\patchcmd{\thebibliography}{\section*}{\section}{}{}
\definecolor{blue2}{cmyk}{.94,.11,0,0}
\definecolor{myblue}{rgb}{.8, .8, 1}
\newlength\mytemplen
\newsavebox\mytempbox
\newcommand{\g}{{\mathfrak g}}
\newcommand{\e}{{\mathrm e}}
\newcommand{\1}{\mathds 1}
\newcommand{\bP}{{\mathbf P}}
\newcommand{\vep}{{\varepsilon}}
\newcommand{\lra}{\longrightarrow}
\newcommand{\rest}{{\upharpoonright}}
\newcommand{\bs}{\boldsymbol}
\newcommand{\ms}{\mathscr}
\renewcommand{\P}{{\mathbb P}}
\newcommand{\E}{{\mathbb E}}
\renewcommand{\d}{{\mathrm d}}
\newcommand{\bE}{{\mathbf E}}
\newcommand{\Z}{{\Bbb Z}}
\newcommand{\R}{{\Bbb R}}
\newcommand{\Id}{{\rm Id}}
\newcommand{\Cov}{{\rm Cov}}
\newcommand{\lf}{\lfloor}
\newcommand{\rf}{\rfloor}
\renewcommand{\i}{{\mathtt i}}
\newcommand{\defeq}{{\stackrel{\rm def}{=}}}
\renewenvironment{proof}[1][\proofname]{\noindent {\bfseries #1.}\;}{\hfill\ensuremath{\blacksquare}\\}
\newcommand{\Log}{{\,\rm Log\,}}
\newcommand{\TV}{{\rm TV}}
\newcommand{\less}{\lesssim}
\newcommand{\more}{\gtrsim}
\renewcommand{\div}{{\rm div}}
\newtheoremstyle{slantthm}{10pt}{10pt}{\slshape}{}{\bfseries}{}{.5em}{\thmname{#1}\thmnumber{ #2}\thmnote{ (#3)}.}
\newtheoremstyle{slantrmk}{10pt}{10pt}{\rmfamily}{}{\bfseries}{}{.5em}{\thmname{#1}\thmnumber{ #2}\thmnote{ (#3)}.}
\begin{document}
\theoremstyle{slantthm}
\newtheorem*{mthm}{Theorem}
\newtheorem{thm}{Theorem}[section]
\newtheorem{prop}[thm]{Proposition}
\newtheorem{lem}[thm]{Lemma}
\newtheorem{cor}[thm]{Corollary}
\newtheorem{defi}[thm]{Definition}
\newtheorem{disc}[thm]{Discussion}
\newtheorem{app}[thm]{Approximation hypothesis}
\newtheorem{hypothesis}[thm]{Hypothesis}
\newtheorem{conj}[thm]{Conjecture}

\theoremstyle{slantrmk}
\newtheorem{ass}[thm]{Assumption}
\newtheorem{rmk}[thm]{Remark}
\newtheorem{eg}[thm]{Example}
\newtheorem{que}[thm]{Question}
\numberwithin{equation}{section}
\numberwithin{figure}{section}
\newtheorem{quest}[thm]{Quest}
\newtheorem{prob}[thm]{Problem}
\newtheorem{ann}[thm]{Annotation}
\newtheorem{nota}[thm]{Notation}
\newtheorem{term}[thm]{Terminology}

\makeatletter
\newcommand\cb{%
    \@ifnextchar[%]
       {\@cb}%
       {\@cb[5pt]}}

\def\@cb[#1]{%
    \@ifnextchar[%]
       {\@@cb[#1]}%
       {\@@cb[#1][5pt]}}

\def\@@cb[#1][#2]#3{
    \sbox\mytempbox{#3}%
    \mytemplen\ht\mytempbox
    \advance\mytemplen #1\relax
    \ht\mytempbox\mytemplen
    \mytemplen\dp\mytempbox
    \advance\mytemplen #2\relax
    \dp\mytempbox\mytemplen
    \colorbox{gray!40!white}{\hspace{1em}\usebox{\mytempbox}\hspace{1em}}}
\makeatother

\makeatletter
\newcommand*{\mint}[1]{%
  % #1: overlay symbol
  \mint@l{#1}{}%
}
\newcommand*{\mint@l}[2]{%
  % #1: overlay symbol
  % #2: limits
  \@ifnextchar\limits{%
    \mint@l{#1}%
  }{%
    \@ifnextchar\nolimits{%
      \mint@l{#1}%
    }{%
      \@ifnextchar\displaylimits{%
        \mint@l{#1}%
      }{%
        \mint@s{#2}{#1}%
      }%
    }%
  }%
}
\newcommand*{\mint@s}[2]{%
  % #1: limits
  % #2: overlay symbol
  \@ifnextchar_{%
    \mint@sub{#1}{#2}%
  }{%
    \@ifnextchar^{%
      \mint@sup{#1}{#2}%
    }{%
      \mint@{#1}{#2}{}{}%
    }%
  }%
}
\def\mint@sub#1#2_#3{%
  \@ifnextchar^{%
    \mint@sub@sup{#1}{#2}{#3}%
  }{%
    \mint@{#1}{#2}{#3}{}%
  }%
}
\def\mint@sup#1#2^#3{%
  \@ifnextchar_{%
    \mint@sup@sub{#1}{#2}{#3}%
  }{%
    \mint@{#1}{#2}{}{#3}%
  }%
}
\def\mint@sub@sup#1#2#3^#4{%
  \mint@{#1}{#2}{#3}{#4}%
}
\def\mint@sup@sub#1#2#3_#4{%
  \mint@{#1}{#2}{#4}{#3}%
}
\newcommand*{\mint@}[4]{%
  % #1: \limits, \nolimits, \displaylimits
  % #2: overlay symbol: -, =, ...
  % #3: subscript
  % #4: superscript
  \mathop{}%
  \mkern-\thinmuskip
  \mathchoice{%
    \mint@@{#1}{#2}{#3}{#4}%
        \displaystyle\textstyle\scriptstyle
  }{%
    \mint@@{#1}{#2}{#3}{#4}%
        \textstyle\scriptstyle\scriptstyle
  }{%
    \mint@@{#1}{#2}{#3}{#4}%
        \scriptstyle\scriptscriptstyle\scriptscriptstyle
  }{%
    \mint@@{#1}{#2}{#3}{#4}%
        \scriptscriptstyle\scriptscriptstyle\scriptscriptstyle
  }%
  \mkern-\thinmuskip
  \int#1%
  \ifx\\#3\\\else_{#3}\fi
  \ifx\\#4\\\els\e^{#4}\fi  
}
\newcommand*{\mint@@}[7]{%
  % #1: limits
  % #2: overlay symbol
  % #3: subscript
  % #4: superscript
  % #5: math style
  % #6: math style for overlay symbol
  % #7: math style for subscript/superscript
  \begingroup
    \sbox0{$#5\int\m@th$}%
    \sbox2{$#5\int_{}\m@th$}%
    \dimen2=\wd0 %
    % => \dimen2 = width of \int
    \let\mint@limits=#1\relax
    \ifx\mint@limits\relax
      \sbox4{$#5\int_{\kern1sp}^{\kern1sp}\m@th$}%
      \ifdim\wd4>\wd2 %
        \let\mint@limits=\nolimits
      \else
        \let\mint@limits=\limits
      \fi
    \fi
    \ifx\mint@limits\displaylimits
      \ifx#5\displaystyle
        \let\mint@limits=\limits
      \fi
    \fi
    \ifx\mint@limits\limits
      \sbox0{$#7#3\m@th$}%
      \sbox2{$#7#4\m@th$}%
      \ifdim\wd0>\dimen2 %
        \dimen2=\wd0 %
      \fi
      \ifdim\wd2>\dimen2 %
        \dimen2=\wd2 %
      \fi
    \fi
    \rlap{%
      $#5%
        \vcenter{%
          \hbox to\dimen2{%
            \hss
            $#6{#2}\m@th$%
            \hss
          }%
        }%
      $%
    }%
  \endgroup
}

\title{\vspace{-.5cm}\bf Convergence of the rescaled Whittaker stochastic differential equations and independent sums}

\author{Yu-Ting Chen\footnote{Department of Mathematics and Statistics, University of Victoria, British Columbia, Canada.}\;\footnote{Email: \url{chenyuting@uvic.ca}}}

\date{\today}

\maketitle

\vspace{-.5cm}
\abstract{We study some SDEs derived from the $q\to 1$ limit of a 2D surface growth model called the $q$-Whittaker process. The fluctuations are proven to exhibit Gaussian characteristics that ``come down from infinity'': After rescaling and re-centering,  convergence to the time-inverted stationary additive stochastic heat equation holds. The point of view in this paper is a probabilistic representation of the SDEs by independent sums. By this connection, the normal and Poisson approximations and the in-between slow decorrelation, all in particular integrated forms, explain the convergence of the re-centered covariance functions. With bounds and divergent constants from these approximations, the proof of the process-level convergence identifies additional divergent terms in the dynamics and considers cancellation arguments that treat the independent sums as discrete spin systems.  
\smallskip
 
\noindent \emph{Keywords:} Surface growth models; additive stochastic heat equation;
pure death processes; normal approximations; Poisson approximations.\medskip

\noindent \emph{Mathematics Subject Classification (2000):} 60J27, 60K35, 60F05
}

\setcounter{tocdepth}{2}
\renewcommand{\baselinestretch}{0.6}
\tableofcontents
\renewcommand{\baselinestretch}{1.0}\normalsize

\section{Introduction}\label{sec:intro}
The original model behind this paper is  the $q$-Whittaker process which goes back to Borodin and Corwin  \cite{BC:Mac}.
The process is a discrete interacting particle system modeling surface growth dynamics. In the limit of $q\to 1$, a recent work of Borodin, Corwin and Ferrari \cite{BCF} proves a convergence of the fluctuations by taking an iterated scaling limit. Our objective in this paper is the final stage of the limiting scheme. It obtains a pointwise limit of the (space-time) covariance functions of the following SDEs:
\begin{align}\label{SDE}
\d \xi_t(a)=t^{-1}(A_L\xi_t)(a) \d t+\d B_t(a),\quad a\in \mathcal T_L,\; 0<t<\infty,
\end{align}
where $L$ is an integer $\geq 3$ and 
$\mathcal T_L$ is the following upper triangular lattice:
\begin{align}\label{def:TNintro}
\mathcal T_L=\{a=(a_1,a_2)\in \Bbb N^2;1\leq a_1\leq a_2\leq L\}.
\end{align}
In \eqref{SDE}, $A_L$ can be identified as a generator matrix. The corresponding Markov chain is given by the invertible linear transformation $(m_1,m_2)\mapsto (m_1+1,m_1+m_2+1)$ of two independent linear pure death processes, such that the sum of the populations does not exceed $L-1$. The noise terms are given by a standard Brownian motion $\{B_t(a);a\in \mathcal T_L\}$. Due to the original discrete dynamics, we call these SDEs from \cite{BCF} the {\bf Whittaker SDEs}. 

Our goal in this paper is to prove a rescaled limit of the Whittaker SDEs at the process level. For the convergence of the covariance functions, we choose a method very different from the method in \cite{BCF} by turning to a probabilistic representation of solutions of the SDEs. Hence, the entire spectrum of limit theorems for sums of Bernoulli random variables enters and becomes the point of departure of the present proofs.

  \usetikzlibrary{decorations.markings}
\tikzset{->-/.style={decoration={
  markings,
  mark=at position #1 with {\arrow{>}}},postaction={decorate}}}
\begin{figure}[t]
\begin{center}
\begin{tikzpicture}[scale=3.5]
	\draw[step=.2cm,gray,very thin] (.2,0) grid (1.8,1.2);
        \foreach \a in {1,.8,.6,.4}
        \filldraw [blue] (1.2,0.2) circle (.8pt);
        \filldraw [red] (1.2,0.4) circle (.8pt);
        \filldraw [blue] (1.6,0.4) circle (.8pt);
        \filldraw [blue] (1.6,0.6) circle (.8pt);
        \filldraw [blue] (1.4,0.6) circle (.8pt);
        \filldraw [blue] (0.8,0.6) circle (.8pt);
        \filldraw [blue] (1.6,0.8) circle (.8pt);
        \filldraw [blue] (1.4,0.8) circle (.8pt);
        \filldraw [blue] (1.0,0.8) circle (.8pt);
        \filldraw [blue] (.6,0.8) circle (.8pt);
        \filldraw [blue] (.4,1) circle (.8pt);
        \filldraw [blue] (.8,1) circle (.8pt);
        \filldraw [blue] (1,1) circle (.8pt);
        \filldraw [blue] (1.4,1) circle (.8pt);
        \filldraw [blue] (1.6,1) circle (.8pt);
        \node at (1.2,.1) {\tiny $\lambda(1,1)$};
        \node at (1.65,.3) {\tiny $\lambda(1,2)$};
        \node at (1.2,.3) {\tiny $\lambda(2,2)$};
        \node at (1.65,.5) {\tiny $\lambda(1,3)$};
        \node at (1.4,.5) {\tiny$\lambda(2,3)$};
        \node at (0.8,.5) {\tiny $\lambda(3,3)$};
        \node at (1.65,.7) {\tiny $\lambda(1,4)$}; 
        \node at (1.4,.7) {\tiny$\lambda(2,4)$};
        \node at (1,.7) {\tiny$\lambda(3,4)$};
        \node at (.6,.7) {\tiny $\lambda(4,4)$};
        \node at (1.65,.9) {\tiny $\lambda(1,5)$};
        \node at (1.4,.9) {\tiny $\lambda(2,5)$};
        \node at (1,.9) {\tiny $\lambda(3,5)$};
        \node at (.8,.9) {\tiny $\lambda(4,5)$};
        \node at (.4,.9) {\tiny $\lambda(5,5)$};
        \draw[->] (0.2,0) -- (2,0);
                \draw[->] (0.2,0) -- (0.2,1.3);
        \node at (2.1,0) {$\Bbb Z$};
                \node at (.1,1.3) {$a_2$};
\end{tikzpicture}
\end{center}
\caption{The figure shows a configuration $\lambda=\{\lambda(a_1,a_2);(a_1,a_2)\in \mathcal T_5\}$ under the $q$-Whittaker process. Here, $\mathcal T_L$ is defined in \eqref{def:TNintro}. For example, the position of the $2$nd particle at level $3$ is $\lambda(2,3)\in \Bbb Z$. Any blue particle can jump to the right with pushing whenever chosen. The red particle is blocked by the particle at level $1$ to maintain the inequalities $\lambda(2,2)\leq \lambda(1,1)\leq \lambda(1,2)$ in the interlacing relation. } 
\label{fig:0}
\vspace{-.5cm}
\end{figure}
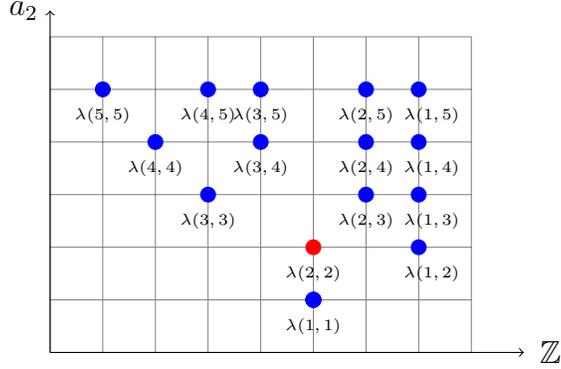

Let us begin by explaining connections between the Whittaker SDEs and surface growth dynamics in terms of the original discrete model. First, given an integer $L\geq 3$, the $q$-Whittaker process  is a stochastic dynamic model of particles living in $L$ rising levels. Each level is a copy of the one-dimensional integer lattice. The number of particles at each level is the same as the level number. The particles at each level are ordered from the right to the left. Accordingly, a configuration of the whole system takes values in $\Bbb Z^{\mathcal T_L}$ with respect to  the finite triangular lattice $\mathcal T_L$ defined above. The particle system evolves according to some special rates depending on $q$ and local configurations. The updates also respect an interlacing relation induced by $\mathcal T_L$ on the overall particle positions. See Figure~\ref{fig:0} for an example of the eligible configurations. More specifically, when a particle, say, at $a\in \mathcal T_L$ is chosen for a transition according to the rates, it attempts to jump to the right lattice point. This jump is successful if the chosen particle is not ``blocked'', in a sense that the jump does not break the assumed interlacing relation. In this case, the chosen particle also ``pushes'' to the right, by one lattice point, the entirety of the vertical string of particles above it in terms of positions and indices. In other words, these particles being pushed originally occupy the same integer site at the respective levels as the chosen particle and are indexed by $a+(0,1),\cdots,a+(0,\ell-1)$, such that $\ell\geq 1$ is maximal. See Section~\ref{sec:Wparticle} for the details of these mechanisms.

The $q$-Whittaker process has an important meaning of being a model of surface growth. Configurations of the interlacing particles can be mapped to discrete surfaces (via lozenge tilings). So transitions of particle configurations correspond to transitions of heights in the discrete surfaces \cite[Section~1.1]{BF}. From a broader perspective,  the pushing mechanism and  the blocking mechanism define the $q$-Whittaker process as a generalization of the totally asymmetric simple exclusion process (TASEP) that plays an important role in the 1D Kardar--Parisi--Zhang universality class \cite{Quastel}. Moreover, since the pushing mechanism induces a special tilt direction in the discrete surfaces, it is believed that the $q$-Whittaker process falls in the  \emph{anisotropic} Kardar--Parisi--Zhang universality class. In this direction, Wolf's conjecture \cite{Wolf}  from the physics literature expects that the large time fluctuations in this class are universally of the order $\sqrt{\ln t}$. Moreover, the probabilistic counterpart of this magnitude is the 2D Gaussian free field \cite[Section~1.4]{BF}. See \cite{BS,Toninelli_17} for more backgrounds.

For the $q$-Whittaker process, the proof of this physical relation  is first obtained in an insightful work  by Borodin and Ferrari \cite{BF}  for the case $q=0$. For the $q\to 1$ dynamics, an intermediate step of the proof in \cite{BCF} leads to the Whittaker SDEs. See \eqref{def:xit} for a summary of the limiting schemes. In particular, though introduced with a probabilistic interpretation by pure death processes above, in \cite{BCF}, the matrices $A_L$ in the drift coefficients of the SDEs appear with the explicit algebraic forms, and the methods are developed accordingly.

In the direction of universality, Wolf's conjecture is also proven in \cite{BCT} for the $q$-Whittaker \emph{driven} particle system \cite{CT}. This model is defined on periodic lattices in two dimensions. It gives another discrete surface growth dynamics that include a special tilt direction. For the $q\to 1$ fluctuations, an intermediate step of the proof in \cite{BCT} derives  some Ornstein--Uhlenbeck finite-dimensional  SDEs (called the Whittaker \emph{driven} SDEs in \cite{C:RW}) similar to the Whittaker SDEs.  The constant matrices in the drift coefficients of the Whittaker driven SDEs continue to incorporate some characteristics of the discrete dynamics. But they are structurally different from the matrices $A_L$. Not all of the off-diagonal entries are nonnegative. In this case, discrete Fourier transforms are the central tool for proving the convergence. By contrast, the method to be discussed below for the Whittaker SDEs considers different properties, mainly the probabilistic interpretation of $A_L$ and some limiting stationarity. Consequently, except for the use of a divergent characteristic direction for space and time to be specified below, the present proofs show very mild technical overlaps with those in \cite{C:RW}. On the other hand, another investigation toward the universality in Wolf's conjecture by unifying \cite{C:RW} and the present paper  may be possible, given the entrance of the central limit theorem to be explained below. It shall develop along a comparison of the proofs with the classical counterparts that use Fourier transforms in proving the central limit theorem. We do not pursue the details here, though.

The main result of this paper proves a rescaled limit of the Gaussian stochastic integral parts  $\{\zeta_t(a);a\in \mathcal T_L\}$ in the solutions of the Whittaker SDEs. It can be summarized informally as the theorem below. See Theorems~\ref{thm:covar} and \ref{thm:second} for the precise statements. For convenience, the proof works with an infinite-dimensional Gaussian process  from consistently extending these parts of all finite $L$ (see Proposition~\ref{prop:SDE} and the discussion below it). The limiting process is based on the 2D additive stochastic heat equation \cite{Walsh}:
\begin{align}\label{SHE:intro}
\frac{\partial X}{\partial t}(x,t)=\frac{\Delta }{2} X(x,t)+\dot{W}(x,t),
\end{align}
where $\dot{W}$ is space-time white noise. Note that various rescaled limits of the covariance functions of the Whittaker SDEs are already obtained in \cite{BCF}. But these results are with forms and methods all different from those introduced here. See Remark~\ref{rmk:skellam} (2$^\circ$). Connections to the stationary 2D additive stochastic heat equation are also pointed out in \cite{BCF}. 

\vspace{-.15cm}

\begin{mthm}[Informal version]
Let $\{\zeta_t(a);a\in \mathcal T_\infty\}$ denote the Gaussian stochastic integral part in the solution of the infinite-level Whittaker SDE, and let $X(x,t)$ denote the stationary solution of the 2D additive stochastic heat equation \eqref{SHE:intro}. Then as $N\to\infty$,
\begin{align}\label{rescale:intro}
\left(x\mapsto \zeta_{Nt}\Big(\left\lfloor Nt+Nt\cdot \frac{x_1}{N^{1/2}}\right\rfloor ,\left\lfloor Nt+Nt\cdot \frac{x_2}{N^{1/2}}\right\rfloor \Big)\right)_{t>0}\lra\big(x\mapsto X(x,t^{-1})\big)_{t>0}
\end{align}
holds in the following two modes of convergence: {\rm (1)} pointwise convergence of space-time covariance functions modulo constants, and {\rm (2)} convergence in distribution as c\`adl\`ag processes that take values in the space of tempered distributions on $\R^2$ modulo constants. 
\end{mthm}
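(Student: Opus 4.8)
\medskip
\noindent\textbf{Proof strategy.} The plan is to prove both modes of convergence through the probabilistic representation announced above. By Proposition~\ref{prop:SDE} the Gaussian stochastic integral part of the infinite-level Whittaker SDE is $\zeta_t=\int_0^t\Phi(t,u)\,\d B_u$, where $\Phi(t,u)=P_{\ln(t/u)}$ and $(P_\sigma)_{\sigma\geq 0}$ is the transition semigroup of the pure-death-derived chain; thus $\zeta_t$ is a centered Gaussian field on $\mathcal T_\infty$ with
$$\Cov\big(\zeta_t(a),\zeta_t(b)\big)=\int_0^t q_{\ln(t/u)}(a,b)\,\d u=\int_0^\infty q_\sigma(a,b)\,t\e^{-\sigma}\,\d\sigma,$$
$q_\sigma(a,b)=\sum_cP_\sigma(a,c)P_\sigma(b,c)$ being the probability that two independent copies of the chain started at $a$ and $b$ coincide at effective time $\sigma$. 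Since the chain factors through two independent linear pure death processes under $(m_1,m_2)\mapsto(m_1+1,m_1+m_2+1)$, each $q_\sigma$ is a product of two coincidence probabilities of independent sums of Bernoulli variables, of the form $\P\big(\mathrm{Bin}(m,\e^{-\sigma})=\mathrm{Bin}(m',\e^{-\sigma})\big)$; for the space-time covariances one inserts one further pure-death evolution into the coincidence count via $\zeta_t(b)=\sum_cP_{\ln(t/s)}(b,c)\zeta_s(c)+(\text{independent increment})$, $s<t$. This is the point where the whole spectrum of limit theorems for Bernoulli sums enters the proof.

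\medskip
\noindent\textbf{Step 1: covariances (Theorem~\ref{thm:covar}).} Under the rescaling $t=Nt'$, $a_i=\lfloor Nt'+Nt'x_i/N^{1/2}\rfloor$, the first death coordinate starts from a population of order $N$ and stays in a local-CLT regime across the relevant range of $\sigma$, while the second starts from a population of order $N^{1/2}$, so where the $\sigma$-integrand concentrates ($\e^{-\sigma}\asymp N^{-1/2}$) its binomials are in the Poisson regime; in the intermediate range the two factors do not decouple, which produces the slow-decorrelation contribution together with a logarithmically divergent piece. Inserting quantitative normal (Berry--Esseen-type) and Poisson (Stein--Chen/Le Cam-type) bounds under the $\sigma$-integral turns each regime into an explicit limit, and the regime-by-regime leading terms assemble into the space-time covariance kernel of the stationary 2D additive stochastic heat equation with time argument inverted and spatial arguments read along the divergent characteristic direction. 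The surviving divergence sits at coincident space points, is point-independent to leading order, and is therefore killed by pairing $\zeta_t$ with zero-sum test functions; it is exactly the ``$\sqrt{\ln t}$''/2D Gaussian free field signature, i.e.\ what ``modulo constants'' removes.

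\medskip
\noindent\textbf{Step 2: process level (Theorem~\ref{thm:second}).} Work in the Skorokhod space of c\`adl\`ag paths valued in $\mathcal S'(\R^2)$ modulo constants, pairing the rescaled field $\zeta^N$ with Schwartz functions $\varphi$ satisfying $\int\varphi=0$. Tightness follows from uniform-in-$N$ moment bounds on the paired field --- which by Gaussianity reduce to the covariance estimates of Step~1 --- together with time-increment bounds $\E|\langle\zeta^N_t-\zeta^N_s,\varphi\rangle|^2\lesssim|t-s|^{\theta}$ read off the rescaled SDE, via Mitoma's criterion. Any subsequential limit is then Gaussian with c\`adl\`ag paths, and its finite-dimensional covariances are pinned down by Step~1 to those of $(x\mapsto X(x,t^{-1}))_{t>0}$, which identifies the limit uniquely (and, through a martingale problem if one wishes, as the time-inverted stationary solution of \eqref{SHE:intro}).

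\medskip
\noindent\textbf{Main obstacle.} The substance of Step~2 --- and the point I expect to be hardest --- is the drift. Differentiating the rescaled field produces, besides the Laplacian of \eqref{SHE:intro}, terms in which $A_L$ acts on the order-$N$ rescaled configuration and which diverge as $N\to\infty$; the plan is to identify these divergent counterterms explicitly and prove they cancel once one projects modulo constants and sums over the triangular lattice, by writing the independent-sum representation of the rescaled field as a discrete spin system on $\mathcal T_\infty$ and running summation-by-parts/telescoping along the characteristic direction so that the divergent pieces collapse against the zero-sum condition. This cancellation argument --- together with running the slow-decorrelation estimates uniformly across the crossover scale and obtaining time-regularity down to the singular time $t=0$, where the fluctuations ``come down from infinity'' --- is the hard part; by contrast the covariance asymptotics of Step~1, though delicate, amount to classical Bernoulli-sum limit theorems applied under an integral.
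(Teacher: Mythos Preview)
Your high-level architecture matches the paper's: covariance asymptotics via the independent-sum representation and Bernoulli limit theorems, then process-level convergence via Mitoma plus H\"older time-increment bounds, with the hard work being a cancellation argument for divergent drift pieces (your ``main obstacle'' is exactly Lemma~\ref{lem:JN2,3}).

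There is, however, a concrete misreading that derails your Step~1. The precise rescaled object is $\zeta^N(x,s)=\zeta_{Ns}\Sigma(M_1,M_2)$ with $M_j=\lfloor Ns+Ns\,x_j/N^{1/2}\rfloor$ (equation~\eqref{def:zetaN1}), where $\Sigma$ is the sum map \eqref{def:sigma}; thus \emph{both} pure-death initial populations are $M_j\sim N$, not one of order $N$ and one of order $N^{1/2}$. (Reading the informal statement literally as $\zeta_{Nt}(a_1,a_2)$ and applying $\Delta$ gives your picture, but the informal statement is loose---the paper directs to Theorems~\ref{thm:covar} and~\ref{thm:second} for the precise formulation.) This changes the regime analysis entirely. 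In the representation \eqref{obj1} the covariance is $\int_0^{Ns}\prod_{j=1}^2\mathbf P(S_{M_j}(r/Ns)=S'_{M'_j}(r/Nt))\,\d r$, and the two factors move \emph{together} through the same regimes as $r$ varies: both are Poisson for $r=\mathcal O(1)$ (mean $\sim r$), both are in the local-CLT regime for $r$ of order $N$, and (when $s=t$) both are Poisson again near $r=Ns$. The divergent constant $\mathfrak C_N\sim(\ln N)/(4\pi)$ and the slow decorrelation come from integrating these approximations over $r$ and \emph{joining} them across the crossover $1\ll r\ll N$---not from a crossover between the two factors at a single scale. The actual execution (Propositions~\ref{prop:poisson1}--\ref{prop:normal2}) splits the $r$-integral at $Ns\ell_N$ and $Nsr_N$ with $\ell_N=1-r_N=N^{-(1/2+\eta)}$ and handles each piece with a quantitative integrated approximation; your ``one factor Gaussian, the other Poisson where the integrand concentrates'' picture would not produce the right limit.
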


\vspace{-.15cm}

Let us explain the theorem in more detail. First, the rescaling in \eqref{rescale:intro} is along the characteristic direction $(x, t)\mapsto  (t + tx, t)$ followed by the Edwards--Wilkinson scaling \cite{EW}: $(x,t,\zeta)\mapsto (N^{-1/2}x,Nt,N^{0}\zeta)$. For the limiting process, stationarity holds in the form that  the initial condition is given by the massless 2D log-correlated Gaussian free field (Section~\ref{sec:id}). Due to the time inversion ($t\mapsto t^{-1}$), the informal process-level picture thus shows that the limiting fluctuations ``come down from infinity'' along the clock of the additive stochastic heat equation. In particular, this result should be an answer to the inquiry in \cite[Section~1.2]{BCF}  whether,  for time varying over the entire half-line, the fluctuations of the $q$-Whittaker process can be related to the stationary additive stochastic heat equation. There in \cite{BCF}, the relation is established over the time interval $ [0,1]$.

For the proof, we first consider a rescaled limit of the covariance functions. We follow an initial step in \cite[Lemma~5.7]{BCF} but subsequently turn to a method that entirely circumvents the use of complex contour integrals and special functions in the original method of \cite{BCF}. That step from \cite{BCF} identifies $A_L$ as the generator matrix of a linear transformation of two i.i.d. Markov chains with linearly decreasing rates. The method of our choosing starts with the interpretation that the latter two Markov chains are linear pure death processes and model the population size of individuals with i.i.d. exponential lifetimes. Moreover, the chains can be represented as sums of independent Bernoulli variables that keep track of the numbers of survivors. The central limit theorem thus leads to the Gaussian characteristic of the limit. It induces a natural choice of the rescaling scheme, and its local version gives approximations by Gaussian densities. The additive stochastic heat equation arises since its solution  is structurally similar to the solutions of the Whittaker SDEs.

Given the crucial connection to independent sums, we are still faced with the basic question of whether the normal approximations are enough. The issue arises from the following representation of the covariance functions of the rescaled processes in \eqref{rescale:intro}: 
\begin{align}\label{covar:intro}
\int_0^{Ns}\prod_{j=1}^2\mathbf P\left(S_{\big\lfloor Ns+Ns\cdot \frac{x_j}{N^{1/2}}\big\rfloor}\left(\frac{r}{Ns}\right)=S'_{\big\lfloor Nt+Nt\cdot \frac{y_j}{N^{1/2}}\big\rfloor}\left(\frac{ r}{Nt}\right)\right)\d r,
\end{align}
where $x,y\in \R^2$, $0\leq s\leq t<\infty$, and $S_m(p)$ and $S'_{m'}(p')$ are independent binomial random variables such that $S_m(p)$ has parameters $(m,p)$, and $S'_{m'}(p')$ is similarly defined (Proposition~\ref{prop:covar}). In \eqref{covar:intro}, the \emph{parameters} of the Bernoulli random variables in the order-$N$ sums are \emph{integrated out} up to order $1$. With this manner, the convergence of the probabilities has to be proven for essentially all the possible values of the parameters simultaneously. Therefore, the above sketch of proof actually leaves out the Poisson approximations, and more importantly, the ``slow decorrelation'' of sums of Bernoulli variables from the Poisson limit to the Gaussian limit. The slow decorrelation raises most of the technicality in this stage. The key is to carry it out by calculating the asymptotics of an integral of probabilities as in \eqref{covar:intro}, the associated interval being changed to $1\ll r\ll N$ where an integrated Poisson approximation and an integrated local central limit theorem are \emph{joined}. The proof also needs to quantify these $\ll$-bounds of $r$. In particular, one ingredient to get the integrated Poisson approximation is a bound from the Stein--Chen method \cite{BE:Poisson,BH}.  Along this way, some logarithmically divergent constants, which also appear in \cite{BCF}, and the stationarity in the limiting equation arise from integrating the decorrelating probabilities. See Section~\ref{sec:conv}. Let us stress that we do not pursue the optimality of error bounds for these integrated approximations. Our interest is to investigate possible connections between the normal and Poisson approximations and the universality in Wolf's conjecture. This direction is in the same spirit as the comparison with the proof in  \cite{C:RW}  for the Whittaker drive SDEs mentioned above.

The process-level convergence of the rescaled processes considers the weak formulation by integrating out space. In this form, the main result proves the uniform H\"older continuity in time on compacts of the covariance functions. The method begins when proving the pointwise convergence of the covariance functions already discussed above, as we quantify appropriate bounds throughout. For the next step (Section~\ref{sec:tight}), the divergent constants from integrating decorrelating probabilities are  removed by re-centering. The main work then bounds the H\"older coefficients by removing some \emph{additional} divergent terms (Lemma~\ref{lem:JN2,3}). The issue here arises since it is not clear how these divergent terms can cancel each other without further transformations. The integral structures of these terms appear to be special and different from those in the decorrelation.  To find ways for the cancellations, the binomial integration by parts, and some recursive identities for the independent sums enter as the central tools. Essentially, by involving these tools, we treat the independent sums of Bernoulli random variables in the covariance functions as discrete spin systems. In summary, this part of the present paper shows a different application of the probabilistic presentation of the Whittaker SDEs. Moreover, as in \cite{C:RW}, it deals with dynamical singularities in the surface growth, along a divergent characteristic direction and not present in proving the convergence of covariance functions.

Finally, we remark that asymptotic expansions for the Poisson approximations \cite{Barbour} and the normal approximations \cite{Petrov} apply to the independent sums in this paper. Given this background, it may be of interest to investigate higher-order asymptotics of the fluctuations of the Whittaker SDEs at the process level. 
\medskip

\noindent {\bf Organization.} Section~\ref{sec:SDEs} discusses the Whittaker SDEs for more detailed connections to the $q$-Whittaker process, the solutions, and the key probabilistic representation giving \eqref{covar:intro}. Section~\ref{sec:conv} proves the convergence of the covariance function. In particular, the main technical conditions in the proofs of Sections~\ref{sec:conv} and~\ref{sec:tight} are imposed in Assumption~\ref{ass:sNtN} and Definition~\ref{def:cond}.  Some notations for binomial random variables used throughout this paper are defined in \eqref{def:Bkernel}. In Section~\ref{sec:ASHE}, we relate the limiting process to the additive stochastic heat equation. The proof of the tightness of the SDEs is in Section~\ref{sec:tight}. Finally, Section~\ref{sec:stat} collects some basic properties of the 2D stationary additive stochastic heat equation.  \smallskip

\noindent {\bf Convention for constants.} $C(T)\in (0,\infty)$ is a constant depending only on $T$ and can change from inequality to inequality. Other constants are defined analogously. We write $A\less B$ or $A\more B$ if $A\leq CB$ for a universal  constant $C\in (0,\infty)$. $A\asymp B$ means both $A\less B$ and $A\more B$.\smallskip

\noindent {\bf Acknowledgments.} The author would like to thank three referees for suggestions concerning  the presentation and comparison with related results and thank Andrew D. Barbour for answering questions on Poisson and normal approximations. Support from the Simons Foundation before the author's present position and from the Natural Science and Engineering Research Council of Canada is gratefully acknowledged.

\section{The Whittaker SDEs}\label{sec:SDEs}
In this section, we recall more details of the Whittaker SDEs derived in \cite[Proposition~5.5]{BCF}. Then we show that these SDEs can be solved explicitly by the driving Brownian motion and sums of i.i.d. Bernoulli random variables. Due to this connection, we turn to limit theorems of independent sums in the next section.

\subsection{Connections to the $q$-Whittaker process}\label{sec:Wparticle}
For fixed  $q\in [0,1)$ and integer $L\geq 3$, the {\bf $\bs q$-Whittaker process} $(\Lambda^q_t)_{t\geq 0}$ considers $L(L+1)/2$ many particles indexed by $\mathcal T_{L}$, where $\mathcal T_L$ is defined by \eqref{def:TNintro}. In any state $\lambda\in \Bbb Z^{\mathcal T_{L}}$ of the system, the following interlacing relation of particles holds: 
\begin{align}\label{def:interlacing}
\lambda(a_1+1,a_2)\leq \lambda(a_1,a_2-1)\leq \lambda(a_1,a_2),\;\forall\;a=(a_1,a_2)\in \mathcal T_{L},\;a_2\geq 2,\;a_2\geq a_1+1.
\end{align}

For $q>0$, the particle labelled by $a\in \mathcal T_L$ attempts to jump to the right ($\lambda(a)\mapsto \lambda(a)+1$) and, if successful, pushes other particles in the way described in Section~\ref{sec:intro}, with rate
\begin{align}\label{def:rate}
c_q(a,\lambda)=\frac{\left(1-q^{\lambda(a_1-1,a_2-1)-\lambda(a_1,a_2)}\right)\left(1-q^{\lambda(a_1,a_2)-\lambda(a_1+1,a_2)+1}\right)}{1-q^{\lambda(a_1,a_2)-\lambda(a_1,a_2-1)+1}}.
\end{align}
Here in \eqref{def:rate}, $1-q^{\lambda(a_1-1,a_2-1)-\lambda(a_1,a_2)}$ is understood to be $1$ if $(a_1-1,a_2-1)\notin \mathcal T_L$; a similar convention applies to  $1-q^{\lambda(a_1,a_2)-\lambda(a_1+1,a_2)+1}$ and $1-q^{\lambda(a_1,a_2)-\lambda(a_1,a_2-1)+1}$. That is, the following reduction applies when $a$ is a point on the left or right boundary of $\mathcal T_L$:
\begin{align}
c_q(a,\lambda)=\left\{
\begin{array}{ll}
1,&a_1=a_2=1;\\
\vspace{-.35cm}\\
1-q^{\lambda(a_2-1,a_2-1)-\lambda(a_2,a_2)},& a_1=a_2>1;\\
\vspace{-.35cm}\\
\displaystyle \frac{1-q^{\lambda(1,a_2)-\lambda(2,a_2)+1}}{1-q^{\lambda(1,a_2)-\lambda(1,a_2-1)+1}},&a_1=1,\;a_2>1.
\end{array}
\right.
\end{align}
One key feature of $c_q(a,\lambda)$ is that it is zero when $\lambda(a_1-1,a_2-1)=\lambda(a_1,a_2)$. The particle labelled by $a$ is thus ``blocked'' by the particle labelled by $(a_1-1,a_2-1)$, as illustrated in Figure~\ref{fig:0}. This condition ensures that the interlacing relation in \eqref{def:interlacing} is maintained. For $q=0$, the rates are extended by continuity so that $c_0(a,\lambda)=1$ whenever the particle labelled by $a$ under $\lambda$ is not blocked, and $c_0(a,\lambda)=0$ otherwise. (This case is not used in the present paper.) See \cite[Section~1.1]{BF} (for $q=0$), \cite[Definition~3.28]{BC:Mac}, and \cite[Section~2.5.2]{BCF}.

Given an integer $L\geq 3$, the following iterated limit in distribution
\begin{align}\label{def:xit}
\xi_t\;\defeq\,
\lim_{\vep_2\to 0+}\lim_{\vep_1\to 0+}(\vep_2\vep_1)^{1/2}\left(\Lambda^{\e^{-\vep_1}}_{(\vep_2\vep_1)^{-1}t}-\vep_1^{-1}\overline{\Lambda}_{(\vep_2\vep_1)^{-1}t}\right)
\end{align}
is proven under the initial conditions that $\Lambda_0^q(a)=0$ for all $a\in \mathcal T_L$. Here, the process $\overline{\Lambda}_t\;\defeq\,\lim_{\vep_0\to 0+}\vep_0\Lambda^{\e^{-\vep_0}}_{\vep_0^{-1}t}$ for re-centering is deterministic and so gives a law of large numbers for the particle system. Second, the limit in \eqref{def:xit} for $\vep_1\to 0+$ considers the $q$-Whittaker process with $q=\e^{-\vep_1}\to 1$. Finally, with  $\mathcal T_\infty\defeq\bigcup_{L\geq 3}\mathcal T_L$, the limiting fluctuation $(\xi_t)$ in \eqref{def:xit} obeys the SDE defined by \eqref{SDE} such that the matrix $A_L$ in the drift vector is
\begin{align}
\begin{split}
&A_L=A\rest (\mathcal T_L\times \mathcal T_L);\\
& \mbox{$\forall\;a,b\in \mathcal T_\infty$,}\; 
A(a,b)=\left\{
\begin{array}{ll}
a_1-1,&b=a-(1,1);\\
a_2-a_1, &b=a-(0,1);\\
0, &\mbox{for other }b\neq a;\\
-\sum_{b'\neq a}A(a,b'),&b=a.
\end{array}
\right.\label{def:AN}
\end{split}
\end{align}
See \cite[Sections~3 and 4]{BCF} for  the limits in \eqref{def:xit}. 

\usetikzlibrary{decorations.markings}
\tikzset{->-/.style={decoration={
  markings,
  mark=at position #1 with {\arrow{>}}},postaction={decorate}}}
\begin{figure}[t]
\begin{center}
\begin{tikzpicture}[scale=4]
	\draw[step=.2cm,gray,very thin] (0,0) grid (1,1);
        \draw (0,0) -- (1,0);
        \draw (0,0) -- (0,1);
        \foreach \a in {1,.8,.6,.4}
        \draw [ultra thick,->-=.5,color=gray](.2,\a)--(.2,\a-.2);
        \foreach \b in {1,.8,.6}
        \draw [ultra thick,->-=.5,color=gray](.4,\b)--(.4,\b-.2);
        \foreach \c in {1,.8}
        \draw [ultra thick,->-=.5,color=gray](.6,\c)--(.6,\c-.2);
        \foreach \d in {1}
        \draw [ultra thick,->-=.5,color=gray](.8,\d)--(.8,\d-.2);
        \foreach \x in {.4,.6,.8,1}
        \draw [ultra thick,->-=.5](\x,\x)--(\x-.2,\x-.2);
        \foreach \y in {.6,.8,1}
        \draw [ultra thick,->-=.5](\y-.2,\y)--(\y-.4,\y-.2);
        \foreach \z in {.8,1}
        \draw [ultra thick,->-=.5](\z-.4,\z)--(\z-.6,\z-.2);
        \foreach \zz in {1}
        \draw [ultra thick,->-=.5](\zz-.6,\zz)--(\zz-.8 ,\zz-.2);
        \foreach \x/\xtext in {0/0,.2/1,.4/2,.6/3,.8/4,1/5}
        \draw (\x,1pt) -- (\x,-1pt) node[anchor=north] {$\xtext$};
        \foreach \y/\ytext in {,.2/1,.4/2,.6/3,.8/4,1/5}
        \draw (1pt,\y) -- (-1pt,\y) node[anchor=east] {$\ytext$};
        \filldraw [red!80!black] (0.2,0.2) circle (.8pt);
        \foreach \u in {.2,.4,.6,.8,1}
        \filldraw [blue!50!white] (\u,1) circle (.8pt);
        \foreach \u in {.2,.4,.6,.8}
        \filldraw [blue!50!white] (\u,.8) circle (.8pt);
        \foreach \u in {.2,.4,.6}
        \filldraw [blue!50!white] (\u,.6) circle (.8pt);
        \foreach \u in {.2,.4}
        \filldraw [blue!50!white] (\u,.4) circle (.8pt);
\end{tikzpicture}
\end{center}
\caption{The possible trajectories of the Markov chain on $\mathcal T_5$ with generator $A_5$.} 
\label{fig:1}
\vspace{-.5cm}
\end{figure}
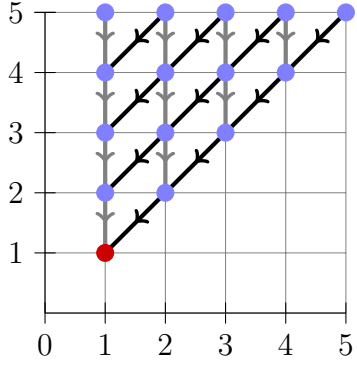

Note that $A$ has nonnegative off-diagonal entries by the definition of $\mathcal T_\infty$. Hence, $A$ is a generator matrix. The semigroup $(\e^{tA};t\geq 0)$ is Markovian. Moreover, given any integer $L\geq 3$, $A_{L}$ depends only on the lattice points in $\mathcal T_{L}$. It also holds that $A_{L}$ is a generator and $(\e^{tA_{L}};t\geq 0)$ is Markovian. See Figure~\ref{fig:1} for  an example. Moreover, $A_L$ shows the special jump rates \eqref{def:rate} in the limit and how the SDEs remember where the pushing and blocking come from in the discrete dynamics. For example, in \eqref{def:AN}, $b=a-(1,1)$ is the label of the particle that can block a jump of the particle labelled by $a$, and $b=a-(0,1)$ is the label of the particle that can push the particle labelled by $a$ or propagate a push.

\subsection{Explicit solutions}\label{sec:SDE}
The following proposition solves the Whittaker SDEs. 

\begin{prop}\label{prop:SDE}
Consider the Whittaker SDE defined in (\ref{SDE}) for an integer $L\geq 3$. \smallskip 

\noindent {\rm (1$^\circ$)} For any solution $(\xi_t;0<t<\infty)$, the following properties hold almost surely:
\begin{align}\label{def0:X0}
\xi_0\;\defeq\;\lim_{t\to 0+}\e^{(-\log t)A_{L}}\xi_t\quad\mbox{exists,}
\end{align}
and $\xi_t=\eta_t+\zeta_t$, where
\begin{align}\label{Xt:inftysoln}
\begin{split}
\eta_t(a)\;\defeq\;\e^{(\log t)A_L}\xi_0(a)\quad \&
\quad \zeta_t(a)\;\defeq\;\sum_{b\in \mathcal T_L}\int_{0}^t \e^{[\log(t/r)]A_L}(a,b)\d B_{r}(b),
\quad \forall\; a\in \mathcal T_L.
\end{split}
\end{align}

\noindent {\rm (2$^\circ$)} For any $\xi_0\in \R^{\mathcal T_{L}}$, $(\xi_t;0<t<\infty)$ defined by (\ref{Xt:inftysoln}) satisfies both  (\ref{SDE}) and (\ref{def0:X0}).
\end{prop}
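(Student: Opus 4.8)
The statement is an existence-and-uniqueness result for a linear SDE with a time-singular drift, and the natural approach is the variation-of-constants method adapted to the logarithmic time change. The plan is to first reduce the problem to a standard (time-homogeneous) linear SDE by the substitution $s = \log t$, or equivalently to recognize that $\e^{[\log(t/r)]A_L}$ is the fundamental solution of the deterministic part $\dot x = t^{-1}A_L x$. Concretely, for part (2$^\circ$), I would take $(\xi_t)$ defined by \eqref{Xt:inftysoln}, and verify directly that it solves \eqref{SDE}: the point-mass term $\eta_t$ satisfies $\d\eta_t = t^{-1}A_L\eta_t\,\d t$ by differentiating the matrix exponential (valid since $\frac{\d}{\d t}\e^{(\log t)A_L} = t^{-1}A_L\e^{(\log t)A_L}$), while for $\zeta_t$ one applies a stochastic Fubini / Itô product argument to the semimartingale $t\mapsto \int_0^t \e^{[\log(t/r)]A_L}\,\d B_r$. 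Writing $\zeta_t = \e^{(\log t)A_L}\int_0^t \e^{-(\log r)A_L}\,\d B_r$, one differentiates the product: the $\d t$-part picks up $t^{-1}A_L\zeta_t$ from the $\e^{(\log t)A_L}$ factor, and the martingale part contributes $\e^{(\log t)A_L}\e^{-(\log t)A_L}\,\d B_t = \d B_t$. This gives \eqref{SDE}. The limit \eqref{def0:X0} then follows because $\e^{(-\log t)A_L}\xi_t = \xi_0 + \int_0^t \e^{-(\log r)A_L}\,\d B_r$, and one checks the stochastic integral converges a.s.\ as $t\to 0+$: the quadratic variation is $\int_0^t \e^{-2(\log r)A_L}$-type (more precisely $\sum_b \int_0^t |\e^{-(\log r)A_L}(a,b)|^2\,\d r$), and since $A_L$ is a generator the entries of $\e^{(-\log r)A_L} = \e^{(\log(1/r))A_L}$ are bounded by $1$ (sub-stochasticity, as $1/r \to \infty$ but the semigroup is Markovian hence contractive in the relevant norm), so the variance integral is finite near $0$; then an $L^2$-martingale convergence argument gives the a.s.\ limit.

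For part (1$^\circ$), the strategy is to run the computation in reverse. Given \emph{any} solution $(\xi_t;\,0<t<\infty)$, define $M_t \defeq \e^{(-\log t)A_L}\xi_t$ and apply Itô's formula: using \eqref{SDE}, the drift of $M_t$ cancels (the $-t^{-1}A_L$ from differentiating $\e^{(-\log t)A_L}$ exactly kills the $t^{-1}A_L\xi_t\,\d t$ from the SDE), leaving $\d M_t = \e^{(-\log t)A_L}\,\d B_t$. Hence $M$ is a continuous local martingale on $(0,\infty)$ with the explicit quadratic variation above; as in the previous paragraph, finiteness of the variance integral near $0$ forces $M_t$ to converge a.s.\ as $t\to 0+$ to some $\xi_0 \in \R^{\mathcal T_L}$, which establishes \eqref{def0:X0}. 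Then for any $0 < \vep < t$, integrating $\d M$ from $\vep$ to $t$ and letting $\vep \to 0+$ gives $M_t = \xi_0 + \int_0^t \e^{(-\log r)A_L}\,\d B_r$, i.e.\ $\xi_t = \e^{(\log t)A_L}\xi_0 + \e^{(\log t)A_L}\int_0^t \e^{(-\log r)A_L}\,\d B_r$, which is exactly $\eta_t + \zeta_t$ after rewriting $\e^{(\log t)A_L}\e^{(-\log r)A_L} = \e^{[\log(t/r)]A_L}$ inside the (deterministic-integrand) stochastic integral. The decomposition $\xi_t = \eta_t + \zeta_t$ with the claimed formulas follows.

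The main obstacle — and the only genuinely delicate point — is the behavior at $t = 0+$: the drift coefficient $t^{-1}$ blows up, so one must justify (i) that the stochastic integral $\int_0^t \e^{(-\log r)A_L}\,\d B_r$ is well-defined as an Itô integral all the way down to $0$ (which needs $\int_0^1 \|\e^{(-\log r)A_L}\|^2\,\d r < \infty$), and (ii) that $\e^{(-\log t)A_L}\xi_t$ genuinely converges a.s.\ rather than merely in some weaker sense. Both hinge on the uniform bound $|\e^{(\log(1/r))A_L}(a,b)| \le 1$ for $r \le 1$, which is immediate from $(\e^{uA_L})_{u\ge 0}$ being a sub-Markovian (in fact Markovian) semigroup — each entry lies in $[0,1]$ and rows sum to $1$. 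Given this bound, $\int_0^1 \sum_b |\e^{(\log(1/r))A_L}(a,b)|^2\,\d r \le |\mathcal T_L|$, so the stochastic integral extends continuously to $[0,t]$ and the $L^2$-bounded martingale $M$ converges a.s.\ at $0$ by the martingale convergence theorem applied along $t\downarrow 0$. A secondary technical point is the interchange of the deterministic integral $\int_0^t$ with the matrix-exponential manipulation $\e^{(\log t)A_L}\int_0^t\e^{(-\log r)A_L}\,\d B_r = \int_0^t \e^{[\log(t/r)]A_L}\,\d B_r$; since the integrand is deterministic this is just linearity of the Itô integral (pulling a constant matrix inside), so it requires no stochastic Fubini subtleties. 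Everything else is routine linear-algebra-plus-Itô bookkeeping.
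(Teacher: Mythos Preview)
Your proposal is correct and takes essentially the same approach as the paper: both reduce to a variation-of-constants formula via the logarithmic time change, use the Markovian (stochastic-matrix) property of $\e^{uA_L}$ to bound the quadratic variation near $t=0$, and invoke martingale convergence for the limit \eqref{def0:X0}. The only cosmetic difference is that the paper performs the substitution $t\mapsto \e^t$ explicitly (working with $\xi_{\e^{t+s}}$ and a standard linear SDE result) and handles the $t\to0+$ limit by an explicit time reversal $M_s=B_1-B_{\e^{-s}}$ to convert it into a forward martingale, whereas you apply It\^o directly to $M_t=\e^{(-\log t)A_L}\xi_t$ and argue continuity of the $L^2$ stochastic integral at $0$; these are the same computation.
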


\begin{proof}
The proofs of ($1^\circ$) and ($2^\circ$) both rely on the following almost-sure identity: 
\begin{align}\label{etA:SDE}
\e^{-(t+s)A_L}\xi_{\e^{t+s}}=\e^{-tA_L}\xi_{\e^t}+\int_{t}^{t+s} \e^{-r A_L }\d_r B_{\e^r},\quad \forall\;s\geq 0,\;t\in \R.
\end{align}
To see \eqref{etA:SDE}, first, note that for any solution $(\xi_t)$ of (\ref{SDE}) and for any fixed $t\in \R$,  by changing variables, $(\xi_{\e^{t+s}};s\geq 0)$ satisfies the following linear equation:
\begin{align*}
\xi_{\e^{t+s}}
=\xi_{\e^{t}}+\int_{0}^{ s}A_L\xi_{\e^{t+r}}\d r+B_{\e^{t+s}}-B_{\e^t}.
\end{align*}
Here, since $W_{r}=\int_0^{r}\e^{-v/2}\d_v (B_{\e^{t+v}}-B_{\e^t})$ is a standard Brownian motion by L\'evy's characterization of Brownian motion, we can write $B_{\e^{t+s}}-B_{\e^t}=\int_0^s\e^{r/2}\d_r W_r$. Hence, \eqref{etA:SDE} follows from a standard result of linear SDEs \cite[the last paragraph of p.354]{KS}.
\smallskip

\noindent {(1$^\circ$).}
First, we prove that the limit in (\ref{def0:X0}) exists almost surely. It suffices to show that the stochastic integral in (\ref{etA:SDE}) with $t+s=0$ converges almost surely as $t\to-\infty$. First, note that by the Riemann-sum approximations of stochastic integrals \cite[Section~3.2.B]{KS},
\begin{align}\label{improper}
\int_t^0\e^{-rA_L}\d_r B_{\e^r}=\int_0^{-t} \e^{rA_L}\d_r M_r,\quad \forall\; t\leq 0.
\end{align}
Here, by time reversal, $(M_s=B_1-B_{\e^{-s}};s\geq 0)$ is a continuous vector martingale with  $\d_s\langle M(b),M(b')\rangle_s=\delta_{b,b'}\e^{-s}\d s$. For all $a,a'\in \mathcal T_L$ and $s\geq 0$, the last stochastic integral satisfies
\begin{align}\label{improper:riemann}
\left\langle \left(\int_0^{\cdot} \e^{rA_L}\d_r M_r\right)(a),\left(\int_0^{\cdot} \e^{rA_L}\d_r M_r\right)(a')\right\rangle_{s}\!=\int_0^{s}[(\e^{rA_L})(\e^{rA_L})^\top](a,a')\e^{-r}\d r.
\end{align}
Since $\e^{rA_L}$ for $r\geq 0$ are stochastic matrices, the integral on the right-hand side of \eqref{improper:riemann} converges as $s\to\infty$. The existence of this limit and the martingale convergence theorem \cite[Problem~3.19 in Section~1.3]{KS} imply the almost-sure convergence of the improper vector stochastic integral $\int_{-\infty}^0 \e^{-rA_L}\d_r B_{\e^r}$ by \eqref{improper}, and hence, the limit in (\ref{def0:X0}) by \eqref{etA:SDE}.

Now, we pass $t\to -\infty$ in (\ref{etA:SDE}) and deduce that, for any $t_0\in \R$,
\begin{gather}
\begin{split}
\xi_{\e^{t_0}}=\e^{t_0A_L}\xi_0+\int_{-\infty}^{t_0}\e^{(t_0-r)A_L}\d_r B_{\e^r}=\e^{t_0A_L}\xi_0+\int_0^{\e^{t_0}}\e^{[\log (\e^{t_0}/r)]A_L}\d_r B_r.\label{etA:SDE1}
\end{split}
\end{gather}
The last equality is enough for (\ref{Xt:inftysoln}). We have proved ($1^\circ$).\smallskip

\noindent {(2$^\circ$).} Given $\xi_0\in \Bbb R^{\mathcal T_L}$, reversing the above arguments for (\ref{etA:SDE1}) and  (\ref{etA:SDE}) proves that $\xi_t=\eta_t+\zeta_t$ defined by (\ref{Xt:inftysoln}) satisfies both (\ref{SDE}) and (\ref{def0:X0}). The proof is complete.
\end{proof}

Since $A_{L}=A\rest (\mathcal T_{L}\times \mathcal T_{L})$ for all $L$, we can construct stochastic integral parts of solutions of the Whittaker SDEs on $\mathcal T_{L}$ for all $L\geq 3$ simultaneously from an infinite-dimensional standard Brownian motion $\{B_t(a);a\in \mathcal T_\infty\}$. This extension defines an infinite-dimensional Gaussian process $\{\zeta_t(a);a\in \mathcal T_\infty\}$ such that for $a\in\mathcal T_L$, $\zeta_t(a)$ equals the sum in \eqref{Xt:inftysoln}. In this case, $A_L$ and $\mathcal T_L$ in the sum can be replaced by $A$ and $\mathcal T_\infty$, respectively.

{\bf In the rest of this paper, we study the rescaled limits of $\{\zeta_t(a);a\in \mathcal T_\infty\}$ at the process level, not just the rescaled limits of their marginal distributions.}

\subsection{Representation by independent sums}
In this subsection, we discuss a probabilistic representation of the Markovian semigroup $(\e^{tA};t\geq 0)$ in terms of sums of i.i.d. Bernoulli random variables. This representation uses two sets of ingredients defined as follows. 

First, define a  sum function $\Sigma:\Bbb Z_+^2\to \mathcal T_\infty$  and a  difference function $\Delta:\mathcal T_\infty\to \Bbb Z_+^2$ by
\begin{align}
\Sigma:(m_1,m_2)\mapsto &\;(\Sigma_1(m_1,m_2),\Sigma_2 (m_1,m_2))\;\defeq\;( m_1 +1,m_1+ m_2+1),\label{def:sigma}\\
\label{def:delta}
\Delta:(a_1,a_2)\mapsto &\;(\Delta_1(a_1,a_2),\Delta_2 (a_1,a_2))\;\defeq\; ( a_1-1, a_2- a_1).
\end{align} 
The map $\Sigma$ is bijective with $\Delta$ being its inverse. (We abuse notation a bit since $\Delta$ has been used to denote the Laplacian in the context of SPDEs.) For the following result, see the proof of \cite[Lemma~5.7]{BCF}. 

\begin{lem}\label{lem:DA}
The Markov chain with generator $A$ can be represented as $\Sigma D$. Here, $D=(D^{(1)},D^{(2)})$,  and $D^{(1)}$ and $D^{(2)}$ are independent linear pure death chains on $\Bbb Z_+$ such that $k\to k-1$ with rate $k$, for any $k\in \Bbb Z_+$.
\end{lem}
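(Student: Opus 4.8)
The statement identifies the Markov chain with generator $A$ (acting on $\mathcal T_\infty$) with the image under $\Sigma$ of the pair $D=(D^{(1)},D^{(2)})$ of independent linear pure death chains on $\Bbb Z_+$. Since $\Sigma:\Bbb Z_+^2\to\mathcal T_\infty$ is a bijection with inverse $\Delta$, the natural route is to transport the generator of $D$ through this bijection and check that the result is exactly $A$. So the plan is: (i) write down the generator $G$ of the product chain $D=(D^{(1)},D^{(2)})$ on $\Bbb Z_+^2$; (ii) push it forward via $\Sigma$, i.e. show that for $f:\mathcal T_\infty\to\R$ one has $A f = G(f\circ\Sigma)\circ\Delta$, equivalently $A(\Sigma m,\Sigma m') = G(m,m')$ for all $m,m'\in\Bbb Z_+^2$; (iii) conclude that $\Sigma D$ is a Markov chain with generator $A$, invoking that $\Sigma$ is a bijection so no states are identified and the jump rates transfer verbatim.

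\textbf{Key steps.} First I would record that the product chain has generator, for $g:\Bbb Z_+^2\to\R$,
\begin{align*}
(Gg)(m_1,m_2)=m_1\big(g(m_1-1,m_2)-g(m_1,m_2)\big)+m_2\big(g(m_1,m_2-1)-g(m_1,m_2)\big),
\end{align*}
since $D^{(1)}$ and $D^{(2)}$ are independent, each jumping $k\to k-1$ at rate $k$. Next I would compute the effect of the coordinate changes $m\mapsto m-(1,0)$ and $m\mapsto m-(0,1)$ under $\Sigma$. From \eqref{def:sigma}, $\Sigma(m_1,m_2)=(m_1+1,m_1+m_2+1)$, so decreasing $m_1$ by one sends $\Sigma(m_1,m_2)=(a_1,a_2)$ to $(a_1-1,a_2-1)=a-(1,1)$, while decreasing $m_2$ by one sends it to $(a_1,a_2-1)=a-(0,1)$. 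Writing $m=\Delta(a)=(a_1-1,a_2-a_1)$, the rate $m_1=a_1-1$ attaches to the jump $a\to a-(1,1)$ and the rate $m_2=a_2-a_1$ attaches to the jump $a\to a-(0,1)$. Comparing with \eqref{def:AN}, these are precisely the off-diagonal entries $A(a,a-(1,1))=a_1-1$ and $A(a,a-(0,1))=a_2-a_1$, all other off-diagonal entries vanish, and the diagonal is the negative row sum in both descriptions. Hence $A(\Sigma m,\Sigma m')=G(m,m')$ for all $m,m'$, i.e. $A=\Sigma\circ G\circ\Sigma^{-1}$ in the obvious sense. Finally, since $\Sigma$ is a bijection and the death chains stay in $\Bbb Z_+$ (so $\Sigma D$ stays in $\mathcal T_\infty$, with $m_1=0$ or $m_2=0$ matching the boundary conventions in \eqref{def:AN} where the corresponding rate is $0$), the process $\Sigma D$ is a time-homogeneous Markov chain whose generator is $A$; this is just the elementary fact that a bijective relabeling of states carries a Markov chain to a Markov chain with conjugated generator.

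\textbf{Main obstacle.} There is no deep difficulty here; the one thing to be careful about is the bookkeeping at the boundary of $\mathcal T_\infty$ and on the axes of $\Bbb Z_+^2$. Concretely, when $a_1=1$ (so $m_1=0$) the death chain $D^{(1)}$ is absorbed and contributes no jump, which must be reconciled with the convention in \eqref{def:AN} that $A(a,b)=0$ for $b$ outside the lattice and with the boundary reductions of the rates; similarly for $a_1=a_2$ (so $m_2=0$). Checking that the forbidden jumps carry rate exactly $0$ on both sides—equivalently that $m_i=0\iff$ the target $\Sigma(m-e_i)$ falls off $\mathcal T_\infty$—is the only place where one must be slightly attentive, and it is immediate from the formulas \eqref{def:sigma}–\eqref{def:delta}. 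I would also remark that since \cite[Lemma~5.7]{BCF} already contains this computation, the proof can simply point to it after recording the generator identity $A=\Sigma G\Sigma^{-1}$.
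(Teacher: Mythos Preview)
Your proposal is correct and follows essentially the same approach as the paper: track how a jump in $D^{(1)}$ (resp.\ $D^{(2)}$) moves $\Sigma D$ from $a$ to $a-(1,1)$ (resp.\ $a-(0,1)$), read off the rates as $a_1-1$ and $a_2-a_1$, and match against \eqref{def:AN}. The paper's proof is simply a two-sentence version of your generator-conjugation argument, without the explicit boundary bookkeeping you add.
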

\begin{proof}
A jump in $D^{(1)}$ changes $\Sigma D=(\Sigma_1D,\Sigma_2D)$ to $(\Sigma_1D-1,\Sigma_2D-1)$ with rate $D^{(1)}=\Sigma_1D-1$. Similarly, a jump in $D^{(2)}$ changes $\Sigma D=(\Sigma_1D,\Sigma_2D)$ to $(\Sigma_1D,\Sigma_2D-1)$ with rate $D^{(2)}=\Sigma_2D-\Sigma_1D$. These rates recover the entries of $A$.
\end{proof}

To introduce the second set of ingredients, we write $S_m(p)$ for a binomial random variable with parameters $m\in \Bbb Z_+$ and $p\in [0,1]$. Then take a sequence of i.i.d. exponential variables $\{\mathbf e_n\}$ under $\mathbf P$ and represent the binomial random variables explicitly as
\begin{align}\label{def:xiS}
S_m(\e^{-t})\;\defeq\; \sum_{n=1}^m \1_{(t,\infty)}(\mathbf e_n),\quad t\in [0,\infty],
\end{align}
with the convention that $\sum_{n=1}^0\equiv 0$. We let $\e^{-t}$ parametrize $S_m$ since  $\mathbf E[\1_{(t,\infty)}(\mathbf e_n)]=\e^{-t}$. The independent sum in (\ref{def:xiS}) is applied in the form that, as processes with c\`adl\`ag paths,
\begin{align}\label{eq:DS}
(D^{(j)}_t;t\geq 0)\stackrel{\rm (d)}{=} \big(S_{D^{(j)}_0}(\e^{-t});t\geq 0\big)
\end{align} 
for any deterministic initial condition $D^{(j)}_0\in \Bbb Z_+$. Recall that this probabilistic representation follows because memorylessness of exponential random variables supplies the Markov property and the property that $\mathbf e_n$'s are independent with $\mathbf E[\mathbf e_n]=1$ gives the linear death rates. See \cite[Section~6.2.1, pp.287--290]{PK}.

The probabilistic representation of $(\e^{tA})$ is now defined as follows: By Lemma~\ref{lem:DA}, the identity $\Delta\Sigma=\Id$ and \eqref{eq:DS}, we have
\begin{align}
 \forall\; a,b\in \mathcal T_\infty,\quad 
\e^{tA}(a,b)&=\mathbf P(\Sigma D_t=b|\Sigma D_0=a)\notag\\
&=\mathbf P( D_t=\Delta b| D_0=\Delta a)\notag\\
&=\mathbf P\left(S_{\Delta_1 a}(\e^{-t})=\Delta_1b\right)\mathbf P\left(S_{\Delta_2a}(\e^{-t})=\Delta_2b\right).\label{etA}
\end{align}
By \eqref{etA}, the pure death processes $D^j$ and the independent sums $S_m(\e^{-t})$ furnish a probabilistic representation of the solutions of the Whittaker SDEs. For this reason, we view these auxiliary random elements as being defined on a probability space, with probability $\mathbf P$ and expectation $\mathbf E$, separate from the probability space for the Whittaker SDEs. 

\begin{rmk}
Alternatively, the reader may choose to think of the Brownian motions $B(a)$'s collectively as  the ``random environment'' driving the Whittaker SDEs. In this case, all the random elements need to be defined on the same probability space.\hfill $\blacksquare$ 
\end{rmk}

We close this section with an immediate application of (\ref{etA}), which is the starting point of the next section. From now on, write $\zeta\Sigma (m_1,m_2)$ for the value of $\zeta:\mathcal T_\infty\to \Bbb R$ at $\Sigma (m_1,m_2)\in \Bbb Z^2$, $S'$ for an independent copy of the process $S$ defined by (\ref{def:xiS}), and ${\rm Cov}[U;V]=\E[UV]-\E[U]\E[V]$ for random variables $U$ and $V$.

\begin{prop}\label{prop:covar}
The mean-zero Gaussian process $\{\zeta_t(a);a\in \mathcal T_\infty\}$ defined below Proposition~\ref{prop:SDE} has a covariance  function satisfying the following probabilistic representation:
\begin{align}
\begin{split}
  {\rm Cov}\big[\zeta_s \Sigma (m_1,m_2);\zeta_t \Sigma ( m_1',m_2')\big]
=\int_0^s\prod_{j=1}^2\mathbf P\left(S_{m_j}\left(\frac{r}{s}\right)=S'_{m'_j}\left(\frac{ r}{t}\right)\right)\d r\label{PB}
\end{split}
\end{align}
for all $(m_1,m_2),(m'_1,m'_2)\in \Bbb Z_+^2$ and $0<s\leq t<\infty$.
\end{prop}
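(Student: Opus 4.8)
The plan is to compute the covariance directly from the explicit formula \eqref{Xt:inftysoln} and then recognize the resulting double time-integral as the probabilistic expression on the right-hand side of \eqref{PB}, using the representation \eqref{etA} of the semigroup by independent binomial sums. First I would use the It\^o isometry: since $\zeta_s(a)=\sum_{b}\int_0^s \e^{[\log(s/r)]A}(a,b)\,\d B_r(b)$ and the $B(b)$'s are independent standard Brownian motions, the covariance of $\zeta_s(a)$ and $\zeta_t(a')$ reduces to
\begin{align*}
{\rm Cov}[\zeta_s(a);\zeta_t(a')]=\sum_{b\in\mathcal T_\infty}\int_0^{s\wedge t}\e^{[\log(s/r)]A}(a,b)\,\e^{[\log(t/r)]A}(a',b)\,\d r,
\end{align*}
where the upper limit is $s\wedge t=s$ under the standing assumption $s\le t$, and where one should briefly note the integral converges (the integrand near $r=0$ is controlled because $\e^{uA}$ are stochastic matrices, as already used in the proof of Proposition~\ref{prop:SDE}). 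So after this step the covariance is $\int_0^s \big(\e^{[\log(s/r)]A}(\e^{[\log(t/r)]A})^\top\big)(a,a')\,\d r$.

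Next I would plug in $a=\Sigma(m_1,m_2)$ and $a'=\Sigma(m_1',m_2')$ and substitute the product formula \eqref{etA}. Writing $p=r/s=\e^{-\log(s/r)}$ and $p'=r/t=\e^{-\log(t/r)}$, formula \eqref{etA} gives $\e^{[\log(s/r)]A}(\Sigma(m_1,m_2),b)=\mathbf P(S_{m_1}(p)=\Delta_1 b)\,\mathbf P(S_{m_2}(p)=\Delta_2 b)$ and similarly for the other factor with $p'$ and $m'$. Summing over $b\in\mathcal T_\infty$ is the same as summing over $\Delta b=(k_1,k_2)\in\Bbb Z_+^2$ since $\Delta:\mathcal T_\infty\to\Bbb Z_+^2$ is a bijection (this is exactly where the change of variables $\Sigma,\Delta$ pays off). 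The summand factorizes across the two coordinates, so
\begin{align*}
\sum_{b}\e^{[\log(s/r)]A}(a,b)\,\e^{[\log(t/r)]A}(a',b)
=\prod_{j=1}^2\sum_{k_j\in\Bbb Z_+}\mathbf P(S_{m_j}(p)=k_j)\,\mathbf P(S'_{m_j'}(p')=k_j)
=\prod_{j=1}^2\mathbf P\big(S_{m_j}(p)=S'_{m_j'}(p')\big),
\end{align*}
using independence of the primed and unprimed families to collapse $\sum_k \mathbf P(S=k)\mathbf P(S'=k)=\mathbf P(S=S')$. Re-inserting $p=r/s$, $p'=r/t$ and integrating over $r\in(0,s)$ yields precisely \eqref{PB}.

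I do not expect a serious obstacle here; the statement is essentially an unpacking of \eqref{Xt:inftysoln} and \eqref{etA}. The only genuine point requiring a line of care is the interchange of the (infinite) sum over $b\in\mathcal T_\infty$ with the time integral and the justification that everything is finite: since all terms are nonnegative (products of probabilities), Tonelli's theorem applies, and finiteness of $\int_0^s\prod_j\mathbf P(S_{m_j}(r/s)=S'_{m_j'}(r/t))\,\d r$ is clear because the integrand is bounded by $1$ on a bounded interval. A secondary bookkeeping point is that the $b$-sum in \eqref{Xt:inftysoln} is over $\mathcal T_L$ for the finite-level SDE but the extended process sums over $\mathcal T_\infty$ (as recorded in the paragraph following Proposition~\ref{prop:SDE}); since $A_L=A\rest(\mathcal T_L\times\mathcal T_L)$ and $\Sigma$ maps into $\mathcal T_\infty$, using $A$ and $\mathcal T_\infty$ throughout is consistent and is what makes the bijection argument with $\Delta$ clean.
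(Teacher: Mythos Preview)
Your proposal is correct and follows essentially the same route as the paper: It\^o isometry, then the semigroup formula \eqref{etA}, then the bijection $\Sigma\leftrightarrow\Delta$ to factorize the sum over $b$ into two independent coordinate sums that collapse to $\mathbf P(S=S')$. Your added remarks on Tonelli and on the $\mathcal T_L$ versus $\mathcal T_\infty$ consistency are accurate and slightly more explicit than the paper's version.
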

\begin{proof}
Recall that for any $L$ and $a\in \mathcal T_L$, $\zeta_t(a)=\sum_{b\in \mathcal T_\infty}\int_{0}^t \e^{[\log(t/r)]A}(a,b)\d B_{r}(b)$. See \eqref{Xt:inftysoln}. Hence, by It\^{o}'s isometry and the bijectivity of $\Sigma:\Bbb Z_+^2\to \mathcal T_\infty$, 
\begin{align}
&\quad   {\rm Cov}\big[\zeta_s \Sigma (m_1,m_2);\zeta_t \Sigma ( m_1',m_2')\big]\notag\\
&=\sum_{n_1,n_2\in \Bbb Z_+}\int_0^s \e^{[\log (s/r)]A}\big(\Sigma (m_1,m_2) ,  \Sigma (n_1,n_2)\big) \e^{[\log (t/r)]A}\big(\Sigma (m_1',m_2'),  \Sigma(n_1,n_2)\big)\d r\notag\\
&=\int_0^s\!\!\!\sum_{n_1,n_2\in \Bbb Z_+}\!\!\!\!\!\mathbf P\left(S_{ m_1}\left(\frac{r}{s}\right)=n_1\right)\mathbf P\left(S_{ m_2}\left(\frac{r}{s}\right)=n_2\right)\mathbf P\left(S_{ m_1'}\left(\frac{r}{t}\right)=n_1\right)\mathbf P\left(S_{ m_2'}\left(\frac{r}{t}\right)=n_2\right)\d r,\notag
\end{align}
where the last equality follows from (\ref{etA}) and the identity $\Sigma\Delta=\Id$. Summing over $n_1,n_2$ in the last equality proves \eqref{PB}.
\end{proof}

\section{Rescaled limit of the covariance function}\label{sec:conv}
In this section, we prove convergence of the covariance function in (\ref{PB}) and quantify the error bounds for the forthcoming applications to tightness of the fluctuation of the Whittaker SDEs.

First, the rescaling can be chosen from the central limit theorem if we consider the probabilities in (\ref{PB}). Write
\begin{align}\label{P:alter}
\mathbf P\left(S_{m_j}\left(\frac{r}{s}\right)=S'_{m'_j}\left(\frac{ r}{t}\right)\right)=\mathbf P\left(\overline{S}_{m_j}\left(\frac{r}{s}\right)-\overline{S}'_{m'_j}\left(\frac{ r}{t}\right)=-m_j\left(\frac{r}{s}\right)+m_j'\left(\frac{r}{t}\right)\right),
\end{align}
with the shorthand notation
\[
\overline{W}=W-\bE[W].
\]
Then a nontrivial limit of the random variable in \eqref{P:alter} follows if we set $m_j$ and $m_j'$ to be
\begin{align}\label{def:MjM'j}
M_j=M(x_j,s)\quad\&\quad M_j'=M(y_j,t),
\end{align} 
respectively,
where, for a fixed integer $N\geq 1$,
\begin{align}\label{def:Morigin}
M(u,r)=M_N(u,r)\,\defeq\,\left\lfloor Nr+Nr\cdot \frac{u}{N^{1/2}}\right\rfloor,\quad (u,r)\in \R\times\R_+.
\end{align}
Under this setup, the central limit theorem applies to the sequence
\[
\frac{1}{N^{1/2}}\left(S_{M_j}\left(\frac{r}{s}\right)-S'_{M'_j}\left(\frac{ r}{t}\right)\right),\quad N\geq 1,
\]
since, with 
\begin{align} 
\begin{split}
\mu_j(r;N)
&=\frac{M_j(\tfrac{r}{s})-M_j'(\tfrac{r}{t})}{N^{1/2}},\;\;
\sigma_j(r;N)^2=\frac{M_j}{N}\left(\frac{r}{s}\right)\left(1-\frac{r}{s}\right)+\frac{M_j'}{N}\left(\frac{r}{t}\right)\left(1-\frac{r}{t}\right),\label{def:mujsigmaj0}
\end{split}\\
 \mu_j(r)&=\mu(x_j-y_j,r)=(x_j-y_j)r, \;\;
\sigma_j(r)^2= r\left(2-\frac{r}{s}-\frac{ r}{t}\right),\label{def:mujsigmaj}
\end{align}
for $\sigma_j(r;N),\sigma_j(r)\geq 0$, we have
\begin{align*}
\mu_j(r;N)&=\mathbf E\left[\frac{1}{N^{1/2}}\left(S_{M_j}\left(\frac{r}{s}\right)-S'_{M'_j}\left(\frac{ r}{t}\right)\right)\right]\xrightarrow[N\to\infty]{} \mu_j(r),\\
\sigma_j(r;N)^2&={\rm Var}\left[\frac{1}{N^{1/2}}\left(S_{M_j}\left(\frac{r}{s}\right)-S'_{M'_j}\left(\frac{ r}{t}\right)\right)\right]\xrightarrow[N\to\infty]{}\sigma_j(r)^2.
\end{align*}

At the process level, we consider a rescaled version of $\zeta$ (Proposition~\ref{prop:SDE}) defined by
\begin{align}\label{def:zetaN1}
\begin{split}
&\zeta^N(x,s)\;\defeq\;\zeta_{Ns}\Sigma\left(\left\lf Ns+Ns\cdot \frac{x_1}{N^{1/2}}\right\rf,\left\lf Ns+Ns\cdot \frac{x_2}{N^{1/2}}\right\rf\right)
\end{split}
\end{align}
for all $x=(x_1,x_2)\in \R^2$ and $s\in \R_+$ such that $M_1,M_2\geq 0$ (recall \eqref{def:MjM'j}). {\bf We always assume this condition $M_1,M_2\geq 0$ on the space-time points when considering $\zeta^N$.} See also Remark~\ref{rmk:simple}.
Then by Proposition~\ref{prop:covar}, we have
\begin{align}
  {\rm Cov}\big[\zeta^N(x,s);\zeta^N(y,t)\big]&
=\int_0^{Ns}\prod_{j=1}^2\mathbf P\left(S_{M_j}\left(\frac{r}{Ns}\right)=S'_{M'_j}\left(\frac{ r}{Nt}\right)\right)\d r\label{obj1}\\
&=\int_0^{s}\prod_{j=1}^2N^{1/2}\mathbf P\left(S_{M_j}\left(\frac{r}{s}\right)=S'_{M'_j}\left(\frac{ r}{t}\right)\right)\d r\label{obj2}
\end{align}
for $x,y\in \R^2$ and $0\leq s\leq t<\infty$. Notice that the integral representation in \eqref{obj2} corresponds to the ``ideal case'' discussed above for \eqref{P:alter}. If $r,s,t$ are fixed such that $\sigma_j(r)\neq 0$, the above view for the probability in \eqref{P:alter}  applies to the integrand in the form of the local central limit theorem. But due to the integral nature of the covariance function, we cannot neglect the contribution of $r\approx 0$ as $N\to\infty$. This is where the local central limit theorem can break down. A similar issue arises if $r\approx s= t$ under \eqref{obj2}. Poisson approximations will apply over these two ranges of $r$. The integral representation in \eqref{obj1} is suitable for this purpose. Nevertheless, the central issue is the slow decorrelation.

\begin{rmk}[Edwards--Wilkinson scaling]\label{rmk:EW}
In terms of the above approximations, the covariance function in \eqref{obj1} has a natural generalization in other spatial dimensions $d$:
\[
N^{\frac{d-2}{2}}\int_0^{Ns}\prod_{j=1}^d\mathbf P\left(S_{M_j}\left(\frac{r}{Ns}\right)=S'_{M'_j}\left(\frac{ r}{Nt}\right)\right)\d r=\int_0^{s}\prod_{j=1}^dN^{1/2}\mathbf P\left(S_{M_j}\left(\frac{r}{s}\right)=S'_{M'_j}\left(\frac{ r}{t}\right)\right)\d r.
\] 
That is, introducing the factor $N^{\frac{d-2}{2}}$ enables the application of the local central limit theorem. Accordingly, one could consider generalizations of the Whittaker SDEs to other spatial dimensions, starting with multi-dimensional generalizations of the triangular lattices $\mathcal T_N$. We do not pursue these generalizations here. On the other hand, the foregoing display suggests that the rescaled process $\zeta^N$ defined in \eqref{def:zetaN1} can be seen as 
\[
\zeta^N(x,s)=N^{\frac{d-2}{4}}\zeta_{Ns}\Sigma\left(\left\lf Ns+Ns\cdot \frac{x_1}{N^{1/2}}\right\rf,\left\lf Ns+Ns\cdot \frac{x_2}{N^{1/2}}\right\rf,\cdots,\left\lf Ns+Ns\cdot \frac{x_d}{N^{1/2}}\right\rf\right),
\]
with $d=2$. From this aspect, $\zeta^N$ is subject to the Edwards--Wilkinson scaling exponents for interface growth models. 
\hfill $\blacksquare$
\end{rmk}

The following theorem summarizes the results of this section. Here and in what follows, we write $V(\lambda)$ and $V'(\lambda')$ for independent Poisson random variables with means $\lambda$ and $\lambda'$, respectively. Also, $(Q_t)$ stands for the probability semigroup of the two-dimensional standard Brownian motion.

\begin{thm}\label{thm:covar}
Let $\zeta^N$ be defined by \eqref{def:zetaN1}.
For all $0<s\leq t<\infty$ and $x,y\in \R^2$ such that either $s<t$ or $x\neq y$, it holds that
\begin{align}\label{covar:zeta}
\begin{split}
&\quad \lim_{N\to\infty} \Big(    {\rm Cov}\big[\zeta^N(x,s);\zeta^N(y,t)\big]
-\mathfrak C_N\Big)\\
&=\frac{1}{2\pi}\int_{\R^2}\int_{\R^2}Q_{s^{-1}}(y',x)\big(-\ln |y'-y''|\big)Q_{t^{-1}}(y'',y)\d y'\d y''\\
&\quad +\int_0^{t^{-1}} \int_{\R^2}Q_{s^{-1}-r}(z,x)Q_{t^{-1}-r}(z,y)\d z \d r,
\end{split}
\end{align}
where $\mathfrak C_N$ is given by 
\begin{align}
\begin{split}\label{def:CN}
\mathfrak C_N\,\defeq\,\mathfrak C_1+\frac{\ln N}{4\pi};\quad \mathfrak C_1\,\defeq\int_0^{\infty}\Big(\mathbf P\big(V(r)=V'(r)\big)^2+\frac{\e^{-\frac{1}{4r}}-2\1_{[1,\infty)}(r)}{4\pi r}\Big)\d r.
\end{split}
\end{align}
\end{thm}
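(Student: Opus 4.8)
The plan is to analyze the integral representation in \eqref{obj1} and split the domain of integration $[0,Ns]$ into three regimes governed by the parameter $r$: a ``small'' regime $0\leq r\leq R_N$, an ``intermediate'' regime $R_N\leq r\leq Ns-R_N'$, and a ``near-coincidence'' regime $Ns-R_N'\leq r\leq Ns$ (the last only being relevant when $s=t$), where $R_N,R_N'$ are cutoffs growing like a small power of $N$ or like $\ln N$ raised to some power; quantifying these cutoffs is part of Assumption~\ref{ass:sNtN} and Definition~\ref{def:cond} referred to in the text. In the intermediate regime, the rescaled summand $\frac{1}{N^{1/2}}(S_{M_j}(\tfrac{r}{Ns})-S'_{M'_j}(\tfrac{r}{Nt}))$ has a variance $\sigma_j(r;N)^2$ bounded away from $0$, so an integrated local central limit theorem applies: $N^{1/2}\mathbf P(S_{M_j}(\cdot)=S'_{M'_j}(\cdot))$ is uniformly close to the Gaussian density $\tfrac{1}{\sqrt{2\pi}\,\sigma_j(r)}\exp(-\mu_j(r)^2/2\sigma_j(r)^2)$ evaluated at the appropriate point, with error terms (from Esseen-type bounds) that are summable against $\d r$ over the intermediate range. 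Multiplying the two $j=1,2$ factors and integrating over $r$, then changing variables $r=Ns\rho$ and recognizing the resulting Gaussian integral as a Chapman--Kolmogorov convolution of the heat kernels $Q$, produces the second term $\int_0^{t^{-1}}\int_{\R^2}Q_{s^{-1}-r}(z,x)Q_{t^{-1}-r}(z,y)\,\d z\,\d r$ on the right side of \eqref{covar:zeta}, up to the logarithmic divergence that gets absorbed into $\mathfrak C_N$.

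In the small regime $0\leq r\leq R_N$, the success probabilities $r/(Ns)$ and $r/(Nt)$ are of order $1/N$ while $M_j,M_j'$ are of order $N$, so $S_{M_j}(\tfrac{r}{Ns})$ converges to a Poisson variable $V(\lambda_j)$ with $\lambda_j\approx r/s$ and similarly $S'_{M'_j}(\tfrac{r}{Nt})\to V'(\lambda_j')$ with $\lambda_j'\approx r/t$; here one invokes the Stein--Chen bound \cite{BE:Poisson,BH} to control the total-variation distance uniformly in the relevant range of parameters, which is what makes the approximation work after the parameters are integrated out. Thus $\int_0^{R_N}\prod_{j=1}^2\mathbf P(S_{M_j}(\cdot)=S'_{M'_j}(\cdot))\,\d r$ is close to $\int_0^{R_N}\mathbf P(V(r/s)=V'(r/t))\mathbf P(V(r/s)=V'(r/t))\,\d r$ — but wait, more carefully the two coordinates give $\prod_{j=1}^2\mathbf P(V(r/s)=V'(r/t))=\mathbf P(V(r/s)=V'(r/t))^2$ only when $x,y$ play no role at this scale, which is indeed the case since the $x_j/N^{1/2}$ corrections vanish in the limit. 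When $s=t$ this integrand is $\mathbf P(V(r)=V'(r))^2$, which decays like $1/(4\pi r)$ for large $r$ (Skellam/Bessel asymptotics), so it is \emph{not} integrable at $\infty$; this is precisely why one subtracts $\tfrac{2\cdot\1_{[1,\infty)}(r)}{4\pi r}$ and why the divergent constant $\tfrac{\ln N}{4\pi}$ appears in $\mathfrak C_N$. When $s<t$ the Poisson integrand decays fast enough and there is no divergence from this end, but the structure of the argument is the same; the finite part of the small-$r$ contribution, together with the boundary matching to the intermediate regime, is collected into $\mathfrak C_1$. The near-coincidence regime $r\approx Ns$ is handled symmetrically by a time-reversal: near $r=Ns$ the binomials have success probabilities near $1$, so their complements are small-parameter binomials that are again Poisson-approximated, contributing (when $s=t$) another copy of the divergent $\tfrac{1}{4\pi r}$-type term already accounted for in $\mathfrak C_N$.

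The crux — and the main obstacle — is the \emph{joining} of the Poisson regime and the Gaussian regime across the intermediate scales $1\ll r\ll Ns$, i.e., showing that the finite part extracted from the Poisson side (the $\mathfrak C_1$ integral with its $1/(4\pi r)$ subtraction) matches up continuously with the tail of the Gaussian-regime integral, so that the sum of the three regime contributions has a genuine limit after subtracting exactly $\mathfrak C_N$ and no more. Concretely, one must show that for $1\ll r\ll N$ the integrated Poisson approximation and the integrated local CLT agree to leading order — both give $\sim 1/(4\pi r)$ in the coincident case — and that the $O(\ln R_N)$ and $O(\ln(N/R_N))$ pieces from the two sides assemble into exactly $\tfrac{\ln N}{4\pi}$ with an $N$-independent remainder. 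This requires careful bookkeeping of the error terms in both approximations as functions of $r$ (hence the need to quantify the $\ll$-bounds), and is where the bulk of the technical estimates of Section~\ref{sec:conv} reside. Finally, the first (log-correlated) term on the right of \eqref{covar:zeta} emerges from the $N$-independent part of the combined small-$r$ and large-$r$ Poisson contributions: after the $1/(4\pi r)$ subtraction the remaining $r$-integral, re-expressed in the rescaled spatial variables via the heat kernels $Q_{s^{-1}}$ and $Q_{t^{-1}}$ acting on the initial data, produces the Gaussian convolution of $-\ln|y'-y''|$, which is exactly the covariance of the massless $2$D log-correlated free field serving as the initial condition (Section~\ref{sec:id}).
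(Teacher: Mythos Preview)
Your overall architecture matches the paper: split $[0,Ns]$ into Poisson-dominated ends and a Gaussian-dominated middle, use Stein--Chen for the Poisson side and a local CLT for the middle, then show the pieces glue across $1\ll r\ll N$ with exactly $\mathfrak C_N$ left over. But your attribution of the limit terms to the regimes is wrong in a way that would derail the computation. The log-correlated term does \emph{not} come from the Poisson contributions; it comes from the \emph{normal} regime. In the paper (Proposition~\ref{prop:normal2}, Lemma~\ref{lem:Tglue}) one replaces $\mathfrak b_N$ by the Gaussian $\prod_j\g(\sigma_j(r)^2;\mu_j(r))$, changes variables $r'=r^{-1}-s^{-1}$, and obtains $\int_0^{s^{-1}(\ell_N^{-1}-1)}Q_{2r'+s^{-1}-t^{-1}}(x,y)\,\d r'$. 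Splitting this at $r'=t^{-1}$, the part $r'\in[0,t^{-1}]$ gives the heat-kernel integral you correctly identify, while the part $r'>t^{-1}$ gives, via Chapman--Kolmogorov and the expansion $\int_0^T Q_{2r}(y',y'')\d r=\frac{\ln T}{4\pi}-\frac{1}{2\pi}\ln|y'-y''|+o(1)$, both the log term and a $\frac{\ln(\ell_N^{-1})}{4\pi}$ divergence that combines with the Poisson-side $\frac{\ln(Ns\ell_N)}{4\pi}$ to produce $\frac{\ln N}{4\pi}$. The small-$r$ Poisson regime contributes only the constant $\int_0^\infty(\mathbf P(V(r)=V'(r))^2-\frac{\1_{[1,\infty)}(r)}{4\pi r})\d r$, one of the two pieces of $\mathfrak C_1$; the other, $\int_0^\infty\frac{\e^{-1/(4v)}-\1_{[1,\infty)}(v)}{4\pi v}\d v$, is a residue of the normal computation.

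Two of your endpoint claims are also incorrect. First, when $s<t$ the small-$r$ Poisson integrand is still $\mathbf P(V(r)=V'(r))^2\sim\frac{1}{4\pi r}$: the Poisson means are $M_jr/(Ns)\to r$ and $M_j'r/(Nt)\to r$ regardless of $s$ versus $t$, so the divergence is present for all $s\leq t$ (Theorem~\ref{thm:poisson}~(1$^\circ$)), contrary to your ``decays fast enough''. Second, the near-coincidence regime $r\approx Ns$ when $s=t$ does \emph{not} give another divergent copy. After the reflection $p\mapsto 1-p$ the event becomes $\{V(\lambda)=V'(\lambda')+M_j-M_j'\}$ with $|M_j-M_j'|\asymp N^{1/2}|x_j-y_j|\to\infty$ while $\lambda,\lambda'=o(N^{1/2})$, so the probabilities vanish and $\int_{Nsr_N}^{Ns}\mathfrak b^N\d r\to 0$ (Theorem~\ref{thm:poisson}~(2$^\circ$)). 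This is exactly why the hypothesis $x\neq y$ is needed when $s=t$, and why the factor $2$ in front of $\1_{[1,\infty)}(r)$ in $\mathfrak C_1$ comes from the two summands $\mathbf P(V=V')^2$ and $\e^{-1/(4r)}/(4\pi r)$, not from two Poisson endpoints.
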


Theorem~\ref{thm:covar}  combines the more detailed results, Theorems~\ref{thm:poisson} and~\ref{thm:normal}, to be proven in the rest of this section.

\begin{rmk}\label{rmk:skellam}
(1$^\circ$). For $\lambda,\lambda'\in (0,\infty)$, $V(\lambda)-V'(\lambda')$ is distributed as the {\bf Skellam distribution} \cite{Skellam}: with the modified Bessel function of the first kind denoted by $I_k$,
\[
\mathbf P\left(V(\lambda)-V'(\lambda')=k\right)=\e^{-(\lambda+\lambda')}\left(\frac{\lambda}{\lambda'}\right)^{k/2}I_k(2\sqrt{\lambda\lambda'}),\quad k\in \Bbb Z.
\]

\noindent {\rm (2$^\circ$).} We recall once again that various rescaled limits of the covariance functions are already obtained in \cite{BCF}. See \cite[Theorem~5.9, Proposition 5.28, and Proposition~5.29]{BCF} in particular. Whereas the first two results of this list are stated as complex contour integrals, \cite[Proposition~5.29]{BCF} appears to be similar to Theorem~\ref{thm:covar}. These two results can be compared as follows.
By a scaling of time and \eqref{PB}, that limit from \cite{BCF} can be restated as 
\begin{align}
\begin{split}
\lim_{N\to\infty}\left(\Cov[U_N;V_N]-\frac{\ln (N/d)}{\pi d\sqrt{a(1-a)}}\right)
=\int_{\R^2}\frac{Q_{(t-1)/t}(0,y)}{\pi d\sqrt{a(1-a)}} (-\ln |x-y|^2)\d y
\end{split}\label{BCF}
\end{align}
for fixed $d>0$, $a\in (0,1)$, $t>1$ and $x\in \R^2$, where we use the shorthand notation:
\begin{align*}
U_N&=\zeta_{Nt}\big(\lfloor(1-a)dNt\rfloor, \lfloor dNt\rfloor\big),\\
V_N&=\zeta_N\big(\lfloor 
(1-a)dN+\sqrt{(1-a)dN}\cdot x_1
\rfloor, \lfloor dN+\sqrt{(1-a)dN}\cdot x_1+\sqrt{adN}\cdot x_2 \rfloor 
\big).
\end{align*}
Compared to \eqref{def:zetaN1} (where the map $\Sigma$ is used), the lattice points defining $U_N$ do not include space and terms of the order $\mathcal O(N^{1/2})$, among several other differences. See also \cite[Remark 5.30, and Corollary~5.31]{BCF} for results related to  \cite[Proposition~5.29]{BCF}.
\hfill $\blacksquare$
\end{rmk}

Recall the notation $M(x_j,s)$ defined in \eqref{def:Morigin}.
In the rest of this paper, we mostly write
\begin{align}\label{def:Bkernel}
\begin{split}
\mathfrak b^{N,j}(x_j,y_j;r,u,v)&\;\defeq\; \mathbf P\left(S_{M(x_j,u)}\left(\frac{r}{Nu}\right)=S'_{M(y_j,v)}\left(\frac{r}{Nv}\right)\right), \quad \mathfrak b^N=\prod_{j=1}^2\mathfrak b^{N,j};\\
\mathfrak b_{N,j}(x_j,y_j;r,u,v)&\;\defeq\; N^{1/2}\mathbf P\left(S_{M(x_j,u)}\left(\frac{r}{u}\right)=S'_{M(y_j,v)}\left(\frac{r}{v}\right)\right), \quad \mathfrak b_N=\prod_{j=1}^2\mathfrak b_{N,j}.
\end{split}
\end{align}
(These probabilities involve both $S$ and $S'$, not just one binomial variable.)

 For the proofs of Theorems~\ref{thm:poisson} and~\ref{thm:normal}, we apply two schemes of integration which formalize the consideration below \eqref{obj2}: For $0<\ell_N<r_N<1$ and $0<\tau_N<1$, we subdivide $r\in [0,s]$ into the following three intervals: 
\begin{align}\label{def:range1}
r\in [0,s\ell_N], \; r\in [s\ell_N,sr_N],\;\mbox{ and }r\in [sr_N,s]\quad\mbox{ if $0\leq t-s\leq \tau_N$,}
\end{align}
or into the following two intervals:
\begin{align}\label{def:range2}
r\in [0,s\ell_N]\quad\mbox{and} \quad r\in [s\ell_N,s]\quad\mbox{ if $ t-s>\tau_N$.}
\end{align} 
We use the notation in \eqref{def:Bkernel}. Then under \eqref{def:range1}, we  work with the following decomposition: 
\begin{align}
\begin{split}\label{tauN1}
 {\rm Cov}\big[\zeta^N(x,s);\zeta^N(y,t)\big]
&=\int_0^{Ns\ell_N}\mathfrak b^N(x,y;r,s,t)\d r+\int_{s\ell_N}^{sr_N}\mathfrak b_N(x,y;r,s,t)\d r\\
&\quad +\int_{Nsr_N}^{Ns}\mathfrak b^N(x,y;r,s,t)\d r.
\end{split}
\end{align}
The decomposition corresponding to (\ref{def:range2}) is
\begin{align}
\begin{split}\label{tauN2}
  {\rm Cov}\big[\zeta^N(x,s);\zeta^N(y,t)\big]
&=\int_0^{Ns\ell_N}\mathfrak b^N(x,y;r,s,t)\d r
 +\int_{s\ell_N}^{s}\mathfrak b_N(x,y;r,s,t)\d r.
\end{split}
\end{align}

\begin{ass}\label{ass:sNtN}
Fix $\eta\in (0,1/2)$.
For all integers $N\geq 16$, set $\ell_N=1-r_N=\tau_N=N^{-(1/2+\eta)}$.\mbox{\quad}
\hfill $\blacksquare$ 
\end{ass}

Lastly, we introduce some conditions on $x_1,x_2,y_1,y_2,s,t,N$ for the forthcoming proofs.

\begin{defi}\label{def:cond}
Fix $0<T_0<1<T_1<\infty$ and let $\eta\in (0,1/2)$ be the constant fixed in Assumption~\ref{ass:sNtN}. The {\bf primary condition (over $\bs [\bs T_{\bs 0}\bs ,{\bs T}_{\bs 1}\bs ]$)} refers to the following condition:
\begin{align}\label{prim cond}
\left\{
\begin{array}{rl}
(x_1,x_2,y_1,y_2)&\hspace{-.3cm}\mbox{\rm :\;\;} x_1,x_2,y_1,y_2\in [-\tfrac{1}{2}N^{\eta},\tfrac{1}{2}N^{\eta}];\\
(s,t)&\hspace{-.3cm}\mbox{\rm :\;\;} T_0\leq s\leq t\leq T_1;  \\
N&\hspace{-.3cm}\mbox{\rm :\;\;} \Bbb N\ni N\geq 16,\;\lfloor \tfrac{1}{2}T_0N^{1/2-\eta}\rfloor \geq 1.
\end{array}
\right.
\end{align}
The {\bf secondary condition (over $\bs [\bs T_{\bs 0}\bs ,{\bs T}_{\bs 1}\bs ]$)} refers to the following condition:
\begin{align}
\begin{split}
\label{sec cond}
(x_1,x_2,y_1,y_2)\mbox{\rm :\;\;}
|x_1-y_1|\wedge |x_2-y_2|\geq \textstyle \frac{4}{T_0}N^{-1/2}.
\end{split}
\end{align}
\end{defi}

\begin{rmk}\label{rmk:simple}
Given $\eta\in (0,1/2)$, the primary condition has two simple implications: First, $\lfloor \tfrac{1}{2}T_0N\rfloor\leq M_j,M_j'\leq \lfloor \tfrac{3}{2}T_1N\rfloor  $, for $M_j,M_j'$ defined in \eqref{def:MjM'j}. These bounds follow from the choice of $x_j,y_j,s,t$.  Second, the lower bound of $M_j,M_j'$ and the choice of $N$ imply $M_j,M_j'\geq 1$. \hfill $\blacksquare$ \end{rmk} 
 
The secondary condition will be used only in the proofs of Proposition~\ref{prop:poisson2} {\rm (4$^\circ$)} and Theorem~\ref{thm:covar1}. (The proof of Proposition~\ref{prop:IN} uses a variation of this condition.) {\bf From now on, whenever either of the two conditions in Definition~\ref{def:cond} is in use, Assumption~\ref{ass:sNtN} is imposed automatically.}

\subsection{Integrated Poisson approximations}\label{sec:poisson}
In this subsection, we study the integrals 
\begin{align}
\int_0^{Ns\ell_N}\mathfrak b^N(x,y;r,s,t)\d r,\quad \int_{Ns r_N}^{Ns}\mathfrak b^N(x,y;r,s,t)\d r
 \label{Poisson:goal2222}
\end{align}
that appear in (\ref{tauN1}) and (\ref{tauN2}), where $\mathfrak b^N$ is defined in \eqref{def:Bkernel}. From now on, we begin to use the convention for constants specified at the end of Section~\ref{sec:intro}.

\begin{lem}\label{lem:poisson1}
Fix $0<T_0<1<T_1<\infty$ and assume the primary condition \eqref{prim cond}. Then for any $L,R\in (0,s)$, we have
\begin{align}
&\quad \int_0^{NL}\Bigg|\mathfrak b^N(x,y;r,s,t)
-\prod_{j=1}^2\mathbf P\left(V\left(\frac{M_jr}{Ns}\right)=V'\left(\frac{M_j' r}{Nt}\right)\right)\Bigg|\d r
\less NL^2\left(\frac{1}{s}+\frac{1}{t}\right)\label{Poisson1}
\end{align}
and
\begin{align}
&\quad \int_{NR}^{Ns}\Bigg|\mathfrak b^N(x,y;r,s,t)-\prod_{j=1}^2\mathbf P\Bigg(V\Big(\frac{M_j\big(Ns-r\big)}{Ns}\Big)=V'\Big(\frac{M_j'\big(Nt-r\big)}{Nt}\Big)+M_j-M_j'\Bigg)\Bigg|\d r\notag\\
\begin{split}\label{Poisson2}
&\less 
N(s-R)^2\left(\frac{1}{s}+\frac{1}{t}\right)+N(s-R)\left(\frac{t-s}{t}\right).
\end{split}
\end{align}
\end{lem}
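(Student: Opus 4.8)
The statement is a quantitative Poisson-approximation bound for the products $\mathfrak b^N=\prod_j \mathfrak b^{N,j}$ of two ``collision'' probabilities. The key structural observation is that each factor $\mathfrak b^{N,j}(x_j,y_j;r,s,t)=\mathbf P(S_{M_j}(\tfrac{r}{Ns})=S'_{M_j'}(\tfrac{r}{Nt}))$ is the collision probability of two independent \emph{sums of independent indicators}: by \eqref{def:xiS}, $S_{M_j}(\tfrac{r}{Ns})=\sum_{n=1}^{M_j}\1_{(-\ln(r/Ns),\infty)}(\mathbf e_n)$, a sum of $M_j$ i.i.d.\ Bernoulli variables each with small success probability $\tfrac{r}{Ns}$ when $r\leq NL$ with $L$ small, and similarly for $S'_{M_j'}$ with success probability $\tfrac{r}{Nt}$. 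The plan is therefore: (i) replace each binomial by a Poisson via a Stein--Chen / Le Cam total-variation bound, controlling the error in terms of the sum of squares of the success probabilities; (ii) use the elementary fact that a collision probability $\mathbf P(U=V')$ is $1$-Lipschitz in total variation in each argument, so that replacing $S_{M_j}$ by $V(\tfrac{M_jr}{Ns})$ and $S'_{M_j'}$ by $V'(\tfrac{M_j'r}{Nt})$ changes $\mathfrak b^{N,j}$ by at most the sum of the two TV errors; (iii) use $|\prod_j a_j-\prod_j b_j|\le \sum_j|a_j-b_j|$ for numbers in $[0,1]$ to pass from the factors to the product $\mathfrak b^N$; (iv) integrate the resulting pointwise bound over $r\in[0,NL]$.

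For step (i): the Le Cam inequality (or the Stein--Chen bound cited as \cite{BE:Poisson,BH}) gives $d_{\mathrm{TV}}\big(S_m(p),V(mp)\big)\le m p^2$. Here with $p=\tfrac{r}{Ns}$ and $m=M_j\le \lfloor \tfrac32 T_1 N\rfloor\asymp N$ (Remark~\ref{rmk:simple}), this is $\lesssim N\cdot (\tfrac{r}{Ns})^2=\tfrac{r^2}{Ns^2}$; similarly the $S'$-factor contributes $\lesssim \tfrac{r^2}{Nt^2}$. Summing over the two values of $j$ and the two sums gives a pointwise bound on $|\mathfrak b^N-\prod_j\mathbf P(V(\tfrac{M_jr}{Ns})=V'(\tfrac{M_j'r}{Nt}))|$ of order $\tfrac{r^2}{N}(\tfrac1{s^2}+\tfrac1{t^2})$, and since $s,t\ge T_0$ this is $\lesssim \tfrac{r^2}{N}(\tfrac1s+\tfrac1t)$ up to absorbing a $T_0^{-1}$ into the implied constant. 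Integrating $\int_0^{NL}\tfrac{r^2}{N}\,\d r = \tfrac{(NL)^3}{3N}\asymp N^2L^3$ — wait, one must be careful: the claimed right-hand side is $NL^2(\tfrac1s+\tfrac1t)$, not $N^2L^3$. The resolution is that when $r\le NL$, the success probability $\tfrac{r}{Ns}\le \tfrac{L}{s}$ is already small, so one bounds $\tfrac{r^2}{Ns^2}\le \tfrac{r}{Ns}\cdot\tfrac{NL}{Ns}\cdot\text{(something)}$; more directly, $\tfrac{r^2}{Ns^2}=\tfrac{r}{s}\cdot\tfrac{r}{Ns}\le \tfrac{r}{s}\cdot\tfrac{L}{s}$, and $\int_0^{NL}\tfrac{r}{s}\,\d r\cdot\tfrac{L}{s}=\tfrac{(NL)^2}{2s}\cdot\tfrac{L}{s}$ — still cubic. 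The correct route: use instead the cruder bound $d_{\mathrm{TV}}(S_m(p),V(mp))\le mp^2\le (mp)\cdot p$, and note $mp=\mathbf E[S_m(p)]=\tfrac{M_jr}{Ns}\asymp \tfrac{r}{s}$ while the second factor $p=\tfrac{r}{Ns}\le \tfrac{L}{s}$; so the per-$r$ error is $\lesssim \tfrac{r}{s}\cdot\tfrac{L}{s}$, and integrating $\int_0^{NL}\tfrac{r}{s}\,\d r=\tfrac{(NL)^2}{2s}$ gives $\lesssim \tfrac{(NL)^2}{s}\cdot\tfrac{L}{s}$; this is $N^2L^3/s^2$, which when $NL^2\gg$ matches $NL^2/s$ only if $NL\lesssim 1$. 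Since in the application $L=\ell_N=N^{-(1/2+\eta)}$ so $NL=N^{1/2-\eta}\to\infty$, the intended bound $NL^2(\tfrac1s+\tfrac1t)$ must come from \emph{not} extracting all the smallness: simply $d_{\mathrm{TV}}\le mp^2\asymp \tfrac{r^2}{Ns^2}$, integrate $\int_0^{NL} \tfrac{r^2}{Ns^2}\d r \asymp \tfrac{N^2L^3}{s^2}=NL\cdot\tfrac{NL^2}{s^2}$; this is $NL^2/s^2$ times $NL$, so the claimed bound is only consistent if $NL\le C$. Hence I expect the actual argument uses a \emph{tighter} per-$r$ Poisson bound, e.g.\ the second-order/signed-measure Stein estimate giving error $\lesssim mp^2\cdot p=mp^3$, or exploits that one only needs the collision probability (a bilinear quantity) rather than full TV, so that cross-cancellation yields an extra factor. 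This matching of exponents is the main obstacle.

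\textbf{Main obstacle.} The hard part is \emph{not} the Poisson replacement itself but getting the integrated error down to exactly $NL^2(\tfrac1s+\tfrac1t)$ and, for \eqref{Poisson2}, to $N(s-R)^2(\tfrac1s+\tfrac1t)+N(s-R)\tfrac{t-s}{t}$. For the second bound one works near $r=Ns$: writing $S_{M_j}(\tfrac{r}{Ns})=M_j-\widetilde S_{M_j}(1-\tfrac{r}{Ns})$ where $\widetilde S$ counts the \emph{dead} individuals (a sum of $M_j$ indicators with small success probability $1-\tfrac{r}{Ns}\le \tfrac{s-R}{s}$ when $r\ge NR$), one reduces to the same Poisson-of-small-$p$ situation but now centred by the deterministic offset $M_j-M_j'$; the extra term $N(s-R)\tfrac{t-s}{t}$ accounts for the discrepancy between $1-\tfrac{r}{Ns}$ and $1-\tfrac{r}{Nt}$ (i.e.\ between the two means $M_j(1-\tfrac{r}{Ns})$ and the Poisson parameter used) integrated over an interval of length $N(s-R)$. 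I would structure the proof of \eqref{Poisson2} by this time-reversal substitution $r\mapsto Ns-r$, then mirror the proof of \eqref{Poisson1} verbatim, paying attention to the asymmetry $s\neq t$: the two binomials now have slightly different ``small'' parameters $1-\tfrac{r}{Ns}$ vs.\ $1-\tfrac{r}{Nt}$, whose difference is $\tfrac{r(t-s)}{Nst}\lesssim \tfrac{t-s}{t}$, producing the second error term upon integration. Throughout, the implied constants are allowed to depend on $T_0,T_1$ per the paper's convention, which absorbs the various $s^{-1},t^{-1}\le T_0^{-1}$ factors and lets one write the clean form $NL^2(\tfrac1s+\tfrac1t)$.
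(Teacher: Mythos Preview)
Your overall strategy is exactly the paper's: Poisson approximation in total variation for each binomial, then the product rule $|XY-AB|\le |X-A|+|Y-B|$ for numbers in $[0,1]$, then integrate; and for \eqref{Poisson2} use the reflection $S_m(p)\stackrel{d}{=}m-S_m(1-p)$ and repeat. Your time-reversal analysis and the origin of the extra term $N(s-R)\tfrac{t-s}{t}$ are correct.

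The gap is precisely at your ``main obstacle'': you are using the wrong Poisson bound. Le~Cam gives $d_{\mathrm{TV}}(S_m(p),V(mp))\le mp^2$, which here is $\asymp r^2/(Ns^2)$ and, as you computed, integrates to the cubic $N^2L^3/s^2$. The paper instead uses the Barbour--Hall/Stein--Chen bound
\[
d_{\mathrm{TV}}\big(S_m(p),V(mp)\big)\;\le\;\frac{1-e^{-mp}}{mp}\cdot mp^2\;=\;(1-e^{-mp})\,p\;\le\;p,
\]
which is sharper than Le~Cam by exactly the factor $(1-e^{-mp})/(mp)$ you are missing. Combined with the product-of-measures bound $d_{\mathrm{TV}}(\mu_1\otimes\mu_2,\nu_1\otimes\nu_2)\le d_{\mathrm{TV}}(\mu_1,\nu_1)+d_{\mathrm{TV}}(\mu_2,\nu_2)$, this gives the pointwise estimate
\[
\bigl|\mathfrak b^N(x,y;r,s,t)-\textstyle\prod_j\mathbf P(V=V')\bigr|\;\lesssim\;\frac{r}{Ns}+\frac{r}{Nt},
\]
and now $\int_0^{NL}\tfrac{r}{N}\,\d r=\tfrac{NL^2}{2}$ yields \eqref{Poisson1} directly. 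For \eqref{Poisson2}, after reflection the per-$r$ error is $\lesssim \tfrac{Ns-r}{Ns}+\tfrac{Nt-r}{Nt}$; substituting $r\mapsto Ns-r$ and writing $\tfrac{Nt-Ns+r}{Nt}=\tfrac{t-s}{t}+\tfrac{r}{Nt}$ gives both terms on the right of \eqref{Poisson2}. No bilinear cancellation or higher-order Stein expansion is needed.
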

\begin{proof}
We state some preliminary results first. Write $d_{\TV}$ for the total variance distance of probability measures defined on the same space. The central tool of this proof is the following bound for Poisson approximations from \cite[Theorem~1]{BH}: for independent Bernoulli random variables $\beta_n$ with $\bE[\beta_n]=p_n$, 
\begin{align}\label{PA:Stein}
d_{\rm TV}\Bigg(\bP\Bigg(\sum_{n=1}^m\beta_n\in \,\cdot\,\Bigg),\bP\Bigg( V\Bigg(\sum_{n=1}^m p_n\Bigg)\in \,\cdot\,\Bigg)\Bigg)\leq \left(\frac{1-\exp\{-\sum_{n=1}^m p_n\}}{\sum_{n=1}^m p_n} \right)\sum_{n=1}^m p_n^2.
\end{align}
See also \cite{BE:Poisson}. We only use the particular case that $p_n=p$ for all $n$, for which the bound is reduced to $(1-\e^{-mp})p$ and so can be bounded by $p$. Also, we recall the following standard result: for probability distributions $\mu_1,\mu_2,\nu_1,\nu_2$ on $\Z$,
\begin{align}\label{dTV:mu}
d_{\TV}(\mu_1\otimes \mu_2,\nu_1\otimes \nu_2)\leq d_{\TV}(\mu_1,\nu_1)+d_{\TV}(\mu_2,\nu_2)
\end{align}
(e.g. \cite[Lemma~3.6.5 on p.147]{Durrett}). See \cite[Proposition~2.3]{CK} for an improved bound.

We are ready to prove (\ref{Poisson1}). Recall that $d_{\rm TV}(\mu,\nu)=\tfrac{1}{2}\sup|\int h\d \mu-\int h\d \nu|$ for probability measures $\mu,\nu$ on $\Bbb Z$, where $h$ ranges over all functions such that $\|h\|_\infty\leq 1$ \cite[(7.2) on p.221]{CGS}. By (\ref{PA:Stein}) and (\ref{dTV:mu}), for all $m,m'\in \Bbb N$, $n\in \Bbb Z$, and $p,p'\in (0,1)$, it holds that
\begin{align}
\left|\mathbf P\big(S_{m}(p)=S'_{m'}(p')+n\big)-\mathbf P\big(V(mp\big)=V'(m'p')+n\big)\right|
&\leq \frac{p+p'}{2}.\label{SM:poisson}
\end{align}
The foregoing inequality and the discrete product rule
\begin{align}\label{XYAB}
XY-AB=(X-A)(Y-B)+(X-A)B+(Y-B)A
\end{align} 
imply that 
\begin{align*}
&\quad \int_0^{NL}\left|\prod_{j=1}^2\mathbf P\left(S_{M_j}\left(\frac{r}{Ns}\right)=S'_{M'_j}\left(\frac{ r}{Nt}\right)\right)-\prod_{j=1}^2\mathbf P\left(V\left(\frac{M_jr}{Ns}\right)=V'\left(\frac{M_j' r}{Nt}\right)\right)\right|\d r\\
&\less \int_0^{NL}
\frac{r}{N}\left(\frac{1}{s}+\frac{1}{t}\right)\d r
\end{align*}
since the $X,Y,A,B$ in this application of \eqref{XYAB} are all bounded by $1$. The required bound in (\ref{Poisson1}) follows.

The proof of (\ref{Poisson2}) is similar. If $X$ is binomial with parameters $(M,p)$, then $M-X$ is binomial with parameters $(M,(1-p))$. Hence, 
\begin{align}\label{bin:comp}
\bP\big(S_m(p)=S'_{m'}(p')+n\big)=\bP\big(S_m(1-p)=S_{m'}(1-p')+m-m'-n\big).
\end{align}
By \eqref{SM:poisson} and \eqref{XYAB}, the integral on the left-hand side of (\ref{Poisson2}) can be $\less$-bounded by
\begin{align*}
&\quad  \int_{NR}^{Ns}
\left(\frac{Ns-r}{Ns}+\frac{Nt-r}{Nt}\right)\d r
=\int_{0}^{N(s-R)}
\left(\frac{r}{Ns}+\frac{Nt-Ns+r}{Nt}\right)\d r.
\end{align*}
This is enough for the required bound in (\ref{Poisson2}). The proof is complete.
\end{proof}

{\bf In the sequel, the discrete product rule in (\ref{XYAB}) will be used repeatedly without being mentioned.} As an immediate result of Lemma~\ref{lem:poisson1}, we obtain the following integrated Poisson approximations.

\begin{prop}\label{prop:poisson1}
Fix $0<T_0<1<T_1<\infty$. Under the primary condition \eqref{prim cond},   
\begin{align*}
&\int_0^{Ns\ell_N}\Bigg|\mathfrak b^{N}(x,y;r,s,t)
 -\prod_{j=1}^2\mathbf P\left(V\left(\frac{M_jr}{Ns}\right)=V'\left(\frac{M_j' r}{Nt}\right)\right)\Bigg|\d r\less C(T_0,T_1)N\ell_N^2.
\end{align*}
If, in addition, we assume $0\leq t-s\leq \tau_N$, then it holds that 
\begin{align*}
&\int_{Nsr_N}^{Ns}\Bigg|\mathfrak b^{N}(x,y;r,s,t)
-\prod_{j=1}^2\mathbf P\Bigg(V\Big(\frac{M_j(Ns-r)}{Ns}\Big)=V'\Big(\frac{M_j'(Nt-r)}{Nt}\Big)+M_j-M_j'\Bigg)\Bigg|\d r\\
&\less C(T_0,T_1)N(1-r_N)^2.
\end{align*}
Since Assumption~\ref{ass:sNtN} is in force, these integrals converge to zero as $N\to\infty$.
\end{prop}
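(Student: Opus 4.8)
The statement is an immediate consequence of Lemma~\ref{lem:poisson1} once we feed in the specific radii dictated by Assumption~\ref{ass:sNtN} and the primary condition \eqref{prim cond}. The plan is as follows. First, for the first estimate, apply \eqref{Poisson1} with $L=s\ell_N$ (which lies in $(0,s)$ by Assumption~\ref{ass:sNtN} since $\ell_N=N^{-(1/2+\eta)}<1$). This produces the upper bound $\less Ns^2\ell_N^2(s^{-1}+t^{-1})$. Under the primary condition we have $T_0\leq s\leq t\leq T_1$, so $s^2(s^{-1}+t^{-1})\leq s+s^2/t\leq T_1+T_1^2/T_0=:C(T_0,T_1)$, giving the claimed bound $\less C(T_0,T_1)N\ell_N^2$.

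Second, for the estimate over $[Nsr_N,Ns]$, apply \eqref{Poisson2} with $R=sr_N$. Here $R\in(0,s)$ again by Assumption~\ref{ass:sNtN} (since $r_N=1-N^{-(1/2+\eta)}\in(0,1)$), and $s-R=s(1-r_N)=s\tau_N$ because $1-r_N=\tau_N$. The bound from \eqref{Poisson2} becomes
\begin{align*}
\less N s^2(1-r_N)^2\Big(\frac1s+\frac1t\Big)+Ns(1-r_N)\cdot\frac{t-s}{t}.
\end{align*}
The first term is $\less C(T_0,T_1)N(1-r_N)^2$ exactly as before. For the second term, the extra hypothesis $0\leq t-s\leq \tau_N=1-r_N$ gives $(t-s)/t\leq (1-r_N)/T_0$, hence $Ns(1-r_N)(t-s)/t\less (T_1/T_0)N(1-r_N)^2$, which is again $\less C(T_0,T_1)N(1-r_N)^2$. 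Combining the two terms yields the stated bound.

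Finally, for the convergence-to-zero assertion: under Assumption~\ref{ass:sNtN}, $\ell_N=1-r_N=N^{-(1/2+\eta)}$ with $\eta\in(0,1/2)$, so $N\ell_N^2=N(1-r_N)^2=N^{1-(1+2\eta)}=N^{-2\eta}\to 0$ as $N\to\infty$, and since $\mathfrak b^N\geq 0$ the integrals themselves are bounded in absolute value by the same $C(T_0,T_1)N^{-2\eta}$, hence vanish. There is no real obstacle here; the only thing to be careful about is bookkeeping the constants $C(T_0,T_1)$ so that they depend only on $T_0,T_1$ (not on $x,y,s,t,N$), which is automatic from the primary condition's two-sided bounds on $s,t$ together with Remark~\ref{rmk:simple}'s observation that $M_j,M_j'\geq 1$ (so that \eqref{SM:poisson}, and through it \eqref{Poisson1}--\eqref{Poisson2}, is legitimately applicable).
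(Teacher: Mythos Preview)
Your proof is correct and matches the paper's approach exactly: the paper simply states that Proposition~\ref{prop:poisson1} is ``an immediate result of Lemma~\ref{lem:poisson1},'' and you have correctly filled in the bookkeeping by specializing \eqref{Poisson1} with $L=s\ell_N$ and \eqref{Poisson2} with $R=sr_N$, then using $T_0\le s\le t\le T_1$ and $t-s\le\tau_N=1-r_N$ to absorb the $s,t$-dependence into $C(T_0,T_1)$. The only superfluous remark is the aside that ``$\mathfrak b^N\ge 0$'' in the final paragraph: the integrands in the proposition are already absolute values, so nonnegativity of $\mathfrak b^N$ plays no role there.
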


In the context of  Proposition~\ref{prop:poisson1} (which is under the primary condition, and hence, under Assumption~\ref{ass:sNtN}), its first inequality gives  an estimate of the first integral in \eqref{Poisson:goal2222}. To calculate the limit of this integral of Poisson probabilities, note that $Ns\ell_N\to \infty$ for $s>0$, and we need to pass the limit of the Poisson probabilities under the integral sign. For \eqref{Poisson:goal2222}, the additional assumption $0\leq t-s\leq\tau_N$ yields an estimate by the  integral of Poisson probabilities from the second inequality of Proposition~\ref{prop:poisson1}. This integral can be written as
\begin{align}
\int_{0}^{N(s-sr_N)}\prod_{j=1}^2\mathbf P\Bigg(V\Big(\frac{M_jr}{Ns}\Big)=V'\Big(\frac{M_j'(Nt-Ns)}{Nt}+\frac{M_j'r}{Nt}\Big)+M_j-M_j'\Bigg)\d r,\label{poisson:int2}
\end{align}
where $N(s-sr_N)\to\infty$ for $s>0$. In this case, $\liminf_N\inf_{(s,t)}|M_j-M_j'|/N^{1/2}>0$ for fixed $x_j\neq y_j$, where $(s,t)$ ranges over all the pairs satisfying the standing assumptions. Also, with $r$ in the range of integration, the parameters of $V$ and $V'$ in \eqref{poisson:int2} are $o(N^{1/2})$. Hence, by scaling, the probability indexed by $j$ in \eqref{poisson:int2}  is zero in the limit for each $j$.

We use the next three lemmas to pass limits for the first integral of Poisson probabilities from Proposition~\ref{prop:poisson1} and the one in \eqref{poisson:int2} in the manner mentioned above. The first and the last of these lemmas consider the property that due to the infinite divisibility, the Poisson distributions with large parameters are eligible for  normal approximations. For the local central limit theorem in the first lemma, see Remark~\ref{rmk:stein} for a discussion and also \cite{BRR:19}. From now on, write, for all $\sigma\in (0,\infty)$ and $x\in \R$,
\begin{align}\label{GaussFT}
\g(\sigma^2;x)\,\defeq\,\frac{1}{\sigma\sqrt{2\pi}}\exp\Big\{-\frac{x^2}{2\sigma^2}\Big\}=\frac{1}{2\pi }\int_\R \e^{-\i \theta x}\exp\Big\{-\frac{\sigma^2\theta^2}{2}\Big\} \d \theta.
\end{align}

\begin{lem}\label{lem:poisson2}
For all $\lambda,\lambda'\in (0,\infty)$,  it holds that  
\begin{align}
\begin{split}\label{lclt3:poisson}
&\quad \sup_{a\in \Bbb Z}\left|\mathbf P\big(V(\lambda )=V'(\lambda')+a\big)-\frac{1}{(\lambda+\lambda')^{1/2}}\mathfrak g\left(1; \frac{-\lambda+\lambda'+a}{(\lambda+\lambda')^{1/2}}\right)\right|
\less  \frac{1}{(\lambda+\lambda')}.
\end{split}
\end{align}
\end{lem}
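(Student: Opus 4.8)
The quantity in brackets is precisely $\mathfrak g(\sigma^2;x)$ with $\sigma^2:=\lambda+\lambda'$ and $x:=a-(\lambda-\lambda')$, so the assertion is a local central limit theorem, with an $O(1/\sigma^2)$ remainder uniform in the integer shift $a$, for the Skellam law of $V(\lambda)-V'(\lambda')$; recall this law has mean $\lambda-\lambda'$ and variance $\lambda+\lambda'=\sigma^2$. The plan is to prove it by Fourier inversion. Since $V(\lambda)-V'(\lambda')$ is $\Z$-valued with characteristic function
\[
\phi(\theta)=\exp\{\lambda(\e^{\i\theta}-1)+\lambda'(\e^{-\i\theta}-1)\}=\e^{\i\theta(\lambda-\lambda')}\exp\{\psi(\theta)\},\qquad \psi(\theta)\defeq(\lambda+\lambda')(\cos\theta-1)+\i(\lambda-\lambda')(\sin\theta-\theta),
\]
the inversion formula gives $\mathbf P(V(\lambda)=V'(\lambda')+a)=\frac{1}{2\pi}\int_{-\pi}^{\pi}\e^{-\i\theta a}\phi(\theta)\,\d\theta$, while the Fourier representation \eqref{GaussFT} gives $\mathfrak g(\sigma^2;x)=\frac{1}{2\pi}\int_\R \e^{-\i\theta a}\e^{\i\theta(\lambda-\lambda')}\e^{-\sigma^2\theta^2/2}\,\d\theta$. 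Subtracting, using $|\e^{-\i\theta a}\e^{\i\theta(\lambda-\lambda')}|=1$, and splitting off the range $|\theta|>\pi$ of the Gaussian integral, it then suffices to bound the two integrals
\[
\frac{1}{2\pi}\int_{-\pi}^{\pi}\big|\e^{\psi(\theta)}-\e^{-\sigma^2\theta^2/2}\big|\,\d\theta\qquad\text{and}\qquad\frac{1}{2\pi}\int_{|\theta|>\pi}\e^{-\sigma^2\theta^2/2}\,\d\theta
\]
each by $C/\sigma^2$; the resulting bound is automatically independent of $a$, which handles the supremum.

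For the first integral, Taylor expansion about $\theta=0$ gives $\psi(\theta)=-\tfrac12\sigma^2\theta^2+E(\theta)$, where, using $0\le \cos\theta-1+\theta^2/2\le \theta^4/24$, $|\sin\theta-\theta|\le |\theta|^3/6$, $\theta^4\le\pi|\theta|^3$ on $|\theta|\le\pi$, and the crucial bound $|\lambda-\lambda'|\le\lambda+\lambda'=\sigma^2$ for the imaginary part, one gets $|E(\theta)|\le C_0\sigma^2|\theta|^3$ on $|\theta|\le\pi$ for an explicit constant $C_0$. Fix $\delta\defeq\min\{1/(4C_0),\pi\}$. On $|\theta|\le\delta$ we have $|E(\theta)|\le\tfrac14\sigma^2\theta^2$, hence $\mathrm{Re}\,\psi(\theta)\le-\tfrac14\sigma^2\theta^2$, and with $|\e^z-1|\le|z|\e^{|z|}$ the integrand is $\le \e^{-\sigma^2\theta^2/2}|E(\theta)|\e^{|E(\theta)|}\le C_0\sigma^2|\theta|^3\e^{-\sigma^2\theta^2/4}$; integrating over $\R$ (substitute $u=\sigma\theta$) this contributes $\less 1/\sigma^2$. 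On $\delta<|\theta|\le\pi$, from $\cos\theta-1\le-2\theta^2/\pi^2$ we get $|\e^{\psi(\theta)}|\le\e^{-2\sigma^2\delta^2/\pi^2}$ and likewise $\e^{-\sigma^2\theta^2/2}\le\e^{-\sigma^2\delta^2/2}$; since $u\mapsto u\e^{-cu}$ is bounded on $(0,\infty)$, these exponentially small terms are $\less1/\sigma^2$. The Gaussian tail $\int_{|\theta|>\pi}\e^{-\sigma^2\theta^2/2}\d\theta$ is treated the same way via $\int_{|u|>b}\e^{-u^2/2}\d u\le\tfrac{2}{b}\e^{-b^2/2}$ with $b=\pi\sigma$, again giving $\less1/\sigma^2$. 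Summing the three contributions proves the lemma. Note all estimates hold for every $\sigma^2=\lambda+\lambda'>0$: when $\sigma^2$ is small the target $1/\sigma^2$ is large, and the only place size enters is the boundedness of $\sigma^2\e^{-c\sigma^2}$.

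The main technical point — and the reason the error is $O(1/\sigma^2)$ rather than something worse — is the treatment of the skew, imaginary part $\i(\lambda-\lambda')(\sin\theta-\theta)$ of $\log\phi$: it is cubic in $\theta$ and cannot be absorbed into the Gaussian factor, but its coefficient $|\lambda-\lambda'|$ is dominated by the variance $\sigma^2$, which is exactly what yields $|E(\theta)|\less\sigma^2|\theta|^3$ and hence an error uniform over all $\lambda,\lambda'$. Alternatively, one could invoke a general local limit theorem with an Edgeworth-type remainder for lattice distributions (in the spirit of Remark~\ref{rmk:stein} and \cite{BRR:19}) applied to $V(\lambda)-V'(\lambda')$ written as a sum of $n$ i.i.d.\ Skellam$(\lambda/n,\lambda'/n)$ increments; but the direct Fourier estimate above is self-contained and produces the stated constant-free $\less$ bound at once.
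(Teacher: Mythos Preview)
Your proof is correct and follows essentially the same Fourier-analytic route as the paper: both subtract the Gaussian Fourier representation from the discrete inversion formula for the Skellam law, bound the resulting integral via the Taylor expansion of $\psi(\theta)=(\lambda+\lambda')(\cos\theta-1)+\i(\lambda-\lambda')(\sin\theta-\theta)$ together with the key inequality $|\lambda-\lambda'|\le\lambda+\lambda'$, and handle the Gaussian tail separately. The only cosmetic difference is that the paper treats the whole interval $|\theta|\le\pi$ at once by multiplying through by $\exp\{\sigma^2\theta^2/8\}$ and using $\cos y-1+y^2/8\le 0$ on $|y|\le\pi$, whereas you split into $|\theta|\le\delta$ and $\delta<|\theta|\le\pi$; the underlying estimates are the same.
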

\begin{proof}
Recall that by Fourier inversions,
\begin{align}
f(a)=\frac{1}{2\pi}\int_{-\pi}^\pi \e^{-\i \theta a}\sum_{b\in \Bbb Z} f(b)\e^{\i \theta b}\d \theta,\quad \forall\; a\in\Bbb Z.
\label{discreteFT}
\end{align}
Writing $\phi_{\lambda,\lambda'}(\theta)$ for $\E\exp \{\i \theta[\,\overline{V}(\lambda )-\overline{V}'(\lambda')]\}$, we obtain from \eqref{GaussFT} and \eqref{discreteFT} that
\begin{align}
  \begin{split}
\forall\;a\in \Bbb Z,\quad  & \quad (\lambda+\lambda')^{1/2}\mathbf P\big(V(\lambda )=V'(\lambda')+a\big)-\mathfrak g\left(1; \frac{-\lambda+\lambda'+a}{(\lambda+\lambda')^{1/2}}\right)\notag
  \end{split}\\
\begin{split}\notag
&= \frac{1}{2\pi  }\int_{|\theta|\leq \pi (\lambda+\lambda')^{1/2}}\e^{-\i \theta \frac{-\lambda+\lambda'+a}{(\lambda+\lambda')^{1/2}}}\left[ \phi_{\lambda,\lambda'}\left(\frac{\theta}{(\lambda+\lambda')^{1/2}}\right)-\exp\left\{-\frac{ \theta^2}{2}\right\}\right]\d\theta\\
&\quad +
\frac{1}{2\pi  }\int_{|\theta|\geq \pi (\lambda+\lambda')^{1/2}}\e^{-\i \theta \frac{-\lambda+\lambda'+a}{(\lambda+\lambda')^{1/2}}}\exp\left\{-\frac{\theta^2}{2}\right\}\d\theta\\
\end{split}\\
&={\rm I}_{\ref{integral:poisson}}+{\rm II}_{\ref{integral:poisson}}.\label{integral:poisson}
\end{align}

We show that the decomposition in (\ref{integral:poisson}) implies (\ref{lclt3:poisson}). To bound ${\rm I}_{\ref{integral:poisson}}$, consider 
\begin{align}
\phi_{\lambda,\lambda'}\left(\frac{\theta}{(\lambda+\lambda')^{1/2}}\right)\exp\left\{\frac{\theta^2}{8}\right\}
&=\exp\left\{ \lambda \left(\e^{\i \theta/(\lambda+\lambda')^{1/2}}-1-\frac{\i\theta}{(\lambda+\lambda')^{1/2}}\right)\right\}\notag\\
&\quad \times\exp\left\{\lambda' \left(\e^{-\i \theta/(\lambda+\lambda')^{1/2}}-1+\frac{\i\theta}{(\lambda+\lambda')^{1/2}}\right)+\frac{\theta^2}{8}\right\}\notag\\
\begin{split}
&=\exp\Bigg\{(\lambda+\lambda')\left[\cos\left(\frac{\theta}{(\lambda+\lambda')^{1/2}}\right)-1+\frac{\theta^2}{8(\lambda+\lambda')}\right]\Bigg\}\\
&\quad \times \exp\Bigg\{\i (\lambda-\lambda')\left[\sin\left(\frac{\theta}{(\lambda+\lambda')^{1/2}}\right)-\frac{\theta}{(\lambda+\lambda')^{1/2}}\right]\Bigg\}.\label{poisson:phi}
\end{split}
\end{align}
By the inequality 
\begin{align}\label{exp:MVT}
|\e^{z_1}-\e^{z_2}|\leq\max\{\e^{|z_1|},\e^{|z_2|}\}\cdot |z_1-z_2|,\quad \forall\;z_1,z_2\in \Bbb C,
\end{align}
and Taylor's theorem, (\ref{poisson:phi}) implies that, for all real $ |\theta|\leq \pi (\lambda+\lambda')^{1/2}$, 
\begin{align}\label{phir:bdd}
\left|\phi_{\lambda,\lambda'}\left(\frac{\theta}{(\lambda+\lambda')^{1/2}}\right)\exp\left\{\frac{\theta^2}{8}\right\}-\exp\left\{-\frac{3\theta^2}{8}\right\}\right|\less
(\lambda+\lambda')\cdot\frac{|\theta|^3}{(\lambda+\lambda')^{3/2}}.
\end{align}
(In more detail, we have used the inequality $\cos(y)-1+y^2/8\leq 0$, for all real $|y|\leq \pi$, when bounding the exponentials from the right-hand side of \eqref{exp:MVT}.) Hence,
\begin{align}\label{I1:poisson}
|{\rm I}_{\ref{integral:poisson}}|\less \int_{|\theta|\leq \pi (\lambda+\lambda')^{1/2}}\exp\left\{-\frac{\theta^2}{8}\right\}\frac{|\theta|^3}{(\lambda+\lambda')^{1/2}}\d \theta\less
\frac{1}{(\lambda+\lambda')^{1/2}}.
\end{align}

To bound $ {\rm II}_{\ref{integral:poisson}}$, we use the following simple inequality for any fixed $a\in (0,\infty)$: 
\begin{align}
x^a\int_x^\infty \exp\left\{-\frac{\theta^2}{2}\right\}\d \theta\leq C(a),\quad \forall\; x\geq 0.
\end{align}
With $a=1/2$ and $x=\pi (\lambda+\lambda')^{1/2}$, we get
\begin{align}
 |{\rm II}_{\ref{integral:poisson}}|
 &\less \frac{1}{(\lambda+\lambda')^{1/2}}.
 \label{I2:poisson}
\end{align}
The bound in (\ref{lclt3:poisson}) follows upon applying (\ref{I1:poisson}) and (\ref{I2:poisson}) to \eqref{integral:poisson}.
\end{proof}

\begin{lem}\label{lem:heat}
For all $0<a\leq b<\infty$ and $x,y\in \R$, it holds that 
\[
\left|\g(a;x)-\g (b;y)\right|\less \frac{|b-a|}{ a^{3/2}}+\frac{|x-y|}{ b}.
\]
\end{lem}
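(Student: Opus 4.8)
The plan is to use the triangle inequality to separate the two sources of difference — the change in the variance parameter from $a$ to $b$, and the change in the spatial argument from $x$ to $y$ — and to estimate each by a one–variable mean value argument. Concretely, I would write
\[
\left|\g(a;x)-\g(b;y)\right|\leq \left|\g(a;x)-\g(b;x)\right|+\left|\g(b;x)-\g(b;y)\right|,
\]
deliberately routing through the intermediate point $(b,x)$ rather than $(a,y)$, so that the spatial increment gets evaluated at the \emph{larger} variance $b$; this is precisely what produces the denominator $b$ in the second term of the claimed bound.

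For the spatial term I would differentiate in the space variable, using $\partial_x\g(b;x)=-\tfrac{x}{b}\g(b;x)$, so that by the mean value theorem
\[
\left|\g(b;x)-\g(b;y)\right|\leq |x-y|\sup_{\xi\in\R}\frac{|\xi|}{b}\g(b;\xi).
\]
After the substitution $\xi=\sqrt{b}\,u$ the supremum equals $\tfrac1b\cdot\tfrac{1}{\sqrt{2\pi}}\sup_{u\in\R}|u|\e^{-u^2/2}\less \tfrac1b$, giving $\left|\g(b;x)-\g(b;y)\right|\less |x-y|/b$.

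For the variance term, writing $v$ for the variance and computing
\[
\partial_v\g(v;x)=\frac{1}{\sqrt{2\pi}}\,v^{-3/2}\e^{-x^2/(2v)}\left(\frac{x^2}{2v}-\frac12\right),
\]
I would bound $\bigl|\partial_v\g(v;x)\bigr|\less v^{-3/2}\e^{-w/2}(1+w)$ with $w=x^2/v\geq 0$, and use $\sup_{w\geq 0}\e^{-w/2}(1+w)<\infty$ to conclude $\bigl|\partial_v\g(v;x)\bigr|\less v^{-3/2}$. Since $a\leq v\leq b$ on the segment joining $a$ and $b$, the mean value theorem then yields $\left|\g(a;x)-\g(b;x)\right|\leq |b-a|\sup_{v\in[a,b]}v^{-3/2}=|b-a|/a^{3/2}$. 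Adding the two estimates gives the lemma.

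I do not anticipate any genuine obstacle here: the only inputs that are not automatic are the two elementary boundedness facts $\sup_{u}|u|\e^{-u^2/2}<\infty$ and $\sup_{w\geq 0}\e^{-w/2}(1+w)<\infty$, and the only point requiring a moment's care is the choice of intermediate point $(b,x)$ in the triangle inequality — passing instead through $(a,y)$ would produce the weaker denominator $a$ in the spatial term.
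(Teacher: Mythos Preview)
Your proposal is correct and follows essentially the same approach as the paper: the paper also routes through the intermediate point $(b,x)$, applies the mean value theorem to each piece, and bounds the derivatives via the elementary facts $u\e^{-u}\leq\e^{-1}$ and $u\e^{-u^2}\leq 1$, which are equivalent to your $\sup_{w\geq0}\e^{-w/2}(1+w)<\infty$ and $\sup_u|u|\e^{-u^2/2}<\infty$.
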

\begin{proof}
The required inequality follows if we apply the mean value theorem and the next two bounds to $[\g(a;x)-\g(b;x)]+[\g (b;x)-\g(b;y)]$:
\begin{align*}
\left|\frac{\d}{\d v}\frac{1}{\sqrt{2\pi v}}\exp\left\{-\frac{x^2}{2v}\right\}\right|\leq \frac{1}{\sqrt{2\pi}}\left(\frac{1}{2v^{3/2}}+\frac{1}{v^{3/2}}\e^{-1}\right),\;\; 
\left|\frac{\d}{\d u}\frac{1}{\sqrt{2\pi b}}\exp\left\{-\frac{u^2}{2b}\right\}\right|\leq \frac{\sqrt{2}}{\sqrt{2\pi b^2}},
\end{align*}
by $u\e^{-u}\leq \e^{-1}$ and $u\e^{-u^2}\leq 1$ (both are valid for all $u\geq 0$), respectively.
\end{proof}

For the next lemma, let $\lambda_{j}(r),\lambda_j'(r),\Lambda_j(r),\Lambda_j'(r)$ be increasing functions taking values in $(0,\infty)$ for all $r\in[1,\infty)$ and let $a_{j},A_j\in \Bbb Z$. Next, define an auxiliary function $g(r)=\prod_{j=1}^2 g_j(r)$ by $g_j(r)=\g(\lambda_j(r)+\lambda_j'(r);-\lambda_j(r)+\lambda_j'(r)+a_j )$ and $G(r)=\prod_{j=1}^2G_j(r)$ for similarly defined heat kernels $G_j(r)$ using $\Lambda_j,\Lambda'_j,A_j$ in place of $\lambda_j,\lambda_j',a_j$, respectively.

\begin{lem}\label{lem:poisson3}
Under the above setup, the following two inequalities hold for all $r\in [1,\infty)$:
\begin{align}
& \hspace{-2.3cm}\left|\prod_{j=1}^2\mathbf P\Big(V\big(\lambda_j(r) \big)=V'\left(\lambda'_{j}(r)\right)+a_{j}\Big)-g(r)  \right|\notag\\
&\less \frac{1}{\prod_{j=1}^2[\lambda_j(r)+\lambda'_j(r)]}+\sum_{\stackrel{\scriptstyle 1\leq i,j\leq 2}{i\neq j}} \frac{1}{[\lambda_i(r)+\lambda'_i(r)][\lambda_j(r)+\lambda'_j(r)]^{1/2}};\label{PIbdd:1}\\
\begin{split}
  \left|g(r)-G(r)\right| 
&\less \prod_{j=1}^2K_j\frac{|\lambda_{j}(r)-\Lambda_{j}(r)|+|\lambda'_{j}(r)-\Lambda'_{j}(r)|+|a_j-A_j|}{[\lambda_{j}(r)+\lambda'_{j}(r)]\wedge [\Lambda_{j}(r)+\Lambda'_{j}(r)]}\\
&\quad  +\sum_{\stackrel{\scriptstyle 1\leq i,j\leq 2}{i\neq j}}K_i\frac{|\lambda_{i}(r)-\Lambda_{i}(r)|+|\lambda'_{i}(r)-\Lambda'_{i}(r)|+|a_i-A_i|}{[\lambda_{i}(r)+\lambda'_{i}(r)]\wedge [\Lambda_{i}(r)+\Lambda'_{i}(r)]}\\
&\quad\quad   \times \frac{1}{[\lambda_{j}(r)+\lambda'_{j}(r)]^{1/2}\wedge [\Lambda_{j}(r)+\Lambda'_{j}(r)]^{1/2}},
\label{PIbdd:2}
\end{split}
\end{align}
where $K_j\defeq\{[\lambda_{j}(1)+\lambda'_{j}(1)]^{1/2}\wedge [\Lambda_{j}(1)+\Lambda'_{j}(1)]^{1/2}\}^{-1}+1$.
\end{lem}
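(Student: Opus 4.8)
The plan is to prove both estimates one factor at a time, by combining the local limit bound of Lemma~\ref{lem:poisson2} with the heat-kernel comparison of Lemma~\ref{lem:heat}, and then assembling the two-fold products through the discrete product rule \eqref{XYAB}. Fix $r\in[1,\infty)$ and abbreviate $v_j=\lambda_j(r)+\lambda_j'(r)$, $w_j=\Lambda_j(r)+\Lambda_j'(r)$, $P_j=\mathbf P\big(V(\lambda_j(r))=V'(\lambda_j'(r))+a_j\big)$, and $g_j=g_j(r)$, $G_j=G_j(r)$ as in the statement.

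First I would record the single-factor Poisson estimate. The elementary rescaling identity $v_j^{-1/2}\g(1;x/v_j^{1/2})=\g(v_j;x)$, used with $x=-\lambda_j(r)+\lambda_j'(r)+a_j$, shows that the Gaussian appearing in Lemma~\ref{lem:poisson2} is exactly $g_j$; that lemma then gives $|P_j-g_j|\lesssim 1/v_j$. Since also $0\le g_j\le(2\pi v_j)^{-1/2}$, inequality \eqref{PIbdd:1} is immediate: write $P_1P_2-g_1g_2=(P_1-g_1)(P_2-g_2)+(P_1-g_1)g_2+(P_2-g_2)g_1$ by \eqref{XYAB} and bound the three summands by $v_1^{-1}v_2^{-1}$, $v_1^{-1}v_2^{-1/2}$ and $v_2^{-1}v_1^{-1/2}$, which are exactly the three terms on the right of \eqref{PIbdd:1}.

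For \eqref{PIbdd:2} I would first compare $g_j$ and $G_j$ through Lemma~\ref{lem:heat}, applied with the pair of variances $\{v_j,w_j\}$ placed in increasing order and with the two Gaussian centres; bounding $|v_j-w_j|$ and the difference of the centres both by $\delta_j:=|\lambda_j(r)-\Lambda_j(r)|+|\lambda_j'(r)-\Lambda_j'(r)|+|a_j-A_j|$ yields
\[
|g_j-G_j|\;\lesssim\;\frac{\delta_j}{(v_j\wedge w_j)^{3/2}}+\frac{\delta_j}{v_j\vee w_j}.
\]
Now comes the only point that uses the normalization $r\ge1$: since $\lambda_j,\lambda_j',\Lambda_j,\Lambda_j'$ are increasing on $[1,\infty)$, one has $v_j\wedge w_j\ge(\lambda_j(1)+\lambda_j'(1))\wedge(\Lambda_j(1)+\Lambda_j'(1))$ for every $r\ge1$, hence $(v_j\wedge w_j)^{-1/2}\le K_j-1\le K_j$; combined with $v_j\vee w_j\ge v_j\wedge w_j$ and $K_j\ge1$ this collapses the last display to $|g_j-G_j|\lesssim K_j\,\delta_j/(v_j\wedge w_j)$. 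Finally, using $0\le G_j\le(2\pi w_j)^{-1/2}\le(v_j\wedge w_j)^{-1/2}$, expand $g_1g_2-G_1G_2=(g_1-G_1)(g_2-G_2)+(g_1-G_1)G_2+(g_2-G_2)G_1$ by \eqref{XYAB}: the first term is bounded by $\prod_{j}K_j\delta_j/(v_j\wedge w_j)$, which is the first line of \eqref{PIbdd:2}, while $(g_i-G_i)G_j$ for $\{i,j\}=\{1,2\}$ is bounded by $K_i\delta_i/(v_i\wedge w_i)\cdot(v_j\wedge w_j)^{-1/2}$, which is the $i\ne j$ sum there.

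I expect no genuine analytic difficulty: everything substantive is already inside Lemmas~\ref{lem:poisson2} and~\ref{lem:heat}, so the work is bookkeeping — keeping the $\wedge$ and $\vee$ of the two variances in the right slots when invoking Lemma~\ref{lem:heat} (whose hypothesis requires ordered arguments), and justifying the passage from the $r$-dependent lower bounds on $v_j\wedge w_j$ to the fixed constant $K_j$. That passage is precisely what forces the restriction to $r\ge1$ and the ``$+1$'' in the definition of $K_j$, which is what lets the $v_j\vee w_j$ term and the prefactor $G_j$ be absorbed with a constant no larger than $K_j$.
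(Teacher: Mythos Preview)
Your proof is correct and follows essentially the same approach as the paper: both arguments apply the discrete product rule \eqref{XYAB}, invoke Lemma~\ref{lem:poisson2} together with the Gaussian sup-bound $g_j\le(2\pi v_j)^{-1/2}$ for \eqref{PIbdd:1}, and invoke Lemma~\ref{lem:heat} plus the monotonicity in $r$ (to absorb the extra $(v_j\wedge w_j)^{-1/2}$ into $K_j$) for \eqref{PIbdd:2}. Your write-up is slightly more explicit about ordering the variances before applying Lemma~\ref{lem:heat} and about the role of the ``$+1$'' in $K_j$, but there is no substantive difference.
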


\begin{proof}
By Lemma~\ref{lem:poisson2}, the left-hand side of \eqref{PIbdd:1}  is $\less$-bounded by
\begin{align}
&\quad \prod_{j=1}^2 \frac{1}{\lambda_{j}(r)+\lambda'_{j}(r)}
 +\sum_{\stackrel{\scriptstyle 1\leq i,j\leq 2}{i\neq j}}
\frac{1}{\lambda_{i}(r)+\lambda'_{i}(r)}g_{j}(r).\notag
\end{align}
Hence, (\ref{PIbdd:1}) follows from the foregoing bound and the definition of $g_j$.

The proof of \eqref{PIbdd:2} is similar. It is enough to note that by Lemma~\ref{lem:heat} and the assumed monotonicity of the functions in $r$, for all $r\in [1,\infty)$,
\begin{align*}
&\quad \left|\mathfrak g\left(\lambda_{j}(r)+\lambda'_{j}(r);-\lambda_{j}(r)+\lambda'_{j}(r)+a_{j}\right) -\mathfrak g\left(\Lambda_{j}(r)+\Lambda'_{j}(r);-\Lambda_{j}(r)+\Lambda'_{j}(r)+A_{j}\right) \right|\notag\\
&\less \frac{|\lambda_{j}(r)+\lambda'_{j}(r)-\Lambda_{j}(r)-\Lambda'_{j}(r)|}{[\lambda_{j}(r)+\lambda'_{j}(r)]^{3/2}\wedge [\Lambda_{j}(r)+\Lambda'_{j}(r)]^{3/2}}+\frac{|\lambda_{j}(r)-\lambda'_{j}(r)-a_{j}-\Lambda_{j}(r)+\Lambda'_{j}(r)+A_{j}|}{[\lambda_{j}(r)+\lambda'_{j}(r)]\wedge [\Lambda_{j}(r)+\Lambda'_{j}(r)]}\\
&\less K_j\frac{|\lambda_{j}(r)-\Lambda_{j}(r)|+|\lambda'_{j}(r)-\Lambda'_{j}(r)|+|a_j-A_j|}{[\lambda_{j}(r)+\lambda'_{j}(r)]\wedge [\Lambda_{j}(r)+\Lambda'_{j}(r)]},
\end{align*}
where $K_j$ is defined below \eqref{PIbdd:2}. The proof is complete. 
\end{proof}

The next proposition is the last step for the integrated Poisson approximations. Recall the discussion below Proposition~\ref{prop:poisson1}.

\begin{prop}\label{prop:poisson2}
Fix $0<T_0<1<T_1<\infty$ and let Assumption~\ref{ass:sNtN} be in force. \smallskip

\noindent \hypertarget{P2-1}{{\rm (1$^\circ$)}} For all $0< s\leq t<\infty$ and $x,y\in \R^2$, 
\begin{align*}
&\quad \lim_{N\to\infty}\int_0^{Ns\ell_N}\Bigg(\prod_{j=1}^2\mathbf P\left(V\left(\frac{M_jr}{Ns}\right)=V'\left(\frac{M_j' r}{Nt}\right)\right)-\frac{\1_{[1,\infty)}(r)}{4\pi r}\Bigg)\d r\\
&=\int_0^{\infty}\Bigg(\prod_{j=1}^2\mathbf P\big(V(r)=V'(r)\big)-\frac{\1_{[1,\infty)}(r)}{4\pi r}\Bigg)\d r.
\end{align*}

\noindent \hypertarget{P2-2}{{\rm (2$^\circ$)}} Under the primary condition over $[T_0,T_1]$, it holds that
\begin{align*}
&\quad \int_0^{Ns\ell_N}\left|\prod_{j=1}^2\mathbf P\left(V\left(\frac{M_jr}{Ns}\right)=V'\left(\frac{M_j' r}{Nt}\right)\right)-\frac{\1_{[1,\infty)}(r)}{4\pi r}\right|\d r\leq C(T_0,T_1)(1+|x|^2+|y|^2).
\end{align*} 

\noindent \hypertarget{P2-3}{{\rm (3$^\circ$)}} For all $s=t\in (0,\infty)$ and $x,y\in \R^2$ with $x\neq y$,
we have
\begin{align*}
&\quad \lim_{N\to\infty}\int_{Nsr_N}^{Ns}\prod_{j=1}^2\mathbf P\left(V\left(\frac{M_jr}{Ns}\right)=V'\left(\frac{M_j' r}{Nt}\right)\right)\d r=0.
\end{align*}

\noindent \hypertarget{P2-4}{{\rm (4$^\circ$)}}
Under  the primary and secondary conditions over $[T_0,T_1]$, and in addition, the assumption $0\leq t-s\leq \tau_N$, it holds that 
\begin{align*}
&\quad \int_{Nsr_N}^{Ns}\left|\prod_{j=1}^2\mathbf P\left(V\left(\frac{M_jr}{Ns}\right)=V'\left(\frac{M_j' r}{Nt}\right)\right)-\frac{\1_{[1,\infty)}(r)}{4\pi r}\right|\d r\\
&\leq C(T_0,T_1)\left(1+|x|^2+|y|^2+\big|\ln |x-y|\big|
\right).
\end{align*} 
\end{prop}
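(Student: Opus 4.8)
\medskip
\noindent\textbf{Proof plan.}
The plan is to treat all four parts through one device: by Lemma~\ref{lem:poisson2} I replace each Skellam mass $\mathbf P(V(\lambda)=V'(\lambda'))$ by the Gaussian density $\g(\lambda+\lambda';-\lambda+\lambda')=(\lambda+\lambda')^{-1/2}\g(1;(-\lambda+\lambda')(\lambda+\lambda')^{-1/2})$, at the cost of an error $\less(\lambda+\lambda')^{-1}$, and then I compare the resulting Gaussian product against the counterterm $\tfrac{\1_{[1,\infty)}(r)}{4\pi r}$. The first thing I would record is the asymptotics of the parameters in play. Writing $\lambda_j=\tfrac{M_jr}{Ns}$ and $\lambda_j'=\tfrac{M_j'r}{Nt}$, the identities $M_j=\lfloor Ns+N^{1/2}sx_j\rfloor$, $M_j'=\lfloor Nt+N^{1/2}ty_j\rfloor$ give $\tfrac{M_j}{Ns}=1+\tfrac{x_j}{N^{1/2}}+O(\tfrac1{Ns})$ and $\tfrac{M_j'}{Nt}=1+\tfrac{y_j}{N^{1/2}}+O(\tfrac1{Nt})$, hence $\lambda_j+\lambda_j'=2r(1+\rho_j)$ with $|\rho_j|\less\tfrac{|x_j|+|y_j|}{N^{1/2}}+\tfrac1{NT_0}$, while the Gaussian argument $\theta_j\defeq\tfrac{-\lambda_j+\lambda_j'}{(\lambda_j+\lambda_j')^{1/2}}$ obeys $\theta_j^2\less\tfrac{r(x_j^2+y_j^2)}{N}+\tfrac{r}{N^2T_0^2}$. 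On the relevant ranges --- $r\le Ns\ell_N=sN^{1/2-\eta}$ for (1$^\circ$)--(2$^\circ$) and $r\in[Nsr_N,Ns]$ for (3$^\circ$)--(4$^\circ$) --- these bounds force $|\rho_j|+\theta_j^2\to0$ uniformly, which is exactly why the Gaussian integrand collapses onto $\tfrac1{4\pi r}$; moreover $\lambda_j+\lambda_j'\asymp r$, eventually uniformly in $N$ and, under the primary condition, uniformly in the parameters with constants depending on $T_0,T_1$.

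For (1$^\circ$) I would split $\int_0^{Ns\ell_N}=\int_0^K+\int_K^{Ns\ell_N}$ with $K$ a large fixed constant. On $[0,K]$ the Poisson masses converge pointwise ($\lambda_j\to r$, $\lambda_j'\to r$) and, for $N$ large, are dominated by $\min\{1,C/r\}$ via Lemma~\ref{lem:poisson2}, so dominated convergence identifies the limit of $\int_0^K$ with the corresponding integral of $\prod_j\mathbf P(V(r)=V'(r))-\tfrac{\1_{[1,\infty)}(r)}{4\pi r}$. On $[K,Ns\ell_N]$ the discrete product rule \eqref{XYAB} together with Lemma~\ref{lem:poisson2} gives $\prod_j\mathbf P(\cdots)-\tfrac1{4\pi r}=\big[\prod_j\g(\lambda_j+\lambda_j';-\lambda_j+\lambda_j')-\tfrac1{4\pi r}\big]+O(r^{-3/2})$, and the bracket equals $\tfrac1{4\pi r}\big[(1+O(\rho_1,\rho_2))e^{-(\theta_1^2+\theta_2^2)/2}-1\big]=O\!\big(\tfrac{|\rho_1|+|\rho_2|+\theta_1^2+\theta_2^2}{r}\big)$; integrating over $[K,Ns\ell_N]$ and using $\int_K^\infty r^{-3/2}\d r\less K^{-1/2}$, $\int_1^{Ns\ell_N}\tfrac{|x|+|y|}{rN^{1/2}}\d r\less\tfrac{(|x|+|y|)\ln N}{N^{1/2}}$, and $\int_1^{Ns\ell_N}\tfrac{\theta_j^2}r\d r\less\tfrac{(|x|^2+|y|^2)Ns\ell_N}{N}$ bounds the tail by $CK^{-1/2}+o_N(1)$. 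The same two estimates applied with $\lambda_j=\lambda_j'=r$ show $\int_K^\infty\big|\prod_j\mathbf P(V(r)=V'(r))-\tfrac1{4\pi r}\big|\d r\less K^{-1/2}$, so the right-hand integral converges; letting $N\to\infty$ and then $K\to\infty$ finishes. Part (2$^\circ$) is the quantitative bookkeeping of exactly this decomposition under the primary condition: on $[0,1]$ use the trivial bound $\big|\prod_j\mathbf P-\tfrac{\1_{[1,\infty)}}{4\pi r}\big|\le1$; on $[1,Ns\ell_N]$ sum the four contributions just listed, each of which is $\less C(T_0,T_1)(1+|x|^2+|y|^2)$ once one inserts $\ell_N=N^{-1/2-\eta}$, $|x_j|,|y_j|\le\tfrac12N^\eta$, and $N^{-1/2}\ln N\le C$.

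For (3$^\circ$) I would not use any cancellation at all: Lemma~\ref{lem:poisson2} gives $\mathbf P(V(\lambda)=V'(\lambda'))\less(\lambda+\lambda')^{-1/2}$, while on $[Nsr_N,Ns]$ one has $\lambda_j+\lambda_j'\more Ns$ for $N$ large (here $s=t$, and the $\lfloor\cdot\rfloor$-errors are harmless), so $\prod_j\mathbf P(\cdots)\less(Ns)^{-1}$ and the integral over an interval of length $Ns(1-r_N)=sN^{1/2-\eta}$ is $\less N^{-1/2-\eta}\to0$. For (4$^\circ$) I would rerun the (2$^\circ$) argument on the short interval $[Nsr_N,Ns]$, where now $\1_{[1,\infty)}\equiv1$ (since $Nsr_N\ge1$ under the primary condition) and $\lambda_j+\lambda_j'\asymp Ns$; the secondary condition $|x_j-y_j|\ge\tfrac4{T_0}N^{-1/2}$ enters to guarantee that the main term of $\theta_j$ dominates the $O(\tfrac1{NT_0})$ floor error, so the comparison heat kernels are the genuine ones and the required bound drops out of the short-interval estimates above. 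The $|\ln|x-y||$ in the statement is a generous over-bound; it leaves room for the crude estimate $\int_{Nsr_N}^{Ns}\min\{1,\theta_1^2+\theta_2^2\}\tfrac{\d r}r\le\int\tfrac{\d u}u$ obtained after the substitution $u=(|x-y|^2/N)r$, should one not wish to track the exponential decay.

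The hard part will be (1$^\circ$)--(2$^\circ$): one must match the Gaussian approximation of the Skellam mass against the counterterm $\tfrac1{4\pi r}$ precisely enough that the difference decays like $r^{-3/2}$ --- modulo contributions that are $o_N(1)$ after integration --- rather than merely like $r^{-1}$, and this has to be done over a domain $[0,Ns\ell_N]$ that itself expands to $[0,\infty)$, so no single dominating function is available and the tail must be controlled by a genuinely uniform-in-$N$ decay estimate. Once that estimate is secured, parts (3$^\circ$) and (4$^\circ$) are comparatively soft.
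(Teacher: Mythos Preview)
Your treatment of (1$^\circ$) and (2$^\circ$) is correct and is essentially the paper's argument: replace each Skellam mass by the local Gaussian via Lemma~\ref{lem:poisson2}, then compare to $1/(4\pi r)$. The paper packages the two comparison steps through Lemma~\ref{lem:poisson3} (inequalities \eqref{PIbdd:1}--\eqref{PIbdd:2}), which produces exactly your $r^{-3/2}$ error and your $\rho_j,\theta_j^2$ corrections; your direct Taylor expansion of the Gaussian product is equivalent and arguably cleaner.

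For (3$^\circ$) and (4$^\circ$) there is a genuine discrepancy, though the fault lies with the printed statement rather than with you. Your arguments are correct --- and, as you half-notice, suspiciously easy --- for the integrand as written, namely the \emph{unshifted} $\prod_j\mathbf P\big(V(M_jr/Ns)=V'(M_j'r/Nt)\big)$: on $[Nsr_N,Ns]$ the parameters are $\asymp Ns$, the product is $\less(Ns)^{-1}$, and the short interval length $Ns(1-r_N)$ kills everything; the secondary condition is not even needed and the $|\ln|x-y||$ is, as you say, a giveaway. But the paper's proof (and the downstream application in Theorem~\ref{thm:poisson}) actually concerns the \emph{shifted} Poisson integral of \eqref{poisson:int2}, i.e.\ the integral approximating $\int_{Nsr_N}^{Ns}\mathfrak b^N\,\d r$ via the second inequality of Proposition~\ref{prop:poisson1}. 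After the change of variables $r\mapsto Ns-r$ that integral runs over $[0,Ns(1-r_N)]$ with integrand $\prod_j\mathbf P\big(V(M_jr/Ns)=V'(\tfrac{M_j'(Nt-Ns)}{Nt}+\tfrac{M_j'r}{Nt})+M_j-M_j'\big)$, and here your crude bound $\less(\lambda+\lambda')^{-1}$ is useless because $\lambda_j+\lambda_j'\asymp r$ is small near $0$. The paper instead exploits that the offset $|M_j-M_j'|\asymp N^{1/2}s|x_j-y_j|$ dwarfs $\sqrt{\lambda_j+\lambda_j'}=o(N^{1/4})$ on the range, so the Gaussian approximation $\g(\lambda_j+\lambda_j';-\lambda_j+\lambda_j'+M_j-M_j')$ is itself small; the secondary condition \eqref{sec cond} is precisely what makes this domination survive the floor errors (see \eqref{use eta}--\eqref{PO2}), and the $|\ln|x-y||$ emerges from the endpoint estimate $\int_0^{c\tau_N/|x-y|^2}r^{-1}e^{-c'/r}\d r$ in \eqref{P2-3:eq1-1}--\eqref{P2-3:eq2}. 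If you are ever asked for the result that is actually used, you will need the shifted version and this offset argument.
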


\begin{proof} We present the proofs of the first two statements and the last two separately. \smallskip

\noindent {(1$^\circ$) and (2$^\circ$).} We choose 
\[
 \lambda_{j}(r)=\frac{M_jr}{Ns},\quad \lambda_{j}'(r)=\frac{M'_jr}{Nt},\quad \Lambda_j(r)=\Lambda_j'(r)=r,\quad a_j=A_j=0
\]
in the setup of Lemma~\ref{lem:poisson3} and impose the primary condition.
Then by Remark~\ref{rmk:simple} and \eqref{PIbdd:1}, we get
\begin{align}\label{r3/2}
 \left|\prod_{j=1}^2\mathbf P\left(V\left(\frac{M_jr}{Ns}\right)=V'\left(\frac{M_j' r}{Nt}\right)\right)-g(r)\right|\leq \frac{C(T_0,T_1)}{r^{3/2}},\quad \forall\; r\in [1,\infty).
 \end{align}
The required limit in \hyperlink{P2-1}{{\rm (1$^\circ$)}} follows from the dominated convergence theorem.

Next, notice that $G(r)=\prod_{j=1}^2\g(2r;0)=1/(4\pi r)$. Hence,  by Remark~\ref{rmk:simple}, \eqref{PIbdd:2} and \eqref{r3/2}, we get the following bound for all $r\in [1,\infty)$: 
\begin{align}
&\quad \left|\prod_{j=1}^2\mathbf P\left(V\left(\frac{M_jr}{Ns}\right)=V'\left(\frac{M_j' r}{Nt}\right)\right)-\frac{1}{4\pi r}\right|\notag\\
&\leq C(T_0,T_1)\Bigg[\frac{1}{r^{3/2}}+\prod_{j=1}^2\Bigg( \left|\frac{M_j}{Ns}-1\right|+\left|\frac{M_j'}{Nt}-1\right|\Bigg) \notag\\
&\hspace{.5cm}+\frac{1}{r^{1/2}}\sum_{j=1}^2\Bigg( \left|\frac{M_j}{Ns}-1\right|+\left|\frac{M_j'}{Nt}-1\right|\Bigg)\Bigg]\notag\\
&\leq C(T_0,T_1)\Bigg(\frac{1}{r^{3/2}}+\frac{1}{N}(1+|x|+|y|)^2+\frac{1}{r^{1/2}N^{1/2}}(1+|x|+|y|)\Bigg),\label{PIbdd:app1}
\end{align}
where the last inequality follows since by the primary condition over $[T_0,T_1]$,
\begin{align}\label{MN-1}
|M_j/(Ns)-1|\leq C(T_0,T_1)(N^{-1}+N^{-1/2}|x_j|)
\end{align}
and a similar bound for  $|M_j'/(Nt)-1|$ holds. The bound in \hyperlink{P2-2}{{\rm (2$^\circ$)}} follows upon integrating both sides  of  \eqref{PIbdd:app1} over $r\in [1,Ns\ell_N]$. Here, we drop a multiplicative constant that tends to zero as $N\to\infty$ since this bound is considered for the tightness proof in Section~\ref{sec:tight}. \smallskip

\noindent {(3$^\circ$) and (4$^\circ$).}
Now, we work with the alternative expressions in \eqref{poisson:int2} for the integrals under consideration. Choose 
\begin{align}\label{3434setup}
\begin{split}
 \lambda_{j}(r)&=\frac{M_j}{Ns}r,\quad \lambda_{j}'(r)=\frac{M'_j(Nt-Ns)}{Nt}+\frac{M'_j}{Nt}r,\\
\Lambda_j(r)&=r,\quad \Lambda_j'(r)=\frac{(Nt+N^{1/2}ty_j)(Nt-Ns)}{Nt}+r,\quad a_j=A_j=M_j-M_j'
\end{split}
\end{align}
for the setup of Lemma~\ref{lem:poisson3} and impose both the primary and secondary conditions and the assumption that $0\leq t-s\leq \tau_N$. (We remove the floor function in $M_j'$ in choosing $\Lambda_j'$.) We proceed with the following steps. \smallskip

\noindent {\bf Step \hypertarget{P2-3-STEP1}{1}.}
First, by Remark~\ref{rmk:simple} and \eqref{PIbdd:1}, we have
\begin{align*}
&\left|\prod_{j=1}^2\mathbf P\Bigg(V\Big(\frac{M_jr}{Ns}\Big)=V'\Big(\frac{M_j'(Nt-Ns)}{Nt}+\frac{M_j'r}{Nt}\Big)+M_j-M_j'\Bigg)-g(r)\right|
\leq \frac{C(T_0,T_1)}{r^{3/2}}
\end{align*}
for all $r\geq 1$. Hence,
\[
\int_1^{N(s-sr_N)}\left|\prod_{j=1}^2\mathbf P\Bigg(V\Big(\frac{M_jr}{Ns}\Big)=V'\Big(\frac{M_j'(Nt-Ns)}{Nt}+\frac{M_j'r}{Nt}\Big)+M_j-M_j'\Bigg)-g(r)\right|\d r
\]
is bounded by $C(T_0,T_1)$, and in the case that $s=t$ and $x\neq y$, tends to zero by dominated convergence as $N\to\infty$ for the reason pointed out below Proposition~\ref{prop:poisson1}. \smallskip

\noindent {\bf Step 2.} We handle $\int_1^{N(s-sr_N)}|g(r)-G(r)|\d r$ by using \eqref{PIbdd:2}. To this end, note that under the setup in \eqref{3434setup}, 
\begin{align*}
&\quad |\lambda_{j}(r)-\Lambda_{j}(r)|+|\lambda'_{j}(r)-\Lambda'_{j}(r)|+|a_j-A_j|\\
&\leq r\left|\frac{M_j}{Ns}-1\right|+\frac{1}{Nt}\cdot N(t-s)+r\left|\frac{M_j'}{Nt}-1\right|\\
&\leq C(T_0,T_1)\left(\tau_N+rN^{-1}+rN^{-1/2}|x_j|+rN^{-1/2}|y_j|\right)\\
&\leq C(T_0,T_1)\left(rN^{-1/2}+rN^{-1}+rN^{-1/2}|x_j|+rN^{-1/2}|y_j|\right),\quad \forall\; r\geq 1,
\end{align*}
where the second inequality uses  \eqref{MN-1} and the assumption $0\leq t-s\leq \tau_N$, and the last one uses $N^{1/2}\tau_N\leq 1$ by Assumption~\ref{ass:sNtN}. Hence, \eqref{PIbdd:2} in the present case simplifies to
\begin{align*}
|g(r)-G(r)|& \leq  C(T_0,T_1)\prod_{j=1}^2 \big(N^{-1/2}+N^{-1/2}|x_j|+N^{-1/2}|y_j|\big)\\
&\quad  +C(T_0,T_1)\sum_{j=1}^2\frac{N^{-1/2}+N^{-1/2}|x_j|+N^{-1/2}|y_j|}{r^{1/2}}
\end{align*}
so that 
\begin{align}
&\quad  \int_1^{N(s-sr_N)}|g(r)-G(r)|\d r\notag\\
&\leq C(T_0,T_1)\Bigg((1-r_N)\prod_{j=1}^2(1+|x_j|+|y_j|)+(1-r_N)^{1/2}\sum_{j=1}^2 (1+|x_j|+|y_j|)\Bigg)\notag\\
&\leq C(T_0,T_1)(1-r_N)^{1/2}(1+|x|^2+|y|^2).\label{P2-3:eq1}
\end{align}

\noindent {\bf Step \hypertarget{P2-3-STEP3}{3}.} Recall the setup in \eqref{3434setup} and the assumption $0\leq t-s\leq \tau_N$. We consider 
\begin{align}
\begin{split}
 \int_1^{N(s-sr_N)}G(r)\d r
&= \int_{1}^{N(s-sr_N)}\prod_{j=1}^2\g\Bigg(2r+\frac{(Nt+N^{1/2}ty_j)(Nt-Ns)}{Nt};\\
&\hspace{2cm}\frac{(Nt+N^{1/2}ty_j)(Nt-Ns)}{Nt}+M_j-M_j'\Bigg)\d r.\label{G-gaussian}
\end{split}
\end{align}
This step is where we use the secondary condition \eqref{sec cond}.

To bound the Gaussian densities in \eqref{G-gaussian}, we first note that the variances therein satisfy the following bounds:
\begin{align}\label{Gaussian:var}
\begin{split}
2r+C(T_0,T_1)N(t-s)&\leq 2r+\frac{(Nt+N^{1/2}ty_j)(Nt-Ns)}{Nt}\\
&\leq 2r+C'(T_0,T_1)N(t-s),
\end{split}
\end{align}
where the primary condition \eqref{prim cond} is used. For the spatial variables in the Gaussian densities, we consider
\begin{align}
&\quad \frac{(Nt+N^{1/2}ty_j)(Nt-Ns)}{Nt}+M_j- M_j'\notag\\
&\geq  \frac{(Nt+N^{1/2}ty_j)(Nt-Ns)}{Nt}+\left(Ns+Ns\cdot \frac{x_j}{N^{1/2}}-1\right)-\left(Nt+Nt\cdot \frac{y_j}{N^{1/2}}\right)\notag\\
&=N^{1/2} y_j(t-s)+N^{1/2}s(x_j-y_j)-1-N^{1/2}(t-s)y_j\notag\\
&\geq -N^{1/2}\tau_N|y_j|+N^{1/2}s(x_j-y_j)-1-N^{1/2}\tau_N|y_j|\notag\\
&\geq  N^{1/2}s(x_j-y_j)-2,\label{use eta}
\end{align}
where the second inequality uses the assumption $0\leq t-s\leq\tau_N$ and the last inequality uses the primary condition \eqref{prim cond} and the definition of $\tau_N$ in Assumption~\ref{ass:sNtN}. Similarly,
\[
-\frac{(Nt+N^{1/2}ty_j)(Nt-Ns)}{Nt}-M_j+ M_j'\geq N^{1/2}t(y_j-x_j)-2.
\]
By \eqref{use eta} and the foregoing inequality, the secondary condition \eqref{sec cond} implies that 
\begin{align}\label{PO2}
\sum_{j=1}^2\left(\frac{(Nt+N^{1/2}ty_j)(Nt-Ns)}{Nt}+M_j-M_j'\right)^2\geq \frac{1}{2}NT_0^2|x-y|^2.
\end{align}

Recall that $1-r_N=\tau_N$. Applying \eqref{Gaussian:var} and \eqref{PO2} to \eqref{G-gaussian}, we get
\begin{align}
 \int_1^{N(s-sr_N)}G(r)\d r
&\leq \int_1^{NT_1\tau_N}\frac{C(T_0,T_1)}{r+N(t-s)}\exp\Bigg\{-\frac{C'(T_0,T_1)N|x-y|^2}{r+N(t-s)}\Bigg\}\d r\notag\\
&=\int_{1/N+(t-s)}^{T_1\tau_N+(t-s)}\frac{C(T_0,T_1)}{r}\exp\Bigg\{-\frac{C'(T_0,T_1)|x-y|^2}{r}\Bigg\}\d r\notag\\
&\leq \int_0^{(T_1+1)\tau_N/|x-y|^2}\frac{C(T_0,T_1)}{r}\exp\Bigg\{-\frac{C'(T_0,T_1)}{r}\Bigg\}\d r,\label{P2-3:eq1-1}
\end{align}
where we have changed variable in the last two lines and the last inequality uses $0\leq t-s\leq \tau_N$. Besides, since $N^{-1}\leq \tau_N\leq N^{-1/2}\leq C(T_0,T_1)|x-y|$ under the secondary condition \eqref{sec cond}, the last inequality implies that
\begin{align}\label{P2-3:eq2}
\int_1^{N(s-sr_N)}G(r)\d r\leq C(T_0,T_1)\big(1+\big|\ln |x-y|\big|\big).
\end{align}

\noindent {\bf Step 4.} Finally, \hyperlink{P2-3}{{\rm (3$^\circ$)}} follows from the second conclusion of Step~\hyperlink{P2-3-STEP1}{1}, \eqref{P2-3:eq1} and \eqref{P2-3:eq1-1} (since $\int_{0+} r^{-1}\e^{-r^{-1}}\d r<\infty $). As for \hyperlink{P2-4}{{\rm (4$^\circ$)}}, it is enough to prove the required inequality with  the term $\1_{[1,\infty)}(r)/(4\pi r)$ removed. Then we apply the other conclusion of Step~\hyperlink{P2-3-STEP1}{1}, \eqref{P2-3:eq1} and \eqref{P2-3:eq2} to the equivalent integral presented in \eqref{poisson:int2}. The proof is complete.
\end{proof}

The following theorem summarizes the asymptotic results proven in Propositions~\ref{prop:poisson1} and~\ref{prop:poisson2}.

\begin{thm}\label{thm:poisson} 
Let Assumption~\ref{ass:sNtN} be in force, and recall the notation $\mathfrak b^N$ in \eqref{def:Bkernel}. \smallskip

\noindent {\rm (1$^\circ$)} For all $0<s\leq t<\infty$ and $x,y\in \R^2$,
\begin{align*}
&\quad \lim_{N\to\infty}\int_0^{Ns\ell_N}\Bigg(\mathfrak b^N(x,y;r,s,t)
-\frac{\1_{[1,\infty)}(r)}{4\pi r}\Bigg)\d r
=\int_0^{\infty}\Bigg(\prod_{j=1}^2\mathbf P\big(V(r)=V'(r)\big)-\frac{\1_{[1,\infty)}(r)}{4\pi r}\Bigg)\d r.
\end{align*}

\noindent {\rm (2$^\circ$)} For all $s=t\in (0,\infty)$ and $x,y\in \R^2$ with $x\neq y$,
\begin{align*}
&\quad \lim_{N\to\infty}\int_{Nsr_N}^{Ns}\mathfrak b^N(x,y;r,s,t)\d r=0.
\end{align*}
\end{thm}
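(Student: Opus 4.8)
\medskip
\noindent\textbf{Proof plan.} The plan is to read off Theorem~\ref{thm:poisson} from the two integrated Poisson approximations, Propositions~\ref{prop:poisson1} and~\ref{prop:poisson2}, which between them already contain all of the analysis. Only two bookkeeping points need attention. First, although those propositions are phrased under the primary and secondary conditions of Definition~\ref{def:cond}, for \emph{fixed} data $x,y,s,t$ one checks that---after fixing, say, $T_0\in(0,\min\{1,s\})$ and $T_1>\max\{1,t\}$---the constraints on $x_j,y_j$ and on $N$ in \eqref{prim cond} all hold for every $N$ beyond some $N_0=N_0(x,y,s,t)$, so the propositions apply from that index on. Second, under Assumption~\ref{ass:sNtN} the two Poisson-approximation remainders are $C(T_0,T_1)N\ell_N^2=C(T_0,T_1)N(1-r_N)^2=C(T_0,T_1)N^{-2\eta}$, hence vanish, while $Ns\ell_N=Ns(1-r_N)=sN^{1/2-\eta}\to\infty$.

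\medskip
\noindent\textbf{Part (1$^\circ$).} Writing $P^N(r)=\prod_{j=1}^2\mathbf P\bigl(V(M_jr/(Ns))=V'(M_j'r/(Nt))\bigr)$, I would split, for $N\ge N_0$,
\[
\int_0^{Ns\ell_N}\!\Bigl(\mathfrak b^N-\tfrac{\1_{[1,\infty)}(r)}{4\pi r}\Bigr)\d r=\int_0^{Ns\ell_N}\!\bigl(\mathfrak b^N-P^N\bigr)\d r+\int_0^{Ns\ell_N}\!\Bigl(P^N-\tfrac{\1_{[1,\infty)}(r)}{4\pi r}\Bigr)\d r .
\]
The first integral is, in absolute value, at most $\int_0^{Ns\ell_N}|\mathfrak b^N-P^N|\d r\le C(T_0,T_1)N\ell_N^2\to0$ by the first inequality of Proposition~\ref{prop:poisson1}; the second converges, by Proposition~\ref{prop:poisson2}(1$^\circ$), to $\int_0^\infty\bigl(\prod_{j=1}^2\mathbf P(V(r)=V'(r))-\tfrac{\1_{[1,\infty)}(r)}{4\pi r}\bigr)\d r$, which is the claimed right-hand side. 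Letting $N\to\infty$ and adding the limits finishes Part (1$^\circ$).

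\medskip
\noindent\textbf{Part (2$^\circ$).} Here $s=t$, so $r/(Ns)\in[r_N,1]$ on the range $r\in[Nsr_N,Ns]$. My preferred route is direct: pick a coordinate $k$ with $x_k\ne y_k$; since $\mathfrak b^N=\mathfrak b^{N,1}\mathfrak b^{N,2}$ with both factors in $[0,1]$, it is enough to show $\int_{Nsr_N}^{Ns}\mathfrak b^{N,k}(x_k,y_k;r,s,s)\d r\to0$. Applying the complement identity \eqref{bin:comp} with $p=p'=r/(Ns)$ and $n=0$ gives $\mathfrak b^{N,k}(x_k,y_k;r,s,s)=\mathbf P\bigl(T_k-T_k'=M_k-M_k'\bigr)$, where $T_k,T_k'$ are independent binomials of sizes $M_k,M_k'$ and \emph{small} success probability $1-r/(Ns)\in[0,\tau_N]$. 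Then $\mathrm{Var}[T_k-T_k']\le(M_k+M_k')\tau_N\lesssim N^{1/2-\eta}$, while $|M_k-M_k'-\mathbf E[T_k-T_k']|=(r/(Ns))|M_k-M_k'|\ge r_N|M_k-M_k'|\gtrsim N^{1/2}$ for $N$ large (using $M_k-M_k'=sN^{1/2}(x_k-y_k)+O(1)$ and $r_N\to1$), so Chebyshev's inequality gives $\mathfrak b^{N,k}\lesssim N^{-1/2-\eta}$ uniformly on $[Nsr_N,Ns]$, whence $\int_{Nsr_N}^{Ns}\mathfrak b^N\d r\le\int_{Nsr_N}^{Ns}\mathfrak b^{N,k}\d r\lesssim Ns\tau_N\cdot N^{-1/2-\eta}=sN^{-2\eta}\to0$. (One could instead invoke the second inequality of Proposition~\ref{prop:poisson1}---its extra hypothesis $0\le t-s\le\tau_N$ holds trivially---to reduce $\int_{Nsr_N}^{Ns}\mathfrak b^N\d r$ to the Poisson integral \eqref{poisson:int2} with $t=s$, and then appeal to Proposition~\ref{prop:poisson2}(3$^\circ$); the direct argument above has the advantage of covering uniformly the subcase in which only one coordinate of $x-y$ is nonzero.)

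\medskip
\noindent\textbf{Where the difficulty is.} Not in Theorem~\ref{thm:poisson} itself: the genuinely hard work---the Stein--Chen integrated Poisson estimate \eqref{PA:Stein}, the local central limit theorem for Skellam-type differences (Lemma~\ref{lem:poisson2}), and the explicit evaluation of the limiting Poisson integrals in the two singular regimes $r\approx0$ and $r\approx s=t$ (Proposition~\ref{prop:poisson2})---is already done. What remains, and what I would be careful about, is only the bookkeeping: upgrading the ``primary/secondary condition'' hypotheses to ``for all large $N$'' statements at fixed data, verifying that the remainders are $o(1)$ precisely because $\eta>0$ in Assumption~\ref{ass:sNtN}, and, should one take the second route in Part (2$^\circ$), matching the complement form of the integrand that the substitution $r\mapsto Ns-r$ produces.
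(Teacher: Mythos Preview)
Your argument is correct. Part~(1$^\circ$) is exactly the paper's approach: the theorem is stated there as a summary of Propositions~\ref{prop:poisson1} and~\ref{prop:poisson2}, and your split $\mathfrak b^N - \tfrac{\1_{[1,\infty)}}{4\pi r} = (\mathfrak b^N - P^N) + (P^N - \tfrac{\1_{[1,\infty)}}{4\pi r})$ is precisely how those two propositions combine.

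For Part~(2$^\circ$) you take a genuinely different route. The paper's argument passes through the full machinery: Proposition~\ref{prop:poisson1} replaces $\mathfrak b^N$ by the shifted Poisson product \eqref{poisson:int2}, and then Proposition~\ref{prop:poisson2}(3$^\circ$) shows that integral vanishes by approximating the Poisson probabilities with Gaussian densities via Lemma~\ref{lem:poisson2} and applying dominated convergence. Your direct Chebyshev bound on the complemented binomials bypasses all of this: once you observe that the variance of $T_k-T_k'$ is $O(N^{1/2-\eta})$ while the target value $M_k-M_k'$ sits at distance $\gtrsim N^{1/2}$ from the mean, the pointwise bound $\mathfrak b^{N,k}\lesssim N^{-1/2-\eta}$ follows immediately, and integration over an interval of length $Ns\tau_N$ gives $O(N^{-2\eta})$. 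This is shorter and uses nothing beyond the second moment; the paper's route, by contrast, produces the quantitative bounds of Proposition~\ref{prop:poisson2}(4$^\circ$) as a by-product, which are needed later for the tightness argument in Section~\ref{sec:tight}. One small remark: your parenthetical worry about ``the subcase in which only one coordinate of $x-y$ is nonzero'' is unnecessary for the alternative route as well, since Proposition~\ref{prop:poisson2}(3$^\circ$) is stated for $x\neq y$ and does not invoke the secondary condition~\eqref{sec cond} (only~(4$^\circ$) does).
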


\subsection{Integrated normal approximations}\label{sec:normal}
We study the remaining integrals in \eqref{tauN1} and \eqref{tauN2}:
\begin{align}\label{iNP}
\int_{s\ell_N}^{sr_N}\mathfrak b_{N}(x,y;r,s,t)\d r
,\quad 
\int_{s\ell_N}^{s}\mathfrak b_{N}(x,y;r,s,t)\d r
\end{align}
by normal approximations. Here and in what follows, recall the notation in \eqref{def:Bkernel} for binomial probabilities. Let us begin with the following elementary result. For a Bernoulli random variable $\beta(p)$ with mean $p$,  we set $\psi_{p}(u)\,\defeq\,\bE\big[\e^{\i  u\overline{\beta}(p)}\big]=p\e^{\i  u(1-p)}+(1-p)\e^{-\i  u p}$, where $\overline{\beta}(p)=\beta(p)-p$.

\begin{lem}
It holds that 
\begin{align}
&|\psi_p(u)|^2
=1-2p(1-p)(1-\cos u),\quad \forall\; p\in (0,1),\;u\in \R;\label{xi:p1}\\
&\sup_{p\in (0,1)}\sup_{u:|u|\leq 1}|\psi_{p}(u)-1|<1.\label{xi:p2}
\end{align}
\end{lem}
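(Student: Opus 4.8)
The plan is to establish the two identities \eqref{xi:p1} and \eqref{xi:p2} separately, both by direct computation, exploiting that $\psi_p$ is an explicit trigonometric polynomial in $u$.

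For \eqref{xi:p1} I would first factor out the common phase: since
$\psi_p(u)=\e^{-\i up}\big(p\e^{\i u}+(1-p)\big)$, we have $|\psi_p(u)|^2=|p\e^{\i u}+(1-p)|^2$. Expanding the right-hand side as $(\Re)^2+(\Im)^2=(p\cos u+1-p)^2+(p\sin u)^2$ and collecting terms with $\cos^2u+\sin^2u=1$ and $(p+(1-p))^2=1$ gives $1-2p(1-p)(1-\cos u)$. This is a couple of lines of algebra with no subtlety.

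For \eqref{xi:p2} the key observation is that, because $p+(1-p)=1$,
\[
\psi_p(u)-1=p\big(\e^{\i u(1-p)}-1\big)+(1-p)\big(\e^{-\i up}-1\big).
\]
Applying the elementary estimate $|\e^{\i\theta}-1|=2|\sin(\theta/2)|\le|\theta|$ to each exponential (with $\theta=u(1-p)$ and $\theta=-up$, respectively) yields $|\psi_p(u)-1|\le p\,|u|(1-p)+(1-p)\,|u|p=2p(1-p)|u|$. Over the range $p\in(0,1)$, $|u|\le 1$ this is at most $2\cdot\tfrac14\cdot1=\tfrac12<1$, so the supremum in \eqref{xi:p2} is in fact bounded by $\tfrac12$.

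I do not anticipate a real obstacle here: both parts are elementary. The only mild point worth flagging is that the bound in the second part must hold uniformly in $p$ and $u$ at once, which the estimate $2p(1-p)|u|\le\tfrac12$ handles automatically because it separates the dependence and each factor is maximized independently ($p(1-p)\le\tfrac14$ and $|u|\le 1$). One could alternatively deduce \eqref{xi:p2} from \eqref{xi:p1} via $|\psi_p(u)-1|^2=|\psi_p(u)|^2-2\Re\psi_p(u)+1$ together with a lower bound on $\Re\psi_p(u)=p\cos(u(1-p))+(1-p)\cos(up)$, but the triangle-inequality route above is shorter and cleaner, so that is the one I would write up.
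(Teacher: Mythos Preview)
Your proposal is correct and follows essentially the same approach as the paper. The only cosmetic difference is in part~\eqref{xi:p2}: the paper uses the same decomposition $|\psi_p(u)-1|\le p|\e^{\i u(1-p)}-1|+(1-p)|\e^{-\i up}-1|$ but bounds each exponential via the monotonicity of $|\e^{\i\theta}-1|^2=2-2\cos\theta$ on $[0,\pi]$ together with $|\e^{\i\pi/3}-1|=1$ (yielding a sup bounded by $2\sin(1/2)<1$), whereas you use $|\e^{\i\theta}-1|\le|\theta|$ to obtain the sharper bound $\tfrac12$; both are equally valid and equally short.
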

\begin{proof}
For (\ref{xi:p2}), note that
$|\psi_{p}(u)-1|\leq p|\e^{\i u(1-p)}-1|+(1-p)|\e^{-\i up}-1|$, $u\mapsto |\e^{\i u}-1|^2=2-2\cos u$ is strictly increasing on $[0,\pi]$, and $|\e^{\i\pi/3}-1|=1$. \end{proof}

The next step is a counterpart of Lemma~\ref{lem:poisson1} in the context of normal approximations. Recall the notation in \eqref{GaussFT} for Gaussian densities. Note that the next lemma is not the binomial local central limit theorem (the de Moivre--Laplace theorem), and the forthcoming application needs the sum form  \eqref{def:para_lclt} of $\sigma^2$ for the bound in \eqref{ineq:quant_lclt}. See also Remark~\ref{rmk:stein}.

\begin{lem}\label{lem:normal1}
Given $q,q'\in (0,1)$ and integers $M,M',N\geq 1$, define
\begin{align}\label{def:para_lclt}
\mu=\frac{Mq-M'q'}{N^{1/2}}\quad \&\quad \sigma^2=\frac{M}{N}q(1-q)+\frac{M'}{N}q'(1-q'). 
\end{align}
Then
\begin{align}\label{ineq:quant_lclt}
\sup_{a\in \Bbb Z}\left|N^{1/2}\mathbf P\big(S_{ M}(q)=S'_{ M'}(q')+a\big) -\mathfrak g\left(\sigma^2;\frac{a}{N^{1/2}}-\mu\right)\right|\less \frac{1}{N^{1/2}\sigma^2}.
\end{align}
In particular, if the primary condition \eqref{prim cond} holds to ensure that $M_j,M_j'\geq 1$ (Remark~\ref{rmk:simple}) and $0<s\leq t<\infty$, then for all $r\in (0,s)$ and $1\leq j\leq 2$,
\begin{align}
\begin{split}
 \left| \mathfrak b_{N,j}(x_j,y_j;r,s,t)
  -\mathfrak g\big(\sigma_j(r;N)^2;\mu_j(r;N)\big)\right|
 &\less 
 \frac{1}{N^{1/2}\sigma_j(r;N)^{2}} 
 .\label{normal:bdd} \end{split}
\end{align}

\end{lem}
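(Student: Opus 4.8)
The plan is to prove the sup-bound \eqref{ineq:quant_lclt} by Fourier inversion on $\Z$, mirroring the structure of the proof of Lemma~\ref{lem:poisson2}, and then to obtain \eqref{normal:bdd} as a direct specialization. First I would write $\overline S_M(q)-\overline S'_{M'}(q')=\sum_{n=1}^M\overline\beta_n(q)-\sum_{n=1}^{M'}\overline\beta'_n(q')$, so its characteristic function is $\phi(u)=\psi_q(u)^M\psi_{q'}(u)^{M'}$ with $\psi$ as in the preceding lemma. By the Fourier inversion formula \eqref{discreteFT} applied to the integer-valued variable and by the Gaussian identity in \eqref{GaussFT}, the left-hand side of \eqref{ineq:quant_lclt} equals
\begin{align*}
\frac{1}{2\pi}\int_{|\theta|\le \pi N^{1/2}}\e^{-\i\theta(\frac{a}{N^{1/2}}-\mu)}\left[\phi\Big(\frac{\theta}{N^{1/2}}\Big)\e^{\i\theta\mu}-\e^{-\sigma^2\theta^2/2}\right]\d\theta
+\frac{1}{2\pi}\int_{|\theta|\ge\pi N^{1/2}}\e^{-\i\theta(\frac{a}{N^{1/2}}-\mu)}\e^{-\sigma^2\theta^2/2}\d\theta,
\end{align*}
where I have centered the characteristic function, i.e. $\phi(\theta/N^{1/2})\e^{\i\theta\mu}=\bE\exp\{\i\theta N^{-1/2}(S_M(q)-S'_{M'}(q')-\bE[\cdots])\}$. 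Call these ${\rm I}$ and ${\rm II}$.

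For ${\rm II}$, I would use the tail bound $x^a\int_x^\infty\e^{-\theta^2/2}\d\theta\le C(a)$ already invoked in the proof of Lemma~\ref{lem:poisson2}; after the substitution $\theta\mapsto\sigma\theta$ this gives $|{\rm II}|\less (\sigma\pi N^{1/2})^{-1}\e^{-\sigma^2\pi^2 N/2}$, which is far smaller than the claimed $N^{-1/2}\sigma^{-2}$ once one notes $\sigma^2\ge 1/N$ in the relevant regime (and in any case the bound is only asserted in a form that dominates this term). For ${\rm I}$, the core estimate is a comparison, uniformly on $|\theta|\le\pi N^{1/2}$, of the centered characteristic function $\phi(\theta/N^{1/2})\e^{\i\theta\mu}$ with $\e^{-\sigma^2\theta^2/2}$. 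I would split the $\theta$-range at $|\theta|\asymp\sqrt N$ into a central region $|\theta|\le \delta N^{1/2}$ with $\delta$ small and a peripheral region $\delta N^{1/2}\le|\theta|\le\pi N^{1/2}$. In the central region, take $\log$ of each Bernoulli factor: $\log\psi_q(u)=-\tfrac12 q(1-q)u^2+O(u^3)$ uniformly in $q\in(0,1)$, so summing over the $M$ copies and $M'$ copies and using $u=\theta/N^{1/2}$ gives $\log[\phi(\theta/N^{1/2})\e^{\i\theta\mu}]=-\tfrac12\sigma^2\theta^2+E$ with $|E|\less (M+M')N^{-3/2}|\theta|^3=N^{-1/2}\sigma^2|\theta|^3/(\text{bounded})$ — here I would be careful to track that the cubic error has coefficient controlled by $M/N$ and $M'/N$, hence by $\sigma^2$ up to the factor $q(1-q),q'(1-q')\le 1/4$; then $|\e^{z_1}-\e^{z_2}|\le\max(\e^{|z_1|},\e^{|z_2|})|z_1-z_2|$ (the inequality \eqref{exp:MVT}) converts this to $|\phi(\theta/N^{1/2})\e^{\i\theta\mu}-\e^{-\sigma^2\theta^2/2}|\less \e^{-\sigma^2\theta^2/4}N^{-1/2}\sigma^2|\theta|^3$ for $\delta$ small, and integrating against $\d\theta$ (substitute $\theta=\sigma^{-1}\vartheta$) yields a contribution $\less N^{-1/2}\sigma^2\cdot\sigma^{-4}=N^{-1/2}\sigma^{-2}$, which is exactly the target. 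In the peripheral region I would use $|\psi_q(u)|^2=1-2q(1-q)(1-\cos u)$ from \eqref{xi:p1}, hence $|\phi(\theta/N^{1/2})|=\prod|\psi|\le\exp\{-c(M+M')(1-\cos(\theta/N^{1/2}))\}$; since $1-\cos(\theta/N^{1/2})\gtrsim \theta^2/N$ on $|\theta|\le\pi N^{1/2}$, this is $\le\exp\{-c'\sigma^2\theta^2/(1/4)\}$ up to constants — wait, one must be cautious: $q(1-q)$ could be tiny, so $\sigma^2$ could be much smaller than $(M+M')/N$. To handle this cleanly I would instead bound the peripheral piece crudely by $\int_{\delta N^{1/2}}^{\pi N^{1/2}}(|\phi(\theta/N^{1/2})|+\e^{-\sigma^2\theta^2/2})\d\theta$ and dominate $|\phi(\theta/N^{1/2})|$ using $|\psi_q(u)|\le\exp\{-q(1-q)(1-\cos u)\}\le\exp\{-\sigma^2\theta^2\cdot c\}$ after recognizing that $\sigma^2\theta^2/N^{0}$ and $(M+M')(1-\cos(\theta/N^{1/2}))$ have comparable structure only through $q(1-q)$; the honest fix is that the combined exponent is $\sum$ over all $M+M'$ Bernoullis of $q_\bullet(1-q_\bullet)(1-\cos(\theta/N^{1/2}))$, and this sum is exactly $N\sigma^2(1-\cos(\theta/N^{1/2}))\gtrsim \sigma^2\theta^2$, so $|\phi(\theta/N^{1/2})|\le\e^{-c\sigma^2\theta^2}$ holds uniformly and the peripheral integral is $\less\sigma^{-1}\e^{-c\delta^2\sigma^2 N}\less N^{-1/2}\sigma^{-2}$ when $\sigma^2 N\gtrsim 1$.

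The main obstacle will be making the dependence on $\sigma^2$ honest and uniform: $\sigma^2$ can degenerate (if $q,q'$ are near $0$ or $1$), and the bound must stay $\less N^{-1/2}\sigma^{-2}$ with a universal constant. The key observation that rescues this is that the natural error scales — the cubic-moment error in the Edgeworth expansion and the exponent controlling $|\phi|$ — are \emph{both} governed by $N\sigma^2=\sum q_\bullet(1-q_\bullet)$ rather than by $M+M'$, so after the substitution $\theta=\sigma^{-1}\vartheta$ every estimate is in terms of $\vartheta$ with clean constants, and the cubic correction picks up precisely one factor $\sigma^{-1}$ relative to the Gaussian, producing the $\sigma^{-2}$. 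Finally, for the consequence \eqref{normal:bdd}, I would simply apply \eqref{ineq:quant_lclt} with $M=M_j$, $M'=M_j'$, $q=r/s$, $q'=r/t$, $a=M_j(r/s)-M_j'(r/t)$ — wait, more precisely with $a$ ranging over $\Z$ and then specializing — so that $\mu=\mu_j(r;N)$ and $\sigma^2=\sigma_j(r;N)^2$ by \eqref{def:mujsigmaj0}, and note $\mathfrak b_{N,j}=N^{1/2}\mathbf P(S_{M_j}(r/s)=S'_{M_j'}(r/t))$ corresponds to taking $a=0$ in the event $S_{M_j}(q)=S'_{M_j'}(q')+a$ after shifting; the right-hand side $N^{-1/2}\sigma^{-2}$ becomes $N^{-1/2}\sigma_j(r;N)^{-2}$, which is \eqref{normal:bdd}. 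One should double-check that the event alignment is $S_M(q)=S'_{M'}(q')$, i.e. $a=0$, giving argument $\mathfrak g(\sigma^2;-\mu)=\mathfrak g(\sigma^2;\mu)$ by evenness — consistent with \eqref{normal:bdd}.
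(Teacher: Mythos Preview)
Your approach is the paper's: Fourier inversion on $\Z$, split the $\theta$-integral into central and peripheral regions, cumulant expansion of $\Log\psi_p$ in the center, and the modulus bound via \eqref{xi:p1} in the periphery. The paper splits at $|\theta|=N^{1/2}$ (so that $|u|\le 1$ and $\Log\psi_p(u)$ is defined by \eqref{xi:p2}) and treats your peripheral part of ${\rm I}$ together with your ${\rm II}$ as a single Gaussian-tail estimate, but these are cosmetic differences.

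One step in your sketch is misstated and would not go through as written. You bound the cubic error in the log of the characteristic function by $(M+M')N^{-3/2}|\theta|^3$ and then assert this equals $N^{-1/2}\sigma^2|\theta|^3/(\text{bounded})$. The direction is wrong: $\sigma^2\le (M+M')/(4N)$, and $q(1-q),q'(1-q')$ can be arbitrarily small, so $(M+M')/N$ is in general not controlled by $\sigma^2$. The correct mechanism---which you do identify in your final paragraph, and which the paper isolates in \eqref{cum3:bdd}--\eqref{cumulant}---is that the third-order remainder in $\Log\psi_p(u)$ is itself $\less p(1-p)|u|^3$, not merely $C|u|^3$, because every term in the derivative of the cumulant carries a factor $\bE|\overline\beta(p)|=2p(1-p)$. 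Summing over the $M+M'$ Bernoulli factors then gives $|E|\less N\sigma^2\cdot N^{-3/2}|\theta|^3=\sigma^2 N^{-1/2}|\theta|^3$ directly, and after that your central-region estimate and the substitution $\theta\mapsto\sigma^{-1}\vartheta$ produce $N^{-1/2}\sigma^{-2}$ as claimed. One minor slip: the characteristic function of $\overline S_M(q)-\overline S'_{M'}(q')$ is $\psi_q(u)^M\psi_{q'}(-u)^{M'}$, not $\psi_q(u)^M\psi_{q'}(u)^{M'}$; this is immaterial for the modulus and for the absolute cubic bound, but should be corrected.
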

\begin{proof}
We reconsider the proof of Lemma~\ref{lem:poisson2} and proceed with the following identity:  By \eqref{GaussFT}, \eqref{discreteFT}, and the definition of $\mu$ in \eqref{def:para_lclt}, it holds that for all $a\in \Bbb Z$,
\begin{align}
 &\quad\;   N^{1/2}\mathbf P\big(S_{ M}(q)=S'_{ M'}(q')+a\big) -\mathfrak g\left(\sigma^2;\frac{a}{N^{1/2}}-\mu\right)\notag\\
 &=\frac{1}{2\pi  }\int_{|\theta|\leq N^{1/2} } \e^{-\i \theta\frac{a}{N^{1/2}}+\i \theta\mu }\left[\psi_{q}\left(\frac{\theta}{N^{1/2}}\right)^{M}\psi_{q'}\left(-\frac{\theta}{N^{1/2}}\right)^{M'}-\exp\left\{-\frac{\sigma^2\theta^2}{2}\right\}\right]\d\theta\notag\\
 & +\frac{1}{2\pi}\int_{N^{1/2} <|\theta|\leq N^{1/2} \pi}  \e^{-\i \theta\frac{a}{N^{1/2}}+\i \theta\mu }\left[\psi_{q}\left(\frac{\theta}{N^{1/2}}\right)^{M}\psi_{q'}\left(-\frac{\theta}{N^{1/2}}\right)^{M'}-\exp\left\{-\frac{\sigma^2\theta^2}{2}\right\}\right]\d\theta\notag\\
 &  + \frac{1}{2\pi  }\int_{|\theta|\geq N^{1/2} \pi}\e^{-\i \theta\frac{a}{N^{1/2}}+\i \theta\mu }\exp\left\{-\frac{\sigma^2\theta^2}{2}\right\}\d\theta\notag\\
  &={\rm I}_{\ref{integral}}+{\rm II}_{\ref{integral}}+{\rm III}_{\ref{integral}}.
  \label{integral}
\end{align}
The required bound in \eqref{ineq:quant_lclt} then follows upon applying (\ref{I2}) and (\ref{I3}), to be proven below, to \eqref{integral}.
\smallskip

\noindent {\bf Step \hypertarget{step-normal-1}{1}.}
For ${\rm I}_{\ref{integral}}$, we view 
\begin{align}
 &\quad \left|\psi_{q}\left(\frac{\theta}{N^{1/2}}\right)^{M}\psi_{q'}\left(-\frac{\theta}{N^{1/2}}\right)^{M'}\exp\left\{\frac{\sigma^2\theta^2}{5}\right\}-\exp\left\{-\frac{3\sigma^2\theta^2}{10}\right\}\right|,\, \forall\;|\theta|\leq N^{1/2},\label{z1z2bdd}
\end{align}
as $|\e^{z_1}-\e^{z_2}|$ and bound this difference in the way of \eqref{exp:MVT}. This use of \eqref{exp:MVT} is legitimate since we can take the logarithms of $\psi_q(\theta/N^{1/2})$ and $\psi_{q'}(-\theta/N^{1/2})$ by (\ref{xi:p2}).\smallskip

\noindent {\bf Step 1-1.}
To bound the term corresponding to $\max\{|\e^{z_1}|,|\e^{z_2}|\}$ in \eqref{exp:MVT}, we use
\eqref{xi:p1} and the inequality $1-v\leq \e^{-v}$ for all $v\geq 0$. They give
 \begin{align}
 \left|\psi_{q}\left(\frac{\theta}{N^{1/2}}\right)\right|^M\exp\left\{\frac{ Mq(1-q)\theta^2}{5N}\right\}
&\leq 
\exp\left\{2Mq\left(1-q\right)\left(\cos \frac{\theta}{N^{1/2}}-1+\frac{\theta^2}{10N}\right)\right\}\notag\\
&\leq \exp\left\{-\frac{Mq(1-q)\theta^2}{5N}\right\},\label{psi:est1}
 \end{align}
 where the last inequality holds whenever $|\theta|\leq N^{1/2}$ since $\cos v-1+v^2/5\leq 0$ for $|v|\leq 1$. Plainly, the same bound with $(q,M)$ replaced by $(q',M')$ on both sides holds. \smallskip

\noindent {\bf Step 1-2.}
Next, for \eqref{z1z2bdd}, we bound the term corresponding to $|z_1-z_2|$ in \eqref{exp:MVT}. We expand $\kappa(\i u)=\Log \psi_p(u)$ for $u\in \R$ around $0$, where $\kappa$ denotes the cumulant of $\overline{\beta}(p)$. We have $\kappa(0)$=0. Also, by definition, $\overline{\beta}(p)$ has a zero mean and variance $p(1-p)$, and so the first two derivatives of $u\mapsto \kappa(\i u)$ at $u=0$ are given by $0$ and $\i^2p(1-p)$, respectively.

The next bound is for the third-order derivative of $u\mapsto \kappa(\i u)$. It is chosen to incorporate the parameters of the Bernoulli random variables: Observe that $(\d/\d u)\kappa(\i u)=\mathbf E[\i \overline{\beta} (p)\e^{\i u \overline{\beta} (p)}]/\mathbf E[\e^{\i u \overline{\beta} (p)}]$. This expression implies that  every higher-order derivative is a ratio where the denominator is a power of $\psi_p(u)$ and the numerator is a sum of products of expectations of the form $\pm \mathbf E[(\i \overline{\beta} (p))^\ell \e^{\i u \overline{\beta} (p)}]$. Each product carries at least one such expectation with $\ell\geq 1$ and so can be bounded by $\mathbf E[|\overline{\beta}(p)|]=2p(1-p)$. Hence, by (\ref{xi:p2}),
\begin{align}\label{cum3:bdd}
\sup_{u:|u|\leq 1}\left|\frac{\d^3}{\d u^3}\kappa(\i u)\right|\less p(1-p),\quad \forall\;p\in (0,1).
\end{align}
Up to this point, we have proved that 
 \begin{align}\label{cumulant}
\left|\Log\psi_p(u)+\frac{p(1-p)}{2}u^2\right|\less p(1-p)|u|^3,\quad \forall\; |u|\leq 1,\; p\in (0,1).
 \end{align}
We remark that the factor $p(1-p)$ in the bound of \eqref{cumulant} will be useful.

Now, an application of the definition \eqref{def:para_lclt} of $\sigma^2$ and \eqref{cumulant} shows that the term $|z_1-z_2|$ in \eqref{exp:MVT} for bounding \eqref{z1z2bdd} satisfies 
 \begin{align}
  &\quad \Bigg|M\left[\Log \psi_{q}\left(\frac{\theta}{N^{1/2}}\right)+\frac{q(1-q)}{2}\frac{\theta^2}{N}\right]+M'\left[\Log \psi_{q'}\left(-\frac{\theta}{N^{1/2}}\right)
 +\frac{q'(1-q')}{2}\frac{\theta^2}{N}\right]\Bigg|\notag\\
 &\less \frac{|\theta|^3}{N^{1/2}}\left[\frac{M}{N}q(1-q)+\frac{M'}{N}q'(1-q')\right] = \frac{|\theta|^3}{N^{1/2}}\sigma^2,\quad \forall\; |\theta|\leq N^{1/2} ,\label{psi:est4}
 \end{align}
 where the last equality applies \eqref{def:para_lclt} again.\smallskip

\noindent {\bf Step 1-3.}
To finish the proof of Step \hyperlink{step-normal-1}{1}, we put together (\ref{psi:est1}), the analogous bound with $(q,M)$ replaced by $(q',M')$,  and (\ref{psi:est4}). Then we deduce that 
\begin{align*}
&\quad \left|\psi_{q}\left(\frac{\theta}{N^{1/2}}\right)^{M}\psi_{q'}\left(-\frac{\theta}{N^{1/2}}\right)^{M'}-\exp\left\{-\frac{\sigma^2\theta^2}{2}\right\}\right|\less \exp\left\{-\frac{\sigma^2\theta^2}{5}\right\}\frac{|\theta|^3}{N^{1/2}}\sigma^2 ,\; \forall\; |\theta|\leq N^{1/2}.
\end{align*} 
We arrive at the following bound:
\begin{align}
\begin{split}
|{\rm I}_{\ref{integral}}|\less &\frac{\sigma^2 }{N^{1/2}}\int_{0}^{N^{1/2} }\theta^3\exp\left\{-\frac{\sigma^2\theta^2}{5}\right\}\d\theta=\frac{1 }{N^{1/2}\sigma^2}\int_{0}^{N^{1/2} \sigma}\theta^3\exp\left\{-\frac{\theta^2}{5}\right\}\d\theta\less \frac{1}{N^{1/2}\sigma^2}.\label{I2}
\end{split}
\end{align}

\noindent {\bf Step 2.} Finally, we bound ${\rm II}_{\ref{integral}}+{\rm III}_{\ref{integral}}$. Notice that 
\[
|\psi_p(u)|^2\leq 1-4p(1-p)\cdot \frac{u^2}{\pi^2}\leq \exp\left\{-\frac{4p(1-p)u^2}{\pi^2}\right\},\quad \forall\;|u|\leq \pi,
\]
where the first inequality follows from \eqref{xi:p1} and the inequality $1-\cos u-2u^2/\pi^2\geq 0$ for all $|u|\leq \pi$, and the second inequality uses $1-v\leq \e^{-v}$ for all $v\geq 0$. It follows that 
\begin{align}
 |{\rm II}_{\ref{integral}}+{\rm III}_{\ref{integral}}|
 &\leq \frac{1}{2\pi }\int_{N^{1/2} <|\theta|\leq N^{1/2} \pi}
\exp\left\{-\frac{2\theta^2}{\pi^2}\left[\frac{M}{N}q(1-q)+\frac{M'}{N}q'(1-q')\right]\right\}
\d\theta\notag\\
&\quad +\frac{1}{2\pi }\int_{N^{1/2} <|\theta|\leq N^{1/2} \pi}\exp\left\{-\frac{\sigma^2\theta^2}{2}\right\}\d \theta+\frac{1}{2\pi  }\int_{|\theta|\geq N^{1/2} \pi} \exp\left\{-\frac{\sigma^2\theta^2}{2}\right\}\d\theta\notag\\
&\leq\int_{N^{1/2} }^\infty \exp\left\{-\frac{\sigma^2\theta^2}{5}\right\}\d \theta= \frac{1}{\sigma}\int_{N^{1/2}\sigma }^\infty \exp\left\{-\frac{\theta^2}{5}\right\}\d \theta\less \frac{1}{N^{1/2}\sigma^2}.
\label{I3}
\end{align}
In \eqref{I3}, the definition \eqref{def:para_lclt} of $\sigma^2$ is used. The proof is complete.
\end{proof}

\begin{rmk}\label{rmk:stein}
The proofs of Lemmas~\ref{lem:poisson2} and \ref{lem:normal1} extend the methods in \cite[Chapters XV and XVI]{Feller_v2} for proving the local central limit theorem of general lattice distributions. Here, we rely on the explicit forms of the characteristic functions to get the rates of convergence. Besides, in contrast to \eqref{integral:poisson}, we decompose the Fourier integral for $N^{1/2}\mathbf P\big(S_{ M}(q)=S'_{ M'}(q')+a\big) $ into two parts in \eqref{integral}. This validates \eqref{cumulant} since $\psi_{1/2}(\pi)=0$ by \eqref{xi:p1}. On the other hand, although it is not clear to us whether \eqref{integral:poisson} can be deduced from the known existing results, \cite[Theorem~9]{Petrov:1962} proves the rate of convergence for sums of i.i.d. general lattice-valued random variables (also by the Fourier analytic method). It may be possible to extend the proof in \cite{Petrov:1962} to \eqref{ineq:quant_lclt} by elaborating its use of the absolute third moments of the random variables in the Fourier integral decomposition. See also \cite[Section~1]{RR:local} for a general discussion for such rates. \mbox{} \hfill $\blacksquare$
\end{rmk}

The next proposition proves integrated normal approximations for the integrals in \eqref{iNP}. 
\begin{prop}\label{prop:normal1}
Fix $0<T_0<1<T_1<\infty$, and assume the primary condition \eqref{prim cond}. \smallskip

\noindent \hypertarget{normal1-1}{{\rm (1$^\circ$)}} It holds that 
\begin{align}
\begin{split}
&
\sup_{ s,t: T_0\leq s\leq t\leq T_1}
\int_{s\ell_N}^{sr_N} \Big|
\mathfrak b_{N}(x,y;r,s,t)
-\prod_{j=1}^2\mathfrak g\big(\sigma_j(r)^2;\mu_j(r)\big)\Big|\d r \leq  
\frac{C(T_0,T_1)}{ (N\ell_N)^{1/2}}.
\label{term:sumA}
\end{split}
\end{align}

\noindent \hypertarget{normal1-2}{{\rm (2$^\circ$)}} If we add the condition $t-s> \tau_N$ to the supremum in  (\ref{term:sumA})  and change the upper limit $sr_N$ of the integral on the left-hand side to $s$, then the same bound holds.\smallskip

\noindent \hypertarget{normal1-3}{{\rm (3$^\circ$)}}
The suprema in  {\rm (1$^\circ$)} and  {\rm (2$^\circ$)} tend to zero as $N\to\infty$ for all $x,y\in \R^2$. 
\end{prop}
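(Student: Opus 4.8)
The plan is to control $\bigl|\mathfrak b_N(x,y;r,s,t)-\prod_{j=1}^2\mathfrak g(\sigma_j(r)^2;\mu_j(r))\bigr|$ pointwise in $r$, splitting the comparison into a binomial-to-Gaussian step (governed by \eqref{normal:bdd}) and a Gaussian-to-limit-Gaussian step (governed by Lemma~\ref{lem:heat}), and then to integrate; the only genuine difficulty is the near-singularity of $r\mapsto\mathfrak g(\sigma_j(r)^2;\,\cdot\,)$ where $\sigma_j(r)^2$ is small. Write $\mathfrak g_j^N(r)=\mathfrak g(\sigma_j(r;N)^2;\mu_j(r;N))$ and $\mathfrak g_j(r)=\mathfrak g(\sigma_j(r)^2;\mu_j(r))$ with $\mu_j,\sigma_j$ as in \eqref{def:mujsigmaj0}--\eqref{def:mujsigmaj}; recall $\sigma_1(r)=\sigma_2(r)=:\sigma(r)$ by \eqref{def:mujsigmaj}. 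Assuming \eqref{prim cond}, I would first record three elementary facts (all $\less$ below may hide constants depending only on $T_0,T_1$): \textbf{(i)} $\sigma(r)^2\ge r(1-r/s)=s\cdot\tfrac rs(1-\tfrac rs)\ge\tfrac{T_0}{2}\min(r/s,1-r/s)$ for $r\in(0,s)$; \textbf{(ii)} since $|M_j/(Ns)-1|+|M_j'/(Nt)-1|\less N^{-1/2}(1+|x_j|+|y_j|)$ and $r(1-r/s),\,r(1-r/t)\le\sigma_j(r)^2$, one gets the \emph{relative} bound $|\sigma_j(r;N)^2-\sigma_j(r)^2|\less N^{-1/2}(1+|x_j|+|y_j|)\,\sigma_j(r)^2$, so in particular $\sigma_j(r;N)^2\asymp\sigma(r)^2$ once $N\ge N_0(T_0,T_1)$; \textbf{(iii)} $|\mu_j(r;N)-\mu_j(r)|\less N^{-1/2}$, straight from \eqref{def:Morigin} and $r\le s\le T_1$. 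The finitely many $N$ below $N_0$ still satisfying \eqref{prim cond} (a bounded range of $x,y$ for each such $N$) are absorbed into the final constant.

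Plugging (ii)--(iii) into Lemma~\ref{lem:heat} gives $|\mathfrak g_j^N(r)-\mathfrak g_j(r)|\less(1+|x_j|+|y_j|)N^{-1/2}\sigma(r)^{-1}+N^{-1/2}\sigma(r)^{-2}$, while \eqref{normal:bdd} with (ii) gives $|\mathfrak b_{N,j}(x_j,y_j;r,s,t)-\mathfrak g_j^N(r)|\less N^{-1/2}\sigma(r)^{-2}$, and trivially $\mathfrak b_{N,j},\mathfrak g_j^N,\mathfrak g_j\less\sigma(r)^{-1}$. Two applications of the discrete product rule \eqref{XYAB} (first to $\mathfrak b_{N,1}\mathfrak b_{N,2}-\mathfrak g_1^N\mathfrak g_2^N$, then to $\mathfrak g_1^N\mathfrak g_2^N-\mathfrak g_1\mathfrak g_2$), and writing $P:=1+\sum_j(|x_j|+|y_j|)$, yield $|\mathfrak b_N-\mathfrak g_1\mathfrak g_2|\less N^{-1}\sigma^{-4}+N^{-1/2}\sigma^{-3}+PN^{-1}\sigma^{-3}+P^2N^{-1}\sigma^{-2}+PN^{-1/2}\sigma^{-2}$. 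Over $r\in[s\ell_N,sr_N]$, fact (i) gives $\int\sigma^{-k}\,\d r\less\ln(1/\ell_N),\ \ell_N^{-1/2},\ \ell_N^{-1}$ for $k=2,3,4$, whence
\[
\int_{s\ell_N}^{sr_N}\bigl|\mathfrak b_N-\mathfrak g_1\mathfrak g_2\bigr|\,\d r\less\frac1{N\ell_N}+\frac1{(N\ell_N)^{1/2}}+\frac{P}{N\ell_N^{1/2}}+\frac{P^2\ln(1/\ell_N)}{N}+\frac{P\ln(1/\ell_N)}{N^{1/2}}.
\]
Under \eqref{prim cond} one has $P\less N^{\eta}$ and $\ell_N=N^{-(1/2+\eta)}$, and comparing exponents each summand is $\le C(T_0,T_1)(N\ell_N)^{-1/2}=C(T_0,T_1)N^{-(1/4-\eta/2)}$ precisely because $\eta<1/2$ (e.g.\ $PN^{-1}\ell_N^{-1/2}\less N^{3\eta/2-3/4}$ and $3\eta/2-3/4\le\eta/2-1/4\iff\eta\le\tfrac12$; the logarithmic factors are harmless as the exponent inequalities are strict). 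Since nothing above depends on $s,t$ beyond $T_0\le s\le t\le T_1$, this proves \textbf{(1$^\circ$)}.

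For \textbf{(2$^\circ$)} the only new feature is the tail interval $[sr_N,s]=[s(1-\ell_N),s]$, of length $s\ell_N\less\ell_N$, on which $1-r/s\to0$ so fact (i) degenerates. This is exactly where the added hypothesis $t-s>\tau_N$ enters: for $N$ large one has $r\ge s/2$ on this interval, hence $\sigma(r)^2\ge r(1-r/t)\ge\tfrac s2\cdot\tfrac{t-s}{t}\gtrsim\tau_N=\ell_N$, giving $\sigma^{-k}\less\ell_N^{-k/2}$; integrating the same five-term pointwise bound over an interval of length $\less\ell_N$ yields $\less\tfrac1{N\ell_N}+\tfrac1{(N\ell_N)^{1/2}}+\tfrac{P}{N\ell_N^{1/2}}+\tfrac{P^2}{N}+\tfrac{P}{N^{1/2}}$, each term again $\le C(T_0,T_1)(N\ell_N)^{-1/2}$ by the same exponent comparison ($\eta<1/2$). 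Adding the $[s\ell_N,sr_N]$ contribution from \textbf{(1$^\circ$)} proves \textbf{(2$^\circ$)}. Finally \textbf{(3$^\circ$)} is immediate: for fixed $x,y\in\R^2$ the primary condition over $[T_0,T_1]$ holds for all large $N$, so \textbf{(1$^\circ$)}--\textbf{(2$^\circ$)} bound the suprema by $C(T_0,T_1)N^{-(1/4-\eta/2)}\to0$ since $\eta<1/2$.

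The crux I expect is the \emph{relative} variance estimate (ii). With only the crude bound $|\sigma_j(r;N)^2-\sigma_j(r)^2|\less N^{-1/2}(1+|x_j|+|y_j|)$, Lemma~\ref{lem:heat} would produce a term of size $(1+|x_j|+|y_j|)N^{-1/2}\sigma(r)^{-3}$ whose $r$-integral is of order $(1+|x_j|+|y_j|)(N\ell_N)^{-1/2}$, and the spatial growth $|x_j|,|y_j|\less N^{\eta}$ permitted by \eqref{prim cond} would then overwhelm the target rate. Generating the factor $\sigma_j(r)^2$ on the right of (ii) --- so that Lemma~\ref{lem:heat} delivers $\sigma^{-3}\cdot\sigma^2=\sigma^{-1}$ in place of $\sigma^{-3}$ --- is what keeps every term at the Edwards--Wilkinson rate $(N\ell_N)^{-1/2}$. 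The remaining work is the routine arithmetic of checking that the exponents of $N$ produced by the substitution $P\less N^{\eta}$ all stay $\le-(1/4-\eta/2)$, which is precisely where the slack $\eta<1/2$ fixed in Assumption~\ref{ass:sNtN} is spent.
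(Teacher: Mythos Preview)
Your argument is correct and follows the same skeleton as the paper: split $\mathfrak b_N-\prod_j\mathfrak g_j$ into a binomial--Gaussian piece controlled by \eqref{normal:bdd} and a Gaussian--Gaussian piece controlled by Lemma~\ref{lem:heat}, then integrate against the lower bounds $\sigma(q)^2\gtrsim q$ on $[\ell_N,1/2]$ and $\sigma(q)^2\gtrsim 1-q$ (respectively $\gtrsim \tau_N+(1-q)$ when $t-s>\tau_N$) on the right half. The paper organises the same bookkeeping through the list \eqref{lim2:I}--\eqref{lim4} rather than your five-term pointwise bound, but the content is identical.

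The one substantive difference is your \emph{relative} variance estimate~(ii). The paper invokes the crude bound $|\sigma_j(sq;N)^2-\sigma_j(sq)^2|\le 2/N$, which is not actually correct once $x_j$ or $y_j$ is nonzero: from $|M_j/N-s|\le s|x_j|N^{-1/2}+N^{-1}$ the difference is only $O\bigl((1+|x_j|+|y_j|)N^{-1/2}\,q(1-q)\bigr)$, i.e.\ exactly your~(ii). Feeding the honest bound directly into Lemma~\ref{lem:heat} would produce a term of order $(1+|x_j|+|y_j|)N^{-1/2}\sigma^{-3}$, whose $r$-integral carries an extra factor $1+|x|+|y|$ in front of $(N\ell_N)^{-1/2}$; your observation that the difference already contains a factor $\sigma_j(r)^2$ downgrades $\sigma^{-3}$ to $\sigma^{-1}$ and is precisely what recovers the $x,y$-free constant $C(T_0,T_1)$ claimed in \eqref{term:sumA}. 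So your ``crux'' paragraph is on point: the relative bound is not a cosmetic sharpening but the step that makes the stated proposition go through uniformly under the primary condition.
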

\begin{proof}
Recall the quantities $\sigma_j(r;N)\geq 0$ and $\sigma_j(r)\geq 0$ defined in \eqref{def:mujsigmaj0} and \eqref{def:mujsigmaj}. We write $I_j(r;N)$ for the right-hand side of (\ref{normal:bdd}), $j\in \{1,2\}$, and define $\sigma(q)\geq 0$ by $\sigma(q)^2=\min\{\sigma_j(sq;N)^2,$ $\sigma_j(sq)^2;1\leq j\leq 2\}$. We stress that $\sigma(q)$ still depends on $s$ and $t$.

For the proof of the proposition, we consider the following four integrals for every choice of $(L,R)$ such that $0<L\leq1/2\leq R\leq 1$: 
\begin{align}
\begin{split}\label{lim2:I}
&\int_{sL}^{sR}I_1(r;N)I_2(r;N)\d r;\\
&\int_{sL}^{sR}I_j(r;N) \mathfrak g\big(\sigma_{j'}(r;N)^2;\mu_{j'}(r;N)\big)\d r,\quad 1\leq j,j'\leq 2,\;j\neq j';\\
&\int_{s L}^{sR}\Bigg|\prod_{j=1}^2 \g\big(\sigma_j(r;N)^2;\mu_j(r;N)\big)-\prod_{j=1}^2 \g\big(\sigma_j(r)^2;\mu_j(r)\big)\Bigg|\d r.
\end{split}
\end{align}
The proof of (1$^\circ$) below chooses $(L,R)$ to be $(\ell_N,1/2)$ and $(1/2,r_N)$. The sum of the corresponding eight integrals from \eqref{lim2:I} bounds the integral in \hyperlink{normal1-1}{\rm (1$^\circ$)}. For the proof of \hyperlink{normal1-2}{\rm (2$^\circ$)}, the integral  can be bounded in the same way by using $(\ell_N,1/2)$ and $(1/2,1)$.

Let us simplify the task of bounding the sum of the four integrals in \eqref{lim2:I} by making some observations. First, using the explicit form of $I_j(r;N)$ and changing variable $r$ to $q=r/s$, we can bound the sum of the first three integrals in (\ref{lim2:I})  by the sum of the following integrals up to a multiplicative constant $C(T_0,T_1)>0$: 
\begin{align}
\int_{L}^{R} \alpha(q) \alpha(q)\d q&= \frac{1}{N}\int_{L}^{R}\frac{\d q}{\sigma(q)^4},
\quad
\int_{L}^{R} \frac{\alpha(q)}{\sigma(q)}\d q= \frac{1}{N^{1/2}}
\int_{L}^{R} \frac{\d q }{\sigma(q)^3}.\label{I:sb}
\end{align}
where $\alpha(q)=1/(N^{1/2}\sigma(q)^2)$ and $\sigma(q)$ (depending on $s$ and $t$) is as defined at the beginning of the proof. Also, to bound the last integral in (\ref{lim2:I}), we use Lemma~\ref{lem:heat} and  the estimates $|\sigma_j(sq;N)^2-\sigma_j(sq)^2|\leq 2/N$ and $|\mu_j(sq;N)-\mu_j(sq)|\leq 2/N^{1/2}$, which gives
\begin{align}
&\quad \int_{L}^{R}\Bigg|\prod_{j=1}^2\mathfrak g\big(\sigma_j(sq;N)^2;\mu_j(sq;N)\big)- \prod_{j=1}^2\mathfrak g\big(\sigma_j(sq)^2;\mu_j(sq)\big)\Bigg|\d q\notag\\
&\less  \int_{L}^{R}\left(\frac{1}{N[\sigma(q)^2]^{3/2}}+\frac{1}{N^{1/2}[\sigma(q)^2]}\right)^2+\left(\frac{1}{N[\sigma(q)^2]^{3/2}}+\frac{1}{N^{1/2}[\sigma(q)^2]}\right)\cdot \frac{1}{[\sigma(q)^2]^{1/2}}\d q\notag\\
&\less \sum_{k=1}^4\int_L^R\frac{\d q}{N^{k/2}[\sigma(q)^2]^{k/2+1}}.\label{lim4}
\end{align}
By the preceding considerations,  we focus on the integrals in (\ref{I:sb}) and (\ref{lim4}), with $L$ and $R$ to be specified, in the rest of the proof. 
\smallskip

\noindent {(1$^\circ$).} We give the proof according to the decomposition $\int_{s\ell_N}^{sr_N}=\int_{s\ell_N}^{s/2}+\int_{s/2}^{sr_N}$ of the integral under consideration. We first consider the case $L=\ell_N$ and $R=1/2$ for the setup above. In view of the contribution of $(M_j/N)(r/s)(1-r/s)$ in $\sigma^2_j(r;N)$ and $\sigma_j^2(r)$ and the change of variable $q=r/s$, the following bound for $\sigma(q)^2$ holds:
\begin{align}\label{sigma:q1}
\sigma(q)^2\geq  C(T_0,T_1)q,\quad \forall\; q\in (0,1/2].
\end{align}
(The primary condition \eqref{prim cond} is used to get this bound and the analogues \eqref{sigma:q2} and \eqref{sigma:q3} below.)
Hence, by (\ref{I:sb}) and (\ref{lim4}),
\begin{align*}
&\int_{\ell_N}^{1/2} \alpha(q) \alpha(q)\d q\leq \frac{C(T_0,T_1)}{N\ell_N}\leq  \frac{C(T_0,T_1)}{ (N\ell_N)^{1/2}},\quad
  \int_{\ell_N}^{1/2} \frac{\alpha(q)}{\sigma(q)}\d q\leq  \frac{C(T_0,T_1)}{ (N\ell_N)^{1/2}},\\
&\int_{\ell_N}^{1/2}\left|\prod_{j=1}^2\mathfrak g\big(\sigma_j(sq;N)^2;\mu_j(sq;N)\big)- \prod_{j=1}^2\mathfrak g\big(\sigma_j(sq)^2;\mu_j(sq)\big)\right|\d q\leq  \frac{C(T_0,T_1)}{(N\ell_N)^{1/2}}.
\end{align*}
Note that these bounds are for integrals of functions exploding at $q=0$.

For the second case, we take $L=1/2$ and $R=r_N=1-\ell_N$. The lower bound for $\sigma(q)^2$ is now taken to be
\begin{align}\label{sigma:q2}
\sigma(q)^2\geq  C(T_0,T_1)(1-q),\quad \forall\;q\in [1/2,1).
\end{align}
In this case, the singularities in the integrals from (\ref{I:sb}) and (\ref{lim4}) are changed to $q=1$. However,  since $\ell_N=1-r_N$, a change of variable with $1-q$ replaced by $q$ shows that the bounds in the above case apply. Putting together the bounds in the two cases yields \hyperlink{normal1-1}{(1$^\circ$)}.\smallskip

\noindent {(2$^\circ$).} We use the decomposition $\int_{s\ell_N}^{s}=\int_{s\ell_N}^{s/2}+\int_{s/2}^{s}$ of the integral under consideration. The same argument for $\int_{s\ell_N}^{s/2}$ in the proof of  \hyperlink{normal1-1}{(1$^\circ$)} applies in this case.

We change the argument for the second case in Step~\hyperlink{normal1}{1} by taking $t-s> \tau_N$, $L=1/2$ and $R=1$. By considering $1-sq/t=[t-s+s(1-q)]/t$ in bounding $(M_j'/N)(r/t)(1-r/t)$, the lower bound in (\ref{sigma:q2}) is replaced by
\begin{align}\label{sigma:q3}
\sigma(q)^2\geq  C(T_0,T_1)[\tau_N+(1-q)] ,\quad \forall\; q\in [1/2,1).
\end{align}
Hence, with a translation of $(1-q)$ by $\tau_N$ in the domains of integration, we can still use the bounds for the second case in the proof of \hyperlink{normal1-1}{(1$^\circ$)}, except that $\ell_N$ is replaced by $\tau_N$.
Since $\tau_N=\ell_N$ by assumption, we have proved \hyperlink{normal1-1}{(2$^\circ$)}.
\smallskip

\noindent {(3$^\circ$).}
The required limits hold under Assumption~\ref{ass:sNtN}. The proof is complete.
\end{proof}

Finally, we pass limit under the integral sign by the following proposition.  Note that the first integral in \eqref{eq:normal2} converges absolutely by the inequality $1-\e^{-v}\leq v$ for all $v\geq 0$.

\begin{prop}\label{prop:normal2}
Let Assumption~\ref{ass:sNtN} be in force. \smallskip

\noindent \hypertarget{N2-1}{{\rm (1$^\circ$)}} Let $0<s\leq t<\infty$ and $x,y\in \R^2$ be such that either  $s<t$ or $x\neq y$. Then 
for $1/2<\widetilde{r}_N\leq 1$ with $\widetilde{r}_N\to 1$, it holds that 
\begin{align}
\begin{split}
&\quad \lim_{N\to\infty}\int_{s\ell_N}^{s\widetilde{r}_N}\Bigg(\prod_{j=1}^2\mathfrak g\big(\sigma_j(r)^2;\mu_j(r)\big)-\frac{\1_{[1,\infty)}(Nr)}{4\pi r}\Bigg)\d r\label{eq:normal-normal}\\
&\quad +\frac{\ln s}{4\pi}-\int_0^{\infty}\frac{\big(\e^{-\frac{1}{4v}}-\1_{[1,\infty)}(v)\big)}{4\pi v}\d v\\
\end{split}\\
\begin{split}\label{eq:normal2}
&=\frac{1}{2\pi}\int_{\R^2}\int_{\R^2}Q_{s^{-1}}(y',x)\big(-\ln |y'-y''|\big)Q_{t^{-1}}(y'',y)\d y'\d y''\\
&\quad +\int_0^{t^{-1}} \int_{\R^2} Q_{s^{-1}-r}(z,x)Q_{t^{-1}-r}(z,y)\d z\d r.
\end{split}
\end{align}

\noindent \hypertarget{N2-2}{{\rm (2$^\circ$)}} Fix $0<T_0<1<T_1<\infty$.  Under the primary condition \eqref{prim cond}, it holds that, 
\begin{align*}
\int_{s\ell_N}^s \Bigg|\prod_{j=1}^2\mathfrak g\big(\sigma_j(r)^2;\mu_j(r)\big)-\frac{\1_{[1,\infty)}(Nr)}{4\pi r}\Bigg|\d r\leq C(T_0,T_1)\big(1+|x|^2+|y|^2+\big|\ln |x-y|\big|\big).
\end{align*}
\end{prop}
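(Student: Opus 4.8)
The plan is to reduce both statements to the explicit function
$$g(r)\defeq \prod_{j=1}^2\g\big(\sigma_j(r)^2;\mu_j(r)\big)=\frac{1}{2\pi r\,\delta(r)}\exp\Big\{-\frac{|x-y|^2 r}{2\,\delta(r)}\Big\},\qquad \delta(r)\defeq 2-\frac rs-\frac rt,$$
obtained by substituting $\sigma_j(r)^2=r\,\delta(r)$ and $\mu_j(r)=(x_j-y_j)r$ from \eqref{def:mujsigmaj} into \eqref{GaussFT}. Assumption~\ref{ass:sNtN} forces $Ns\ell_N=sN^{1/2-\eta}\to\infty$ (and $\ge T_0N^{1/2-\eta}\ge 2$ under the primary condition), so for $N$ large the indicator $\1_{[1,\infty)}(Nr)$ equals $1$ on the whole range of integration in both statements; thus only the behaviour of $r\mapsto g(r)-\tfrac1{4\pi r}$ on $(0,s)$ is relevant.

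\textbf{Part (1$^\circ$).} First I would check that $g(r)-\tfrac1{4\pi r}$ is absolutely integrable on $(0,s)$: writing $g(r)=\tfrac1{4\pi r}\phi(r)$ with $\phi(r)=\tfrac{2}{\delta(r)}\exp\{-|x-y|^2 r/(2\delta(r))\}$, which is $C^1$ on $[0,s)$ with $\phi(0)=1$, the difference $\tfrac{\phi(r)-\phi(0)}{4\pi r}$ is bounded near $r=0$; near $r=s$ the hypothesis ``$s<t$ or $x\neq y$'' keeps $g$ bounded ($\delta(s)=1-s/t>0$ if $s<t$; if $s=t$ and $x\neq y$ then $\delta(r)\downarrow 0$ but the exponential forces $g(r)\to 0$). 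Hence by dominated convergence $\int_{s\ell_N}^{s\widetilde{r}_N}(g-\tfrac1{4\pi r})\,\d r\to\int_0^s(g-\tfrac1{4\pi r})\,\d r$. Next I would apply the change of variables $v=r/\delta(r)$ (so $\delta(r)=r/v$; since $v'(r)=2/\delta(r)^2>0$ this is an increasing bijection of $(0,s)$ onto $(0,\tfrac{st}{t-s})$, the right endpoint being $+\infty$ when $s=t$), followed by $u=1/v$. Using $g(r)=r^{-2}Q_{1/v}(x,y)$ and $r=2/(u+s^{-1}+t^{-1})$ one gets $g(r)\,\d r=\tfrac12 Q_u(x,y)\,\d u$ and $\d r/r=-\d u/(u+s^{-1}+t^{-1})$, so
$$\int_0^s\Big(g(r)-\frac1{4\pi r}\Big)\d r=\lim_{b\to\infty}\left(\frac12\int_{s^{-1}-t^{-1}}^{b}Q_u(x,y)\,\d u-\frac1{4\pi}\ln\big(b+s^{-1}+t^{-1}\big)\right)+\frac{\ln(2/s)}{4\pi}.$$
For the right-hand side of \eqref{eq:normal2} I would use the Chapman--Kolmogorov identity $\int_{\R^2}Q_a(z,x)Q_b(z,y)\,\d z=Q_{a+b}(x,y)$ to rewrite the last term of \eqref{eq:normal2} as $\tfrac12\int_{s^{-1}-t^{-1}}^{s^{-1}+t^{-1}}Q_u(x,y)\,\d u$, and the representation $-\ln|z|=\tfrac{\gamma}{2}+\lim_{a\to\infty}\big(\pi\int_0^a Q_r(z,0)\,\d r-\tfrac12\ln(2a)\big)$ (a restatement of $\int_0^a Q_r(z,0)\,\d r=\tfrac1{2\pi}E_1(|z|^2/2a)$, $E_1(w)\defeq\int_w^\infty\tau^{-1}\e^{-\tau}\,\d\tau=-\gamma-\ln w+o(1)$) together with Chapman--Kolmogorov again to turn the double integral in \eqref{eq:normal2} into $\tfrac{\gamma}{4\pi}+\lim_{b\to\infty}\big(\tfrac12\int_{s^{-1}+t^{-1}}^{b}Q_u(x,y)\,\d u-\tfrac{\ln(2b)}{4\pi}\big)$. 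Collecting all the logarithmic pieces, the claimed identity reduces to the numerical fact $\int_0^\infty\tfrac{\e^{-1/(4v)}-\1_{[1,\infty)}(v)}{4\pi v}\,\d v=\tfrac{2\ln 2-\gamma}{4\pi}$, which I would verify by the substitution $w=1/(4v)$, splitting at $v=1$, and using $E_1(w)=\mathrm{Ein}(w)-\gamma-\ln w$ with $\mathrm{Ein}(w)=\int_0^w\tfrac{1-\e^{-\tau}}{\tau}\,\d\tau$. (Alternatively one may invoke the computation of the stationary covariance in Section~\ref{sec:stat} to identify the heat-kernel expression above with the right-hand side of \eqref{eq:normal2}.)

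\textbf{Part (2$^\circ$).} I would split $\int_{s\ell_N}^s=\int_{s\ell_N}^{s/2}+\int_{s/2}^s$. On $[0,s/2]$ one has $\delta(r)\ge 1$, so $0\le\phi\le 2$ and, using $\delta'(r)=-(s^{-1}+t^{-1})$, $|\phi'(r)|\le C(T_0,T_1)(1+|x-y|^2)$; hence by the mean value theorem $|g(r)-\tfrac1{4\pi r}|=|\phi'(\xi)|/(4\pi)\le C(T_0,T_1)(1+|x-y|^2)$ for $r\in[0,s/2]$, giving $\int_{s\ell_N}^{s/2}|g-\tfrac1{4\pi r}|\,\d r\le C(T_0,T_1)(1+|x|^2+|y|^2)$. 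On $[s/2,s]$ the term $\tfrac1{4\pi r}$ contributes at most $C(T_0,T_1)$; for $g$ I would substitute $\delta=\delta(r)$ ($\d r=\d\delta/(s^{-1}+t^{-1})$, $\delta\in[(t-s)/t,\,3/2]$) and estimate
$$\int_{s/2}^s g(r)\,\d r\le C(T_0,T_1)\int_0^{3/2}\frac1\delta\exp\Big\{-\frac{T_0|x-y|^2}{6\,\delta}\Big\}\,\d\delta= C(T_0,T_1)\,E_1\Big(\frac{T_0|x-y|^2}{6}\Big)\le C(T_0,T_1)\big(1+\big|\ln|x-y|\big|\big),$$
using $E_1(w)\less 1+|\ln w|$ for $w>0$. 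Adding the two pieces gives the asserted bound; the cases $x=y$ and/or $s=t$ are covered because the right-hand side is then $+\infty$ unless $s<t$, in which case the estimates above apply verbatim.

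\textbf{Main obstacle.} The delicate point is the constant bookkeeping in Part (1$^\circ$): the two substitutions are routine, but one must track the logarithmically divergent pieces of $\tfrac12\int_{s^{-1}-t^{-1}}^{b}Q_u(x,y)\,\d u$, of $\int_0^s\tfrac{\d r}{4\pi r}$, of $\tfrac{\ln s}{4\pi}$, and of the double integral in \eqref{eq:normal2} against one another, and then verify that the leftover finite constant is exactly $\int_0^\infty\tfrac{\e^{-1/(4v)}-\1_{[1,\infty)}(v)}{4\pi v}\,\d v=\tfrac{2\ln 2-\gamma}{4\pi}$; the Euler constant $\gamma$ enters both through the small-argument asymptotics of $E_1$ in the heat-kernel bookkeeping and through this last integral, and the two occurrences must cancel exactly. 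A secondary difficulty, in Part (2$^\circ$), is the absence of a bound on $g$ near $r=s$ that is uniform in $(s,t)$ as $t-s\downarrow 0$; this is precisely what produces the $\big|\ln|x-y|\big|$ term via the exponential-integral estimate above.
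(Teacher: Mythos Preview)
Your proposal is correct and follows essentially the same route as the paper. Both arguments rest on the same change of variables (your $u=1/v=2/r-s^{-1}-t^{-1}$ is exactly the paper's $2r'+s^{-1}-t^{-1}$ with $r'=r^{-1}-s^{-1}$), the Chapman--Kolmogorov identity, and the same $[s\ell_N,s/2]\cup[s/2,s]$ split for Part~(2$^\circ$).

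The one organizational difference is in the constant bookkeeping for Part~(1$^\circ$). The paper packages it into Lemma~\ref{lem:Tglue}, which writes $\int_0^T Q_{2r}(y',y'')\,\d r$ directly as $\int_0^\infty\tfrac{\e^{-1/(4v)}-\1_{[1,\infty)}(v)}{4\pi v}\,\d v-\tfrac{1}{2\pi}\ln|y'-y''|+\tfrac{\ln T}{4\pi}+\text{(errors)}$ and then shows the error terms vanish after smoothing by $Q_{s^{-1}}\otimes Q_{t^{-1}}$; Euler's constant never appears. You instead pass through the exponential integral $E_1$, carry $\gamma$ explicitly, and reduce everything to the numerical identity $\int_0^\infty\tfrac{\e^{-1/(4v)}-\1_{[1,\infty)}(v)}{4\pi v}\,\d v=\tfrac{2\ln 2-\gamma}{4\pi}$ (which is correct, via $E_1(w)=-\gamma-\ln w+\mathrm{Ein}(w)$). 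Your route is slightly more computational but perfectly valid; the paper's route has the advantage that the specific constant need never be evaluated. One small point you glossed over: when you convert the double integral $\tfrac{1}{2\pi}\iint Q_{s^{-1}}(-\ln|\cdot|)Q_{t^{-1}}$ into $\tfrac{\gamma}{4\pi}+\lim_{b\to\infty}(\cdots)$, you are interchanging a limit with the $\d y'\,\d y''$ integration; this needs a dominating function. The paper handles this via the explicit error bounds $\vep_j$ in Lemma~\ref{lem:Tglue} and the estimate \eqref{est:normal2}; in your framework the uniform bound $\big|\tfrac12 E_1(|y'-y''|^2/(2a))-\tfrac12\ln(2a)\big|\le C(1+|\ln|y'-y''||)$ for $a\ge 1$ (from $|E_1(w)+\gamma+\ln w|\le \mathrm{Ein}(w)\le w$ for $w\le 1$ and $E_1(w)\le 1$ for $w\ge 1$) supplies it.
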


The proof  uses the following property.

\begin{lem}\label{lem:Tglue}
For all $y',y''\in \R^2$ with $y'\neq y''$ and $T\in (0,\infty)$, it holds that
\begin{align}\label{limT:glue}
\begin{split}
\int_0^{T}Q_{2r}(y',y'')\d r&=\int_0^{\infty}\frac{\big(\e^{-\frac{1}{4v}}-\1_{[1,\infty)}(v)\big)}{4\pi v}\d v-\frac{1}{2\pi}\ln |y'-y''|\\
&\quad +\frac{\ln T}{4\pi} +\sum_{j=1}^3 \vep_j(y',y'';T)
\end{split} 
\end{align}
for error functions
\begin{align*}
\vep_1(y',y'';T)&=-\int_{\frac{T}{|y'-y''|^2}}^\infty\frac{\big(\e^{-\frac{1}{4v}}-\1_{[1,\infty)}(v)\big)}{4\pi v}\d v ,
\\
\vep_2(y',y'';T)&=-\1_{(0,1)}\left(\frac{T}{|y'-y''|^2}\right)\cdot \frac{1}{4\pi}\ln T,\\
\vep_3(y',y'';T)&=\1_{(0,1)}\left(\frac{T}{|y'-y''|^2}\right)\cdot \frac{1}{2\pi}\ln |y_1-y_2|
\end{align*}
satisfying the following limits for all $1\leq j\leq 3$, $x,y\in \R^2$, and $0< s\leq t<\infty$:
\begin{align}\label{Q:conv}
\begin{split}
&\lim_{T\to\infty}\int_{\R^2}\int_{\R^2}Q_{s^{-1}}(y',x)\big|\vep_j(y',y'';T)\big|Q_{t^{-1}}(y'',y)\d y'\d y''=0.
\end{split}
\end{align}
\end{lem}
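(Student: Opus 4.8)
The plan is to reduce \eqref{limT:glue} to an elementary one–variable computation and then estimate the three error terms by routine Gaussian bounds. Recall $Q_t(z,z')=\tfrac{1}{2\pi t}\exp\{-|z-z'|^2/(2t)\}$, so $Q_{2r}(y',y'')=\tfrac{1}{4\pi r}\exp\{-|y'-y''|^2/(4r)\}$. Writing $d\defeq|y'-y''|>0$ and substituting $v=r/d^2$ turns the left side of \eqref{limT:glue} into $\int_0^{T/d^2}\tfrac{\e^{-1/(4v)}}{4\pi v}\,\d v$. I would then split the integrand as $\tfrac{\e^{-1/(4v)}}{4\pi v}=\tfrac{\e^{-1/(4v)}-\1_{[1,\infty)}(v)}{4\pi v}+\tfrac{\1_{[1,\infty)}(v)}{4\pi v}$; the first summand is integrable on $(0,\infty)$ (near $0$ because $\e^{-1/(4v)}$ decays faster than any power, near $\infty$ because $1-\e^{-1/(4v)}\le 1/(4v)$), so its integral over $(0,T/d^2)$ equals $\int_0^\infty-\int_{T/d^2}^\infty$, the tail being exactly $-\vep_1(y',y'';T)$. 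The second summand equals $\tfrac{1}{4\pi}\ln(T/d^2)=\tfrac{\ln T}{4\pi}-\tfrac{\ln d}{2\pi}$ when $T\ge d^2$ and $0$ when $T<d^2$. Comparing the resulting expression with the claimed right side $\int_0^\infty\tfrac{\e^{-1/(4v)}-\1_{[1,\infty)}(v)}{4\pi v}\d v-\tfrac{1}{2\pi}\ln d+\tfrac{\ln T}{4\pi}$ separately in the cases $T\ge d^2$ and $T<d^2$ shows that the discrepancy is precisely $\vep_2+\vep_3$ (here $|y_1-y_2|$ in the definition of $\vep_3$ is to be read as $|y'-y''|$). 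This establishes \eqref{limT:glue}.

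For \eqref{Q:conv} I would, for each $j$, bound $|\vep_j(y',y'';T)|$ pointwise and integrate against $Q_{s^{-1}}(y',x)Q_{t^{-1}}(y'',y)$, using two standard facts about the Gaussian vector $y'-y''$ under this product density: the second moment $\int_{\R^2}\!\int_{\R^2}|y'-y''|^2\,Q_{s^{-1}}(y',x)Q_{t^{-1}}(y'',y)\,\d y'\d y''=|x-y|^2+2s^{-1}+2t^{-1}$ is finite, and $\int_{\R^2}\!\int_{\R^2}\1_{\{|y'-y''|>\rho\}}Q_{s^{-1}}(y',x)Q_{t^{-1}}(y'',y)\,\d y'\d y''\le C\e^{-c\rho^2}$ for $\rho$ large. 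For $\vep_1$: when $d^2\le T$, $1-\e^{-u}\le u$ gives $|\vep_1|\le\int_{T/d^2}^\infty\tfrac{\d v}{16\pi v^2}=\tfrac{d^2}{16\pi T}$, whose Gaussian integral is $\le\tfrac{1}{16\pi T}(|x-y|^2+2s^{-1}+2t^{-1})\to0$; when $d^2>T$, integrability at the origin yields the uniform bound $|\vep_1|\le C_0\defeq\int_0^1\tfrac{\e^{-1/(4v)}}{4\pi v}\d v+\int_1^\infty\tfrac{1-\e^{-1/(4v)}}{4\pi v}\d v<\infty$, so the contribution over $\{d^2>T\}$ is $\le C_0\cdot C\e^{-cT}\to0$. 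For $\vep_2$, supported on $\{d^2>T\}$ with $|\vep_2|=\tfrac{|\ln T|}{4\pi}$, the contribution is $\le\tfrac{|\ln T|}{4\pi}C\e^{-cT}\to0$. For $\vep_3$, supported on $\{d>\sqrt T\}$ where (for $T>1$) $|\ln d|=\ln d\le d$, the contribution is $\le\tfrac{1}{2\pi}\int_{\R^2}\!\int_{\R^2}|y'-y''|\1_{\{|y'-y''|>\sqrt T\}}Q_{s^{-1}}(y',x)Q_{t^{-1}}(y'',y)\,\d y'\d y''\to0$ by dominated convergence, since $|y'-y''|$ is integrable.

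The only step requiring genuine care is the estimate of $\vep_1$: it is \emph{not} dominated by a fixed multiple of $d^2/T$ uniformly in $y',y''$, so one must separate the regimes $d^2\le T$ and $d^2>T$ and, in the latter, exploit the integrability of $v\mapsto\e^{-1/(4v)}/(4\pi v)$ at the origin to obtain the uniform bound $C_0$. Everything else — the change of variables, the elementary evaluation of $\int\1_{[1,\infty)}(v)/(4\pi v)\,\d v$, the two‑case matching that identifies $\vep_2,\vep_3$, and the Gaussian moment and tail estimates — is bookkeeping.
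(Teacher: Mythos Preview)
Your proof is correct and follows essentially the same route as the paper: the same change of variables $v=r/|y'-y''|^2$ and splitting of the integrand to obtain \eqref{limT:glue}, and the same Gaussian moment/tail arguments for \eqref{Q:conv}. One minor remark: for $\vep_1$ the paper simply invokes dominated convergence using the uniform bound $|\vep_1|\le C_0$ (which you yourself identified), so the two-regime split you describe, while correct, is not actually needed.
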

\begin{proof}
To see \eqref{limT:glue}, note that by changing variables, 
\begin{align*}
\int_0^{T}Q_{2r}(y',y'')\d r&=\int_0^{\frac{T}{|y'-y''|^2}}\frac{\big(\e^{-\frac{1}{4v}}-\1_{[1,\infty)}(v)\big)}{4\pi v}\d v\\
&\quad +\1_{[1,\infty)}\left(\frac{T}{|y'-y''|^2}\right)\cdot \frac{1}{4\pi}\ln \left(\frac{T}{|y'-y''|^2}\right).
\end{align*}
Hence, $\vep_j$'s in \eqref{limT:glue} are introduced as error terms for large $T/|y'-y''|^2$.

Now, the convergence in \eqref{Q:conv} for $j=1$ holds by dominated convergence. For the other cases, let $B,B'$ be two independent copies of the two-dimensional standard Brownian motion. Then for $j=2$ and $T>1$, 
\begin{align}
0&\leq  \frac{\ln T}{4\pi } \int_{\R^2}\int_{\R^2}Q_{s^{-1}}(y',x)\1_{(0,1)}\left(\frac{T}{|y'-y''|^2}\right)Q_{t^{-1}}(y'',y)\d y'\d y''\notag\\
&= \frac{\ln T}{4\pi }\mathbf P\left(|x+B_{s^{-1}}-y-B'_{t^{-1}}|>T^{1/2}\right)\notag\\
&= \frac{\ln T}{4\pi }\mathbf P\left(|x-y+B_{s^{-1}+t^{-1}}|>T^{1/2}\right)\notag\\
&\leq \frac{\ln T}{4\pi }\cdot \frac{|x-y|+\mathbf E[|B_{s^{-1}+t^{-1}}|]}{T^{1/2}}\label{est:normal1}
\end{align}
by the Markov inequality. Passing $T\to\infty$ in the foregoing inequality, we see that the required limit is zero. For the required limit with $j=3$, the dominated convergence theorem applies since $|\vep_3(y',y'';T)|\leq \big|\ln |y'-y''|\big|$ for all $T>0$ and, for any $\alpha\in (0,1]$, 
\begin{align}
&\quad\int_{\R^2}\int_{\R^2}Q_{s^{-1}}(y',x)\big|\ln |y'-y''|\big|Q_{t^{-1}}(y'',y)\d y'\d y''\notag\\
&\less   \int_{\R^2}\int_{\R^2}Q_{s^{-1}}(y',x)\left(|y'-y''|+\frac{C(\alpha)}{ |y'-y''|^\alpha}\right)Q_{t^{-1}}(y'',y)\d y'\d y''\notag\\
\begin{split}
&\leq \mathbf E\left[|x+B_{s^{-1}}-y-B'_{t^{-1}}|\right]+C(\alpha)\int_{\R^2}Q_{s^{-1}}(y',x)\\
&\quad \times \left(\int_{\R^2}Q_{t^{-1}}(y'',y)\d y''+\|Q_{t^{-1}}(\cdot,y)\|_\infty\int_{|y'-y''|\leq 1}\frac{1}{|y'-y''|^\alpha}\d y''\right)\d y' <\infty.\label{est:normal2}
\end{split}
\end{align}
The proof is complete.
\end{proof}

\begin{proof}[\sc Proof of Proposition~\ref{prop:normal2}]
{(1$^\circ$)}
Recall \eqref{def:mujsigmaj} for $\sigma_j(r)$ and $\mu_j(r)$. Write
\begin{align}
 \int_{ s\ell_N}^s\prod_{j=1}^2\mathfrak g\big(\sigma_j(r)^2;\mu_j(r)\big)\d r
&=\int_{s \ell_N}^s\frac{1}{r^2}\prod_{j=1}^2 \g\left(\frac{2}{r}-\frac{1}{s}-\frac{1}{t};x_j-y_j\right)\d r\notag
\\
&=\int_0^{s^{-1}( \ell_N^{-1}-1)} Q_{2r'+s^{-1}-t^{-1}}(x,y)\d r'\label{Gaussian:int},
\end{align}
where we change variable by $r'=r^{-1}-s^{-1}$. The foregoing integral is finite whenever $s<t$ or $x\neq y$. Hence, the proof of (1$^\circ$) for $\widetilde{r}_N=1$  suffices by dominated convergence. We consider this case in the rest of the proof of (1$^\circ$).

Whenever $N$ is large enough such that $s^{-1}(\ell_N^{-1}-1)>t^{-1}$, by using the Chapman--Kolmogorov equation and changing variables as $r=r'-t^{-1}$ and $r=t^{-1}-r'$, we can write the last integral as
\begin{align}
&\quad \left(\int_{t^{-1}}^{s^{-1}( \ell_N^{-1}-1)}+\int_{0}^{t^{-1}}\right)  Q_{2r'+s^{-1}-t^{-1}}(x,y)\d r' \notag\\
\begin{split}
&=\int_{\R^2}\int_{\R^2}Q_{s^{-1}}(y',x)\left(\int_{0}^{s^{-1}( \ell_N^{-1}-1)-t^{-1}} Q_{2r}(y',y'')\d r\right)Q_{t^{-1}}(y'',y)\d y'\d y''\\
&\quad +\int_0^{t^{-1}} \int_{\R^2} Q_{s^{-1}-r}(z,x)Q_{t^{-1}-r}(z,y)\d z\d r.\label{kernel1}
\end{split}
\end{align}
The first term on the right-hand side of \eqref{kernel1} shows the integral on the left-hand side of \eqref{limT:glue} with $T=s^{-1}( \ell_N^{-1}-1)-t^{-1}$. Recall \eqref{Gaussian:int} and $N\ell_N\to\infty$ by Assumption~\ref{ass:sNtN}. By using $s^{-1}\ell_N^{-1}\sim s^{-1}( \ell_N^{-1}-1)-t^{-1}$ as $N\to\infty$ and then applying Lemma~\ref{lem:Tglue} to \eqref{kernel1},
\begin{align}
&\quad \lim_{N\to\infty} \int_{ s\ell_N}^s\Bigg(\prod_{j=1}^2\mathfrak g\big(\sigma_j(r)^2;\mu_j(r)\big)-\frac{\1_{[1,\infty)}(Nr)}{4\pi r}\Bigg)\d r\notag\\
&=\lim_{N\to\infty} \Bigg(\int_{ s\ell_N}^s\prod_{j=1}^2\mathfrak g\big(\sigma_j(r)^2;\mu_j(r)\big)\d r-\frac{\ln s}{4\pi }-\frac{\ln [s^{-1}( \ell_N^{-1}-1)-t^{-1}]}{4\pi}\Bigg)\notag\\
\begin{split}
&=-\frac{\ln s}{4\pi}  +\int_0^{\infty}\frac{\big(\e^{-\frac{1}{4v}}-\1_{[1,\infty)}(v)\big)}{4\pi v}\d v\\
&\quad +\frac{1}{2\pi}\int_{\R^2}\int_{\R^2}Q_{s^{-1}}(y',x)\big(-\ln |y'-y''|\big)Q_{t^{-1}}(y'',y)\d y'\d y''\\
&\quad +\int_0^{t^{-1}} \int_{\R^2} Q_{s^{-1}-r}(z,x)Q_{t^{-1}-r}(z,y)\d z\d r.\label{kernel:lim2}
\end{split}
\end{align}
We have obtained \eqref{eq:normal2} for the case $\widetilde{r}_N=1$ from \eqref{kernel:lim2}. \smallskip

\noindent {(2$^\circ$).} By the primary condition~\eqref{prim cond}, $NT_0\ell_N\geq 1$. Hence, the indicator function in the integral under consideration can only take the value $1$. We bound this integral according to $\int_{s\ell_N}^s=\int_{s\ell_N}^{s/2}+\int_{s/2}^s$. First, consider
\begin{align}
&\quad \int_{s\ell_N}^{s/2} \left|\prod_{j=1}^2\mathfrak g\big(\sigma_j(r)^2;\mu_j(r)\big)-\frac{1}{4\pi r}\right|\d r\notag\\
&\leq \int_{s\ell_N}^{s/2} \left|\frac{1}{2\pi  r(2-r/s-r/t)}\exp\left\{-\frac{|x-y|^2r^2}{2r(2-r/s-r/t)}\right\}-\frac{1}{2\pi r(2-r/s-r/t)}\right|\d r\notag\\
&\quad +C(T_0,T_1)\notag\\
&\leq C(T_0,T_1)(1+|x-y|^2)\leq C(T_0,T_1)(1+|x|^2+|y|^2),\label{N2-2-1}
\end{align}
where \eqref{N2-2-1} uses the inequality $1-\e^{-v}\leq v$, which is valid for all $v\geq 0$. Also,
\begin{align}
 \int_{s/2}^{s} \prod_{j=1}^2\mathfrak g\big(\sigma_j(r)^2;\mu_j(r)\big)\d r
&\leq \int_{s/2}^{s} \frac{1}{2\pi  r^2(2/r-1/s-1/t)}\exp\left\{-\frac{|x-y|^2}{2(2/r-1/s-1/t)}\right\}\d r\notag\\
&=\int_{\frac{s^{-1}-t^{-1}}{|x-y|^2}}^{\frac{3s^{-1}-t^{-1}}{|x-y|^2}} \frac{1}{4\pi v}\e^{-\frac{1}{2v}}\d v
\leq C(T_0,T_1)\big(1+\big|\ln |x-y|\big|\big).\label{N2-2-2}
\end{align}
The required inequality follows upon combining \eqref{N2-2-1} and \eqref{N2-2-2}. 
\end{proof}

The following theorem summarizes Proposition~\ref{prop:normal1} (3$^\circ$) and Proposition~\ref{prop:normal2}
(1$^\circ$).

\begin{thm}\label{thm:normal}
Let Assumption~\ref{ass:sNtN} be in force, and recall the notation $\mathfrak b_N$ in \eqref{def:Bkernel}.
\smallskip

\noindent {\rm (1$^\circ$)} For all $0<s< t<\infty$ and $x,y\in \R^2$,
\begin{align*}
&\quad \lim_{N\to\infty}\int_{s\ell_N}^{s}\Big(
\mathfrak b_N(x,y;r,s,t)
-\frac{\1_{[1,\infty)}(Nr)}{4\pi r}\Big)\d r +\frac{\ln s}{4\pi}-\int_0^{\infty}\frac{\big(\e^{-\frac{1}{4v}}-\1_{[1,\infty)}(v)\big)}{4\pi v}\d v\\
&= \frac{1}{2\pi}\int_{\R^2}\int_{\R^2}Q_{s^{-1}}(y',x)\big(-\ln |y'-y''|\big)Q_{t^{-1}}(y'',y)\d y'\d y''\\
&\quad +\int_0^{t^{-1}} \int_{\R^2} Q_{s^{-1}-r}(z,x)Q_{t^{-1}-r}(z,y)\d z\d r.
\end{align*}

\noindent {\rm (2$^\circ$)} For all $s=t\in (0,\infty)$ and $x,y\in \R^2$ with $x\neq y$, the same limit in (1$^\circ$)
 holds if we change the upper limits $s$ of the integrals on the left-hand side to $sr_N$. 
\end{thm}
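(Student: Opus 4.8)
The plan is to obtain Theorem~\ref{thm:normal} purely by assembling the two preceding results of this subsection, namely the integrated normal approximation in Proposition~\ref{prop:normal1} and the explicit limit computation in Proposition~\ref{prop:normal2}~(1$^\circ$); no fresh estimate is needed. First I would fix $x,y\in\R^2$ and $0<s\le t<\infty$ as in the statement and select constants $0<T_0<\min\{1,s\}$ and $T_1>\max\{1,t\}$. Since $\eta>0$, the primary condition \eqref{prim cond} over $[T_0,T_1]$ will hold for these fixed data once $N$ is large enough (all of $|x_j|,|y_j|\le\tfrac12N^\eta$, $T_0\le s\le t\le T_1$, $N\ge 16$, and $\lfloor\tfrac12T_0N^{1/2-\eta}\rfloor\ge1$ are eventually satisfied), so Proposition~\ref{prop:normal1} may be invoked along the tail of the sequence. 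I will write $U_N=s$ when proving (1$^\circ$) and $U_N=sr_N$ when proving (2$^\circ$).

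The core step is the decomposition
\begin{align*}
\int_{s\ell_N}^{U_N}\Big(\mathfrak b_N(x,y;r,s,t)-\tfrac{\1_{[1,\infty)}(Nr)}{4\pi r}\Big)\d r
&=\int_{s\ell_N}^{U_N}\Big(\mathfrak b_N(x,y;r,s,t)-\prod_{j=1}^2\mathfrak g\big(\sigma_j(r)^2;\mu_j(r)\big)\Big)\d r\\
&\quad+\int_{s\ell_N}^{U_N}\Big(\prod_{j=1}^2\mathfrak g\big(\sigma_j(r)^2;\mu_j(r)\big)-\tfrac{\1_{[1,\infty)}(Nr)}{4\pi r}\Big)\d r.
\end{align*}
For the first integral on the right I would argue that it vanishes as $N\to\infty$: in case (1$^\circ$), since $s<t$ is fixed and $\tau_N\to0$ we have $t-s>\tau_N$ for all large $N$, so Proposition~\ref{prop:normal1}~(2$^\circ$) (upper limit $s$) together with (3$^\circ$) shows it is at most $C(T_0,T_1)(N\ell_N)^{-1/2}\to0$; in case (2$^\circ$) the upper limit is $sr_N$ and the same conclusion follows from Proposition~\ref{prop:normal1}~(1$^\circ$),(3$^\circ$). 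Then I would add $\frac{\ln s}{4\pi}-\int_0^\infty\frac{\e^{-1/(4v)}-\1_{[1,\infty)}(v)}{4\pi v}\d v$ to both sides, pass to the limit, and identify the surviving expression with the quantity computed in Proposition~\ref{prop:normal2}~(1$^\circ$): in case (1$^\circ$) apply that proposition with the constant sequence $\widetilde r_N\equiv1$ (legitimate because $s<t$), so that $U_N=s=s\widetilde r_N$; in case (2$^\circ$) apply it with $\widetilde r_N=r_N=1-\ell_N\in(1/2,1]$, $\widetilde r_N\to1$ (legitimate because $x\ne y$), so that $U_N=sr_N=s\widetilde r_N$. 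In either case the limit equals the right-hand side stated in the theorem.

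The main obstacle is essentially bookkeeping rather than analysis: one must track carefully which clause of Proposition~\ref{prop:normal1} is applicable together with the correct upper limit of integration — $s$ when $s<t$, but only $sr_N$ when $s=t$, because Proposition~\ref{prop:normal1}~(2$^\circ$) fails precisely in the degenerate regime where $t-s>\tau_N$ cannot hold — and then match these two regimes to the two admissibility cases $s<t$ and $x\ne y$ appearing in Proposition~\ref{prop:normal2}~(1$^\circ$). Once this correspondence is arranged and the eventual validity of the primary condition for fixed data is checked, the proof is immediate, since all the quantitative content has already been carried out in Propositions~\ref{prop:normal1} and~\ref{prop:normal2}.
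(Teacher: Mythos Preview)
Your proposal is correct and matches the paper's approach exactly: the paper itself presents Theorem~\ref{thm:normal} simply as a summary of Proposition~\ref{prop:normal1}~(3$^\circ$) and Proposition~\ref{prop:normal2}~(1$^\circ$), and your decomposition together with the case-by-case bookkeeping (Proposition~\ref{prop:normal1}~(2$^\circ$) when $s<t$ since eventually $t-s>\tau_N$, Proposition~\ref{prop:normal1}~(1$^\circ$) when $s=t$ with upper limit $sr_N$, then Proposition~\ref{prop:normal2}~(1$^\circ$) with $\widetilde r_N\equiv 1$ or $\widetilde r_N=r_N$ respectively) is precisely how those two propositions combine.
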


\section{Convergence to the additive stochastic heat equation}\label{sec:ASHE}
In this section, we relate the limiting covariance function in Theorem~\ref{thm:covar}
to the covariance function of an additive stochastic heat equation. Whereas some of these connections are already pointed out in \cite{BCF}, we proceed with the weak formulation.

From now on, $\mathcal S(\R^2)$ denotes the space of real-valued Schwartz functions on $\R^2$ and $\mathcal S'(\R^2)$ denotes the space of bounded linear functionals on $\mathcal S(\R^2)$ over $\R$. By convention, $\mathcal S'(\R^2)$ is equipped with the weak topology \cite{RS}.

\subsection{Weak formulations}
With respect to the process $\zeta^N(x,s)$ in \eqref{def:zetaN1}, we define 
\begin{align}\label{def:zetaN2}
\zeta^N_s(\phi)=\int_{x\geq -N^{1/2}\mathbf 1}\phi(x)\zeta^N(x,s)\d x,\quad \phi\in \mathcal S(\R^2).
\end{align}
Here, $\mathbf 1=(1,1)$. The constraint $x\geq -N^{1/2}\mathbf 1$ is maximal for using the Whittaker SDEs since for any $s>0$, $M(x_j,s)\geq 0$ if and only if $x_j\geq -N^{1/2}$. (Recall \eqref{def:Morigin} for the notation $M(x_j,s)$.) In this subsection, we show some basic growth properties of the process $\zeta^N(x,s)$ and then translate Theorem~\ref{thm:covar} to a convergence result under the weak formulation.

By Proposition~\ref{prop:covar}, the metric induced by the covariance function of $\zeta_s\Sigma(m_1,m_2)$ can be represented as follows: for every $(m_1,m_2),(m_1',m_2')\in \Bbb Z_+^2$ and $0\leq s\leq t<\infty$,
\begin{align}
&\quad \E[|\zeta_s\Sigma(m_1,m_2)-\zeta_t\Sigma(m_1',m_2')|^{2}]\notag\\
\begin{split}\label{covar}
&=\int_s^t\prod_{j=1}^2\mathbf P\left(S_{m_j'}\left(\frac{r}{t}\right)=S'_{m_j'}\left(\frac{ r}{t}\right)\right)\d r\\
&\quad -\Bigg[\int_0^s\prod_{j=1}^2\mathbf P\left(S_{m_j}\left(\frac{r}{s}\right)=S'_{m_j'}\left(\frac{ r}{t}\right)\right)\d r-\int_0^s\prod_{j=1}^2\mathbf P\left(S_{m_j}\left(\frac{r}{s}\right)=S'_{m_j}\left(\frac{ r}{s}\right)\right)\d r\Bigg]\\
&\quad -\Bigg[\int_0^s\prod_{j=1}^2\mathbf P\left(S_{m_j}\left(\frac{r}{s}\right)=S'_{m_j'}\left(\frac{ r}{t}\right)\right)\d r-\int_0^s\prod_{j=1}^2\mathbf P\left(S_{m_j'}\left(\frac{r}{t}\right)=S'_{m_j'}\left(\frac{ r}{t}\right)\right)\d r\Bigg].
\end{split}
\end{align}
Here, the general identity in use is
\begin{align*}
&\quad \int_0^s f(r;s,s)\d s-2\int_0^s f(r;s,t)\d r+\int_0^t f(r;t,t)\d r\\
&=\int_s^t f(r;t;t)-\left[\int_0^s f(r;s,t)\d r-\int_0^s f(r;s,s)\d r\right]-\left[\int_0^s f(r;s,t)\d r-\int_0^sf(r;t,t)\d r\right].
\end{align*}

\begin{lem} \label{lem:bin}
{\rm (1$^\circ$)} 
Given $0<r< a$ and integers $m,n\geq 0$, it holds that 
\begin{align}\label{derivativeP}
\begin{split}
\frac{\partial}{\partial a}\mathbf P\left(S_m\left(\frac{r}{a}\right)=n\right)
&=\frac{1}{a}\left[(n+1)\mathbf P\left(S_m\left(\frac{r}{a}\right)=n+1\right)-n\mathbf P\left(S_m\left(\frac{r}{a}\right)=n\right)  \right].
\end{split}
\end{align}

\noindent {\rm (2$^\circ$)} Given $T\in (0,\infty)$, it holds that
\begin{align}\label{mod0}
\begin{split}
&\E[|\zeta_s\Sigma(m_1,m_2)-\zeta_t\Sigma(m_1,m_2)|^2]  \leq C(T) \big(\|(m_1,m_2)\|_\infty\vee 1\big)\times  |t-s|,
\end{split}
\end{align}
for all $0\leq s,t\leq T$ and $(m_1,m_2)\in \Bbb Z_+^2$, where $\|(m_1,m_2)\|_\infty=\max\{|m_1|,|m_2|\}$.
\end{lem}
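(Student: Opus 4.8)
For part (1$^\circ$) I would argue through the pure death chain of Section~\ref{sec:SDEs}. Since $D^{(1)}$ moves $k\to k-1$ at rate $k$, its transition probabilities $P_n(u)=\mathbf P(D^{(1)}_u=n\mid D^{(1)}_0=m)$ satisfy the Kolmogorov forward equation $\frac{\d}{\d u}P_n(u)=(n+1)P_{n+1}(u)-nP_n(u)$, and $P_n(u)=\mathbf P\big(S_m(\e^{-u})=n\big)$ by \eqref{eq:DS}. Substituting $\e^{-u}=r/a$, i.e.\ $u=\log(a/r)$ with $\frac{\d u}{\d a}=\frac1a$, and applying the chain rule gives exactly \eqref{derivativeP}. (Alternatively one may differentiate $\binom mn(r/a)^n(1-r/a)^{m-n}$ in $a$ directly and regroup, using $(n+1)\binom m{n+1}=(m-n)\binom mn$.)

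For part (2$^\circ$) I would start from \eqref{covar} specialised to $m_j'=m_j$, which after collecting terms reads, for $0\le s\le t\le T$,
\begin{align*}
&\E[|\zeta_s\Sigma(m_1,m_2)-\zeta_t\Sigma(m_1,m_2)|^2]\\
&\qquad=\int_s^tC(r)\,\d r+\int_0^s\big(B(r)-A(r)\big)\,\d r+\int_0^s\big(C(r)-A(r)\big)\,\d r,
\end{align*}
where $A(r)=\prod_{j=1}^2\mathbf P\big(S_{m_j}(r/s)=S'_{m_j}(r/t)\big)$, $B(r)=\prod_{j=1}^2\mathbf P\big(S_{m_j}(r/s)=S'_{m_j}(r/s)\big)$ and $C(r)=\prod_{j=1}^2\mathbf P\big(S_{m_j}(r/t)=S'_{m_j}(r/t)\big)$ (the case $t<s$ follows by symmetry). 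Since $C(r)\le1$, the first integral is at most $t-s$. For the other two, the discrete product rule \eqref{XYAB} bounds $|B(r)-A(r)|$ and $|C(r)-A(r)|$ by sums over $j$ of one-coordinate differences such as $\big|\mathbf P(S_{m_j}(r/s)=S'_{m_j}(r/s))-\mathbf P(S_{m_j}(r/s)=S'_{m_j}(r/t))\big|$; conditioning on $S_{m_j}(r/s)$ and bounding a probability by $1$, each such difference is at most $2d_{\TV}\big(\mathrm{Bin}(m_j,r/s),\mathrm{Bin}(m_j,r/t)\big)$. Then I would invoke the elementary bound $d_{\TV}\big(\mathrm{Bin}(m,p),\mathrm{Bin}(m,p')\big)\le m|p-p'|$ — via the monotone coupling of binomials through i.i.d.\ uniforms, or by integrating the parameter derivative furnished by \eqref{derivativeP} — with $|p-p'|=r/s-r/t=r\,\frac{t-s}{st}$. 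This yields $|B(r)-A(r)|+|C(r)-A(r)|\less(m_1+m_2)\,r\,\frac{t-s}{st}$, and integrating over $r\in[0,s]$ contributes $\int_0^sr\,\d r=\frac{s^2}{2}$, so that $\int_0^s\big(|B-A|+|C-A|\big)\,\d r\less(m_1+m_2)\,\frac{(t-s)s}{t}\le(m_1+m_2)(t-s)$. Adding the three pieces gives \eqref{mod0}, in fact with a $T$-independent constant.

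The computation is routine; the only point requiring attention is the bookkeeping of the powers of $s$ and $t$. Since the lemma allows $s,t$ arbitrarily small, any estimate losing the linear dependence in $|p-p'|$, or losing the extra factor $r$ coming from the $r$-integration, would only produce a bound of order $(m_1+m_2)\,\frac{t-s}{t}$, which is not $O(|t-s|)$ uniformly as $s,t\downarrow0$. It is exactly the shape $r\,(t-s)/(st)$, integrated to $s(t-s)/(2t)$, that simultaneously delivers a clean factor $|t-s|$ and the linear growth in $\|(m_1,m_2)\|_\infty$. An equivalent route, which I would adopt instead if it turns out shorter, avoids \eqref{covar} altogether: represent $\zeta_t(a)-\zeta_s(a)$ by It\^o's isometry from \eqref{Xt:inftysoln}, bound $\ell^2$ by $\ell^1$, and use $d_{\TV}\big(\e^{u_1A}(a,\cdot),\e^{u_2A}(a,\cdot)\big)\le(u_2-u_1)\,\E[\lambda_{X_{u_1}}\mid X_0=a]$ with jump rates $\lambda_b=b_2-1$ read off from \eqref{def:AN} and $\E[\lambda_{X_{u_1}}\mid X_0=a]=(m_1+m_2)\e^{-u_1}$ from Lemma~\ref{lem:DA}.
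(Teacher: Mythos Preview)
Your proposal is correct and matches the paper's argument: for (1$^\circ$) the paper differentiates the binomial pmf directly (your alternative), and for (2$^\circ$) it applies \eqref{derivativeP} via the mean value theorem to the one-coordinate factors in \eqref{covar}, obtaining $\int_0^s\frac{|t-s|}{s}\big(\mathbf E[S'_{m_j}(r/s)]+1\big)\,\d r\less \|(m_1,m_2)\|_\infty|t-s|$, which is the same estimate your coupling bound $d_{\TV}(\mathrm{Bin}(m,p),\mathrm{Bin}(m,p'))\le m|p-p'|$ delivers. The two routes differ only in packaging; your alternative via It\^o isometry and the jump-rate bound is also valid but is not the paper's choice.
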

\begin{proof}
To obtain \eqref{derivativeP}, we may assume that $0\leq n\leq m$ and then consider:
\begin{align*}
&\quad \frac{\partial}{\partial a}\mathbf P\left(S_m\left(\frac{r}{a}\right)=n\right)\\
&={m\choose n}n\left(\frac{r}{a}\right)^{n-1}\left(\frac{-r}{a^2}\right)\left(1-\frac{r}{a}\right)^{m-n}+{m\choose n}\left(\frac{r}{a}\right)^{n}(m-n)\left(1-\frac{r}{a}\right)^{m-n-1}\frac{r}{a^2}\\
&=-\frac{n}{a}\mathbf P\left(S_m\left(\frac{r}{a}\right)=n\right)+\frac{n+1}{a}\mathbf P\left(S_m\left(\frac{r}{a}\right)=n+1\right).
\end{align*}

Next, we show that \eqref{derivativeP} implies \eqref{mod0}. In the case $s=0$ or $m_1=m_2=0$, the required bound holds obviously, since then the second and third terms on the right-hand side of \eqref{covar} with $(m_1',m_2')=(m_1,m_2)$ are zero. For $0<s\leq t<\infty$ and nonzero $(m_1,m_2)\in \Bbb Z_+^2$, \eqref{derivativeP} implies the following bound for the second term in \eqref{covar} with $(m_1',m_2')=(m_1,m_2)$:
\begin{align*}
&\quad \int_0^s\Bigg|\prod_{j=1}^2\mathbf P\left(S_{m_j}\left(\frac{r}{s}\right)=S'_{m_j}\left(\frac{ r}{s}\right)\right)-\prod_{j=1}^2\mathbf P\left(S_{m_j}\left(\frac{r}{s}\right)=S'_{m_j}\left(\frac{ r}{t}\right)\right)\Bigg|\d r\\
&\less 
  \int_0^s \frac{|t-s|}{s} \sum_{j=1}^2\left(\mathbf E\left[S'_{m_j}\left(\frac{r}{s}\right)\right]+1\right) \d r\less \|(m_1,m_2)\|_\infty\times |t-s|.
\end{align*}
The third term in \eqref{covar} with $(m_1',m_2')=(m_1,m_2)$ can be bounded similarly. Hence, \eqref{mod0} holds whenever $0<s\leq t<\infty$ and $(m_1,m_2)$ is nonzero. We have proved \eqref{mod0}. 
\end{proof}

As an application, we obtain the a.s. polynomial growth of $\zeta$ in the following lemma.
 
\begin{lem}
\hypertarget{regular1}{{\rm (1$^\circ$)}} 
For all $\alpha\in (1,\infty)$, we can find $C(\alpha)\in (0,\infty)$ such that
\[
\E\left[\sup_{(m_1,m_2)\in \Bbb Z_+^2}\sup_{s\in [0,T]}\frac{|\zeta_s\Sigma(m_1,m_2)|^{2\alpha}}{1+\|(m_1,m_2)\|_\infty^{C(\alpha)}}\right]<\infty,\quad \forall\; T\in (0,\infty).
\]

\noindent  \hypertarget{regular2}{{\rm (2$^\circ$)}}  For each $N\geq 1$, the following statement holds with probability one:
the integral in \eqref{def:zetaN2} converges absolutely for all $s\in [0,\infty)$ and $\phi\in \mathcal S(\R^2)$, and $\zeta^N$
takes values in $D(\R_+,\mathcal S'(\R^2))$. 
\end{lem}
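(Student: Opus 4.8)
The plan is to derive the a.s.\ polynomial growth in $(1^\circ)$ from Gaussianity together with the modulus-of-continuity bound already established in Lemma~\ref{lem:bin}, and then to read off $(2^\circ)$ by a pathwise domination argument against the Schwartz decay of $\phi$.

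For $(1^\circ)$, I would first record three facts: each $\zeta_s\Sigma(m_1,m_2)$ is mean-zero Gaussian (Proposition~\ref{prop:covar}); $\E[\zeta_t\Sigma(m_1,m_2)^2]=\int_0^t\prod_{j=1}^2\mathbf P(S_{m_j}(r/t)=S'_{m_j}(r/t))\,\d r\le t$, so $\zeta_t\Sigma(m_1,m_2)\to 0$ in $L^2$ as $t\to 0$; and $t\mapsto\zeta_t$ is continuous on $(0,\infty)$ by the linear-SDE representation \eqref{etA:SDE1}, hence (with the Kolmogorov estimate below) extends continuously to $[0,\infty)$ with $\zeta_0\equiv 0$. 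Combining the Gaussian moment equivalence with Lemma~\ref{lem:bin}~$(2^\circ)$ yields, for $0\le s,t\le T$,
\[
\E\big[|\zeta_s\Sigma(m_1,m_2)-\zeta_t\Sigma(m_1,m_2)|^{2\alpha}\big]\le c_\alpha\big(C(T)(\|(m_1,m_2)\|_\infty\vee 1)\big)^\alpha|t-s|^\alpha .
\]
Since $\alpha>1$, the quantitative Kolmogorov continuity criterion (equivalently the Garsia--Rodemich--Rumsey inequality) in the time variable, together with $\zeta_0\Sigma(m_1,m_2)=0$, gives
\[
\E\Big[\sup_{s\in[0,T]}|\zeta_s\Sigma(m_1,m_2)|^{2\alpha}\Big]\le C(\alpha,T)\big(\|(m_1,m_2)\|_\infty\vee 1\big)^\alpha .
\]
As $\#\{m\in\Z_+^2:\|m\|_\infty=k\}\le 2k+1$, choosing any $C(\alpha)>\alpha+2$ makes
\[
\sum_{m\in\Z_+^2}\frac{\E[\sup_{s\in[0,T]}|\zeta_s\Sigma(m)|^{2\alpha}]}{1+\|m\|_\infty^{C(\alpha)}}\le C(\alpha,T)\sum_{k\ge 0}\frac{(2k+1)(k\vee 1)^\alpha}{1+k^{C(\alpha)}}<\infty ,
\]
and $(1^\circ)$ follows after interchanging the expectation and the sum by Tonelli.

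For $(2^\circ)$, fix $N$ and pick some $\alpha>1$. Applying $(1^\circ)$ with $T$ ranging over $\N$ and intersecting the a.s.\ events produces finite random constants $\Omega_T$ with $|\zeta_u\Sigma(m)|\le \Omega_T(1+\|m\|_\infty^{C(\alpha)})^{1/(2\alpha)}$ for all $m\in\Z_+^2$, $u\in[0,T]$. By \eqref{def:Morigin}, for $x\ge -N^{1/2}\mathbf 1$ and $s\in[0,T]$ one has $\|(M(x_1,Ns),M(x_2,Ns))\|_\infty\le NT(1+\|x\|_\infty/N^{1/2})\le C(N,T)(1+|x|)$, so with $p:=C(\alpha)/(2\alpha)$,
\[
|\zeta^N(x,s)|\le \Omega_{NT}\,C(N,T)^p(1+|x|)^p,\qquad x\ge -N^{1/2}\mathbf 1,\ s\in[0,T].
\]
Since $\phi\in\mathcal S(\R^2)$ decays faster than any polynomial, this forces the integral in \eqref{def:zetaN2} to converge absolutely for every $s\ge 0$ (cover $\R_+$ by the $[0,T]$, $T\in\N$), and $|\zeta^N_s(\phi)|$ is bounded by the Schwartz seminorm $\sup_x(1+|x|)^{p+3}|\phi(x)|$; with linearity this gives $\zeta^N_s\in\mathcal S'(\R^2)$. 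For the c\`adl\`ag property, for fixed $x$ the index $s\mapsto(M(x_1,Ns),M(x_2,Ns))$ is a non-decreasing, right-continuous, $\Z_+^2$-valued step function with left limits (each $s\mapsto Ns(1+x_j/N^{1/2})$ is continuous, non-decreasing and $\ge 0$ on the domain), while $u\mapsto\zeta_u\Sigma(m)$ is continuous for each $m$; composing, $s\mapsto\zeta^N(x,s)$ is c\`adl\`ag with left limit $\zeta_{Ns}\Sigma(M(x_1,Ns-),M(x_2,Ns-))$, and on each $[0,T]$ both are dominated by the $x$-integrable bound above. Dominated convergence then shows $s\mapsto\zeta^N_s(\phi)$ is c\`adl\`ag for every $\phi$, with left limits $\zeta^N_{s-}(\phi)=\int_{x\ge -N^{1/2}\mathbf 1}\phi(x)\zeta^N(x,s-)\,\d x$, and Fatou bounds this functional by the same Schwartz seminorm, so $\zeta^N_{s-}\in\mathcal S'(\R^2)$; hence $\zeta^N\in D(\R_+,\mathcal S'(\R^2))$.

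I expect the only genuine difficulty to be in $(1^\circ)$: extracting a supremum-in-time moment bound with a clean polynomial dependence on $\|(m_1,m_2)\|_\infty$ (exponent linear in $\alpha$), uniform over the countably many lattice indices, so that a single choice of $C(\alpha)$ makes the series converge; the rest of $(1^\circ)$ is a Tonelli/union-bound step. Part $(2^\circ)$ is essentially bookkeeping, the one delicate point being the c\`adl\`ag verification — tracking the step-function index map, passing limits through the spatial integral by dominated convergence, and checking that the left-hand limits are genuine tempered distributions.
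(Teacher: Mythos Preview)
Your proposal is correct and follows essentially the same route as the paper: both combine Gaussianity with the modulus-of-continuity estimate from Lemma~\ref{lem:bin}~(2$^\circ$) and Kolmogorov's criterion to get a polynomial bound on $\E[\sup_{s\in[0,T]}|\zeta_s\Sigma(m)|^{2\alpha}]$ in $\|m\|_\infty$, then sum over the lattice (you sum directly, the paper groups dyadically---equivalent bookkeeping), and (2$^\circ$) is handled by the same c\`adl\`ag-plus-domination argument. One minor slip: in your bound for (2$^\circ$) you write $M(x_j,Ns)$ where the definition \eqref{def:Morigin} calls for $M(x_j,s)$; your estimate $\|m\|_\infty\le NT(1+\|x\|_\infty/N^{1/2})$ is nonetheless the right one.
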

\begin{proof}
{(1$^\circ $).} We modify the proof of \cite[Proposition~4.1]{C:RW} as follows. For any integer $n\geq 1$, set $E_n=\{(m_1,m_2)\in \Bbb Z_+^2;2^{n-1}\leq \|(m_1,m_2)\|_\infty<2^n\}$. For any $\beta\in (0,\infty)$ and $\alpha\in (1,\infty)$,
\begin{align}
&\quad \E\left[\sup_{(m_1,m_2)\in \Bbb Z_+^2}\sup_{s\in [0,T]}\frac{|\zeta_s\Sigma(m_1,m_2)|^{2\alpha}}{1+\|(m_1,m_2)\|_\infty^{{\beta }}}\right]\notag\\
\begin{split}
&\leq \E\left[\sup_{s\in [0,T]}|\zeta_{s}\Sigma(0,0)|^{2\alpha}\right] +\sum_{n=1}^\infty  \frac{1}{1+2^{\beta(n-1)}}\sum_{(m_1,m_2)\in E_n}\E\left[\sup_{s\in [0,T]}|\zeta_{s}\Sigma(m_1,m_2)|^{2\alpha}\right].\label{CT010}
\end{split}
\end{align}
Since $\zeta$ is a Gaussian process, Lemma~\ref{lem:bin} (2$^\circ$) is enough for the application of Kolmogorov's theorem of continuity  \cite[(2.1)Theorem in Chapter~I]{RY}. Moreover, we can find $C'(\alpha,T),C(\alpha)$ such that the expected supremum in \eqref{CT010} indexed by $(m_1,m_2)$ is bounded by $C'(\alpha,T)\|(m_1,m_2)\|_\infty^{C(\alpha)}$. We get the required result upon setting $\beta=3+C(\alpha)$ in \eqref{CT010}.\smallskip

\noindent {(2$^\circ $).} For $x_j\geq -N^{1/2}$, $s\mapsto sN+sN\cdot x_j/N^{1/2}$ is nondecreasing on $[0,\infty)$ and so $s\mapsto M_j$ is c\`adl\`ag. For these $x_j$'s, we also have $0\leq M_j\leq Ns+Ns\cdot |x_j|/N^{1/2}$. Hence, by  (1$^\circ$), $\zeta_s^N(\phi)$ is absolutely convergent as an integral, for any $s$, and is c\`adl\`ag in $s$.  The weak topology of $\mathcal S'(\R^2)$ gives the required  path property of $\zeta^N$. 
\end{proof}

Now we extend Theorem~\ref{thm:covar} to a convergence under the weak formulation.

\begin{thm}\label{thm:covar1}
Let $\zeta^N$ be the $\mathcal S'(\R^d)$-valued processes defined by \eqref{def:zetaN2}.
Then it holds that 
\begin{align}\label{covar:phi1phi2......}
\begin{split}
&\quad \lim_{N\to\infty}\left[\Cov[\zeta^N_s(\phi_1);\zeta^N_t(\phi_2)]-\mathfrak C_N\left(\int \phi_1\right)\left(\int\phi_2\right)\right]\\
&=\frac{1}{2\pi}\int_{\R^2}\int_{\R^2}Q_{s^{-1}}\phi_1(y')\big(-\ln |y'-y''|\big)Q_{t^{-1}}\phi_2(y'')\d y'\d y''\\
&\quad +\int_0^{t^{-1}} \int_{\R^2} Q_{s^{-1}-r}\phi_1(z)Q_{t^{-1}-r}\phi_2(z)\d z\d r,\quad \forall\;\phi_1,\phi_2\in \mathcal S(\R^2),\;0<s\leq t<\infty,
\end{split}
\end{align}
where $\mathfrak C_N$ is defined in \eqref{def:CN}. 
\end{thm}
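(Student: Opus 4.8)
The strategy is to reduce Theorem~\ref{thm:covar1} to the pointwise statement Theorem~\ref{thm:covar} by interchanging the limit $N\to\infty$ with the double spatial integration against $\phi_1,\phi_2$, justified by a dominated-convergence argument. The starting point is the identity
\[
\Cov[\zeta^N_s(\phi_1);\zeta^N_t(\phi_2)]=\int_{x\geq -N^{1/2}\mathbf 1}\int_{y\geq -N^{1/2}\mathbf 1}\phi_1(x)\phi_2(y)\,\Cov\big[\zeta^N(x,s);\zeta^N(y,t)\big]\d x\d y,
\]
which follows from \eqref{def:zetaN2} and Fubini (legitimate by the a.s.\ absolute convergence established in the preceding lemma and the moment bound Lemma~\ref{lem:bin}(2$^\circ$)). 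Subtracting $\mathfrak C_N(\int\phi_1)(\int\phi_2)$ and writing $\int\phi_i=\int_{\R^2}\phi_i$, the re-centering constant gets absorbed into the integrand as $\mathfrak C_N\,\1_{\{x\geq -N^{1/2}\mathbf 1\}}\1_{\{y\geq -N^{1/2}\mathbf 1\}}$ up to a vanishing tail (the excluded region has $\phi_i$-mass $O(e^{-cN})$, and $\mathfrak C_N=O(\ln N)$, so this error $\to 0$). Thus one is left to show
\[
\int\!\!\int \phi_1(x)\phi_2(y)\Big(\Cov[\zeta^N(x,s);\zeta^N(y,t)]-\mathfrak C_N\Big)\d x\d y\longrightarrow \int\!\!\int\phi_1(x)\phi_2(y)\,\mathfrak K(x,y;s,t)\d x\d y,
\]
where $\mathfrak K$ is the right-hand side of \eqref{covar:zeta}. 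By Theorem~\ref{thm:covar}, the integrand converges pointwise for a.e.\ $(x,y)$ (the exceptional set $\{x=y,\ s=t\}$ has Lebesgue measure zero when $s=t$, and is empty when $s<t$). So the entire content is a dominated convergence argument: produce a $\phi_1,\phi_2$-integrable majorant, uniform in $N$, for $|\phi_1(x)\phi_2(y)|\cdot|\Cov[\zeta^N(x,s);\zeta^N(y,t)]-\mathfrak C_N|$.

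The majorant is assembled from the quantitative bounds proven throughout Section~\ref{sec:conv}. Using the decompositions \eqref{tauN1}–\eqref{tauN2} together with the primary condition bookkeeping, the relevant pieces are: Proposition~\ref{prop:poisson1} and Proposition~\ref{prop:poisson2}(2$^\circ$),(4$^\circ$) for the Poisson ranges; Proposition~\ref{prop:normal1} and Proposition~\ref{prop:normal2}(2$^\circ$) for the normal range. Collectively these give, whenever the primary and secondary conditions over some $[T_0,T_1]\ni s,t$ hold,
\[
\big|\Cov[\zeta^N(x,s);\zeta^N(y,t)]-\mathfrak C_N\big|\leq C(T_0,T_1)\big(1+|x|^2+|y|^2+\big|\ln|x-y|\big|\big).
\]
Since $\phi_1,\phi_2\in\mathcal S(\R^2)$, the function $|\phi_1(x)\phi_2(y)|(1+|x|^2+|y|^2+|\ln|x-y||)$ is integrable on $\R^2\times\R^2$ — the polynomial factors are killed by Schwartz decay, and $|\ln|x-y||$ is locally integrable near the diagonal and dominated by a polynomial away from it (cf.\ the computation \eqref{est:normal2} in Lemma~\ref{lem:Tglue}). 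One must also handle the region where the \emph{secondary} condition \eqref{sec cond} fails, i.e.\ $|x_1-y_1|\wedge|x_2-y_2|<\tfrac{4}{T_0}N^{-1/2}$; this region shrinks to a measure-zero set (a union of two codimension-one slabs) as $N\to\infty$, and on it one still has the crude a priori bound $|\Cov[\zeta^N(x,s);\zeta^N(y,t)]|\lesssim Ns + \ln N$ from \eqref{obj1} together with $0\le\mathfrak b^N\le 1$, so its contribution, weighted by $|\phi_1(x)\phi_2(y)|$ and integrated, is $O(N^{-1/2}\ln N)\to 0$. Finally, the limiting kernel $\mathfrak K(x,y;s,t)$ is itself $\phi_1,\phi_2$-integrable by the same estimates \eqref{est:normal1}–\eqref{est:normal2}, and one reads off that $\int\!\!\int\phi_1\phi_2\,\mathfrak K$ equals the right-hand side of \eqref{covar:phi1phi2......} by Fubini, using $Q_r\phi(x)=\int Q_r(x,z)\phi(z)\d z$ and symmetry of the heat kernel.

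The main obstacle is the uniform-in-$N$ integrable domination near the diagonal $x=y$ when $s=t$: there the individual bounds in Propositions~\ref{prop:poisson2}(4$^\circ$) and \ref{prop:normal2}(2$^\circ$) carry the $|\ln|x-y||$ singularity and are only valid under the secondary condition, so one has to patch together the secondary-condition regime (where the logarithmic majorant applies) with a thin diagonal neighborhood of width $\sim N^{-1/2}$ (where only the trivial $O(Ns+\ln N)$ bound is available) and check the latter's contribution vanishes after integration against Schwartz functions. A second, more bookkeeping-heavy point is the case distinction $0\le t-s\le\tau_N$ versus $t-s>\tau_N$ built into \eqref{def:range1}–\eqref{def:range2}: for fixed $s<t$ one eventually lands in the second regime, so the three-piece decomposition is only needed transiently, but the uniform majorant must be stated so as to cover both. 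Neither issue is deep — both are handled by the estimates already in hand — so the proof is essentially a careful assembly of Section~\ref{sec:conv}'s bounds plus one application of dominated convergence.
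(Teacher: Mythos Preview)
Your overall strategy matches the paper's: reduce to Theorem~\ref{thm:covar} via dominated convergence, using the quantitative bounds of Section~\ref{sec:conv} as the majorant, and treat separately the regions where the primary or secondary conditions fail. The handling of the primary-condition failure and the assembly of the logarithmic majorant are fine.

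There is, however, a genuine gap in your treatment of the region $B_N$ where the secondary condition \eqref{sec cond} fails. You write that on $B_N$ one has the crude bound $|\Cov[\zeta^N(x,s);\zeta^N(y,t)]|\lesssim Ns+\ln N$ and conclude the contribution is $O(N^{-1/2}\ln N)$. But the arithmetic does not close: the $\phi_1,\phi_2$-weighted measure of $B_N$ is of order $N^{-1/2}$, while the crude bound on the integrand is of order $N$, so the product is $O(N^{1/2})$, which diverges. The crude bound over the \emph{full} time interval $[0,Ns]$ is simply too weak.

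The fix, which is what the paper does, is to observe that the secondary condition enters only through Proposition~\ref{prop:poisson2}\,(4$^\circ$), i.e.\ only in bounding the piece $\int_{Nsr_N}^{Ns}\mathfrak b^N\,\d r$ of the decomposition \eqref{tauN1}. All other pieces are controlled by the primary condition alone, so on $B_N$ the logarithmic majorant still applies to them. For the remaining piece one uses the trivial bound $\mathfrak b^N\le 1$ over an interval of length $Ns(1-r_N)=Ns\cdot N^{-(1/2+\eta)}$, not $Ns$. This gives
\[
\int_{B_N}\Big(\int_{Nsr_N}^{Ns}\d r\Big)|\phi_1(x)\phi_2(y)|\,\d x\d y
\;\lesssim\; N^{-1/2}\cdot N(1-r_N)\;=\;N^{-\eta}\;\longrightarrow\;0,
\]
using Assumption~\ref{ass:sNtN}. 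With this correction your argument goes through and coincides with the paper's proof.
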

\begin{proof}
First, we state a preliminary bound by summarizing those in Propositions \ref{prop:poisson1}, \ref{prop:poisson2}, \ref{prop:normal1} and \ref{prop:normal2} via \eqref{tauN1} and \eqref{tauN2}: Fix $0<T_0<1<T_1<\infty$. Let  the primary condition \eqref{prim cond} be in force. If $0\leq t-s\leq \tau_N$, we also require the secondary condition \eqref{sec cond}. Then it holds that
\begin{align}
& \int_0^{Ns}\Big|\mathfrak b^N(x,y;r,s,t)
-\frac{\1_{[1,\infty)}(r)}{4\pi r}\Big|\d r 
\leq C(T_0,T_1)\left(1+|x|^2+|y|^2+\big|\ln |x-y|\big|\right) .\label{bdd:abs conv}
\end{align}

We show the main term of  the required limit. First, recall that Theorem~\ref{thm:covar} is obtained by summing the limits in Theorems~\ref{thm:poisson} and~\ref{thm:normal}. Next, we shift the integrand  of the integral in \eqref{obj1} by $\1_{[1,\infty)}(r)/(4\pi r)$ (as in Theorem~\ref{thm:poisson}) and define an integral $I_N(x,y;s,t)$ accordingly. Let $G_N$ denote the set of $(x,y)$ that satisfies the assumption for \eqref{bdd:abs conv}. By dominated convergence, we can pass the limit  $\lim_N\int_{G_N}I_N(x,y;s,t)\phi_1(x)\phi_2(y)\d x\d y$ under the integral sign. The limit is given by the right-hand side of \eqref{covar:phi1phi2......}.

It remains to show $\int_{G_N^\complement}I_N(x,y;s,t)\phi_1(x)\phi_2(y)\d x\d y\to 0$. We consider two cases. To handle the integral over $(x,y)$ that fails to satisfy the primary condition \eqref{prim cond}, note that for any $p>0$ and $\phi\in \mathcal S(\R^2)$, $\int_{ ([-N^\eta/2,N^\eta/2]^2)^\complement}|\phi(x)|\d x \leq C(\eta,p,\phi)N^{-p}$. Next, let $B_N$ denote the set of $(x,y)$ that fails the secondary condition \eqref{sec cond}. Recall that the secondary condition is only required in Proposition~\ref{prop:poisson2} (4$^\circ$) in getting \eqref{bdd:abs conv}. Then it is enough to note that for that integral in Proposition~\ref{prop:poisson2},
\[
\int_{B_N}\left(\int_{Nsr_N}^{Ns}
\d r\right)|\phi_1(x)\phi_2(y)| \d x\d y\leq C(T_0,T_1,\phi_1,\phi_2) N^{-1/2}\cdot N(1-r_N)=N^{-\eta}\xrightarrow[N\to\infty]{}0,
\]
where the choice of $1-r_N$ in Assumption~\ref{ass:sNtN} is used. This completes the proof.
\end{proof}

This proof shows that it can be reinforced a bit to get the following implication.

\begin{cor}\label{cor:uniformcovar}
For all $0<T_0<1<T_1<\infty$ and $\phi_1(x;r,s,t),\phi_2(x;r,s,t)$ such that 
\[
|\phi_j(x;r,s,t)|\leq C(T_0,T_1,\phi_j)/(1+|x|^n),\quad \forall\; n\in \Bbb N,\;j\in \{1,2\},
\]
it holds that  
\[
\sup_{\stackrel{\scriptstyle T_0\leq s\leq t\leq T_1}{N\geq 16}}\left|\int_{\R^2}\!\int_{\R^2} \!\int_0^{Ns}\!\!\! \left(\mathfrak b^N(x,y;r,s,t)-\frac{\1_{[1,\infty)}(r)}{4\pi r}\right)\d r\phi_1(x;r,s,t)\phi_2(y;r,s,t)\d x\d y\right|<\infty.
\]
\end{cor}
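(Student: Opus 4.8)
The plan is to retrace the proof of Theorem~\ref{thm:covar1}, but keeping track of the fact that the bound \eqref{bdd:abs conv} is \emph{uniform} in $(s,t)$ over $[T_0,T_1]$ once the primary (and, on the diagonal-thin strip, secondary) conditions hold, and observing that the decay hypothesis on $\phi_1,\phi_2$ is exactly what is needed to control all the polynomial and logarithmic growth factors appearing there. Concretely, first I would fix $0<T_0<1<T_1<\infty$, fix $\eta\in(0,1/2)$ as in Assumption~\ref{ass:sNtN}, and split the double integral over $(x,y)\in\R^2\times\R^2$ according to whether $(x,y)$ lies in the ``good set'' $G_N$ — the set on which \eqref{bdd:abs conv} applies, i.e. $x_i,y_i\in[-\tfrac12 N^\eta,\tfrac12 N^\eta]$, and additionally $|x_i-y_i|\geq \tfrac{4}{T_0}N^{-1/2}$ whenever $0\le t-s\le\tau_N$ — or in its complement $G_N^\complement$.

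On $G_N$, I would simply insert \eqref{bdd:abs conv}: the inner $r$-integral is bounded in absolute value by $C(T_0,T_1)\big(1+|x|^2+|y|^2+\big|\ln|x-y|\big|\big)$, uniformly in $N\ge 16$ and in $T_0\le s\le t\le T_1$. Multiplying by $|\phi_1(x;r,s,t)\phi_2(y;r,s,t)|\le C(T_0,T_1,\phi_1,\phi_2)(1+|x|^n)^{-1}(1+|y|^n)^{-1}$ for $n$ large (say $n=5$), the factor $1+|x|^2+|y|^2$ is absorbed and $\big|\ln|x-y|\big|$ is integrable in $y$ near $y=x$ and grows only logarithmically at infinity, so the double integral over $G_N$ is bounded by a finite constant $C(T_0,T_1,\phi_1,\phi_2)$ independent of $N,s,t$. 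Here I am using that \eqref{bdd:abs conv} is genuinely stated with a constant depending only on $T_0,T_1$ — which is why it must be read as a supremum over the admissible $(s,t)$ — and that the bound is for the integrand \emph{already shifted} by $\1_{[1,\infty)}(r)/(4\pi r)$, matching the quantity in the statement.

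The remaining task is to show the contribution over $G_N^\complement$ stays bounded (in fact it tends to $0$, but boundedness suffices). This splits into two pieces, exactly as in the proof of Theorem~\ref{thm:covar1}. For the part where the primary condition fails, i.e. some coordinate of $x$ or $y$ leaves $[-\tfrac12 N^\eta,\tfrac12 N^\eta]$, I would crudely bound the shifted $r$-integral on $[0,Ns]$ by $Ns+\int_1^{Ns}\tfrac{1}{4\pi r}\d r\le C(T_1) N$ (using the trivial bound $\mathfrak b^N\le 1$), then use the rapid decay $\int_{([-N^\eta/2,N^\eta/2]^2)^\complement}|\phi_j(x)|\d x\le C(\eta,p,\phi_j)N^{-p}$ for any $p$, taking $p$ large enough (say $p\ge 2$) to beat the factor $N$; this gives a bound $\to 0$ and in particular uniformly bounded. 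For the part where the primary condition holds but the secondary one fails (so necessarily $0\le t-s\le\tau_N$), the only estimate among \eqref{tauN1}'s pieces that used the secondary condition is the one for $\int_{Nsr_N}^{Ns}\mathfrak b^N\d r$ in Proposition~\ref{prop:poisson2}~(4$^\circ$); I would bound that integral by its length $Ns-Nsr_N\le C(T_1)N(1-r_N)$, bound the shift term $\int_{Nsr_N}^{Ns}\tfrac{1}{4\pi r}\d r$ likewise, bound the other two integrals in \eqref{tauN1} by the already-established uniform bounds (which do not invoke the secondary condition), and integrate against $|\phi_1\phi_2|$ over the bad set $B_N$, whose $(x,y)$-measure against $|\phi_1\phi_2|$ is finite; since $N(1-r_N)=N^{1/2-\eta}$ while the $\int_{Nsr_N}^{Ns}$ piece comes with the factor $N^{-1/2}$ after rescaling (cf. the display in the proof of Theorem~\ref{thm:covar1}), the product is $N^{-\eta}\to 0$. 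Collecting the three contributions yields the claimed uniform bound.

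The step I expect to require the most care is the bookkeeping ensuring that \emph{every} constant produced along the way depends only on $(T_0,T_1,\phi_1,\phi_2)$ and never on $(s,t)$ or $N$: this means I must re-examine Propositions~\ref{prop:poisson1}, \ref{prop:poisson2}, \ref{prop:normal1}, \ref{prop:normal2} to confirm each bound there was stated uniformly over $T_0\le s\le t\le T_1$ (they were, since they are phrased with constants $C(T_0,T_1)$ and, in Proposition~\ref{prop:normal1}, with an explicit $\sup_{s,t}$), and to confirm that the $|x|,|y|,|\ln|x-y||$ growth is exactly of the order claimed — no hidden dependence on $s,t$ entering through, e.g., the variance lower bounds \eqref{sigma:q1}--\eqref{sigma:q3}, which indeed carry only $C(T_0,T_1)$. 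Once that is verified, the decay assumption on $\phi_1,\phi_2$ (allowing $n$ arbitrarily large) does the rest with room to spare, and the proof is essentially a uniform-in-parameters rereading of the proof of Theorem~\ref{thm:covar1}.
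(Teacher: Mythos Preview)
Your proposal is correct and matches the paper's approach exactly: the paper does not give a separate proof of the corollary but simply remarks that the proof of Theorem~\ref{thm:covar1} ``can be reinforced a bit'' to yield it, and your outline is precisely that reinforcement---splitting into $G_N$ and $G_N^\complement$, invoking \eqref{bdd:abs conv} on the good set, and handling the two failure modes on the complement with the same crude bounds. One small wording slip: in the secondary-condition failure piece you say the $|\phi_1\phi_2|$-measure of $B_N$ is ``finite'' and then invoke a factor $N^{-1/2}$ ``after rescaling''; in fact that $N^{-1/2}$ \emph{is} the $|\phi_1\phi_2|$-measure of $B_N$ (a strip of width $\sim N^{-1/2}$ in one coordinate), not a rescaling artifact---but since you cite the correct display from the proof of Theorem~\ref{thm:covar1}, the arithmetic $N^{-1/2}\cdot N(1-r_N)=N^{-\eta}$ is right.
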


\subsection{Identification of the limit}\label{sec:id}
Given $X_0\in \mathcal S'(\R^2)$, the additive stochastic heat equation is defined by
\begin{align}\label{def:ASHE_weak}
X_t(\phi)=X_0(\phi)+\int_0^t X_s\left(\frac{\Delta}{2}\phi\right)\d s+\int_0^t\int_{\R^2}\phi(x)W(\d x,\d r),
\end{align}
where $W(\d x,\d r)$ is a space-time white noise. For \eqref{def:ASHE_weak} and in what follows, $\Delta$ refers to the Laplacian (rather than the operator defined in \eqref{def:delta}). The solution is
\begin{align}\label{main:SPDE}
X_{t}(\phi)= X_0[Q_{t}(\phi)]+\int_0^{t} \int_{\R^2}Q_{t-r}\phi(x)W(\d x,\d r),
\end{align}
where $Q_t=\e^{t\Delta/2}$ is the transition semigroup of the two-dimensional standard Brownian motion and $W$ is a space-time white noise. See \cite[pp.339--343 in Chapter~5]{Walsh}.

Theorem~\ref{thm:covar1} and \eqref{main:SPDE} suggest that the limiting process of the rescaled Whittaker SDEs is the solution $X$ of an additive stochastic heat equation: if $X_0$ is independent of the space-time white noise such that$\{X_0(\phi)\}$ is a family of centered Gaussian variables with
\begin{align}\label{covar:X0}
\E[X_0(\phi_1)X_0(\phi_2)]=\frac{1}{2\pi }
\int_{\R^2}\int_{\R^2}\phi_1(y')\phi_2(y'')\big(-\ln|y'-y''|\big)\d y'\d y'',
\end{align}
then for $0<s\leq t<\infty$,
\begin{align}\label{match}
\lim_{N\to\infty}\left[\Cov[\zeta^N_s(\phi_1);\zeta^N_t(\phi_2)]-\mathfrak C_N\left(\int \phi_1\right)\left(\int\phi_2\right)\right]=\Cov[X_{s^{-1}}(\phi_1);X_{t^{-1}}(\phi_2)]
\end{align}
and $(X_{t^{-1}};t>0)$ is a centered Gaussian process. To be precise,  recall that \eqref{covar:X0} well defines $X_0$ as a centered Gaussian random field indexed by $\mathcal S_0(\R^2)=\{\phi\in\mathcal S(\R^2);{\textstyle \int}\phi=0\}$. See \cite{LSSW} and the references therein for a reproducing kernel approach of the construction of $X_0$. Alternatively, $X_0$ can be defined as the stationary solution of the additive stochastic heat equation: $X_0(\phi)=\int_{-\infty}^0\int_{\R^2}Q_{-r}\phi(x)W(\d x,\d r)$, which converges a.s. See Section~\ref{sec:stat}.

Given the conditionally positive definiteness of $(x,y)\mapsto -\ln|x-y|$, the relation in \eqref{covar:X0} cannot apply on the full space $\mathcal S(\R^2)$.  To avoid unnecessary technical issues from this restriction of domain, first we fix $\psi\in \mathcal S(\R^2)$ such that $\int \psi=1$ and define a re-centering operator $R=R_\psi:\mathcal S(\R^2)\to \mathcal S_0(\R^2)$ by $R\phi=\phi-(\int \phi)\psi$. Note that $R$ is a projection onto $\mathcal S_0(\R^2)$: $R^2\phi=R\phi$. Then with the Gaussian random field $X_0$ specified above, we modify the choice of the limiting process in \eqref{match} to the following $\mathcal S'(\R^2)$-valued continuous process:
\begin{align}\label{def:lim}
X_t(\phi)=X_0\left[Q_{t} R\phi\right]+\int_0^{t} \int_{\R^2}Q_{t-r}R\phi(x)W(\d x,\d r),\quad \phi\in \mathcal S(\R^2).
\end{align} 
Here, $X_0\left[Q_{t} R\phi\right]$, and hence, the process $X$ are well-defined since the Lebesgue measure is an invariant measure of $(Q_t)$. Note that for $\phi\in \mathcal S_0(\R^2)$, this $X_t(\phi)$ still satisfies \eqref{def:ASHE_weak}. (For example,  \cite[$5^\circ$ in  the proof of Theorem~2.1 on page 430]{Shiga} allows for a straightforward extension beyond one dimension to this case.) By \eqref{match}, it holds that for all $\phi_1,\phi_2\in \mathcal S(\R^2)$ and $0<s\leq t<\infty$,
\begin{align}
\begin{split}\label{eq:main_conv1}
\lim_{N\to\infty}\Cov[\zeta^N_s\circ R(\phi_1);\zeta^N_t\circ R(\phi_2)]&=\Cov[X_{s^{-1}}(\phi_1);X_{t^{-1}}(\phi_2)].
\end{split}
\end{align}

\begin{thm}\label{thm:second}
As $N\to\infty$, the sequence of laws of $(\zeta_t^N\circ R;t>0)$ converges weakly to the law of $(X_{t^{-1}};t>0)$ as probability measures on $D((0,\infty),\mathcal S'(\R^2))$, where $X$ is defined by \eqref{def:lim}.
\end{thm}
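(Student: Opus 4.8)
The plan is to prove Theorem~\ref{thm:second} by the standard two-step strategy for weak convergence in a Skorokhod space: (i) establish convergence of finite-dimensional distributions, and (ii) prove tightness of the laws of $(\zeta_t^N\circ R;t>0)$ in $D((0,\infty),\mathcal S'(\R^2))$. For the finite-dimensional distributions, since each $\zeta^N_t\circ R$ is a centered Gaussian field (a linear functional of the driving Brownian motion), a finite collection $(\zeta^N_{t_1}\circ R(\phi_1),\dots,\zeta^N_{t_k}\circ R(\phi_k))$ is a centered Gaussian vector, and convergence in law reduces to convergence of the covariance matrix entries. Those entries are exactly $\Cov[\zeta^N_{t_i}\circ R(\phi_i);\zeta^N_{t_j}\circ R(\phi_j)]$, whose limits are identified by \eqref{eq:main_conv1} (which in turn rests on Theorem~\ref{thm:covar1} and the re-centering computation in Section~\ref{sec:id}). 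The limit vector is centered Gaussian with covariance $\Cov[X_{t_i^{-1}}(\phi_i);X_{t_j^{-1}}(\phi_j)]$, i.e.\ the finite-dimensional distributions of $(X_{t^{-1}};t>0)$; here one uses that $X$ defined by \eqref{def:lim} is itself a centered Gaussian process, so its law is determined by this covariance structure. One should also note the time change $t\mapsto t^{-1}$ is a homeomorphism of $(0,\infty)$, so it suffices to work on, say, $t>0$ directly (equivalently on the inverted clock); the statement on $D((0,\infty),\mathcal S'(\R^2))$ follows from the version on any relatively compact time interval $[T_0^{-1},T_1^{-1}]$ by a standard exhaustion argument.

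For tightness, the natural route is to reduce to tightness of the real-valued processes $(\zeta^N_t\circ R(\phi);t\in[T_0,T_1])$ for each fixed $\phi\in\mathcal S(\R^2)$, together with a nuclearity/Mitoma-type criterion (e.g.\ Mitoma's theorem) that upgrades coordinatewise tightness in $D([T_0,T_1],\R)$ to tightness in $D([T_0,T_1],\mathcal S'(\R^2))$. For the scalar processes, the key quantitative input is a moment bound of Kolmogorov--Chentsov type: a bound of the form
\begin{align*}
\E\big[\,|\zeta^N_t\circ R(\phi)-\zeta^N_s\circ R(\phi)|^2\,\big]\leq C(T_0,T_1,\phi)\,|t-s|^{\gamma}
\end{align*}
for some $\gamma>0$, uniformly in $N$ and in $s,t\in[T_0,T_1]$, which for a Gaussian family gives control of all higher moments and hence tightness (indeed $C$-tightness) in $D([T_0,T_1],\R)$. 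Since $\zeta^N_t\circ R(\phi)$ is a centered Gaussian variable, the left side is a double integral of the covariance kernel $\Cov[\zeta^N(x,t);\zeta^N(y,s)]$ against $R\phi(x)R\phi(y)$; after subtracting the divergent constant $\mathfrak C_N(\int R\phi)^2=0$ (this is precisely why the re-centering $R$ is used — $\int R\phi=0$ kills the $\mathfrak C_N$ term), one is left estimating increments in the time variables of the probabilistic representation $\int_0^{Ns}\prod_j\mathfrak b^{N,j}$ integrated against rapidly decaying test functions. This is exactly the content advertised in the introduction as the work of Section~\ref{sec:tight} (removing ``additional divergent terms'', Lemma~\ref{lem:JN2,3}, binomial integration by parts, recursive identities), so the proof here should cite those estimates: the uniform-in-$N$ Hölder bound on the re-centered covariance increments, combined with the growth bounds of Lemma~\ref{lem:bin} and Corollary~\ref{cor:uniformcovar} to handle the spatial integration uniformly.

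I would then assemble the argument: finite-dimensional convergence from \eqref{eq:main_conv1} plus Gaussianity; tightness in $D((0,\infty),\mathcal S'(\R^2))$ from Mitoma's criterion fed by the scalar Kolmogorov--Chentsov bounds; and conclude that $(\zeta^N_t\circ R;t>0)$ converges weakly to the unique law on $D((0,\infty),\mathcal S'(\R^2))$ with the prescribed Gaussian finite-dimensional distributions, which is the law of $(X_{t^{-1}};t>0)$ by construction \eqref{def:lim}. The main obstacle is unquestionably the uniform-in-$N$ time-increment (Hölder) estimate on the re-centered covariance function: the integral representation \eqref{obj1} has genuinely divergent pieces as $N\to\infty$ coming from the small-$r$ (and $r\approx Ns$ when $s=t$) regimes, and the increments in $s$ and $t$ do not obviously cancel these divergences without the structural manipulations — binomial integration by parts and the recursive death-process identities — referenced in the introduction. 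Everything else (Gaussianity, Mitoma's theorem, the $t\mapsto t^{-1}$ reparametrization, passage from compact time intervals to $(0,\infty)$) is routine once that estimate, supplied by Section~\ref{sec:tight}, is in hand. I would therefore structure the proof so that Theorem~\ref{thm:second} is a short corollary citing (a) Theorem~\ref{thm:covar1}/\eqref{eq:main_conv1} for finite-dimensional distributions and (b) the main tightness estimate of Section~\ref{sec:tight} together with Mitoma's theorem for tightness, with only the Gaussian-vector and reparametrization bookkeeping carried out in full here.
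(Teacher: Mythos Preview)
Your proposal is correct and matches the paper's approach essentially verbatim: the paper's proof of Theorem~\ref{thm:second} is precisely the short paragraph following the statement, which invokes \eqref{eq:main_conv1} for finite-dimensional convergence (via Gaussianity) and Mitoma's theorem for tightness, deferring the scalar H\"older estimate on $\zeta^N(\phi)$, $\phi\in\mathcal S_0(\R^2)$, to Section~\ref{sec:tight}. Your sketch is in fact more detailed than the paper's own treatment, and your identification of the uniform time-increment bound as the only nontrivial ingredient is exactly right.
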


For the proof of this theorem, the convergence of finite-dimensional marginals follows readily from \eqref{eq:main_conv1}. To obtain the convergence at the process level, Mitoma's theorem \cite{Mitoma} requires the tightness of  $\zeta^N(\phi)$ for all fixed $\phi\in \mathcal S_0(\R^2)$. The proof of this property is the subject of the next section.

\section{Tightness of the rescaled Whittaker SDEs}\label{sec:tight}
Our goal in this section is to prove that the family of laws of the continuous Gaussian processes $\zeta^N(\phi)$ defined by \eqref{def:zetaN2} is tight, for a fixed $\phi\in \mathcal S_0(\R^2)$. It is enough to show that $(s,t)\mapsto  \E[|\zeta_s^N(\phi)-\zeta_t^N(\phi)|^{2}]$ are uniformly  H\"older continuous on compacts. The major argument appears in the proof of Lemma~\ref{lem:JN2,3}, where several exact identities are introduced to cancel new divergent terms. Throughout this section, we continue to use the notations in \eqref{def:Bkernel} for binomial probabilities.

Consider the explicit expressions of these metrics by applying the rescaling under consideration. Recall \eqref{def:zetaN1}, \eqref{obj2} and \eqref{def:zetaN2}. By \eqref{covar}, we have
\begin{align}
\E[|\zeta_s^N(\phi)-\zeta_t^N(\phi)|^{2}]
&=\mathcal I_{N}(s,t)-\mathcal J_{N}(s,t)-\mathcal K_{N}(s,t),\label{covar:dec}
\end{align}
where
\begin{align}
\hypertarget{IN}{\mathcal I_{N}}(s,t)&=\int_s^t \d r \int_{x\geq  -N^{1/2}\mathbf 1}\d x\phi(x)\int_{y\geq -N^{1/2}\mathbf 1}\d y\phi(y)\mathfrak b_{N}(x,y;r,t,t),\notag\\
\hypertarget{JN}{\mathcal J_{N}}(s,t)&=\int_0^s \d r \int_{x\geq  -N^{1/2}\mathbf 1}\d x\phi(x)\int_{y\geq -N^{1/2}\mathbf 1}\d y\phi(y)[\mathfrak b_{N}(x,y;r,s,t)-\mathfrak b_{N}(x,y;r,s,s)],\notag\\
\hypertarget{KN}{\mathcal K_{N}}(s,t)&=\int_0^s \d r \int_{x\geq  -N^{1/2}\mathbf 1}\d x\phi(x)\int_{y\geq -N^{1/2}\mathbf 1}\d y\phi(y)[\mathfrak b_{N}(x,y;r,s,t)-\mathfrak b_{N}(x,y;r,t,t)]\notag.
\end{align}
Then the job is to show that each of the three terms on the right-hand side of \eqref{covar:dec} satisfies the following property for some $\alpha\in (0,1]$:
\begin{align}\label{IJK:bdd}
\begin{split}
&\sup_{N\geq 16}\sup_{s,t:T_0\leq s< t\leq T_1}|\mathcal L_N(s,t)|/(t-s)^\alpha<\infty,\quad\;\forall\; 0<T_0<1<T_1<\infty.
\end{split}
\end{align}
The results are presented as Propositions~\ref{prop:IN}, \ref{prop:JN}, and \ref{prop:KN} below. The use of uniform H\"older continuity is only for $\hyperlink{IN}{\mathcal I_{N}}$;  the other two functions are uniformly Lipschitz. 

\begin{nota}\label{nota:j}
Write $j_\ast $ for the coordinate in $\{1,2\}$ different from $j\in\{1,2\}$.  \hfill $\blacksquare$
\end{nota}

\begin{prop}\label{prop:IN}
\eqref{IJK:bdd} is satisfied by $\mathcal L_N=\hyperlink{IN}{\mathcal I_{N}}$ for $\alpha=1/2$. 
\end{prop}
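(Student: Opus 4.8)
The plan is to bound $\hyperlink{IN}{\mathcal I_N}(s,t)$ by $C(T_0,T_1)(t-s)^{1/2}$ using the integrated normal and Poisson approximations already developed, applied with $s$ replaced by $t$ (so that $\mathfrak b_N(x,y;r,t,t)$ is the diagonal kernel). First I would recall the probabilistic meaning: $\int_0^r \mathfrak b_N(x,y;\cdot,t,t)$-type integrals represent covariances, and the point is that over the short interval $r\in[s,t]$ we only need an \emph{upper} bound on the integrand, uniform in $r$, after integrating out $x$ and $y$ against $\phi\in\mathcal S_0(\R^2)$. The length of the interval of integration is $t-s$, so if the inner double integral $\int\!\!\int \phi(x)\phi(y)\,\mathfrak b_N(x,y;r,t,t)\,\d x\,\d y$ were bounded uniformly in $r\in[s,t]$ and $N$, we would immediately get the Lipschitz bound $|\hyperlink{IN}{\mathcal I_N}(s,t)|\le C(t-s)$, which is stronger than what \eqref{IJK:bdd} with $\alpha=1/2$ demands. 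The subtlety is that for $r$ near $t$ the kernel $\mathfrak b_N(x,y;r,t,t)$ is a \emph{diagonal} binomial-coincidence probability $N^{1/2}\mathbf P(S_{M(x,t)}(r/t)=S'_{M(y,t)}(r/t))$, whose Gaussian approximant $\mathfrak g(\sigma^2;\mu)$ has variance $\sigma^2\asymp (1-r/t)$ degenerating as $r\uparrow t$; this is exactly the regime where one must interpolate between a local CLT bound (valid for $r$ not too close to $t$) and a Poisson bound (valid for $r$ close to $t$), precisely as in the decompositions \eqref{tauN1}--\eqref{tauN2} and Assumption~\ref{ass:sNtN}.

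The key steps, in order: (i) Split the $r$-integral over $[s,t]$ at $r=t\,r_N$ (using $s=t$ in the roles of both time variables). For $r\in[s,t r_N]$ apply Lemma~\ref{lem:normal1}, getting $|\mathfrak b_{N}(x,y;r,t,t)-\prod_j\mathfrak g(\sigma_j(r)^2;\mu_j(r))|\lesssim N^{-1/2}\sigma_j(r)^{-2}$ and then the explicit Gaussian bound $\prod_j\mathfrak g(\sigma_j(r)^2;\mu_j(r))\le C/(r(1-r/t)\cdot\text{stuff})$, integrate against $\phi(x)\phi(y)$, and observe that since $\phi$ has \emph{mean zero} the worst spatial behaviour is tamed (we can use the crude bound $\mathfrak g\le(2\pi\sigma^2)^{-1}$ and integrability of $\phi$). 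For $r\in[t r_N,t]$ apply the integrated Poisson approximation from Lemma~\ref{lem:poisson1}/Proposition~\ref{prop:poisson1} (second inequality, with $s=t$), which gives an $L^1_r$ bound of order $N(1-r_N)^2$ on that window. (ii) Integrate the resulting pointwise bounds over $x,y$ using $\int_{\R^2}|\phi|<\infty$ and $\int\phi=0$; the mean-zero property is what prevents a spurious $\ln$ or polynomial-in-$N$ factor. (iii) Combine: the normal-approximation contribution over $[s,tr_N]$ is $\lesssim (t-s)$ times a uniform constant (possibly up to a further $(t-s)^{1/2}$ from a $\int_{r}^{t}(1-r/t)^{-1/2}\,\d r$-type factor near the endpoint), and the Poisson contribution is $\lesssim C(1-r_N)\asymp (t-s)$ when $t-s\le \tau_N$, or is subsumed when $t-s>\tau_N$; in all cases one lands on $C(T_0,T_1)(t-s)^{1/2}$. (iv) Conclude \eqref{IJK:bdd} with $\alpha=1/2$ and note (as the text already flags) that the H\"older rather than Lipschitz exponent is forced only by this endpoint degeneracy.

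The main obstacle I expect is precisely controlling the contribution of $r$ near $t$: here $\sigma_j(r)^2\asymp(1-r/t)\to0$, so the Gaussian density $\mathfrak g(\sigma_j(r)^2;\mu_j(r))$ blows up and the error term $N^{-1/2}\sigma_j(r)^{-2}$ in Lemma~\ref{lem:normal1} is not integrable down to $r=t$; one must switch to the Poisson description at the scale $r_N=1-N^{-(1/2+\eta)}$, and then verify that the leftover window $[tr_N,t]$ contributes only $O(1-r_N)=O(N^{-1/2-\eta})$ after integrating against $\phi$, which combined with the interval length gives the $(t-s)^{1/2}$ and not better. A secondary technical point is that, unlike in Section~\ref{sec:conv} where the secondary condition \eqref{sec cond} ($x\neq y$) was available, here the diagonal case $x=y$ genuinely contributes to $\hyperlink{IN}{\mathcal I_N}$, so one cannot use any lower bound on $|x-y|$; the remedy is that we are after a one-sided (upper) estimate only, for which the crude $\mathfrak g(\sigma^2;\cdot)\le(2\pi\sigma^2)^{-1/2}\cdot(2\pi\sigma^2)^{-1/2}$ bounds together with $\phi\in\mathcal S_0$ suffice, and no cancellation among divergent terms is needed — in contrast to the genuinely harder Lemma~\ref{lem:JN2,3} for $\mathcal J_N$.
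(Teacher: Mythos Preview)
Your outline has the right instinct—split near $r=t$ into a normal regime and a Poisson regime—but the specific tools you invoke do not close the argument, and two of your claims are incorrect.

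First, the Poisson window. You propose to use ``the integrated Poisson approximation from Lemma~\ref{lem:poisson1}/Proposition~\ref{prop:poisson1} (second inequality, with $s=t$)''. That inequality only bounds the error $|\mathfrak b^N-\text{(Poisson)}|$; you still have to bound the Poisson probabilities themselves. For that, Proposition~\ref{prop:poisson2} (3$^\circ$)--(4$^\circ$) are the only tools available, and both require $x\neq y$ (either explicitly or via the secondary condition \eqref{sec cond}). You acknowledge that the diagonal $x=y$ genuinely contributes to $\mathcal I_N$, but your proposed remedy—``crude $\mathfrak g(\sigma^2;\cdot)\le(2\pi\sigma^2)^{-1/2}$ together with $\phi\in\mathcal S_0$''—does not work in the Poisson window: there $\sigma_j(r)^2\asymp(1-r/t)\to0$, so the crude bound explodes, and the mean-zero property of $\phi$ cannot rescue a pointwise upper bound on a nonnegative kernel (you are not subtracting any recentering term here). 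Relatedly, your assertion in (iii) that ``$(1-r_N)\asymp(t-s)$ when $t-s\le\tau_N$'' is simply false: $1-r_N=\tau_N=N^{-(1/2+\eta)}$ is a fixed scale, unrelated to $t-s$.

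The paper's argument is different in two essential ways. It splits at $1-r/t=(1/2)N^{-1/2}$ (not at $r_N$), and in the near-$t$ window it does \emph{not} go through Section~\ref{sec:poisson}. Instead it applies a direct Chernoff bound with parameter $N^{-1/4}$ to the Poisson difference, yielding the pointwise estimate $\mathfrak b_{N,j}(x_j,y_j;r,t,t)\le 1+C\,N^{1/2}e^{-\frac12 T_0 N^{1/4}|x_j-y_j|}$. This is integrable against $|\phi(x)||\phi(y)|$ (no use of $\int\phi=0$ anywhere in this proposition). The Hölder exponent $1/2$ then comes from Cauchy--Schwarz in the variable $r$: the window has length $\lesssim N^{-1/2}$, which trades against the $N^{1/2}$ prefactor to produce $(t-s)^{1/2}$. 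In the complementary region $1-r/t>(1/2)N^{-1/2}$, the variance $\sigma_j(r;N)^2$ is bounded below by $C(T_0,T_1)N^{-1/2}$, so Lemma~\ref{lem:normal1} gives a Gaussian-plus-$O(1)$ bound that integrates to $C(t-s)$ directly. Your claim that $\phi\in\mathcal S_0(\R^2)$ is needed for $\mathcal I_N$ is a misconception; it is crucial for $\mathcal J_N$ and $\mathcal K_N$, but Proposition~\ref{prop:IN} is proved entirely with $|\phi|$.
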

\begin{proof}
We bound the integrand of $\hyperlink{IN}{\mathcal I_{N}}(s,t)$, for $r\in [s,t]$, according to $r/t\geq 1-(1/2)N^{-1/2}$ and the complementary case. The point here is that since $r$ is bounded away from zero, we do not need to deal with divergent re-centering constants as in Section~\ref{sec:conv}. 
\smallskip

\noindent \hypertarget{IN1}{{\bf Step 1.}}
First, we assume the primary condition \eqref{prim cond} and
\begin{align}\label{cond:gauss-bddbdd0}
r\in [s,t]\quad \mbox{such that}\quad r/t\geq 1-(1/2)N^{-1/2}.
\end{align}

Under \eqref{cond:gauss-bddbdd0}, we obtain from \eqref{SM:poisson} and \eqref{bin:comp} that
\begin{align}\label{IN-1}
\begin{split}
&\Bigg|\mathbf P\left(S_{m}\left(\frac{r}{t}\right)=S'_{m'}\left(\frac{r}{t}\right)\right)-\mathbf P\left(V\left(m\left(1-\frac{r}{t}\right)\right)=V'\left(m'\left(1-\frac{r}{t}\right)\right)+m-m'\right)\Bigg|
\leq \frac{1}{2N^{1/2}}
\end{split}
\end{align}
for $m=M(x_j,t)$, $m'=M(y_j,t)$, and $j\in \{1,2\}$. To bound the foregoing Poisson probabilities, notice that for all $x_j,y_j\in \R$ and $t\geq T_0$,  
\begin{align}\label{IN-2}
|M(x_j,t)-M(y_j,t)|\geq T_0N^{1/2}|x_j-y_j|-1.
\end{align}
Then it follows from the elementary inequalities $\mathbf P(V-V'\geq m)\leq \e^{-N^{-1/4}m}\mathbf E[\e^{N^{-1/4}(V-V')}]$ and $\e^{v}-1\leq v+v^2$ for all $v\in [-1,1]$ that the next two inequalities hold:  
\begin{align}
 &\quad N^{1/2}\mathbf P\left(V\left(M(x_j,t)\left(1-\frac{r}{t}\right)\right)=V'\left(M(y_j,t)\left(1-\frac{r}{t}\right)\right)+M(x_j,t)-M(y_j,t)\right)\notag\\
 &\leq \1_{\{M(x_j,t)-M(y_j,t)\geq 0\}}N^{1/2}\exp\Big\{-N^{-1/4}[M(x_j,t)-M(y_j,t)]^+\Big\}
\notag\\
 &\quad \times \exp\Big\{M(x_j,t)\Big(1-\frac{r}{t}\Big)(\e^{N^{-1/4}}-1)+M(y_j,t)\Big(1-\frac{r}{t}\Big)(\e^{-N^{-1/4}}-1)\Big\}
\notag\\
&\quad +\1_{\{M(x_j,t)-M(y_j,t)< 0\}}N^{1/2}\exp\Big\{-N^{-1/4}[M(x_j,t)-M(y_j,t)]^-\Big\}\notag\\
&\quad\quad  \times \exp\Big\{M(y_j,t)\Big(1-\frac{r}{t}\Big)(\e^{N^{-1/4}}-1)+M(x_j,t)\Big(1-\frac{r}{t}\Big)(\e^{-N^{-1/4}}-1)\Big\}\notag\\
&\leq N^{1/2}\exp\Big\{-N^{-1/4}|M(x_j,t)-M(y_j,t)|\Big\}\notag\\
&\quad \times \exp\Big\{\Big(1-\frac{r}{t}\Big)N^{-1/4}|M(x_j,t)-M(y_j,t)|+\Big(1-\frac{r}{t}\Big)N^{-1/2}[M(x_j,t)+M(y_j,t)]\Big\}\notag\\
&\leq N^{1/2}\exp\Big\{-\frac{1}{2}T_0N^{1/4}|x_j-y_j|+C(T_1)\Big\},\notag
\end{align}
where the last inequality uses the primary condition \eqref{prim cond}, \eqref{cond:gauss-bddbdd0} and \eqref{IN-2}. Combining \eqref{IN-1} and the last inequality proves that under the primary condition \eqref{prim cond} and \eqref{cond:gauss-bddbdd0},
\begin{align}\label{IN-5}
\mathfrak b_{N,j}(x_j,y_j;r,t,t)
\leq 1+C(T_1)N^{1/2}\e^{-\frac{1}{2}T_0N^{1/4}|x_j-y_j|}.
\end{align}

\noindent \hypertarget{IN2}{{\bf Step 2.}}
We assume the primary condition \eqref{prim cond} and the complementary case of \eqref{cond:gauss-bddbdd0}:
\begin{align}\label{cond:gauss-bddbdd}
r\in [s,t]\quad \mbox{such that}\quad 1-r/t> (1/2)N^{-1/2}.
\end{align}
Then $C(T_0,T_1)\leq  N^{1/2}\sigma_j(r;N)^2$ for all $j\in \{1,2\}$ (recall \eqref{def:mujsigmaj0}). Furthermore, if 
\begin{align}\label{sec cond-jj}
|x_j-y_j|\geq (4/T_0)N^{-1/2}\quad\mbox{for \emph{fixed} $j\in \{1,2\}$,}
\end{align}
then $|M(x_j,t)-M(y_j,t)|\geq C(T_0,T_1)N^{1/2}|x_j-y_j|$ by \eqref{IN-2}. Hence, under the primary condition \eqref{prim cond}, \eqref{cond:gauss-bddbdd} and \eqref{sec cond-jj}, Lemma~\ref{lem:normal1} gives
\begin{align}
\begin{split}
\mathfrak b_{N,j}(x_j,y_j;r,t,t)&\leq \mathfrak g\big(\sigma_j(r;N)^2;C(T_0,T_1)(x_j-y_j)\big)+C'(T_0,T_1).\label{IN-4-2}
\end{split}
\end{align}

\noindent \hypertarget{IN3}{{\bf Step 3.}}
Up to this point, we have a bound of $|\hyperlink{IN}{\mathcal I_{N}}(s,t)|$ given by the sum of the following six terms, up to a multiplicative constant $C(T_0,T_1,\phi)\in (0,\infty)$:
\begin{align*}
 \displaystyle 
 \tau_1&=\int_s^t \d r\int_{\R^2}\d x|\phi(x)|\int_{\R^2}\d y |\phi(y)|\1_{\{1-r/t\leq (1/2)N^{-1/2}\}}\prod_{j=1}^2\big[1+N^{1/2}\e^{-\frac{1}{2}T_0N^{1/4}|x_j-y_j|}\big],\\
\displaystyle
 \tau_2&=\int_s^t \d r\int_{\R^2}\d x|\phi(x)|\int_{\R^2}\d y |\phi(y)|\prod_{j=1}^2 \big[ \mathfrak g\big(\sigma_j(r;N)^2;C(T_0,T_1)(x_j-y_j)\big)+1\big],\quad \\
\displaystyle\tau_3&= \sum_{j=1}^2\int_s^t \d r\int_{\R^2}\d x|\phi(x)|\int_{\R^2}\d y |\phi(y)|\\
&\hspace{.5cm}\big[ \mathfrak g\big(\sigma_j(r;N)^2;C(T_0,T_1)(x_j-y_j)\big)+1\big]\times N^{1/2}\1_{\{|x_{j_\ast}-y_{j_\ast}|< (4/T_0)N^{-1/2}\}},\quad \\
 \displaystyle  \tau_4&=\int_s^t \d r\Bigg(\prod_{j=1}^2N^{-1/2}\Bigg)\cdot \Bigg(\prod_{j=1}^2N^{1/2}\Bigg),\hspace{3cm}\\
\displaystyle \tau_5&=
\int_{s}^t \d r\int_{([-\frac{1}{2}N^{\eta},\frac{1}{2}N^{\eta}]^2)^\complement}\d x|\phi(x)|\int_{\R^2}\d y |\phi(y)|\Bigg(\prod_{j=1}^2N^{1/2}\Bigg),\hspace{2cm} \\
\displaystyle \tau_6&= \int_{s}^t \d r\int_{\R^2}\d x |\phi(x)|\int_{([-\frac{1}{2}N^{\eta},\frac{1}{2}N^{\eta}]^2)^\complement}\d y|\phi(y)|\Bigg(\prod_{j=1}^2N^{1/2}\Bigg),\hspace{2cm}
\end{align*}
where the definition of $\tau_3$ uses Notation~\ref{nota:j}. Let us explain how these six terms arise from bounding $  |\hyperlink{IN}{\mathcal I_{N}}(s,t)|$. First, $\tau_1$ follows from \eqref{IN-5}.  We get $\tau_2$ from \eqref{IN-4-2} by enforcing \eqref{sec cond-jj} for all $j\in\{1,2\}$, whereas $\tau_3$ follows from \eqref{IN-4-2} under \eqref{sec cond-jj} for exactly one $j\in \{1,2\}$. Then we need $\tau_4$ when \eqref{sec cond-jj} fails for all $j\in \{1,2\}$. Now that $\tau_1,\tau_2,\tau_3,\tau_4$ are obtained by assuming the primary condition, $\tau_5,\tau_6$ handle the failure of the primary condition. 
 
We are ready to prove the required uniform H\"older continuity. It is immediate that
\begin{align}\label{IN-final2}
\sum_{j=2}^6\tau_j\leq C(T_0,T_1,\phi)|t-s|,
\end{align}
where $\tau_2,\tau_3,\tau_5,\tau_6$ are bounded by using the fast decay property of $\phi$ or the property that $\int_{\R}\d x_j\g(\sigma^2;x_j)=1$ for all $\sigma^2>0$. The argument in Step~\hyperlink{IN1}{1} is insufficient to yield such uniform Lipschitz continuity of $\tau_1$, but we can still prove its H\"older continuity as follows. First, by the Cauchy--Schwarz inequality with respect to $\int_s^t\d r$ and the bound $\int_s^t \d r\1_{\{1-r/t\leq (1/2)N^{-1/2}\}}\leq C(T_0,T_1)N^{-1/2}$,
\begin{align*}
&\quad \int_s^t \d r\int_{\R^2} \d x|\phi(x)|\int_{\R^2}\d y|\phi(y)|\1_{\{1-r/t\leq (1/2)N^{-1/2}\}}N^{1/2}\e^{-\frac{1}{2}T_0N^{1/4}|x_j-y_j|}\\
&\leq \left(C(T_0,T_1)N^{-1/2}\left(N^{1/2}\int_{\R^2}\d x|\phi(x)|\int_{\R^2}\d y|\phi(y)|\e^{-\frac{1}{2}T_0N^{1/4}|x_j-y_j|}\right)^{2}\right)^{1/2}(t-s)^{1/2}\\
&\leq C(T_0,T_1,\phi)(t-s)^{1/2},
\end{align*}
where the $\less$-inequality follows since $\sup_NN^{1/4}\int_{\R}\d x_j\e^{-\frac{1}{2}T_0N^{1/4}|x_j|}<\infty$. Hence, $\tau_1\less C(T_0,T_1,\phi)(t-s)^{1/2}$. This H\"older continuity and \eqref{IN-final2} are enough for the proposition. 
\end{proof}

The main theme of this section is to bound $\hyperlink{JN}{\mathcal J_{N}}$. We start with an interpolation to represent this term: 
\begin{align}
\begin{split}
\hyperlink{JN}{\mathcal J_{N}}(s,t)
&=\int_s^t\d v\int_0^s \d r  \int_{x\geq  -N^{1/2}\mathbf 1}\d x\phi(x)\frac{\partial}{\partial v}\int_{y\geq -N^{1/2}\mathbf 1}\d y\phi(y)
\mathfrak b_{N}(x,y;r,s,v)
.\label{interpolate}
\end{split}
\end{align}
For the foregoing derivative, we change variables with $N^{1/2}y'=vN^{1/2}(y+N^{1/2}\mathbf 1)$:
\begin{align}
&\quad\; \frac{\partial }{\partial v}\int_{y\geq -N^{1/2}\mathbf 1}\d y\phi(y)  \prod_{j=1}^2 N^{1/2}\mathbf P\left(S_{M(x_j,s)}\left(\frac{r}{s}\right)=S'_{M(y_j,v)}\left(\frac{r}{v}\right)\right),\notag\\
&= \frac{\partial }{\partial v}\frac{1}{v^2}\int_{y'\geq \mathbf 0}\d y'\phi\left(\frac{y'}{v}-N^{1/2}\mathbf 1\right)
 \prod_{j=1}^2 N^{1/2}\mathbf P\left(S_{M(x_j,s)}\left(\frac{r}{s}\right)=S'_{\lfloor N^{1/2}y'_j\rfloor}\left(\frac{r}{v}\right)\right).\label{interpolate1}
\end{align}
To proceed, we differential under the integral sign and then change the variable $y'$ back to $y$. Hence, by \eqref{interpolate} and \eqref{interpolate1}, we get
\begin{align}
\hyperlink{JN1}{\mathcal J_{N}}(s,t)&=-2\hypertarget{JN1}{\mathcal J_{N,1}}(s,t)-\hypertarget{JN2}{\mathcal J_{N,2}}(s,t)+\hypertarget{JN3}{\mathcal J_{N,3}}(s,t).\label{main:JN}
\end{align}
Here, since $\d y'=\d yv^2$, 
\begin{align*}
\left\{
\begin{array}{ll}
\displaystyle \mathcal J_{N,k}(s,t)= \int_s^t\d v \int_0^s \d r\int_{x\geq -N^{1/2}\mathbf 1}\d x\phi(x) \int_{y\geq -N^{1/2}\mathbf 1}\d yv^2 \times 
\mathfrak a_{N,k} \\
\vspace{-.2cm}\\
\displaystyle \quad \quad \mbox{for}\quad \mathfrak a_{N,1}=\frac{\phi(y)}{v^3}\mathfrak b_{N}(x,y;r,s,v), \quad \mathfrak a_{N,2}=\frac{v(y+N^{1/2}\mathbf 1)\cdot \nabla\phi(y)}{v^4}\mathfrak b_{N}(x,y;r,s,v),\\
\vspace{-.2cm}\\
\displaystyle 
 \quad\quad \mathfrak a_{N,3}=\frac{\phi(y)}{v^2} \frac{\partial}{\partial v}\prod_{j=1}^2 N^{1/2}\mathbf P\left(S_{M(x_j,s)}\left(\frac{r}{s}\right)=S'_{m'_j}\left(\frac{r}{v}\right)\right)\Big|_{m_1'=M(y_1,v),m_2'=M(y_2,v)}.
\end{array}
\right.
\end{align*}
We handle $\hyperlink{JN1}{\mathcal J_{N,1}}$ and $-\hyperlink{JN2}{\mathcal J_{N,2}}+\hyperlink{JN3}{\mathcal J_{N,3}}$ separately. 

\begin{lem}\label{lem:JN1}
\eqref{IJK:bdd} is satisfied by $\mathcal L_N=\hyperlink{JN1}{\mathcal J_{N,1}}$  for $\alpha=1$. 
\end{lem}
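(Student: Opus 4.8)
The plan is to recognize $\hyperlink{JN1}{\mathcal J_{N,1}}$ as a time-average of the re-centered covariance functional, for which a uniform bound is already available. Starting from the definition of $\mathfrak a_{N,1}$ below \eqref{main:JN}, the prefactor simplifies: $v^2\cdot \mathfrak a_{N,1}=v^2\cdot \phi(y)v^{-3}\mathfrak b_N(x,y;r,s,v)=v^{-1}\phi(y)\mathfrak b_N(x,y;r,s,v)$, and $v^{-1}$ is bounded above by $T_0^{-1}$ on the relevant range $v\in[s,t]\subseteq[T_0,T_1]$. Hence
\[
\hyperlink{JN1}{\mathcal J_{N,1}}(s,t)=\int_s^t\frac{\d v}{v}\int_{x\geq -N^{1/2}\mathbf 1}\!\!\d x\,\phi(x)\int_{y\geq -N^{1/2}\mathbf 1}\!\!\d y\,\phi(y)\int_0^s\mathfrak b_N(x,y;r,s,v)\,\d r .
\]
By \eqref{obj1} (equivalently \eqref{obj2}) and the definition \eqref{def:zetaN2}, the inner triple integral over $r,x,y$ is exactly $\Cov[\zeta^N_s(\phi);\zeta^N_v(\phi)]$, so $\hyperlink{JN1}{\mathcal J_{N,1}}(s,t)=\int_s^t v^{-1}\Cov[\zeta^N_s(\phi);\zeta^N_v(\phi)]\,\d v$, where $T_0\le s\le v\le T_1$.

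Next I would invoke the uniform covariance estimate of Corollary~\ref{cor:uniformcovar}. The key point is that $\phi\in\mathcal S_0(\R^2)$, so $\int\phi=0$ and therefore $\int\!\int \phi(x)\phi(y)\tfrac{\1_{[1,\infty)}(r)}{4\pi r}\,\d x\,\d y=0$; subtracting this vanishing term inside the $r$-integral (after rescaling $r\mapsto Nr$ so that $\int_0^s\mathfrak b_N\,\d r=\int_0^{Ns}\mathfrak b^N\,\d r$) turns $\Cov[\zeta^N_s(\phi);\zeta^N_v(\phi)]$ into precisely the quantity bounded in Corollary~\ref{cor:uniformcovar}, applied with the constant-in-$(r,s,t)$ choice $\phi_1=\phi_2=\phi$. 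Thus there is $C(T_0,T_1,\phi)<\infty$ with $\sup_{N\geq 16}\sup_{T_0\le s\le v\le T_1}|\Cov[\zeta^N_s(\phi);\zeta^N_v(\phi)]|\le C(T_0,T_1,\phi)$.

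Combining, $|\hyperlink{JN1}{\mathcal J_{N,1}}(s,t)|\le T_0^{-1}C(T_0,T_1,\phi)\,(t-s)$ uniformly in $N\ge 16$ and in $T_0\le s<t\le T_1$, which is \eqref{IJK:bdd} with $\alpha=1$. There is no real obstacle here: the only thing to notice is the algebraic cancellation $v^2\cdot v^{-3}=v^{-1}$ that reduces $\hyperlink{JN1}{\mathcal J_{N,1}}$ to an integral of the covariance functional, together with the mean-zero property of $\phi\in\mathcal S_0(\R^2)$ that licenses the use of the already-established uniform bound; the genuinely delicate divergent-term cancellations are deferred to $-\hyperlink{JN2}{\mathcal J_{N,2}}+\hyperlink{JN3}{\mathcal J_{N,3}}$ in Lemma~\ref{lem:JN2,3}.
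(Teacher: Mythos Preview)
Your proof is correct and follows essentially the same route as the paper: simplify $v^2\mathfrak a_{N,1}=v^{-1}\phi(y)\mathfrak b_N$, rescale $r\mapsto Nr$ to pass from $\mathfrak b_N$ to $\mathfrak b^N$, insert the centering $\1_{[1,\infty)}(r)/(4\pi r)$ using $\int\phi=0$, and invoke Corollary~\ref{cor:uniformcovar}.

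One small point you gloss over: the $x$- and $y$-integrals in $\mathcal J_{N,1}$ (and in the definition of $\zeta^N(\phi)$) are over $\{x\ge -N^{1/2}\mathbf 1\}$, not all of $\R^2$, so the centering term does \emph{not} integrate to exactly zero. What you get instead is $\big(\int_{x\ge -N^{1/2}\mathbf 1}\phi\big)^2\cdot\tfrac{\ln(Ns)}{4\pi}$, and the logarithmic divergence must be killed by the super-polynomial decay of the truncated tail $\int_{x\notin[-N^{1/2},\infty)^2}\phi$. The paper makes this explicit by splitting off the term $\int_s^t \tfrac{\d v}{v}\int\int\phi(x)\phi(y)\tfrac{\ln(Ns)}{4\pi}\1_{\{Ns\ge1\}}$ and remarking that the fast decay of $\phi$ subdues it. Your argument is fine once this is said.
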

\begin{proof}
We undo the change of variables below \eqref{interpolate} to rewrite  $\hyperlink{JN1}{\mathcal J_{N,1}}(s,t)$ as
\begin{align*}
&\int_s^t\frac{\d v}{v} \int_{x\geq -N^{1/2}\mathbf 1}\d x\phi(x)\int_{y\geq -N^{1/2}\mathbf 1}\d y\phi(y)\int_0^{Ns} 
\d r\Big(\mathfrak b^N(x,y;r,s,v)
-\frac{\1_{[1,\infty)}(r)}{4\pi r}\Big)\\
&\quad +\int_s^t \frac{\d v}{v}\int_{x\geq -N^{1/2}\mathbf 1}\d x\phi(x)\int_{y\geq -N^{1/2}\mathbf 1}\d y\phi(y)\frac{\ln (Ns)}{4\pi}\1_{\{Ns\geq 1\}}.
\end{align*}
By Corollary~\ref{cor:uniformcovar}, the first term in the foregoing equality is bounded by $C(T_0,T_1,\phi)|t-s|$. The second term can be bounded in the same way since the assumption $\phi\in \mathcal S_0(\R^2)$ enables the cancellation of $\frac{\ln (Ns)}{4\pi}$ up to an error term that can be subdued by using the fast decay of $\phi$. We have proved the proposition. 
\end{proof}

For the remaining terms in \eqref{main:JN} for $\hyperlink{JN}{\mathcal J_{N}}$, we first state some elementary results.

\begin{lem}\label{lem:shift}
Let $F:\R\to \R$ be bounded, $p\in (0,1)$, and $m\in \Bbb Z_+$. \smallskip

\noindent \hypertarget{shift1}{{\rm (1$^\circ$)}} The independent sums $S_m=S_m(p)$'s satisfy
\begin{align}
&p\big(\mathbf E\left[F(S_m)\right]-\mathbf E\left[F(S_m+1)\right]\big)=\mathbf E\left[F(S_m)\right]-\mathbf E\left[F(S_{m+1})\right]
,\label{bin:obs0}\\
&\mathbf E[F(S_m+2)]
=\frac{1}{p^2}\mathbf E[F(S_{m+2})]-\frac{2(1-p)}{p^2}\mathbf E[F(S_{m+1})]+\frac{(1-p)^2}{p^2}\mathbf E[F(S_{m})].\label{bin:obs2}
\end{align}

\noindent  \hypertarget{shift2}{{\rm (2$^\circ$)}} {(Binomial integration by parts).} 
$\mathbf E\left[S_mF(S_m)\right]=\mathbf E[S_m]\mathbf E\left[F(S_{m-1}+1)\right]$.\smallskip

\noindent  \hypertarget{shift3}{{\rm (3$^\circ$)}} For all $\phi\in \mathcal S(\R^2)$, $L\in [-\infty,\infty]$, $v\in (0,\infty)$, $\ell\in \Bbb Z$ and $j\in \{1,2\}$, it holds that
\[
\int_{L}^\infty \d x_j\phi(x)F\big(M(x_j,v)+\ell\big)
=\int_{ L+ \ell/(vN^{1/2})}^\infty \d x_j\phi\big(x- \ell e_j/(vN^{1/2})\big)F\big(M(x_j,v)\big), 
\]
where $\{e_1,e_2\}$ is the standard basis of $\R^2$. 
\end{lem}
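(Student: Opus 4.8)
The plan is to prove the three items separately by direct computation, using only the fact that $S_{m+1}(p)$ is distributed as $S_m(p)+\beta$ for an independent Bernoulli variable $\beta$ of mean $p$, together with the defining formula $M(x_j,v)=\lfloor Nv+N^{1/2}vx_j\rfloor$. For part \hyperlink{shift1}{(1$^\circ$)}, writing $S_{m+1}=S_m+\beta$ with $\beta$ independent of $S_m$ gives, for every bounded $F$ and every $m\geq 0$, the relation $\mathbf E[F(S_{m+1})]=p\,\mathbf E[F(S_m+1)]+(1-p)\mathbf E[F(S_m)]$. Rearranging this into $\mathbf E[F(S_m)]-\mathbf E[F(S_{m+1})]=p\big(\mathbf E[F(S_m)]-\mathbf E[F(S_m+1)]\big)$ is exactly \eqref{bin:obs0}. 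Solving instead for $\mathbf E[F(S_m+1)]=p^{-1}\mathbf E[F(S_{m+1})]-p^{-1}(1-p)\mathbf E[F(S_m)]$ and then applying this identity three times — once with $F$ replaced by $F(\cdot+1)$ to expand $\mathbf E[F(S_m+2)]$, and once more on each of the two resulting terms $\mathbf E[F(S_{m+1}+1)]$ and $\mathbf E[F(S_m+1)]$ — one collects the coefficients of $\mathbf E[F(S_{m+2})]$, $\mathbf E[F(S_{m+1})]$ and $\mathbf E[F(S_m)]$ and obtains \eqref{bin:obs2}; the use of an \emph{integer} shift is what lets this close without error terms.

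For part \hyperlink{shift2}{(2$^\circ$)} I would use the representation $S_m=\sum_{n=1}^m\beta_n$ with i.i.d.\ Bernoulli($p$) summands and write $\mathbf E[S_mF(S_m)]=\sum_{n=1}^m\mathbf E\big[\beta_nF\big(\beta_n+\sum_{k\neq n}\beta_k\big)\big]$. Since $\beta_n\in\{0,1\}$, one has $\mathbf E[\beta_n g(\beta_n)]=p\,g(1)$ for any $g$; conditioning on $\beta_n$ and using independence, each summand equals $p\,\mathbf E\big[F\big(1+\sum_{k\neq n}\beta_k\big)\big]=p\,\mathbf E[F(S_{m-1}+1)]$, so the total is $mp\,\mathbf E[F(S_{m-1}+1)]=\mathbf E[S_m]\,\mathbf E[F(S_{m-1}+1)]$. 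Equivalently this follows from $k\binom{m}{k}=m\binom{m-1}{k-1}$ after shifting the summation index; the case $m=0$ is trivial since both sides vanish.

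For part \hyperlink{shift3}{(3$^\circ$)} the one thing to observe is that for $\ell\in\Bbb Z$ and $v>0$ one has $M\big(x_j-\ell/(vN^{1/2}),v\big)=\lfloor Nv+N^{1/2}vx_j-\ell\rfloor=M(x_j,v)-\ell$, because subtracting an integer commutes with the floor. I would then substitute $x_j\mapsto x_j-\ell/(vN^{1/2})$ in the left-hand integral — an affine shift of unit Jacobian, legitimate whether $L$ is finite or $L=\pm\infty$ — which moves the lower limit from $L$ to $L+\ell/(vN^{1/2})$, replaces $\phi(x)$ by $\phi\big(x-\ell e_j/(vN^{1/2})\big)$ (only the $j$-th coordinate is affected), and, by the displayed identity, replaces $F(M(x_j,v)+\ell)$ by $F(M(x_j,v))$; this is precisely the asserted equality.

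There is really no serious obstacle: all three items are elementary identities for binomial laws plus one linear change of variables. The only points requiring care are bookkeeping ones — keeping straight which of the two coordinates is being shifted in \hyperlink{shift3}{(3$^\circ$)}, and noting that it is the integrality of $\ell$ (respectively of the shifts $1$ and $2$ in \hyperlink{shift1}{(1$^\circ$)}) that lets the floor be pulled through and the identities close exactly. Since Lemma~\ref{lem:shift} only serves as a toolbox for the cancellation of the additional divergent terms in the bound on $\hyperlink{JN}{\mathcal J_{N}}$ (Lemma~\ref{lem:JN2,3}), I would keep the write-up short.
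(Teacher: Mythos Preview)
Your proposal is correct and follows essentially the same route as the paper: conditioning $S_{m+1}$ on $S_m$ for \eqref{bin:obs0}, rearranging and iterating for \eqref{bin:obs2}, the identity $k\binom{m}{k}=m\binom{m-1}{k-1}$ (equivalently your sum-of-Bernoullis argument) for \hyperlink{shift2}{(2$^\circ$)}, and the shift $M(x_j,v)+\ell=M(x_j+\ell/(vN^{1/2}),v)$ followed by a change of variables for \hyperlink{shift3}{(3$^\circ$)}. Your write-up is slightly more expansive but matches the paper's argument point by point.
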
 
\begin{proof}
Conditioning $S_{m+1}$ on $S_m$ yields \eqref{bin:obs0}. To get \eqref{bin:obs2}, note that  
\begin{align}
\mathbf E[F(S_m+1)]
=\frac{1}{p}\mathbf E[F(S_{m+1})]-\frac{1-p}{p}\mathbf E[F(S_m)]\label{bin:obs1}
\end{align}
by rearranging \eqref{bin:obs0}, and then iterate \eqref{bin:obs1}. Next, the identity in \hyperlink{shift2}{\rm (2$^\circ$)} is equivalent to $\sum_{j=1}^m {m\choose j}p^j(1-p)^{m-j}jF(j)=mp\sum_{j=0}^{m-1} {m-1\choose j}p^j(1-p)^{m-1-j}F(j+1)$. For  \hyperlink{shift3}{\rm (3$^\circ$)}, observe that $M(x_j,a)+\ell=M(x_j+ \ell/(aN^{1/2}),a)$ by \eqref{def:Morigin}, and then change variables.
\end{proof}
 
\begin{lem}\label{lem:JN2,3}
\eqref{IJK:bdd} is satisfied by $\mathcal L_N=-\hyperlink{JN2}{\mathcal J_{N,2}}+\hyperlink{JN3}{\mathcal J_{N,3}}$  for $\alpha=1$. 
\end{lem}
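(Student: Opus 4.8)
The plan is to bound $-\hyperlink{JN2}{\mathcal J_{N,2}}+\hyperlink{JN3}{\mathcal J_{N,3}}$ by first making the derivative $\partial_v$ in $\mathfrak a_{N,3}$ explicit via Lemma~\ref{lem:bin}~(1$^\circ$), which turns $\partial_v\mathbf P(S_{M(x_j,s)}(r/s)=S'_{m'_j}(r/v))|_{m'_j=M(y_j,v)}$ into a combination of shifted probabilities $\mathbf P(\,\cdots=S'_{M(y_j,v)}+1)$ and $\mathbf P(\,\cdots=S'_{M(y_j,v)})$ times $1/v$, plus a boundary term coming from the dependence of $M(y_j,v)$ on $v$ (since $M(y_j,v)=\lfloor Nv+Nv\cdot y_j/N^{1/2}\rfloor$ is itself $v$-dependent). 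The $\mathfrak a_{N,2}$ term is exactly the contribution of that $v$-dependence of the floor on the \emph{test function} side, so the two must be combined. After this bookkeeping, $-\hyperlink{JN2}{\mathcal J_{N,2}}+\hyperlink{JN3}{\mathcal J_{N,3}}$ becomes an integral over $v\in[s,t]$, $r\in[0,Ns]$ (after undoing the rescaling as in Lemma~\ref{lem:JN1}), $x,y$ of $1/v$ times $\phi(x)$ against a \emph{discrete derivative} in the $S'$-variable of the binomial kernel, weighted by $\phi(y)$ and by the ``drift'' $(y+N^{1/2}\mathbf 1)\cdot\nabla\phi(y)$ or its shifted analogues.

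The core mechanism, as the excerpt announces, is to exhibit cancellations among the resulting divergent terms — each individual piece behaves like $\int_1^{Ns} r^{-1}\,\d r \sim \ln(Ns)$, which does \emph{not} cancel by mere re-centering because $\phi\in\mathcal S_0(\R^2)$ only kills one such logarithm. So the plan is: (i) use the binomial integration by parts, Lemma~\ref{lem:shift}~(2$^\circ$), to rewrite terms of the form $\mathbf E[S'_{M(y_j,v)}(\cdot)F(S'_{M(y_j,v)}(\cdot))]$ (which arise from $(y+N^{1/2}\mathbf 1)\cdot\nabla\phi(y)$ multiplying a probability, since $M(y_j,v)\asymp Nv+N^{1/2}vy_j$) in terms of $\mathbf E[S'_{M(y_j,v)-1}(\cdot)+1]$, i.e. to trade a factor of the random variable for a shift of both the index and the argument; (ii) use the recursive identities \eqref{bin:obs0}, \eqref{bin:obs1}, \eqref{bin:obs2} to move all shifts onto a common footing; (iii) use the shift-on-the-test-function identity Lemma~\ref{lem:shift}~(3$^\circ$) to convert index/argument shifts $M(x_j,v)\mapsto M(x_j,v)+\ell$ into translations $\phi(\,\cdot\,-\ell e_j/(vN^{1/2}))$ of the test function, so that after telescoping the leftover is a \emph{difference of translates} of $\phi$ — which is $O(N^{-1/2})$ pointwise and hence integrable against the $\sim\int_1^{Ns}r^{-1}\d r=O(\ln N)$ from the kernel, giving an $O(N^{-1/2}\ln N)=o(1)$, in fact $O(1)$ uniformly. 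Throughout, the parts of the integrand that are genuinely ``smooth'' (i.e. for which the local CLT / Poisson bounds of Sections~\ref{sec:poisson}--\ref{sec:normal} and Corollary~\ref{cor:uniformcovar} apply directly, after inserting and subtracting $\1_{[1,\infty)}(r)/(4\pi r)$) contribute $O(|t-s|)$ by the same route as in Lemma~\ref{lem:JN1}; the range $v-r$ small and the tails $|x|,|y|\gtrsim N^\eta$ are handled exactly as in the proof of Theorem~\ref{thm:covar1} using the fast decay of $\phi$.

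Concretely I would organize the proof as: \textbf{Step 1:} expand $\partial_v$ in $\mathfrak a_{N,3}$ using \eqref{derivativeP}, isolating a ``regular'' part $R_N$ and a ``singular'' part; combine the singular part with $\hyperlink{JN2}{\mathcal J_{N,2}}$ since both carry the factor tracking $\partial_v M(y_j,v)=N+N^{1/2}y_j$ (up to boundary jumps of the floor, which contribute a further lower-order term estimated crudely). \textbf{Step 2:} for $R_N$, insert $\pm\1_{[1,\infty)}(r)/(4\pi r)$, apply Corollary~\ref{cor:uniformcovar} and the $\mathcal S_0$-cancellation of $\ln(Ns)$ exactly as in Lemma~\ref{lem:JN1}, getting a bound $C(T_0,T_1,\phi)|t-s|$. \textbf{Step 3:} for the combined singular part, apply Lemma~\ref{lem:shift}~(2$^\circ$) to eliminate the explicit factor of $S'_{M(y_j,v)}$, then \eqref{bin:obs0}--\eqref{bin:obs2} to normalize the shifts, then Lemma~\ref{lem:shift}~(3$^\circ$) to transfer shifts to translates of $\phi$; telescope so that what multiplies the $O(\ln N)$-size $r$-integral is $|\phi(y-\ell e_j/(vN^{1/2}))-\phi(y)|\less N^{-1/2}\|\nabla\phi\|_\infty$ (times the $p^{-1},p^{-2}$ factors from \eqref{bin:obs1}--\eqref{bin:obs2}, which on the relevant range $r\leq Ns$, i.e. $p=r/(Nv)\less 1$, are $O(1)$ only when $r$ is not too small — so one splits $r\in[0,Ns\ell_N]$, using the Poisson bounds of Proposition~\ref{prop:poisson1}--\ref{prop:poisson2} there, and $r\in[Ns\ell_N,Ns]$, where $p\more\ell_N$ and the $p^{-2}$ costs at most $\ell_N^{-2}=N^{1+2\eta}$ against a gain of $N^{-1}$ per shift from the $\phi$-translate and the short $r$-range $Ns-Ns\ell_N=O(Ns\ell_N)$ — the exponents are arranged by Assumption~\ref{ass:sNtN} so this is $O(1)$). \textbf{Step 4:} collect, conclude the bound is $\less|t-s|$ on $[T_0,T_1]$, i.e. $\alpha=1$. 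The main obstacle is Step~3: organizing the algebraic cancellation so that \emph{every} logarithmically divergent term is paired with a $\phi$-difference (or with a $\mathcal S_0$-cancellation), while keeping the $p^{-1}$ and $p^{-2}$ blow-ups at small $r$ under control — the decomposition $[0,Ns\ell_N]$ versus $[Ns\ell_N,Ns]$ and the precise exponent $\ell_N=N^{-(1/2+\eta)}$ are exactly what make the competing powers of $N$ balance, and getting that accounting right (including the floor-function boundary terms) is the delicate part.
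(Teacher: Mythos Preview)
Your overall architecture---expand $\partial_v$ via \eqref{derivativeP}, then use Lemma~\ref{lem:shift} (1$^\circ$)--(3$^\circ$) to convert index shifts into translates of $\phi$ so that the divergent $r$-integral is multiplied by a discrete derivative of $\phi$---is the right idea and matches the paper. However, the execution has one genuine gap and a couple of misreadings that would derail the argument.

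\textbf{The main gap: your control of the $p^{-2}$ factors is wrong.} After applying \eqref{bin:obs2} with $p=r/s$ (or $r/v$), the three coefficients $p^{-2}$, $-2(1-p)p^{-2}$, $(1-p)^2p^{-2}$ sum to $1$, so the \emph{leading} contribution after shifting via Lemma~\ref{lem:shift}~(3$^\circ$) is exactly $\phi$, independent of $p$. This is what lets the paper match the resulting expression with $\mathcal J_{N,2,2}$ (the $N^{1/2}\div\phi(y)$ piece of $\mathcal J_{N,2}$). The $O(N^{-1/2})$ correction $\overline{\phi}^0_N$, however, still carries individual terms with $1/(r/s)^2$, and these \emph{cannot} be controlled by your power-counting: on $r\in[Ns\ell_N,Ns]$ you have $p\gtrsim\ell_N$, so $p^{-2}\lesssim N^{1+2\eta}$, the $\phi$-translate gain is only $N^{-1/2}$ (not $N^{-1}$ as you wrote), and the $r$-range is $Ns(1-\ell_N)\sim Ns$, \emph{not} $O(Ns\ell_N)$. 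The product $N^{1+2\eta}\cdot N^{-1/2}\cdot\ln N\to\infty$. The paper's fix is different and essential: split at $r/s=1/2$. For $r/s\geq 1/2$ one has $(r/s)^{-2}\leq 4$ and the bound on $\overline\phi_N^0$ in \eqref{def:phi0} is uniform. For $r/s<1/2$, first apply the complementary-binomial identity \eqref{bin:comp} so that the relevant parameter becomes $1-r/v\geq 1/2$, and then run the same argument with $(1-r/v)^{-2}\leq 4$. You never mentioned \eqref{bin:comp}; without it Step~3 of your plan does not close.

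\textbf{Two smaller misreadings.} First, after the change of variables in \eqref{interpolate1}, the floor $\lfloor N^{1/2}y'_j\rfloor$ is $v$-independent, so there is no ``boundary term from the $v$-dependence of $M(y_j,v)$''; $\mathcal J_{N,2}$ arises from differentiating $\phi(y'/v-N^{1/2}\mathbf 1)$, and $\mathcal J_{N,3}$ from differentiating the probability in $r/v$ with $m'_j$ frozen. Second, when you sum \eqref{derivativeP} over $n$ weighted by $\mathbf P(S_{M(x_j,s)}(r/s)=n)$, the factor of $n$ becomes $S_{M(x_j,s)}(r/s)$, \emph{not} $S'_{M(y_j,v)}(r/v)$; the binomial integration by parts of Lemma~\ref{lem:shift}~(2$^\circ$) is therefore applied to $S_{M(x_j,s)}$, yielding the prefactor $M(x_j,s)r/s$. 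This is what, after the shifts, produces the match $\mathcal J_{N,3}=\mathcal J_{N,2,2}+\widetilde{\mathcal R}_N$ with $\mathcal J_{N,2,2}$ carrying $N^{1/2}\partial_{y_j}\phi(y)$---your plan to integrate by parts on $S'$ would not line up with $\mathcal J_{N,2,2}$ in the same way.
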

\begin{proof}
We divide the proof into a few steps. Step~\hyperlink{J1}{1} is for $\hyperlink{JN2}{\mathcal J_{N,2}}$, whereas we need Steps~\hyperlink{J2}{2}, \hyperlink{J21}{2-1}--\hyperlink{J22}{2-2} to handle $\hyperlink{JN3}{\mathcal J_{N,3}}$. A summary is given in Step \hyperlink{J3}{3} to complete the proof. 
\smallskip

\noindent {\bf Step \hypertarget{J1}{1}.}
According to $(y+N^{1/2}\mathbf 1)\cdot \nabla \phi(y)=y\cdot \nabla \phi(y)+N^{1/2}\div \phi(y)$, we can write 
\begin{align}
\hyperlink{JN2}{\mathcal J_{N,2}}(s,t)
&=\hypertarget{JN21}{\mathcal J_{N,2,1}}(s,t)+\hypertarget{JN22}{\mathcal J_{N,2,2}}(s,t). \label{main1}
\end{align}
Here, 
\begin{align}
\left\{
\begin{array}{ll}
\displaystyle \hyperlink{JN21}{\mathcal J_{N,2,j}}(s,t)= \int_s^t\frac{\d v}{v}\int_0^s \d r \int_{x\geq -N^{1/2}\mathbf 1}\d x\phi(x)\int_{y\geq -N^{1/2}\mathbf 1}\d y \mathfrak a_{N,2,j}\notag\\
\vspace{-.2cm}\\
\displaystyle \quad \mbox{\rm for}\quad \mathfrak a_{N,2,1}=y\cdot \nabla\phi(y)\mathfrak b_{N}(x,y;r,s,v),\quad \mathfrak a_{N,2,2}=N^{1/2}\div \phi(y)\mathfrak b_{N}(x,y;r,s,v).\notag
\end{array}
\right.
\end{align}
Since $y\mapsto y\cdot \nabla \phi(y)\in \mathcal S_0(\R^2)$ by integration by parts, the proof in Lemma~\ref{lem:JN1} shows that \eqref{IJK:bdd} for $\mathcal L_N=\hyperlink{JN21}{\mathcal J_{N,2,1}}$ and $\alpha=1$ holds. We handle $\hyperlink{JN22}{\mathcal J_{N,2,2}}(s,t)$ in Step~\hyperlink{J3}{3}.

In the rest of this proof, we write $\mathcal R_{N,j}=\mathcal R_{N,j}(x_{j_\ast},y_{j_\ast};r,s,v)$ for a function such that 
\begin{align*}
&\sup_{N\in \Bbb N}\sup_{s,v\in [T_0,T_1]}\Bigg|\int_0^s
\d r \int_{-N^{1/2}}^\infty \!\!\d x_{j_\ast}\int_{-N^{1/2}}^\infty \!\!\d y_{j_\ast} \mathcal R_{N,j}(x_{j_\ast},y_{j_\ast};r,s,v) \mathfrak b_{N,j_\ast}(x_{j_\ast},y_{j_\ast};r,s,v)\Bigg|<\infty.
\end{align*}
(Recall Notation~\ref{nota:j} for the index $j_\ast$.) The ``remainder'' $\mathcal R_{N,j}$ may change from term to term unless otherwise specified.\smallskip

\noindent {\bf Step \hypertarget{J2}{2}.}
We consider $\hyperlink{JN3}{\mathcal J_{N,3}}$ in Steps~\hyperlink{J2}{2}--\hyperlink{J2-1-2}{4}. First, we compute the derivative in its definition. By Lemma~\ref{lem:bin} (1$^\circ$) and then Lemma~\ref{lem:shift} (2$^\circ$) with $S_m(p)=S_{{M(x_j,s)} }\left(\frac{r}{s}\right)$, 
\begin{align}
&\quad\; \frac{\partial}{\partial v}N^{1/2}\mathbf P\left(S_{{M(x_j,s)} }\left(\frac{r}{s}\right)=S'_{m_j'}\left(\frac{r}{v}\right)\right)\Big|_{m'_j=M(y_j,v)}
\notag\\
&=\frac{N^{1/2}}{v}\mathbf E\left[\left(S_{{M(x_j,s)} }\left(\frac{r}{s}\right)+1\right)\1_{\big\{S_{{M(x_j,s)} }\left(\frac{r}{s}\right)+1=S'_{M(y_j,v)}\left(\frac{r}{v}\right)\big \}}\right]\notag\\
&\quad -\frac{N^{1/2}}{v}\mathbf E\left[
S_{{M(x_j,s)} }\left(\frac{r}{s}\right)\1_{\big\{
S_{{M(x_j,s)} }\left(\frac{r}{s}\right)=
S'_{M(y_j,v)}\left(\frac{r}{v}\right)\big \}}\right]\notag\\
&=\frac{N^{1/2}M(x_j,s)r}{vs}\mathbf P\left(S_{{M(x_j,s)-1} }\left(\frac{r}{s}\right)+2=S'_{M(y_j,v)}\left(\frac{r}{v}\right)\right)\notag\\
&\quad -\frac{N^{1/2}M(x_j,s)r}{vs}\mathbf P\left(S_{{M(x_j,s)-1} }\left(\frac{r}{s}\right)+1=S'_{M(y_j,v)}\left(\frac{r}{v}\right)\right)\notag\\
&\quad +\frac{N^{1/2}}{v}\mathbf P\left(S_{{M(x_j,s)} }\left(\frac{r}{s}\right)+1=S'_{M(y_j,v)}\left(\frac{r}{v}\right)\right)\notag\\
&=\hypertarget{JN3j1}{\mathfrak a_{N,3,j,1}}(x_j,y_j;r,s,v)-\hypertarget{JN3j2}{\mathfrak a_{N,3,j,2}}(x_j,y_j;r,s,v),\label{JN3-partialder}
\end{align}
where the last equality applies \eqref{bin:obs0} for $F(n)=\mathbf P(S_{{M(x_j,s)-1} }(\frac{r}{s})+2=n)$ and $S_m(p)=S'_{M(y_j,v)}(\frac{r}{v})$ so that
\begin{align*}
\hypertarget{JN3j1}{\mathfrak a_{N,3,j,1}}(x_j,y_j;r,s,v)&=N\times \frac{M(x_j,s)}{Ns}\times N^{1/2}\mathbf P\left(S_{{M(x_j,s)-1} }\left(\frac{r}{s}\right)+2=S'_{M(y_j,v)}\left(\frac{r}{v}\right)\right)\\
&\quad -N\times \frac{M(x_j,s)}{Ns}\times N^{1/2}\mathbf P\left(S_{{M(x_j,s)-1} }\left(\frac{r}{s}\right)+2=S'_{M(y_j,v)+1}\left(\frac{r}{v}\right)\right),\\
\hypertarget{JN3j2}{\mathfrak a_{N,3,j,2}}(x_j,y_j;r,s,v)&=\frac{1}{v}\times N^{1/2}\mathbf P\left(S_{{M(x_j,s)} }\left(\frac{r}{s}\right)+1=S'_{M(y_j,v)}\left(\frac{r}{v}\right)\right).
\end{align*}
Note that $\mathfrak a_{N,3,j,k}$ are written out this way to make clear the property that $\lim_NM(x_j,s)/(Ns)= 1$. By the definition of $\hyperlink{JN3}{\mathcal J_{N,3}}$ and \eqref{JN3-partialder}, we have shown that 
\begin{align}\label{JN3-intintint}
\begin{split}
\hyperlink{JN3}{\mathcal J_{N,3}(s,t)}&=\sum_{j=1}^2 \int_s^t\d v \int_0^s \d r\int_{ -N^{1/2}}^\infty \d x_{j_\ast}\int_{ -N^{1/2}}^\infty \d y_{j_\ast} \\
&\hspace{-1.5cm}\left(\sum_{k=1}^2\int_{-N^{1/2}}^\infty \d x_{j}\phi(x)\int_{-N^{1/2}}^\infty \d y_j\phi(y)\mathfrak a_{N,3,j,k} (x_j,y_j;r,s,v)\right) \mathfrak b_{N,j_\ast}(x_{j_\ast},y_{j_\ast};r,s,v).
 \end{split}
\end{align}

To bound $\hyperlink{JN3}{\mathcal J_{N,3}(s,t)}$ according to \eqref{JN3-intintint}, we consider the following two sets of integrals separately in the next two steps:
\begin{align}\label{JN3-set}
\left\{\int_{-N^{1/2}}^\infty \d x_{j}\phi(x)\int_{-N^{1/2}}^\infty \d y_j\phi(y)\mathfrak a_{N,3,j,k} (x_j,y_j;r,s,v);   j=1,2\right\},\quad k=1,2.
\end{align}

\noindent {\bf Step \hypertarget{J2-1}{3}.} For the set in \eqref{JN3-intintint} with $k=2$, applying Corollary~\ref{cor:uniformcovar} as in the proof of Lemma~\ref{lem:JN1} yields that
\begin{align}
\begin{split}
\int_{-N^{1/2}}^\infty \d x_j\phi(x)  \int_{-N^{1/2}}^\infty \d y_j\phi(y)\hyperlink{JN3j2}{\mathfrak a_{N,3,j,2}}
(x_j,y_j;r,s,v)=\mathcal R_{N,j},\quad \forall\;j\in \{1,2\}.\label{der:analyze}
\end{split}
\end{align}

In more detail for \eqref{der:analyze}, the binomial probabilities in $\hyperlink{JN3j2}{\mathfrak a_{N,3,j,2}}$ differ slightly from those in Corollary~\ref{cor:uniformcovar} since those in $\hyperlink{JN3j2}{\mathfrak a_{N,3,j,2}}$ are for events of the form $\{S_m+1=S'_{m'}\}$ rather than $\{S_m=S'_{m'}\}$. However, the proof of the corollary  relies on the bounds obtained in Sections~\ref{sec:poisson} and~\ref{sec:normal}, and these bounds carry over to the present case after slight modifications: See the algebra of Step \hyperlink{P2-3-STEP3}{3} of the proof of Proposition~\ref{prop:poisson2} up to \eqref{PO2}. The other changes are straightforward.\smallskip

\noindent {\bf Step \hypertarget{J2-2}{4}.}
In this step, we consider the set of integrals in \eqref{JN3-set} with $k=1$. In contrast to $\hyperlink{JN3j2}{\mathfrak a_{N,3,j,2}}$, each of the two terms defining $\hyperlink{JN3j1}{\mathfrak a_{N,3,j,1}}$ as a difference is of a larger order in $N$. We aim to let $\hyperlink{JN3j1}{\mathfrak a_{N,3,j,1}}$ (after appropriate integrations according to \eqref{JN3-intintint}) and $\hyperlink{JN22}{\mathcal J_{N,2,2}}$ (still unsettled) cancel each other. Then the task is to get an estimate of $\hyperlink{JN3j1}{\mathfrak a_{N,3,j,1}}$ that can be used to match $\hyperlink{JN22}{\mathcal J_{N,2,2}}$.

We begin by simplifying the integrals in \eqref{JN3-set} with $k=1$, in view of the property that there are additional $\pm1$ in the numbers of summands in the random sums defining $\hyperlink{JN3j1}{\mathfrak a_{N,3,j,1}}$. These unwanted integers can be removed  by using Lemma~\ref{lem:shift} (3$^\circ$) to translate variables:
\begin{align}
&\quad\, \int_{-N^{1/2}}^\infty \d x_j\phi(x)\int_{-N^{1/2}}^\infty \d y_j\phi(y)\hyperlink{JN3j1}{\mathfrak a_{N,3,j,1}}
(x_j,y_j;r,s,v)\notag\\
\begin{split}
&=N\int_{-N^{1/2}-\frac{1}{sN^{1/2}}}^\infty \d x_j\frac{M(x_j,s)+1}{Ns}\times \phi\left(x+\frac{e_j}{sN^{1/2}}\right)\\
&\quad \times\int_{-N^{1/2}}^\infty \d y_j\phi(y)\times N^{1/2}\mathbf P\left(S_{{M(x_j,s)} }\left(\frac{r}{s}\right)+2=S'_{M(y_j,v)}\left(\frac{r}{v}\right)\right)\\
&\quad -N\int_{-N^{1/2}-\frac{1}{sN^{1/2}}}^\infty \d x_j\frac{M(x_j,s)+1}{Ns}\times \phi\left(x+\frac{e_j}{sN^{1/2}}\right)\\
&\quad \times\int_{-N^{1/2}+\frac{1}{vN^{1/2}}}^\infty \!\!\!\d y_j \phi\left(y-\frac{e_j}{vN^{1/2}}\right)\times N^{1/2} \mathbf P\left(S_{{M(x_j,s)} }\left(\frac{r}{s}\right)+2=S'_{M(y_j,v)}\left(\frac{r}{v}\right)\right)\label{RN32-0}
\end{split}\\
\begin{split}
&=\frac{N^{1/2}}{v}\int_{-N^{1/2}-\frac{1}{sN^{1/2}}}^\infty \d x_j\frac{M(x_j,s)+1}{Ns}\times \phi\left(x+\frac{e_j}{sN^{1/2}}\right)\\
&\quad\times  \int_{-N^{1/2}}^\infty \d y_j\left[\phi(y)-\phi\left(y-\frac{e_j}{vN^{1/2}}\right)\right] vN^{1/2}\\
&\quad  \times  N^{1/2}\mathbf P\left(S_{{M(x_j,s)} }\left(\frac{r}{s}\right)+2=S'_{M(y_j,v)}\left(\frac{r}{v}\right)\right)+ \mathcal R_{N,j}.\label{RN32}
\end{split}
\end{align}

Let us explain \eqref{RN32}. The two terms defining the right-hand side of \eqref{RN32-0} as a difference are not the same only for the integrations with respect to $y_j$, and they share the same binomial probability. So we extract a discrete partial derivative $[\phi(y)-\phi(y-\frac{e_j}{vN^{1/2}})]vN^{1/2}$ from this difference, as shown in \eqref{RN32}. Additionally, in \eqref{RN32}, $\mathcal R_{N,j}$ arises from taking the difference of the ranges of $y_j$ in \eqref{RN32-0} and using  the fast decay property of $\phi$.

In the rest of Step~\hyperlink{J2-2}{4}, we show that the first term in \eqref{RN32} can be used to cancel $\hyperlink{JN22}{\mathcal J_{N,2,2}}$ left unsettled in Step~\hyperlink{J1}{1}. The plan is to reduce this term  in \eqref{RN32} to an integral showing only binomial probabilities of the form $\mathbf P(S_m=S'_{m'})$, now that these probabilities enter $\hyperlink{JN22}{\mathcal J_{N,2,2}}$.  To this end, we apply the higher-order expansion \eqref{bin:obs2}, and then we remove $+2$ and $+1$ in the numbers of summands on the right-hand side of \eqref{bin:obs2}  by changing variables according to Lemma~\ref{lem:shift} (3$^\circ$).  (The latter is similar to how we obtain \eqref{RN32-0}.)

In Steps~\hyperlink{J2-1-1}{4-1} and \hyperlink{J2-1-2}{4-2} below, we consider \eqref{RN32} for $1>r/s\geq 1/2$ and $r/s<1/2$ separately.
\smallskip

\noindent {\bf Step \hypertarget{J2-1-1}{4-1}.} We restrict our attention to $r$ such that $1>r/s\geq 1/2$. This condition on $r$ is used to bound away from zero the denominators $(r/s)^2$ in $\mathfrak a_{N,3,j,1,k}$ defined below.

To lighten notation for the application of \eqref{RN32}, we write
\begin{align}\label{def:tildeF-end}
\hypertarget{tildeF}{\widetilde{F}}(n)&=\int_{-N^{1/2}}^\infty \d y_j\left[\phi(y)-\phi\left(y-\frac{e_j}{vN^{1/2}}\right)\right] vN^{1/2} \times  N^{1/2}\mathbf P\left(n=S'_{M(y_j,v)}\left(\frac{r}{v}\right)\right).
\end{align}
Second, we apply the method outlined before Step \hyperlink{J2-1-1}{4-1}, with $p=r/s$, $m=M(x_j,s)$, and the notation $\mathfrak a_{N,3,j,1,k}=\mathfrak a_{N,3,j,1,k}(x,y_{j_{\ast}};r,s,v)$ defined by 
\begin{align*}
\hypertarget{JN2-2-1}{\mathfrak a_{N,3,j,1,1}}(x,y_{j_{\ast}};r,s,v)&=\frac{M(x_j,s)-1}{Ns} \times \phi\left(x-\frac{e_j}{sN^{1/2}}\right) \frac{1}{(r/s)^2}\mathbf E\left[\widetilde{F}\left(S_{M(x_j,s)}\left(\frac{r}{s}\right)\right)\right],\\
\hypertarget{JN2-2-2}{\mathfrak a_{N,3,j,1,2}}(x,y_{j_{\ast}};r,s,v)&=(-1)\times 
\frac{M(x_j,s)}{Ns}\times \phi(x) \frac{2(1-r/s)}{(r/s)^2}\mathbf E\left[\widetilde{F}\left(S_{M(x_j,s)}\left(\frac{r}{s}\right)\right)\right],\\
\hypertarget{JN2-2-3}{\mathfrak a_{N,3,j,1,3}}(x,y_{j_{\ast}};r,s,v)&=\frac{M(x_j,s)+1}{Ns}\times \phi\left(x+\frac{e_j}{sN^{1/2}}\right) \frac{(1-r/s)^2}{(r/s)^2} \mathbf E\left[\widetilde{F}\left(S_{M(x_j,s)}\left(\frac{r}{s}\right)\right)\right].
\end{align*}
These two considerations yield the next two equalities:
\begin{align}
&\quad  \int_{-N^{1/2}}^\infty \d x_j\phi(x)\int_{-N^{1/2}}^\infty \d y_j\phi(y)\hyperlink{JN3j1}{\mathfrak a_{N,3,j,1}}
(x_j,y_j;r,s,v)-\mathcal R_{N,j}\notag\\
&=\frac{N^{1/2}}{v}\int_{-N^{1/2}-\frac{1}{sN^{1/2}}}^\infty\!\!\! \d x_j\frac{M(x_j,s)+1}{Ns}\times \phi\left(x+\frac{e_j}{sN^{1/2}}\right) \mathbf E\left[\hyperlink{tildeF}{\widetilde{F}}\left(S_{M(x_j,s)}\left(\frac{r}{s}\right)+2\right)\right]\label{shift-origin000}\\
\begin{split}
&=\frac{N^{1/2}}{v}\int_{-N^{1/2}+\frac{1}{sN^{1/2}}}^\infty \d x_j\hyperlink{JN2-2-1}{\mathfrak a_{N,3,j,1,1}} +\frac{N^{1/2}}{v}\int_{-N^{1/2}}^\infty \!\!\d x_j \hyperlink{JN2-2-2}{\mathfrak a_{N,3,j,1,2}}
\\
&\quad +\frac{N^{1/2}}{v}\int_{-N^{1/2}-\frac{1}{sN^{1/2}}}^\infty \d x_j\hyperlink{JN2-2-3}{\mathfrak a_{N,3,j,1,3}}
\label{shift}
\end{split}\\
\begin{split}\label{shift1}
&=\frac{N^{1/2}}{v}\int_{-N^{1/2}}^\infty \d x_j\hypertarget{phibarN}{\overline{\phi}_N}  (x;r,s,v)\mathbf E\left[\hyperlink{tildeF}{\widetilde{F}}\left(S_{M(x_j,s)}\left(\frac{r}{s}\right)\right)\right] \\
& \quad -\frac{N^{1/2}}{v}\int_{-N^{1/2}}^{-N^{1/2}+\frac{1}{sN^{1/2}}}\d x_j\hyperlink{JN2-2-1}{\mathfrak a_{N,3,j,1,1}}
 +\frac{N^{1/2}}{v}\int_{-N^{1/2}-\frac{1}{sN^{1/2}}}^{-N^{1/2}}\d x_j\hyperlink{JN2-2-3}{\mathfrak a_{N,3,j,1,3}}.
\end{split}
\end{align}
For \eqref{shift1}, to sum the integrands in \eqref{shift} with the expectations excluded, we write 
\begin{align}
\begin{split}
\hyperlink{phibarN}{\overline{\phi}_N}  (x;r,s)\;&\defeq\; \frac{M(x_j,s)-1}{Ns}\times \phi\left(x-\frac{e_j}{sN^{1/2}}\right) \frac{1}{(r/s)^2}-\frac{M(x_j,s)}{Ns}\times \phi(x) \frac{2(1-r/s)}{(r/s)^2}\\
&\quad +\frac{M(x_j,s)+1}{Ns}\times \phi\left(x+\frac{e_j}{sN^{1/2}}\right) \frac{(1-r/s)^2}{(r/s)^2}.\label{def:phiN}
\end{split}
\end{align}
Recalling \eqref{def:tildeF-end}, we obtain from \eqref{shift1} that
\begin{align}
&\quad \int_{-N^{1/2}}^\infty \d x_j\phi(x)\int_{-N^{1/2}}^\infty \d y_j\phi(y) \hyperlink{JN3j1}{\mathfrak a_{N,3,j,1}}
(x_j,y_j;r,s,v)\notag\\
\begin{split}
&=\frac{N^{1/2}}{v}\int_{-N^{1/2}}^\infty \d x_j 
\hyperlink{phibarN}{\overline{\phi}_N} (x;r,s)
\int_{-N^{1/2}}^\infty \d y_j \left[\phi(y)-\phi\left(y-\frac{e_j}{vN^{1/2}}\right)\right] vN^{1/2}\\
&\quad \times  N^{1/2}\mathbf P\left(S_{{M(x_j,s)} }\left(\frac{r}{s}\right)=S'_{M(y_j,v)}\left(\frac{r}{v}\right)\right)+\mathcal R_{N,j}\label{RN:final0}.
\end{split}
\end{align}
Here, the new contribution to the $\mathcal R_{N,j}$ in \eqref{RN:final0} comes from the last two terms in \eqref{shift1}: We use the condition $r/s\geq 1/2$ to bound the denominators $(r/s)^2$ in $\hyperlink{JN2-2-1}{\mathfrak a_{N,3,j,1,1}}$ and $\hyperlink{JN2-2-1}{\mathfrak a_{N,3,j,1,3}}$ away from zero. The fast decay property of $\phi$ is also used.

Let us approximate $\hyperlink{phibarN}{\overline{\phi}_N}$ and show two more $\mathcal R_{N,j}$-terms in \eqref{RN:final0}. First, note that $\partial_{x_j}\phi\in \mathcal S(\R^2)$, and the sum of the coefficients of the expectations in \eqref{bin:obs2}  is $1$. By these properties, \eqref{MN-1} and $r/s\geq 1/2$, an approximation of $\hyperlink{phibarN}{\overline{\phi}_N }$ by $\phi$ with an $\mathcal O(1/N^{1/2})$-error holds:
\begin{align}\label{def:phi0}
\hyperlink{phibarN}{\overline{\phi}_N }(x;r,s)=\phi(x)+\hypertarget{phibar0N}{\overline{\phi}_N^{0}} (x;r,s)
,\mbox{ where }|\hyperlink{phibar0N}{\overline{\phi}_N^{0} }(x;r,s)|\leq \frac{C(T_0,T_1,\phi,n)}{N^{1/2}(1+|x|^n)},\; \forall\;n\in \Bbb N. 
\end{align}
Next, to get the two $\mathcal R_{N,j}$-terms,  we first use the mean-zero property, $\int \d y [\phi(y)-\phi(y-\frac{e_j}{vN^{1/2}})]=0$. So the first term in \eqref{RN:final0} is not changed after we subtract $\1_{[1,\infty)}(r)/(4\pi r)$. Then we apply Corollary~\ref{cor:uniformcovar} to this re-centered term: Replacing $\hyperlink{phibarN}{\overline{\phi}_N}$   by $\hyperlink{phibar0N}{\overline{\phi}_N^0}$ results in an $\mathcal R_{N,j}$-term, thanks to the bounds for $\hyperlink{phibar0N}{\overline{\phi}_N^{0} }$ in \eqref{def:phi0}. We get another $\mathcal R_{N,j}$-term from replacing $[\phi(y)-\phi(y-\tfrac{e_j}{v N^{1/2}})]vN^{1/2}$ with $\partial_{y_j}\phi(y)$, since  their difference can be bounded  uniformly in $v\in [s,t]$ and $T_0\leq s\leq t\leq T_1$  in the same way as  $\hyperlink{phibar0N}{\overline{\phi}_N^0}$. 

Our conclusion of Step \hyperlink{J2-1-1}{4-1} is as follows. By the observations in the last paragraph, \eqref{RN:final0}, and \eqref{def:phi0}, the following estimate holds for all $r$ and $v$ such that $r/s\geq 1/2$ and $s\leq v\leq t$:
\begin{align}
\begin{split}
&\quad\int_{-N^{1/2}}^\infty \d x_j\phi(x)\int_{-N^{1/2}}^\infty \d y_j\phi(y)\hyperlink{JN3j1}{\mathfrak a_{N,3,j,1}}
(x_j,y_j;r,s,v)\\
&=\frac{N^{1/2}}{v}\int_{-N^{1/2}}^\infty \d x_j
\phi (x)
\int_{-N^{1/2}}^\infty \d y_j \partial_{y_j}\phi(y)  \mathfrak b_{N,j}(x_j,y_j;r,s,v)+\mathcal R_{N,j}\label{RN:final00}.
\end{split}
\end{align}

\noindent {\bf Step \hypertarget{J2-1-2}{4-2}.} 
For the case $r/s< 1/2$ complementary to that considered in Step \hyperlink{J2-1-1}{4-1}, we turn to \eqref{bin:comp} to change parameters of $S_m(p)$ and $S_{m'}'(p')$ and consider probabilities of $S_m(1-p)$ and $S'_{m'}(1-p')$ alternatively. This replacement is reversible. In this way, the argument from \eqref{shift-origin000} to \eqref{RN:final0} applies similarly.  

In more detail, the modification of  Step \hyperlink{J2-1-1}{4-1} begins with the following replacement for \eqref{RN32} from using \eqref{bin:comp}:
\begin{align}
\begin{split}
&\quad\, \mathbf P\left(S_{{M(x_j,s)} }\left(\frac{r}{s}\right)+2=S'_{M(y_j,v)}\left(\frac{r}{v}\right)\right)\\
&=\mathbf P\left(S_{{M(x_j,s)} }\left(1-\frac{r}{s}\right)+M(y_j,v)-M(x_j,s)=S'_{M(y_j,v)}\left(1-\frac{r}{v}\right)+2\right).\label{P2SM}
\end{split}
\end{align}
Then we modify $\widetilde{F}$ in \eqref{def:tildeF-end} such that the binomial probability is replaced by 
\[
\mathbf P\left(S_{{M(x_j,s)} }\left(1-\frac{r}{s}\right)+M(y_j,v)-M(x_j,s)=n\right).
\]
The condition $r/s<1/2$ ensures that the denominators in the analogues of $a_{N,3,j,1,k}$ satisfy $(1-r/v)^2\geq (1/2)^2$ for all $s\leq v\leq t$. In the corresponding analogue of \eqref{RN:final0}, we reverse the replacements by writing
\begin{align}
\begin{split}\label{P2SM-2}
&\quad\; \mathbf P\left(S_{{M(x_j,s)} }\left(1-\frac{r}{s}\right)+M(y_j,v)-M(x_j,s)=S'_{M(y_j,v)}\left(1-\frac{r}{v}\right)\right)\\
&=\mathbf P\left(S_{{M(x_j,s)} }\left(\frac{r}{s}\right)=S'_{M(y_j,v)}\left(\frac{r}{v}\right)\right).
\end{split}
\end{align}
Hence,  for the present case, we have the same estimate in \eqref{RN:final00}, except that the applicable $r$ and $v$ now have to satisfy $r/s<1/2$ and $s\leq v\leq t$.\smallskip

\noindent {\bf Step \hypertarget{J5}{5}.}
Finally, we apply \eqref{der:analyze}, \eqref{RN:final00}, and the analogue of \eqref{RN:final00} from Step~\hyperlink{J2-1-2}{4-2} to the sum over $j\in \{1,2\}$ in \eqref{JN3-intintint}. Since $\div \phi(y)= \partial_{y_1}\phi(y)+\partial_{y_2}\phi(y)$, it follows that 
\begin{align}\label{JN23-final}
\hyperlink{JN3}{\mathcal J_{N,3}}(s,t)= \hyperlink{JN22}{\mathcal J_{N,2,2}}(s,t)+\widetilde{\mathcal R}_N(s,t),
 \end{align}
where $|\widetilde{\mathcal R}_N(s,t)|\leq C(T_0,T_1,\phi)(t-s)$ for all $T_0\leq s\leq t\leq T_1$ by the definition of $\mathcal R_{N,j}$'s in the last paragraph before Step \hyperlink{J2}{2}. By \eqref{main1} and the validity of \eqref{IJK:bdd} for $\mathcal L_N=\hyperlink{JN21}{\mathcal J_{N,2,1}}$ and $\alpha=1$ from Step \hyperlink{J1}{1}, \eqref{JN23-final} is enough for the proof of the lemma.
\end{proof}

By \eqref{main:JN}, Lemmas~\ref{lem:JN1} and \ref{lem:JN2,3} can be summarized as the following proposition.

\begin{prop}\label{prop:JN}
\eqref{IJK:bdd} for $\mathcal L_N=\hyperlink{JN}{\mathcal J_{N}}$ and $\alpha=1$ holds. 
\end{prop}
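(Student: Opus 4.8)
The plan is to assemble Proposition~\ref{prop:JN} directly from the two lemmas already proved, using the exact decomposition \eqref{main:JN} together with the elementary stability of the bound \eqref{IJK:bdd} under finite linear combinations. Concretely, recall from \eqref{interpolate}–\eqref{main:JN} that
\begin{align*}
\mathcal J_N(s,t) = -2\,\mathcal J_{N,1}(s,t) - \mathcal J_{N,2}(s,t) + \mathcal J_{N,3}(s,t),
\end{align*}
an identity whose derivation rests only on the interpolation $\mathfrak b_N(x,y;r,s,t)-\mathfrak b_N(x,y;r,s,s)=\int_s^t\partial_v\mathfrak b_N(x,y;r,s,v)\,\d v$, a change of variables in the $y$–integral, and differentiation under the integral sign, all justified above via the fast decay of $\phi$ and the polynomial growth of $\zeta$. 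The key point is that this is an \emph{exact} identity, so no error terms are introduced when passing back from the three pieces to $\mathcal J_N$.

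First I would record the routine observation that if finitely many functions $\mathcal L_N^{(1)},\dots,\mathcal L_N^{(k)}$ each satisfy \eqref{IJK:bdd} with a common exponent $\alpha$, then so does any linear combination $\sum_i c_i\mathcal L_N^{(i)}$, since
\begin{align*}
\frac{\bigl|\sum_i c_i \mathcal L_N^{(i)}(s,t)\bigr|}{(t-s)^\alpha} \le \sum_i |c_i|\,\frac{|\mathcal L_N^{(i)}(s,t)|}{(t-s)^\alpha},
\end{align*}
and each summand on the right is bounded uniformly in $N\ge 16$ and in $T_0\le s<t\le T_1$ by hypothesis. I would apply this with $\mathcal L_N^{(1)}=\mathcal J_{N,1}$, $\mathcal L_N^{(2)}=-\mathcal J_{N,2}+\mathcal J_{N,3}$, coefficients $c_1=-2$, $c_2=1$, and $\alpha=1$.

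It then remains only to invoke Lemma~\ref{lem:JN1}, which gives \eqref{IJK:bdd} for $\mathcal L_N=\mathcal J_{N,1}$ with $\alpha=1$, and Lemma~\ref{lem:JN2,3}, which gives \eqref{IJK:bdd} for $\mathcal L_N=-\mathcal J_{N,2}+\mathcal J_{N,3}$ with $\alpha=1$; combining these via the stability observation yields \eqref{IJK:bdd} for $\mathcal L_N=\mathcal J_N$ with $\alpha=1$, which is the assertion. I do not anticipate any obstacle in this final step itself: the genuinely hard part — the cancellation of the extra divergent terms in $-\mathcal J_{N,2}+\mathcal J_{N,3}$ through binomial integration by parts and the recursive identities of Lemma~\ref{lem:shift}, together with the uniform covariance estimate of Corollary~\ref{cor:uniformcovar} — was already carried out in Lemma~\ref{lem:JN2,3}. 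The only thing warranting a moment's care is confirming that \eqref{main:JN} holds as a genuine pointwise identity in $(s,t)$ (so that the triangle inequality can be applied with no leftover terms), which is immediate from its construction above.
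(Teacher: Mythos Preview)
Your proposal is correct and is exactly the paper's own argument: the proposition is stated immediately after Lemmas~\ref{lem:JN1} and~\ref{lem:JN2,3} as their summary via the decomposition~\eqref{main:JN}, with no additional work beyond the triangle-inequality combination you spell out.
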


The same result holds for $\hyperlink{KN}{\mathcal K_{N}}$ by an almost identical argument if we restart from \eqref{interpolate}. We omit the details.

\begin{prop}\label{prop:KN}
\eqref{IJK:bdd} for $\mathcal L_N=\hyperlink{KN}{\mathcal K_{N}}$ and $\alpha=1$ holds. 
\end{prop}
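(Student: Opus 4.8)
The statement to be proven is Proposition~\ref{prop:KN}: the quantity $\hyperlink{KN}{\mathcal K_N}(s,t)$ satisfies the uniform Lipschitz bound \eqref{IJK:bdd} with $\alpha=1$. The announcement ``The same result holds for $\hyperlink{KN}{\mathcal K_{N}}$ by an almost identical argument if we restart from \eqref{interpolate}'' tells us exactly what to do: mimic the treatment of $\hyperlink{JN}{\mathcal J_N}$ in Proposition~\ref{prop:JN} (i.e., Lemmas~\ref{lem:JN1} and~\ref{lem:JN2,3}), but now differentiating in the \emph{first} time variable $s$ of $\mathfrak b_N(x,y;r,s,t)$ rather than the second. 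Concretely, since $\hyperlink{KN}{\mathcal K_N}(s,t)=\int_0^s\d r\int\!\!\int\phi(x)\phi(y)[\mathfrak b_N(x,y;r,s,t)-\mathfrak b_N(x,y;r,t,t)]$, I would write an interpolation analogous to \eqref{interpolate},
\begin{align*}
\hyperlink{KN}{\mathcal K_N}(s,t)=-\int_s^t\d v\int_0^{?}\d r\int_{x\geq -N^{1/2}\mathbf 1}\d x\,\phi(x)\,\frac{\partial}{\partial v}\int_{y\geq -N^{1/2}\mathbf 1}\d y\,\phi(y)\,\mathfrak b_N(x,y;r,v,t),
\end{align*}
being careful that the upper limit of the $r$-integral is itself $v$ (this is the main structural difference from the $\hyperlink{JN}{\mathcal J_N}$ case, where the $r$-range was the fixed $[0,s]$); so there is an extra boundary contribution from differentiating the limit of integration, namely a term $\mathfrak b_N(x,y;v,v,t)$ evaluated at $r=v$. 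That boundary term corresponds, after unscaling, to $r=Nv$, which sits at the upper edge of the integration range where the Poisson-type estimates of Section~\ref{sec:poisson} apply (cf.\ Theorem~\ref{thm:poisson}(2$^\circ$) and Proposition~\ref{prop:poisson2}(3$^\circ$)--(4$^\circ$)); it is bounded and contributes only an $\mathcal O(t-s)$ term.

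\textbf{Key steps.} First I would compute $\partial_v$ of the spatial integral against $\phi$. Exactly as in the derivation of \eqref{main:JN}, changing variables $N^{1/2}x'=vN^{1/2}(x+N^{1/2}\mathbf 1)$ to move the $v$-dependence off the domain and onto the integrand, then differentiating under the integral sign, produces three pieces: one of the form $\phi(x)/v$ times a $\mathfrak b_N$ (the ``$\mathcal J_{N,1}$-type'' term), one involving $x\cdot\nabla\phi(x)+N^{1/2}\div\phi(x)$ (the ``$\mathcal J_{N,2}$-type'' term), and one where $\partial_v$ falls on the binomial probability itself through the parameter $r/v$ in $S_{M(x_j,v)}(r/v)$ and through $M(x_j,v)$ (the ``$\mathcal J_{N,3}$-type'' term). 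For the first, I invoke Corollary~\ref{cor:uniformcovar} verbatim (using $\phi\in\mathcal S_0(\R^2)$ to cancel the $\tfrac{\ln(Nv)}{4\pi}$ divergence), getting an $\mathcal O(t-s)$ bound as in Lemma~\ref{lem:JN1}. For the third term, I apply Lemma~\ref{lem:bin}(1$^\circ$) and the binomial integration-by-parts and shift identities of Lemma~\ref{lem:shift}, exactly reproducing the algebra \eqref{JN3-partialder}--\eqref{RN:final00}: the leading-order piece $\mathfrak a_{N,3,j,1}$ is reshuffled via the second-order expansion \eqref{bin:obs2}, the variable translations of Lemma~\ref{lem:shift}(3$^\circ$), and the mean-zero property of $\phi(y)-\phi(y-e_j/(vN^{1/2}))$, so that it matches, up to an $\mathcal O(t-s)$ remainder $\mathcal R_{N,j}$, the ``$N^{1/2}\div\phi$'' contribution coming from the second term. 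The cancellation of these two genuinely divergent ($\mathcal O(N^{1/2})$) contributions is what makes the whole thing work, and it goes through because the combinatorial identities in Lemmas~\ref{lem:bin} and~\ref{lem:shift} are symmetric in the roles played by the two time slots. The residual ``$x\cdot\nabla\phi$'' part lands in $\mathcal S_0(\R^2)$ by integration by parts and is again handled by Corollary~\ref{cor:uniformcovar}.

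\textbf{Main obstacle.} The delicate point is \emph{bookkeeping the $r$-range}. In $\hyperlink{JN}{\mathcal J_N}$ the inner $r$-integral runs over the fixed interval $[0,s]$ independent of the interpolation variable $v$, whereas in $\hyperlink{KN}{\mathcal K_N}$ the relevant $r$-integral, after the interpolation, runs over an interval whose upper endpoint moves with the variable being differentiated. One must verify (i) that the extra boundary term generated by the moving endpoint is genuinely bounded — this uses that near $r\approx Nv$ the binomial probabilities $\mathfrak b^{N}$ are small and, integrated, contribute only $\mathcal O(t-s)$, which is the content already extracted in Proposition~\ref{prop:poisson2}(3$^\circ$)--(4$^\circ$) and Theorem~\ref{thm:poisson}; and (ii) that when Corollary~\ref{cor:uniformcovar} is invoked, its hypotheses (primary and, where needed, secondary conditions over $[T_0,T_1]$, controlled by the choice of $\ell_N,r_N$ in Assumption~\ref{ass:sNtN}) still hold after the change of variables, with the weight functions $\phi_1,\phi_2$ in the Corollary absorbing the Jacobians $1/v^{k}$ and the smooth factors $\phi(x\pm e_j/(sN^{1/2}))$, all of which have the required rapid decay uniformly for $s,t\in[T_0,T_1]$. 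None of this requires a new idea beyond those in Lemmas~\ref{lem:JN1} and~\ref{lem:JN2,3}; it is a careful transcription, which is why I would — as the paper does — state it and omit the routine details.
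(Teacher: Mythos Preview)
Your overall plan is correct and is exactly what the paper intends: mirror the proof of Proposition~\ref{prop:JN} by interpolating in the first time slot, change variables in $x$ (rather than $y$), and obtain a decomposition $\mathcal K_N=-2\mathcal K_{N,1}-\mathcal K_{N,2}+\mathcal K_{N,3}$ whose three pieces are handled by Corollary~\ref{cor:uniformcovar} and the cancellation argument of Lemma~\ref{lem:JN2,3}.

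However, your ``Main obstacle'' is a phantom. In
\[
\mathcal K_N(s,t)=\int_0^s\d r\iint\phi(x)\phi(y)\bigl[\mathfrak b_N(x,y;r,s,t)-\mathfrak b_N(x,y;r,t,t)\bigr],
\]
the upper limit of the $r$-integral is the \emph{fixed} endpoint $s$, not the interpolation variable. Setting $G(v)=\int_0^s\d r\iint\phi(x)\phi(y)\,\mathfrak b_N(x,y;r,v,t)$ for $v\in[s,t]$ gives $\mathcal K_N(s,t)=G(s)-G(t)=-\int_s^t G'(v)\,\d v$, and $G'(v)=\int_0^s\d r\iint\phi(x)\phi(y)\,\partial_v\mathfrak b_N(x,y;r,v,t)$ with no boundary term whatsoever, since $r\le s\le v$ keeps $r/v\le 1$ throughout. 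So the question mark in your display should simply be $s$, and the entire discussion of a moving endpoint, the boundary term $\mathfrak b_N(x,y;v,v,t)$, and the appeal to Proposition~\ref{prop:poisson2}(3$^\circ$)--(4$^\circ$) for that term, can be deleted. The transcription from $\mathcal J_N$ to $\mathcal K_N$ is therefore even more mechanical than you make it: the $r$-range $[0,s]$ is common to both, and the only change is that the roles of $(x,s)$ and $(y,v)$ in \eqref{interpolate1}--\eqref{RN:final00} are swapped.
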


\section{Stationary additive stochastic heat equation}\label{sec:stat}
In this section, we collect some basic results for the stationary additive stochastic heat equation which are mentioned in Section~\ref{sec:ASHE}, since they seem difficult to find in the literature.

Let $W$ be a two-sided space-time white noise. For any $\phi\in L_2(\R^2,\d x)$, $W(\phi)$ is a two-sided Brownian motion with $W_0(\phi)=0$ and $\E[W_{ 1}(\phi)^2]=\E[W_{- 1}(\phi)^2]=\|\phi\|^2_{L_2(\R^2,\d x)}$. An inspection of the proof of Proposition~\ref{prop:normal2}, especially \eqref{kernel1}, shows that, without the re-centering function,  we would obtain the following divergent integral instead of the first integral in \eqref{covar:zeta}:
\[
\int_{\R^2}\int_{\R^2}Q_{t^{-1}}(y',x)\left(\int_0^\infty Q_{2r}(y',y'')\d r\right)Q_{s^{-1}}(y'',y)\d y'\d y''.
\]
Then at least formally, the initial condition $X_0$ below \eqref{covar:X0} needs to be replaced by
$\widetilde{X}_0(\phi)= \int_{-\infty}^0 \int_{\R^2}Q_{-r}\phi(x)W(\d x,\d r)$ so that the process corresponding to \eqref{main:SPDE} is
\begin{align}\label{Xt:stat}
\widetilde{X}_t(\phi)=\int_{-\infty}^t\int_{\R^2}Q_{t-r}\phi(x)W(\d x,\d r).
\end{align}  
The process in \eqref{Xt:stat} is an analogue of the classical stationary solution of the Ornstein--Uhlenbeck process. The next proposition shows that it is well-defined whenever $\phi\in \mathcal S_0(\R^2)$.  

\begin{prop}\label{prop:stat}
For every $\phi\in \mathcal S_0(\R^2)$, the improper stochastic integral in \eqref{Xt:stat} converges a.s. for any fixed $t\geq 0$ and, as a process, has the same finite-dimensional marginals as $X(\phi)$ defined by \eqref{def:lim}. In particular, $(X_t;t\geq 0)$ as an $\mathcal S'(\R^2)$-valued process is stationary. 
\end{prop}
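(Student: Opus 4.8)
The plan is to prove the two assertions — a.s.\ convergence of the improper integral in \eqref{Xt:stat} for fixed $t$, and equality of finite-dimensional marginals with the process $X$ from \eqref{def:lim} — by the same device used for the stationary Ornstein--Uhlenbeck process, namely comparing the tail of the stochastic integral against a deterministic variance integral and then invoking a Gaussian/martingale convergence argument. First I would fix $\phi\in\mathcal S_0(\R^2)$ and $t\geq 0$ and consider, for $T>0$, the truncated integral $\widetilde X_t^{(T)}(\phi)=\int_{-T}^t\int_{\R^2}Q_{t-r}\phi(x)W(\d x,\d r)$. By It\^o's isometry for white noise (Walsh \cite{Walsh}), the increment $\widetilde X_t^{(T')}(\phi)-\widetilde X_t^{(T)}(\phi)$ for $T<T'$ is a centered Gaussian with variance $\int_{-T'}^{-T}\|Q_{t-r}\phi\|_{L_2(\R^2)}^2\,\d r$. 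So the whole question reduces to showing $\int_{-\infty}^t\|Q_{t-r}\phi\|_{L_2}^2\,\d r<\infty$, i.e.\ $\int_0^\infty\|Q_u\phi\|_{L_2}^2\,\d u<\infty$. Using Plancherel, $\|Q_u\phi\|_{L_2}^2=(2\pi)^{-2}\int_{\R^2}\e^{-u|\xi|^2}|\widehat\phi(\xi)|^2\,\d\xi$, and since $\phi\in\mathcal S_0$ we have $\widehat\phi(0)=\int\phi=0$, so $|\widehat\phi(\xi)|^2\lesssim|\xi|^2$ near the origin; hence $\int_0^\infty\|Q_u\phi\|_{L_2}^2\,\d u=(2\pi)^{-2}\int_{\R^2}\frac{|\widehat\phi(\xi)|^2}{|\xi|^2}\,\d\xi<\infty$, the integrand being $O(1)$ at $0$ and rapidly decaying at $\infty$. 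This is exactly the place where the mean-zero hypothesis $\phi\in\mathcal S_0(\R^2)$ is essential, and it is the conceptual heart of the statement; the rest is bookkeeping.

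Given finiteness of this variance integral, the family $\{\widetilde X_t^{(T)}(\phi)\}_{T>0}$ is a Cauchy family in $L^2(\P)$ of centered Gaussians, and — reading it as a (time-reversed) continuous $L^2$-martingale in the parameter $-T$ with the quadratic variation just computed converging — the martingale convergence theorem \cite[Problem~3.19 in Section~1.3]{KS} (as already invoked in the proof of Proposition~\ref{prop:SDE}) gives a.s.\ convergence of $\widetilde X_t^{(T)}(\phi)$ as $T\to\infty$; call the limit $\widetilde X_t(\phi)$. This handles the first claim. For the identification of finite-dimensional marginals, both $\widetilde X_\cdot(\phi)$ and $X_\cdot(\phi)$ are centered Gaussian processes (the latter by construction in \eqref{def:lim}, using that $X_0$ is the centered Gaussian field with covariance \eqref{covar:X0} realized as $X_0(\phi)=\int_{-\infty}^0\int_{\R^2}Q_{-r}\phi(x)W(\d x,\d r)$), so it suffices to match their covariance functions. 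For $\phi\in\mathcal S_0(\R^2)$ one has $R\phi=\phi$, so \eqref{def:lim} reads $X_t(\phi)=X_0[Q_t\phi]+\int_0^t\int_{\R^2}Q_{t-r}\phi(x)W(\d x,\d r)$; writing $X_0[Q_t\phi]=\int_{-\infty}^0\int_{\R^2}Q_{t-r}\phi(x)W(\d x,\d r)$ and concatenating the two stochastic integrals over $(-\infty,0]$ and $[0,t]$ via additivity of the white-noise integral shows $X_t(\phi)=\widetilde X_t(\phi)$ as square-integrable random variables, for each $t$; hence all joint laws coincide. Stationarity of $(\widetilde X_t)_{t\geq 0}$ then follows from the translation invariance in time of the two-sided white noise $W$: the law of $\big(\int_{-\infty}^{t_i+h}\int Q_{t_i+h-r}\phi_i\,W(\d x,\d r)\big)_i$ is obtained from that of $\big(\int_{-\infty}^{t_i}\int Q_{t_i-r}\phi_i\,W(\d x,\d r)\big)_i$ by the measure-preserving shift $r\mapsto r-h$ of $W$, so these agree; combined with the marginal identification this yields stationarity of $X$ as an $\mathcal S'(\R^2)$-valued process.

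The main obstacle, such as it is, is purely a matter of care rather than difficulty: one must be attentive that the stochastic integrals are interpreted correctly over the infinite time horizon (as improper integrals / $L^2$-limits, not Lebesgue integrals in $r$), that the additivity $\int_{-\infty}^0+\int_0^t=\int_{-\infty}^t$ of the white-noise integral is legitimate once both pieces are well-defined, and that the covariance identity $\E[X_0(\psi_1)X_0(\psi_2)]=\frac1{2\pi}\int\int\psi_1(y')\psi_2(y'')(-\ln|y'-y''|)\,\d y'\d y''$ is recovered from the representation $X_0(\psi)=\int_{-\infty}^0\int Q_{-r}\psi\,\d W$ — this last computation is essentially the content of \eqref{limT:glue}/Lemma~\ref{lem:Tglue} read in the limit $T\to\infty$ (the $\frac1{4\pi}\ln T$ term being precisely what forces the mean-zero restriction and what disappears under $R$), so it can be quoted rather than redone. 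Everything else is standard Gaussian analysis.
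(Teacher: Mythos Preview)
Your proposal is correct and close in spirit to the paper's argument, but the execution differs in two places worth noting. For the a.s.\ convergence of the improper integral, you use Plancherel and the Fourier-side identity $\int_0^\infty\|Q_u\phi\|_{L_2}^2\,\d u=(2\pi)^{-2}\int|\widehat\phi(\xi)|^2|\xi|^{-2}\,\d\xi$, exploiting $\widehat\phi(0)=0$; the paper instead works in real space, re-centering by $\1_{[1,\infty)}(r)/(4\pi r)$ (which integrates out against $\phi\in\mathcal S_0$) and invoking dominated convergence as in \eqref{Gaussian-stat1}. Your route is shorter and more transparent about why mean zero is exactly the right hypothesis. For the identification of marginals, you realize $X_0$ by the stochastic integral $\int_{-\infty}^0\int Q_{-r}\phi\,\d W$ and then concatenate with $\int_0^t$ to get $X_t(\phi)=\widetilde X_t(\phi)$ as random variables, whereas the paper matches covariances directly by rewriting the log kernel via Lemma~\ref{lem:Tglue}. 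Your approach is cleaner once the covariance identity for $X_0$ is in hand, but be aware that this identity is not given a priori: the paper's Section~\ref{sec:id} points forward to Section~\ref{sec:stat} for its justification, so you must actually verify that $\int_{-\infty}^0\int Q_{-r}\psi\,\d W$ has covariance \eqref{covar:X0} (which, as you say, is the $T\to\infty$ reading of Lemma~\ref{lem:Tglue}) rather than quote it as known. With that computation spelled out, your argument stands.
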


\begin{proof}
To get the first property of $\widetilde{X}$, note that, for $-\infty<-S\leq -T\leq 0\leq t<\infty$, 
\begin{align}
&\quad \Cov\left[\int_{-S}^t\int_{\R^2}Q_{t-r}\phi(x)W(\d x,\d r);\int_{-T}^t\int_{\R^2}Q_{t-r}\phi(x)W(\d x,\d r)\right]\notag\\
&=\int_{\R^2}\int_{\R^2}\phi(x)\phi(y)\left(\int_0^{T+t} Q_{2r}(x,y)\d r\right)\d x\d y\notag\\
&\xrightarrow[T\to\infty]{}\int_{\R^2}\int_{\R^2}\phi(x)\phi(y)\left[\int_0^{\infty} \left(Q_{2r}(x,y)-\1_{[1,\infty)}(r)\frac{1}{4\pi r}\right)\d r\right]\d x\d y\label{Gaussian-stat1}
\end{align}
by dominated convergence since $\phi\in \mathcal S_0(\R^2)$ and $1-\e^{-u}\leq u$ for all $u\geq 0$. The improper stochastic integral in \eqref{Xt:stat} converges in $L_2(\P)$ by \eqref{Gaussian-stat1},  and so, almost surely by the martingale convergence theorem as in the proof of Proposition~\ref{prop:SDE}.

To see that $X(\phi)$ and $\widetilde{X}(\phi)$ have the same finite-dimensional marginals, we use Lemma~\ref{lem:Tglue} to rewrite the log kernel in the definition \eqref{covar:X0} of  $X_0$. For all $0\leq s\leq t<\infty$,  
\begin{align}
&\quad \Cov[X_s(\phi);X_t(\phi)]\notag\\
&=\frac{1}{2\pi}\int_{\R^2}\int_{\R^2}\phi(x)\phi(y)\left(\int_{\R^2}\int_{\R^2}Q_{s}(y',x)\big(-\ln |y'-y''|\big)Q_{t}(y'',y)\d y'\d y''\right)\d x\d y\notag\\
&\quad +\int_{\R^2}\int_{\R^2}\phi(x)\phi(y)\left(\int_0^{s} \int_{\R^2} Q_{s-r}(z,x)Q_{t-r}(z,y)\d z\d r\right)\d x\d y\notag\\
&=\lim_{T\to\infty}\int_{\R^2}\int_{\R^2}\phi(x)\phi(y)\left(\int_{\R^2}\int_{\R^2}Q_{s}(y',x)\int_0^T Q_{2r}(y',y'')\d rQ_{t}(y'',y)\d y'\d y''\right)\d x\d y\notag\\
&\quad +\int_{\R^2}\int_{\R^2}\phi(x)\phi(y)\left(\int_0^{s} \int_{\R^2} Q_{r}(z,x)Q_{t-s+r}(z,y)\d z\d r\right)\d x\d y\notag\\
&= \lim_{T\to\infty}\int_{\R^2}\int_{\R^2}\phi(x)\phi(y)\left(\int_0^{T+s} Q_{t-s+2r}(x,y)\d r\right)\d x\d y\notag\\
&= \lim_{T\to\infty}\Cov\left[\int_{-T}^s\int_{\R^2}Q_{s-r}\phi(x)W(\d x,\d r);\int_{-T}^t\int_{\R^2}Q_{t-r}\phi(x)W(\d x,\d r)\right],\notag
\end{align}
where in the second equality, we use Lemma~\ref{lem:Tglue}, the assumption $\phi\in \mathcal S_0(\R^2)$, and the dominated convergence theorem. The last equality shows that $ \Cov[X_s(\phi);X_t(\phi)]= \Cov[\widetilde{X}_s(\phi);\widetilde{X}_t(\phi)]$, which is enough for the required identity in finite-dimensional marginals.  
 
Finally, given $\phi_1,\phi_2\in \mathcal S_0(\R^2)$, $\Cov[X_t(\phi_1);X_t(\phi_2)]$ is given by \eqref{Gaussian-stat1} with $\phi(x)\phi(y)$ replaced by $\phi_1(x)\phi_2(y)$. Hence, $\Cov[X_t(\phi_1);X_t(\phi_2)]$ does not depend on $t$. This proves the stationarity of $X$.  
\end{proof}

\end{document}